\documentclass[11pt]{article}
\usepackage[english]{babel}
\usepackage{amsmath}
\usepackage{amsthm}
\usepackage{amsfonts}
\usepackage{amssymb}
\usepackage{euscript}
\usepackage[cp1251]{inputenc}
\usepackage[pdftex]{graphicx}
\usepackage [autostyle, english = american]{csquotes}
\usepackage{tikz-cd}
\setcounter{section}{0}

\sloppy
\usepackage{float}
\usepackage[pdfpagelabels,bookmarksdepth=3]{hyperref}
\hypersetup{colorlinks=true,citecolor=black,urlcolor=black,linkcolor=black}

\newcommand{\xrightarrowdbl}[2][]{%
    \leftarrow\mathrel{\mkern-14mu}\xrightarrow[#1]{#2}
}

\newtheorem{theorem}{Theorem}[section]
\newtheorem{lemma}[theorem]{Lemma}

\theoremstyle{definition}
\newtheorem{definition}[theorem]{Definition}

\newtheorem{conjecture}[theorem]{Conjecture}

\newtheorem{example}[theorem]{Example}

\theoremstyle{remark}
\newtheorem{remark}[theorem]{Remark}

\numberwithin{equation}{section}


\begin{document}

\begin{center}
\textbf{\large{The first step towards symplectic homotopy theory}}
\end{center}
\begin{center}
\textbf{Vardan Oganesyan}\\
University of California Santa Cruz,\\
E-mail address: vardanmath@gmail.com
\end{center}


\textbf{Abstract.} We consider two categories related to symplectic manifolds:

\vspace{0.03in}
\noindent
1. Objects are symplectic manifolds and morphisms are symplectic embeddings.

\vspace{0.03in}
\noindent
2. Objects are symplectic manifolds endowed with compatible almost complex structure and morphisms are pseudoholomorphic maps.

\vspace{0.04in}
\noindent

We define new homotopy theories for these categories.  In particular, we give  definitions of homotopy equivalent symplectic manifolds and define  new cohomology theories. Theses cohomology theories are functorial, homotopy invariant and have other interesting properties.

We also construct triangulated persistence category of symplectic manifolds. This allows to apply machinery developed Biran, Cornea, and Zhang and define distances between symplectic manifolds.

\tableofcontents
\newpage

\section{Introduction}

This paper was inspired by motivic homotopy theory of algebraic varieties and by the following question of Fukaya (asked in \cite{Fukaya})

\begin{center}
Can one quantize Adams spectral sequence?
\end{center}
This leads us to a different but similar question:

\begin{center}
Can we define the homotopy theory of symplectic manifolds? Can we define symplectic stable homotopy theory and the Adams spectral sequence?
\end{center}

In this paper we make the first step towards symplectic homotopy theory, but there is still a lot of work required to be done to answer the above questions.

We consider the following categories:

\vspace{0.06in}
\noindent
1. Objects are symplectic manifolds and morphisms are symplectic embeddings.

\vspace{0.06in}
\noindent
2. Objects are symplectic manifolds endowed with compatible almost complex structure and morphisms are pseudoholomorphic maps.

\vspace{0.1in}

We denote these categories by the same symbol $\mathcal{S}ymp$ and the reader can replace $\mathcal{S}ymp$ by any of the mentioned categories.

These categories are not rich enough to do homotopy theory. We do not have all limits and colimits (in sense of category theory) and as a result we can not even define simple operations $X/W$, $X\vee Y$, $X \wedge Y$. Also, all naive definitions of homotopy equivalent morphisms do not result in an equivalence relation. There are more disadvantages of these categories, but we already see that we need to add more objects and morphisms into the mentioned categories to be able to do homotopy theory.

We can try to solve all the mentioned problems  by considering symplectic structures on nonsmooth manifolds, but this is technically very complicated and seems to be not flexible enough.

We have a natural method to add morphisms and objects into our category. This method was introduced by Grothendieck and is known in algebraic geometry. Let us discuss a philosophical aspect here and give rigorous definitions in Section $\ref{aprspacetodo}$. In functional analysis we have generalized functions. Roughly speaking, generalized functions are functionals on the space of functions. Our set of functions can be embedded into the space of functionals, i.e. each function defines a generalized function. Of course in order to obtain something useful we need to work with a certain subset of `nice' functionals. It turns out that similar idea works for any small category $\mathcal{C}$. Let $\mathcal{C}^{op}$ be the opposite category. We consider a category of functors from $\mathcal{C}^{op}$ to a category with required properties (sets, abelian groups). This category of functors is called a category of presheves on $\mathcal{C}$ and is denoted by $PrSh_{\mathcal{C}}$. It turns out that $PrSh_{\mathcal{C}}$ has all limits and colimits and we are able to define analogues of quotients, smash products and other operations. On the other hand, $PrSh_{\mathcal{C}}$ is very big and we loose all information about $\mathcal{C}$. In other words, $\mathcal{C}$ may have some colimits (not all, but some of them) and the category of presheves does not respect existing colimits. To solve the mentioned problem we need to consider a category of sheaves.

We need to apply this technique to the mentioned symplectic categories. We denote the resulting categories by $Sh_{\mathcal{S}ymp}$, where $Sh$ stands for sheaves. Roughly speaking, each element of $Sh_{\mathcal{S}ymp}$ defines a sheaf on all symplectic manifolds and $\mathcal{S}ymp$ is naturally embedded into $Sh_{\mathcal{S}ymp}$. As a result, we replace our manifolds $X$ by functors $\mathcal{F}_X$ with no information loss:  $X$, $Y$ are isomorphic if and only if $\mathcal{F}_X$ is naturally isomorphic to $\mathcal{F}_Y$ (see Section $\ref{aprspacetodo}$). The advantage is that now we can define quotients, wedge products, and symplectic spheres (with the understanding that those are sheaves, not manifolds). Moreover, now we are able to define homotopy equivalent maps (see Section $\ref{homequivalence})$.

We also define  new cohomology groups. These cohomology groups have all required properties: functoriality, homotopy invariance,... We discuss all details in Sections $\ref{symphomgroup}$ and $\ref{homequivalence2}$. Note that there are not so many functorial nvariants in symplectic geometry.

In fact, we get that any object  $X \in \mathcal{S}ymp$ defines a cohomology theory and any morphism $f: X \rightarrow Y$ defines a map between cohomology theories associated with $X$ and $Y$.

\vspace{0.07in}

We also construct triangulated persistence category of symplectic manifolds (see Section $\ref{triangperscat}$). This allows to apply machinery developed Biran, Cornea, and Zhang (see $\cite{Octav1}$, $\cite{Octav2}$) and define distances between symplectic manifolds.

\vspace{0.11in}

Let us note that this technique is not completely new in symplectic geometry. Similar methods was used by Pardon in \cite{Pardon} to construct the fundamental class.

In fact, homotopy theory of some other `nonflexible' categories exist. In $1990 - 2000$ Morel, Suslin, and Voevodsky defined $A^1-$homotopy theory of algebraic varieties.  This theory helped to solve many open problems in algebraic geometry; the interested reader can find all details in \cite{motivic1, motivic2, motivic3}. Our definitions are partially inspired by these papers.

\vspace{0.09in}

Since the technique used in this paper is not widely known by symplectic geometers, we provide all definitions and give geometric interpretation of all results.  We only assume that the reader is familiar with basic concepts of category theory.

\vspace{0.13in}

\noindent \textbf{Acknowledgments.}
The author thanks Artem Kotelskiy, Egor Shelukhin, Ivan Panin, Octav Cornea, Viktor Ginzburg for many helpful discussions.

\section{Some reasons to read this paper}

In this section we discuss some relations of known invariants and open problems to the cohomology groups defined in this paper. These relations will be studied in further papers and in this paper we only mention them.

\vspace{0.05in}
The reader may ask if it is useful to state some open problems in terms of the defined cohomology groups. The answer is yes. As soon as we define a cohomology theory with all the nice properties (functoriality, homotopy invariance, \ldots ), we immediately get some tools to study these problems. For example, we get exact sequences, spectral sequences and other tools, like in algebraic topology.
In this paper we make the first step and only define cohomology groups and prove that our groups have all the mentioned nice properties.

\vspace{0.05in}
In this Section we simplify our definitions to be able to explain the basic ideas.

\subsection{Questions and applications of motivic pseudoholomorphic groups $JH_{\bullet}^M(Y, X)$}\label{question1}

Let $X$, $Y$, $Z$, $W$ be arbitrary symplectic manifolds endowed with compatible almost complex structures. Let $M = (M, \omega_M, J_M,  p_0, p_1)$ be an arbitrary connected symplectic manifold with two marked points and $\omega_M$-compatible almost complex structure $J_M$. Here $M$ plays a role of the unit segment.

We assign abelian groups to any triple $X, Y, M$
\begin{equation*}
X, Y, M \; \rightarrow \; JH_{n}^M(Y, X), \;\; \text{where $n \geqslant 0$}.
\end{equation*}

Let $f: Y \rightarrow Z$ and $g: X \rightarrow W$ be pseudoholomorphic maps. These maps induce homomorphisms between cohomology groups

\begin{equation*}
f^{*}: JH_{\bullet}^M(Z, X) \rightarrow JH_{\bullet}^M(Y, X), \quad g_{*}: JH_{\bullet}^M(Y, X) \rightarrow JH_{\bullet}^M(Y, W).
\end{equation*}

\begin{remark}
In fact, similar cohomology groups can be defined for any almost complex manifolds, not necessarily symplectic. The construction is absolutely the same, but we consider only symplectic manifolds with compatible almost complex structure.
\end{remark}

Let us consider a few applications and questions.

\vspace*{0.1in}
\noindent
\textbf{1.} Assume that $Y = pt$ is a point and $M = \mathbb{C}P^1$ endowed with the standard symplectic and almost complex structures, where $p_0 = [0:1]$ and $p_1 = [1:0]$.

\begin{theorem}\label{cohomkahler}(see Section $\ref{computations}$)
If $X$ is a Kahler manifold and $JH_{0}^{\mathbb{C}P^1}(pt, X) = \mathbb{Z}$, then $X$ is algebraic.
\end{theorem}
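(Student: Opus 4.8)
The plan is to unwind the definition of $JH_0^{\mathbb{C}P^1}(pt, X)$ into a concrete geometric statement about rational curves on $X$, and then feed that statement into classical Hodge theory to conclude projectivity via the Kodaira embedding theorem. Throughout I use that $X$ is Kähler, so the compatible almost complex structure is integrable and every pseudoholomorphic map $\mathbb{C}P^1 \to X$ is an honest holomorphic rational curve; I also take $X$ to be compact, as is needed for Kodaira embedding and for ``algebraic'' to mean projective.

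First I would interpret the hypothesis. With $Y = pt$ and $M = \mathbb{C}P^1$ playing the role of the interval, a pseudoholomorphic map $pt \to X$ is just a point of $X$, and a $\mathbb{C}P^1$-homotopy between two such maps is a holomorphic sphere $u : \mathbb{C}P^1 \to X$ with $u(p_0) = x_0$ and $u(p_1) = x_1$. Hence the homotopy relation identifies two points exactly when they lie on a chain of rational curves. Since $JH_0$ is the degree-zero (homology-type) group, it should be the free abelian group on the set of these equivalence classes, i.e. on the $\mathbb{C}P^1$-homotopy components of $X$; the assumption $JH_0^{\mathbb{C}P^1}(pt, X) = \mathbb{Z}$ then says there is a single class, so $X$ is rationally chain connected.

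Next I would show that rational chain connectedness forces the vanishing of holomorphic $2$-forms, $H^0(X, \Omega_X^2) = 0$. Given $\alpha \in H^0(X, \Omega_X^2)$ and any rational curve $u : \mathbb{C}P^1 \to X$, the pullback $u^{*}\alpha$ lies in $H^0(\mathbb{C}P^1, \Omega^2_{\mathbb{C}P^1}) = 0$, so $\alpha$ annihilates every tangent direction arising from a rational curve. If the rational curves through a general point dominate and their tangent directions span the tangent space, then $\alpha$ vanishes on a dense open set, hence $\alpha \equiv 0$. By Dolbeault and Hodge symmetry on the compact Kähler manifold $X$ this yields $H^2(X, \mathcal{O}_X) \cong \overline{H^{2,0}(X)} = 0$. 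I would then finish with the standard projectivity criterion: because $H^{0,2}(X) = H^{2,0}(X) = 0$, the Hodge decomposition collapses to $H^2(X, \mathbb{R}) = H^{1,1}(X) \cap H^2(X, \mathbb{R})$, so the open nonempty Kähler cone meets the dense rational lattice $H^2(X, \mathbb{Q})$; a rational Kähler class clears denominators to an integral Hodge class, and Kodaira's embedding theorem produces a projective embedding, so $X$ is algebraic.

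The main obstacle is the geometric input in the third step: passing from the purely homotopy-theoretic ``one component'' conclusion to the statement that rational curves span all tangent directions at a general point. A priori $JH_0 = \mathbb{Z}$ gives only rational chain connectedness, whereas the form-vanishing argument wants very free curves; bridging this gap \emph{without} first assuming $X$ is projective (to avoid circularity with the usual ``rationally chain connected $\Rightarrow$ rationally connected'' theorem) is where the real work lies. It is precisely here that one must exploit the integrability of $J$ and the deformation theory of holomorphic spheres in the Kähler setting most carefully.
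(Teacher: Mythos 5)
There are two genuine problems here, and the first is a misreading of the hypothesis. A $\mathbb{C}P^1$-homotopy in this theory is not a single holomorphic sphere: the degree-one chains are elements of $JC(\mathbb{C}P^1, X)$, i.e.\ integer combinations of pseudoholomorphic \emph{correspondences}, which are locally unordered $n$-tuples of holomorphic maps and globally project to curves in $X$ that are branched covers of $\mathbb{C}P^1$ --- hence curves of arbitrary genus (see the meromorphic-function example in Section~\ref{pseudoholcorr}). So $JH_0^{\mathbb{C}P^1}(pt,X)=\mathbb{Z}$ does \emph{not} say that $X$ is rationally chain connected; it says that the difference $p-q$ of any two points is the boundary $dF = F|_{p_0}-F|_{p_1}$ of some combination of correspondences, which is much closer to the condition ``$CH_0(X)=\mathbb{Z}$'' than to rational connectedness. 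Your Hodge-theoretic plan is therefore built on a stronger statement than the hypothesis provides. Moreover, even granting rational chain connectedness, the step you yourself flag as ``the real work'' is a genuine gap: restricting a holomorphic $2$-form to a curve is vacuously zero (a curve carries no nonzero $2$-forms), so to kill $H^0(X,\Omega_X^2)$ you need the pulled-back \emph{bundle} $u^{*}\Omega_X^2$ to have no sections along a dominating family, i.e.\ very free rational curves; producing those from chain connectedness on a compact Kähler manifold without already knowing projectivity is exactly the hard theorem you were trying to avoid.

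The paper's proof sidesteps all of this. It extracts from $dF=p-q$ only that $p$ and $q$ lie on a common connected component $G$ of the associated subset of $\mathbb{C}P^1\times X$, whose irreducible pieces are branched covers of $\mathbb{C}P^1$ and hence proper curves of some genus; projecting to $X$ gives a proper connected (not necessarily irreducible, not necessarily rational) curve through $p$ and $q$. That is precisely Campana's notion of algebraic connectedness, and Campana's criterion (compact Kähler and algebraically connected implies projective) finishes the proof with no Hodge theory at all. If you want to salvage your route, replace the Kodaira\slash $H^{2,0}=0$ endgame by Campana's criterion; the only input then needed from the hypothesis is the existence of a connected proper curve through a general pair of points, which is exactly what the correspondence argument delivers.
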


The proof of this theorem is simple and is based on a paper of Campana (see $\cite[p.~212]{Campana})$.

\vspace*{0.1in}
\noindent
\textbf{2.} We have the following lemma:

\begin{lemma}\label{curvepassinglemma}(see Section $\ref{computations}$)
If there is a pseudoholomorphic sphere passing through any two points of $X$, then $JH_0^{\mathbb{C}P^1}(pt, X) = \mathbb{Z}$. Here $X$ is not necessarily Kahler.
\end{lemma}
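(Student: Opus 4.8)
The plan is to identify $JH_0^{\mathbb{C}P^1}(pt, X)$ with the free abelian group on the set of $\mathbb{C}P^1$-homotopy classes of points of $X$, and then to use the hypothesis to show that there is exactly one such class. Since $Y = pt$, a morphism $Y \to X$ is simply a point of $X$, and in the homotopy theory built later in the paper the object $M = \mathbb{C}P^1$, together with its two marked points $p_0, p_1$, plays the role of the unit interval. The group $JH_\bullet^M(pt,X)$ is the homology of the complex modelling this mapping space, so its degree-zero part computes $\mathbb{Z}[\pi_0^{\mathbb{C}P^1}(X)]$, where $\pi_0^{\mathbb{C}P^1}(X)$ is the set of points of $X$ modulo the equivalence relation generated by declaring $x_0 \sim x_1$ whenever there exists a pseudoholomorphic map $H \colon \mathbb{C}P^1 \to X$ with $H(p_0) = x_0$ and $H(p_1) = x_1$. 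The statement therefore reduces to showing that $\pi_0^{\mathbb{C}P^1}(X)$ is a one-point set.

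The key geometric step is to produce, for every ordered pair $x_0, x_1 \in X$, a \emph{based} pseudoholomorphic sphere sending $p_0 \mapsto x_0$ and $p_1 \mapsto x_1$. By hypothesis there is a pseudoholomorphic sphere $u \colon \mathbb{C}P^1 \to X$ whose image contains both $x_0$ and $x_1$; pick $a, b \in \mathbb{C}P^1$ with $u(a) = x_0$ and $u(b) = x_1$. Since $\mathrm{Aut}(\mathbb{C}P^1) = PSL(2,\mathbb{C})$ acts transitively on ordered pairs of distinct points, there is a biholomorphism $\varphi$ of $\mathbb{C}P^1$ with $\varphi(p_0) = a$, $\varphi(p_1) = b$. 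The composite $H = u \circ \varphi$ is again pseudoholomorphic, because precomposition by a holomorphic map preserves the Cauchy--Riemann equation $dH \circ j = J \circ dH$, and it satisfies $H(p_0) = x_0$, $H(p_1) = x_1$. Hence $x_0 \sim x_1$ holds \emph{directly}, for every ordered pair.

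To conclude, I note that the constant sphere at a point is pseudoholomorphic, so $x \sim x$, and by the previous paragraph every ordered pair of points is already related by the generating relation. Thus, for $X$ nonempty, all points lie in a single class of the generated equivalence relation, $\pi_0^{\mathbb{C}P^1}(X)$ is a one-point set, and $JH_0^{\mathbb{C}P^1}(pt, X) = \mathbb{Z}[\mathrm{pt}] = \mathbb{Z}$.

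The main obstacle I anticipate is definitional rather than geometric: one must match the abstract sheaf-theoretic definition of $JH_0$ to the concrete description as $\mathbb{Z}[\pi_0^{\mathbb{C}P^1}(X)]$, i.e. verify that the degree-zero homology of the complex defining $JH_\bullet^M$ is genuinely the free abelian group on $M$-path-components. This is exactly where the non-transitivity of the naive homotopy relation flagged in the introduction must be handled: $\pi_0^M$ is by construction a quotient by the equivalence relation \emph{generated} by $M$-homotopy, not by the raw relation. In the present situation, however, the hypothesis supplies a direct $M$-homotopy between every pair of points, so the generating relation is already total, and this subtlety does not bite; I only need to phrase the argument at the level of the generated equivalence relation to keep the reduction clean.
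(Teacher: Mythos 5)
Your proposal is correct and follows essentially the same route as the paper: the paper also reduces to $JC_0^{\mathbb{C}P^1}(pt,X)=\mathbb{Z}[X]$ and then, given a sphere $f$ through $p$ and $q$, precomposes with an automorphism $\gamma$ of $\mathbb{C}P^1$ sending the marked points to $a,b$ so that $d(f\circ\gamma)=p-q$. Your reformulation via $\mathbb{Z}[\pi_0^{\mathbb{C}P^1}(X)]$ is only a repackaging of the same computation, and your attention to the generated-versus-raw equivalence relation is a harmless extra precaution that the direct argument renders unnecessary.
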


\vspace*{0.1in}
\noindent
\textbf{3.} We define homotopy between pseudoholomorphic maps in Section $\ref{homequivpseud}$. This homotopy equivalence results in equivalence relation.

If $X$, $W$ are homotopy equivalent (in sense of Section $\ref{homequivpseud}$), then $JH^M_{\bullet}(Y, X) = JH^M_{\bullet}(Y,W)$ for any $Y$ and $M$.

If $Y$, $Z$ are homotopy equivalent (in sense of Section $\ref{homequivpseud}$), then $JH^M_{\bullet}(Y, X) = JH^M_{\bullet}(Z, X)$ for any $X$ and $M$.

This can help to translate some geometric properties . For example, if $X, W$ are homotopy equivalent (in sense of Section $\ref{homequivpseud}$) Kahler manifolds and $X$ is algebraic, then $W$ is also algebraic. This follows from Lemma $\ref{cohomkahler}$.

\vspace*{0.1in}
\noindent
\textbf{4.} (see Section $\ref{cancelpseud}$)  If there exists a pseudoholomorphic map $\gamma: \;  M \times M \rightarrow M$ such that
\begin{equation*}
\gamma(p_0 \times M) = p_0, \quad  \gamma(p_1 \times M) = id_M,
\end{equation*}
then we have following isomorphisms
\begin{equation*}
JH_{\bullet}^M(Y, X \times M) = JH_{\bullet}^M(Y, X), \quad JH_{\bullet}^M(Y \times M, X) = JH_{\bullet}^M(Y, X).
\end{equation*}

\vspace*{0.08in}
\noindent
\textbf{5.} Let $X$ be a smooth algebraic variety (or Kahler manifold). Then, cohomology groups $JH_{\bullet}^M(Y, X)$ define new invariants of algebraic varieties (Kahler manifolds).

\vspace*{0.08in}
\noindent
\textbf{6.} Cohomology groups $JH^M_{\bullet}(Y, X)$ depend on almost complex structures of $X, Y$ and $M$. On the other hand, we mentioned that homotopy equivalent (in sense of Section $\ref{homequivpseud}$) manifolds define the same cohomology groups.

\vspace{0.08in}
\noindent
\textbf{Question.} How do groups $JH_{\bullet}^M(Y, X)$ depend on almost complex structures? Let $X_0 = (X, \omega_X, J_0)$, $X_1 = (X, \omega_X, J_1)$, where $J_0, J_1$ are $\omega_X$-compatible. When are manifolds $X_0$, $X_1$ are homotopy equivalent (in sense of Section $\ref{homequivpseud}$).

\subsection{Questions and applications of motivic symplectic groups $SH_{\bullet}^M(Y, X)$}\label{question2}

Let $X$, $Y$, $Z$, $W$ be arbitrary symplectic manifolds. Let $M = (M, \omega_M,  p_0, p_1)$ be an arbitrary connected symplectic manifold with two marked points. Here $M$ plays a role of the unit segment.

We assign abelian groups to any triple $X, Y, M$
\begin{equation*}
X, Y, M \; \rightarrow \; SH_{n}^M(Y, X), \;\; \text{where $n \geqslant 0$}.
\end{equation*}

Let $f: Y \rightarrow Z$ and $g: X \rightarrow W$ be symplectic maps. These maps induce homomorphisms between cohomology groups
\begin{equation*}
f^{*}: SH_{\bullet}^M(Z, X) \rightarrow SH_{\bullet}^M(Y, X), \quad g_{*}: SH_{\bullet}^M(Y, X) \rightarrow SH_{\bullet}^M(Y, W).
\end{equation*}

\noindent
\textbf{1.} Consider $f_t: Y \rightarrow Z$ and $g_t: X \rightarrow W$, where $f_t$, $g_t$ are symplectic for any $t$. Then, we have
\begin{equation*}
f_0^{*} = f_1^{*}, \quad (g_0)_{*} = (g_1)_{*}.
\end{equation*}
In other words, if $f_0$, $f_1$ and $g_0$, $g_1$ are symplectically isotopic, then these maps induce equal maps of cohomology groups (see Theorem $\ref{deformtheorem}$).

Seidel in $\cite{Seidel1}$ and $\cite{Seidel2}$ constructed symplectomorphisms on four-manifolds that are smoothly, but not symplectically, isotopic to the identity. Our new cohomology groups probably provides a new method to find similar examples in any dimension.

Consider symplectomorphisms $g_0, g_1: X \rightarrow X$. Assume that $g_0$ and $g_1$ are smoothly isotopic. For any $Y$ and $M$ we have the isomorphisms
\begin{equation*}
(g_0)_{*}, \; (g_1)_{*}: \; SH_{\bullet}^M(Y, X) \rightarrow SH_{\bullet}^M(Y, X).
\end{equation*}
If $(g_0)_{*} \neq (g_1)_{*}$, then $g_0$ and $g_1$ are not symplectically isotopic.

The problem is that we do not have enough tools yet to compute these cohomology groups and compare the isomorphisms.

\begin{conjecture}
Let $g_0, g_1: X \rightarrow X$ be symplectomorphisms. If $g_0$, $g_1$ are smoothly isotopic and are not symplectically isotopic, then there exist $Y$ and $M$ such that $(g_0)_{*} \neq (g_1)_{*}$.
\end{conjecture}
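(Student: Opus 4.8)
The plan is to reduce the conjecture---a partial converse to Theorem~\ref{deformtheorem}---to a single ``universal'' comparison and then to argue that the groups $SH_\bullet^M$ are rich enough to detect the symplectic isotopy class. Theorem~\ref{deformtheorem} gives the easy implication (symplectic isotopy $\Rightarrow (g_0)_\ast=(g_1)_\ast$); here we must manufacture, from the \emph{failure} of symplectic isotopy, an explicit witness $(Y,M)$ on which the induced maps disagree. The guiding principle is Yoneda-style: test $g_\ast$ against the class of the identity.

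First I would fix the interval object. I would choose $M$ so that an $M$-homotopy between two symplectic maps $a,b\colon Y\to X$ in $Sh_{\mathcal{S}ymp}$ is the same datum as a symplectic isotopy from $a$ to $b$; concretely one takes $M$ to be a symplectic cylinder (or disk) with $p_0,p_1$ at its two ends, so that a sheaf morphism $H$ out of the appropriate product, restricting to $a$ over $p_0$ and to $b$ over $p_1$, is exactly a path of symplectic maps. With this choice the degree-zero group is the free abelian group on $M$-homotopy classes, $SH_0^M(Y,X)=\mathbb{Z}\bigl[\pi_0\,\underline{\mathrm{Map}}(Y,X)\bigr]$, and $g_\ast$ acts on generators by post-composition, $[h]\mapsto[g\circ h]$.

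Second, I would specialize to the universal test object $Y=X$ and track the class of the identity. Since $(g_i)_\ast[\mathrm{id}_X]=[g_i\circ \mathrm{id}_X]=[g_i]$, equality $(g_0)_\ast=(g_1)_\ast$ on $SH_0^M(X,X)$ forces $[g_0]=[g_1]$, i.e.\ that $g_0$ and $g_1$ are $M$-homotopic. Thus it suffices to prove the single statement: if $g_0,g_1$ are \emph{not} symplectically isotopic, then for the chosen $M$ they are not $M$-homotopic, so that $[g_0]\neq[g_1]$ and already $(g_0)_\ast[\mathrm{id}_X]\neq(g_1)_\ast[\mathrm{id}_X]$. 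The hypothesis that $g_0,g_1$ are smoothly isotopic is exactly what lets us discard the ``wrong'' $\pi_0$ of the diffeomorphism group and isolate the purely symplectic obstruction. Here $Y=\mathrm{pt}$ is too weak, since $SH_\bullet^M(\mathrm{pt},X)$ records only the action of $g_i$ on the $M$-homology of $X$ itself, on which homotopic---in particular smoothly isotopic---maps can easily agree; the content of the argument is that enlarging $Y$ to $X$ and testing against $\mathrm{id}_X$ captures the full isotopy class.

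The hard part will be the descent/rigidity step hidden in the reduction above: one must show that an $M$-homotopy taken in the sheaf category $Sh_{\mathcal{S}ymp}$---where morphisms are far more general than honest symplectic maps of manifolds---between two genuine symplectomorphisms can always be realized by an honest symplectic isotopy, so that passing to sheaves creates no spurious homotopy collapsing $[g_0]$ and $[g_1]$. I expect this to require a representability/approximation result identifying $\pi_0\,\underline{\mathrm{Map}}(X,X)$ (for the cylinder $M$) with $\pi_0\,\mathrm{Symp}(X)$, after which the conjecture becomes the tautology that symplectically non-isotopic maps lie in distinct components. The genuine obstacle is therefore analytic rather than formal: establishing this identification, or at least the one-sided injectivity of genuine-isotopy classes into $M$-homotopy classes, is precisely the computation the paper flags as currently out of reach, and it is where Gromov compactness and a careful control of sheaf morphisms out of products with $M$ would have to enter.
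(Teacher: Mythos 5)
This statement is left as a conjecture in the paper --- the author explicitly notes that the tools to compute $SH_{\bullet}^M$ and compare the induced isomorphisms are not yet available --- so there is no proof to compare against, and your proposal does not close the gap either. Your reduction is sound as far as it goes: taking $Y=X$ and evaluating on the class of $\mathrm{id}_X$ in $SH_0^M(X,X)$, one gets $(g_i)_{*}[\mathrm{id}_X]=[g_i]$, so the conjecture follows once one shows that $g_0$ and $g_1$ represent distinct classes in degree zero. This is exactly the contrapositive of the mechanism in Lemma~\ref{sympapplemma}, and it correctly identifies where the difficulty lives. But the step you flag as ``the hard part'' is not a technical descent lemma to be supplied later; it is the entire content of the conjecture, and as stated it is likely false in the form you propose.

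Concretely, two points fail. First, there is no choice of $M$ for which an $M$-homotopy ``is the same datum as a symplectic isotopy.'' A homotopy is an element $H\in SC(Y\times M,X)=\mathbb{Z}[SCor(Y\times M,X)]$, i.e.\ a formal integer combination of multivalued symplectic correspondences (locally unordered tuples of maps whose pullbacks of $\omega_X$ sum to $\omega_{Y\times M}$), and the paper's own proofs of symmetry and transitivity essentially use such combinations ($H_1-H_2\circ(\mathrm{id}\times\gamma)+F_3\vee I$). So $\pi_0$ of this relation is a priori far coarser than $\pi_0\,\mathrm{Symp}(X)$, and $SH_0^M(X,X)$ is a quotient of $\mathbb{Z}[SCor(X,X)]$ by the image of $d$, not $\mathbb{Z}[\pi_0\,\mathrm{Symp}(X)]$; two honest symplectomorphisms can in principle be identified through chains of correspondences that never pass through genuine isotopies. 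Second, even granting the reduction, you give no mechanism --- Gromov compactness is invoked only as a placeholder --- for why smoothly isotopic but symplectically non-isotopic maps must remain separated after this quotient. Establishing injectivity of $\pi_0\,\mathrm{Symp}(X)\to SH_0^M(X,X)$ (for some $Y$, here $Y=X$, and some $M$) is precisely the open computation; your argument restates the conjecture in this equivalent form rather than proving it.
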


\noindent
\textbf{2} We define homotopy between symplectic embeddings in Section $\ref{homequivsymp}$. This homotopy equivalence generalizes the standard symplectic isotopy and results in equivalence relation. So, if two maps are symplectically isotopic, then they are hmotopic in sense of Section $\ref{homequivsymp}$ (see Theorem $\ref{deformtheorem}$).

\vspace{0.08in}
\noindent
\textbf{3.} The question of whether there is a unique smooth symplectic representative in $\mathbb{C}P^2$ of each homology class up to symplectic isotopy is known as the symplectic isotopy problem. This problem is still open and was studied by many people (see $\cite{Gromov2}$, $\cite{Sikorav}$, $\cite{Tian}$, $\cite{Starkston}$).

Let $D_1$, $D_2$ be disks, or polydisks, or similar manifolds. Assume that $dim(D_1) = dim(D_2)$. It is very hard to study the space of symplectic embeddings of $D_1$ into $D_2$. In general, it is not even known if the spaces of symplectic embeddings are connected.

Any symplectic embedding $Y \rightarrow X$ represents an element of $SH^M_{0}(Y, X)$.

\begin{lemma}\label{sympapplemma}(see Section $\ref{computations}$)
Consider arbitrary symplectic embeddings $f, g: Y \rightarrow X$. Suppose that for some $n$ there exists symplectic embedding of $M$ into $X^n$. If $f$, $g$ represent different elements in $SH_0^M(Y, X)$, then images $f(Y)$, $g(Y)$ are not symplectically isotopic through embeddings.

In particular, if some $f$ and $g$ represent different elements in $SH_0^M(D_1, D_2)$, then the space of symplectic embeddings is not connected.

\end{lemma}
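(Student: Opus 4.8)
The plan is to prove the contrapositive and let homotopy invariance of $SH_0^M$ do the work. Concretely, I would assume that the images $f(Y)$ and $g(Y)$ are symplectically isotopic through embeddings, i.e. that $f$ and $g$ lie in the same path component of the space of symplectic embeddings $Y \hookrightarrow X$, and show that under the hypothesis that $M$ embeds symplectically into $X^n$ for some $n$, the maps $f$ and $g$ represent the same element of $SH_0^M(Y,X)$. This contradicts the assumption $[f] \ne [g]$ and yields the statement. The ``in particular'' clause is then the special case $Y = D_1$, $X = D_2$: if the space of symplectic embeddings were connected, any two embeddings would be isotopic through embeddings, forcing $[f] = [g]$ for all $f,g$, so the existence of $f,g$ with $[f]\ne[g]$ obstructs connectedness.

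First I would record the logical skeleton. By the discussion preceding the lemma (point \textbf{2} of Section \ref{question2}) together with Theorem \ref{deformtheorem}, a symplectic isotopy between two embeddings is a homotopy in the sense of Section \ref{homequivsymp}, and homotopic morphisms induce the same map on $SH_\bullet^M$; in degree zero this means that $M$-homotopic embeddings represent the same class in $SH_0^M(Y,X)$. Thus the entire problem reduces to producing, from the given smooth $[0,1]$-family of symplectic embeddings $\psi_t \colon Y \hookrightarrow X$ with $\psi_0 = f$ and $\psi_1(Y) = g(Y)$, a genuine $M$-homotopy: a morphism in $Sh_{\mathcal{S}ymp}$ over $Y \times M$ whose restrictions to the marked slices $Y \times \{p_0\}$ and $Y \times \{p_1\}$ recover $f$ and $g$.

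The main obstacle, and the step where the hypothesis on $M$ is essential, is exactly this conversion of a $[0,1]$-parametrized isotopy into an $M$-parametrized morphism of the category. The interval $[0,1]$ is not an object of $\mathcal{S}ymp$, whereas the homotopies of Section \ref{homequivsymp} are indexed by $M$; the bridge is the symplectic embedding $\iota \colon M \hookrightarrow X^n$. Choosing a path in $M$ from $p_0$ to $p_1$ (possible since $M$ is connected) and reparametrizing, I would use $\iota$ to realize the family $\{\psi_t\}$ as a single section over $Y \times M$ in the sheaf category, landing after the construction in a suitable power of $X$ (of the order of $X^{n+1}$, which is dimensionally consistent since $\dim(Y\times M)\le(n+1)\dim X$), and possibly expressed through a correspondence as in the ambient notation. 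Verifying that this assignment is symplectic, lands in the correct object, and is compatible with the defining relations of $SH_0^M$ is the technical heart of the argument; this is where the existentially quantified $n$ and the single chosen $\iota$ are consumed, and nowhere else.

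Two points will require care. The first is that ``isotopic images'' only gives $\psi_1 = g \circ \rho$ for a reparametrizing symplectomorphism $\rho$ of $Y$ rather than $\psi_1 = g$ on the nose; I would either arrange the isotopy to fix the parametrization at the endpoints, or check that precomposition by a symplectomorphism of $Y$ isotopic to the identity leaves the class in $SH_0^M(Y,X)$ unchanged, which again follows from Theorem \ref{deformtheorem}. The second is checking that the $M$-homotopy so produced is admissible in the precise sense of Section \ref{homequivsymp}, so that homotopy invariance genuinely applies. With the $M$-homotopy in hand, homotopy invariance gives $[f] = [g]$, which completes the contrapositive and hence the lemma.
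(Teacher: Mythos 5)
Your proposal is correct and follows essentially the same route as the paper: the paper also argues by contraposition, noting that the embedding $M \hookrightarrow X^n$ gives $SCor(M,X) \neq \emptyset$ (via Example $\ref{prodembedd}$), invoking Theorem $\ref{deformtheorem}$ to turn a symplectic isotopy between $f$ and $g$ into an $M$-homotopy $H \in SC(Y \times M, X)$, and concluding $dH = f - g$, hence $[f] = [g]$ in $SH_0^M(Y,X)$. The only differences are that the paper simply cites Theorem $\ref{deformtheorem}$ rather than re-sketching the construction of $H$ (which uses a function $M \to [0,1]$ equal to $0$ near $p_0$ and $1$ near $p_1$, not a path in $M$), and that it quietly treats the hypothesis as a parametrized isotopy $\varphi_t$ with $\varphi_0 = f$, $\varphi_1 = g$, ignoring the reparametrization subtlety you flag.
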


This lemma shows that as soon as we have enough tools to compute the cohomology groups (like in algebraic topology), we will be able to study the space of symplectic embeddings.

\vspace{0.1in}
\noindent
\textbf{4.} (see Section $\ref{cancelsymp}$ and Example $\ref{prodembedd}$)  Suppose that there is a symplectic map
\begin{equation*}
\gamma: M \times M \rightarrow M \times M, \;\;\;\; \gamma(p_1 \times q) = (q \times p_0), \;\; \forall \; q\in M.
\end{equation*}
If $\gamma$ is symplectically isotopic to the identity and $dim(X), dim(Y) > 0$, then we have the following isomorphisms
\begin{equation*}
SH_{\bullet}^M(Y, X \times M) = SH_{\bullet}^M(Y, X), \quad SH_{\bullet}^M(Y \times M, X) = SH_{\bullet}^M(Y, X).
\end{equation*}
In fact, $M$ needs to satisfy one more technical condition, but we skip it. The reader can find all details in Section $\ref{cancelsymp}$.

\vspace{0.1in}
\noindent
\textbf{5.} The groups $SH^M_{\bullet}(Y, X)$ may be used to measure complexity of symplectomorphisms. Let $f: X \rightarrow X$ be a symplectomorphism. Then, we have the isomorphisms $f_{*}$ of cohomology groups. The complexity of $f$ may be measured by complexity of the isomorphism $f_{*}$. As we mentioned, if $f$ is symplecically isotopic to the identity, then $f_{*} = (id_X)_{*}$.

\vspace*{0.1in}
\noindent
\textbf{6.} Let $X_1 = (X, \omega_1)$, $X_2 = (X, \omega_2)$ be symplectic manifolds, where $\omega_1, \omega_2$ are connected by a path of symplectic forms.

\begin{conjecture}
We can choose some $Y$, $M$ to distinguish  $X_1$ and $X_2$ using $SH_{\bullet}^M(Y, X_1)$ and $SH_{\bullet}^M(Y, X_2)$.
\end{conjecture}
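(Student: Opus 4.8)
The plan is to exploit the embedding-theoretic content of the degree-zero groups together with functoriality, reducing the cohomological distinction between $X_1$ and $X_2$ to a purely geometric one governed by symplectic capacities. Recall that any symplectic embedding $Y \to X$ represents a class in $SH_0^M(Y,X)$, and that a symplectomorphism $X_1 \to X_2$ would induce isomorphisms $SH_\bullet^M(Y, X_1) \cong SH_\bullet^M(Y, X_2)$ for all $Y$ and $M$, since $g_\ast$ is functorial and invertible. Hence it suffices to produce a single pair $(Y, M)$ for which the two groups are non-isomorphic; this will simultaneously distinguish $X_1$ from $X_2$ and certify that they are not symplectomorphic, even though they are deformation equivalent through the given path of symplectic forms. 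Of course the statement has content only when $X_1$ and $X_2$ are genuinely non-isomorphic, which is the case of interest.

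First I would take the test object $Y$ to be a standard symplectic ball $B^{2n}(r)$, or an ellipsoid $E(a_1,\dots,a_n)$, since for such $Y$ the existence of a symplectic embedding $Y \to X_i$ is controlled by well-understood invariants: the Gromov width and, in dimension four, the full sequence of ECH capacities. A clean model path is the scaling family $\omega_t = (1 + t(\lambda-1))\,\omega_1$, for which $X_2 = (X, \lambda\omega_1)$; here the Gromov widths of $X_1$ and $X_2$ differ whenever $\lambda \neq 1$, while the two forms remain joined by a path of symplectic forms. Fixing $r$ strictly between the two widths then produces a ball $Y = B^{2n}(r)$ that admits a symplectic embedding into exactly one of $X_1$, $X_2$.

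The second step is to show that this geometric asymmetry survives passage to cohomology. The embedding $\iota\colon Y \to X_1$ gives a class $[\iota] \in SH_0^M(Y, X_1)$, whereas $SH_0^M(Y, X_2)$ receives no class from any embedding. I would argue that $[\iota]$ is nonzero, using the principle that the class of a genuine embedding cannot be annihilated by the $M$-homotopy relation without an ambient symplectic isotopy, which the capacity obstruction forbids, and that the group $SH_0^M(Y, X_2)$ is correspondingly smaller. The delicate choice is the interval object $M$: the relation it induces must be fine enough not to collapse the capacity distinction, so I would take $M$ as rigid as the construction permits, for instance one for which every $M$-homotopy of embeddings restricts to an honest symplectic isotopy, and prove that symplectic capacities are constant along $M$-homotopies and hence descend to invariants of the homotopy quotient computing $SH_0^M$.

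The main obstacle is twofold. Conceptually, one must prove that the $M$-homotopy relation is compatible with capacities, so that the reduction of the cohomological question to an embedding question is faithful and is not destroyed by the sheafification and homotopy localization built into $SH_\bullet^M$; this is precisely where the functoriality and homotopy invariance established in Sections \ref{symphomgroup} and \ref{homequivalence2} are the essential input. Computationally, and this is why the statement is posed as a conjecture rather than a theorem, there is at present no machinery to evaluate $SH_\bullet^M(Y, X_i)$ on concrete examples. Completing the argument therefore depends on first developing the exact sequences and spectral sequences advertised in Section 2, which would let one compute these groups from Gromov--Witten data and capacity thresholds; at that point the capacity inequality for $X_1$ and $X_2$ translates directly into the desired inequality $SH_0^M(Y, X_1) \not\cong SH_0^M(Y, X_2)$.
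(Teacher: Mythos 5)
This statement is stated in the paper as a conjecture and the paper offers no proof of it, so there is nothing to compare your proposal against; it can only be judged on its own merits, and on those merits it is not a proof but a strategy sketch with a fatal gap. The central problem is that you conflate symplectic \emph{embeddings} with symplectic \emph{correspondences}. The group $SH_0^M(Y,X)$ is computed from $SC(Y,X)=\mathbb{Z}[SCor(Y,X)]$, and a correspondence only requires $f_1^{*}\omega_X+\ldots+f_n^{*}\omega_X=\omega_Y$ for an unordered collection of maps; the form may be split among several $f_i$. Consequently the capacity obstruction that prevents a large ball $Y=B^{2n}(r)$ from embedding into $X_2$ does \emph{not} prevent $Y$ from admitting correspondences into $X_2$: by Lemma \ref{contremb} (via Gromov's theorem), for contractible $Y$ any smooth embedding $Y\rightarrow X_2^{\,n}$ yields $SCor(Y,X_2)\neq\emptyset$, regardless of Gromov width. (The paper's example of $f_1(z)=f_2(z)=z/\sqrt{2}$, a correspondence from a disk to itself of total order two, shows exactly how the "too big to embed" asymmetry evaporates at the chain level.) So your claimed asymmetry --- $SH_0^M(Y,X_1)$ receives an embedding class while $SH_0^M(Y,X_2)$ "receives no class" --- does not give any asymmetry between the groups themselves: both chain groups are large, and you would still have to show the resulting homology groups are non-isomorphic, which is precisely the conjecture.

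A second, independent gap is your proposed choice of a "rigid" interval object $M$ for which every $M$-homotopy of embeddings restricts to an honest symplectic isotopy. No such $M$ is constructed, and the design of the theory works against its existence: $M$-homotopies are arbitrary elements of $SC(Y\times M,X)$, i.e.\ formal $\mathbb{Z}$-linear combinations of correspondences, and already the proofs of reflexivity and transitivity require homotopies of the form $F\vee I$ and $H_1-H_2\circ(id_Y\times\gamma)+F_3\vee I$, which are nothing like isotopies. The paper proves only the one-way implication (symplectic isotopy $\Rightarrow$ $M$-homotopy, Theorem \ref{deformtheorem}); your argument needs a converse, or at least needs capacities to descend to $M$-homotopy classes, and neither is established nor plausible without substantial new ideas. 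Your own closing paragraph concedes that the required computation of $SH_\bullet^M(Y,X_i)$ cannot currently be carried out, which is an accurate self-assessment: what you have written is a research program, not a proof, and its first step already rests on an invariant (capacity of the target) that the correspondence formalism is specifically built to circumvent.
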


\section{Motivation}

\subsection{Theorem of Dold-Thom}\label{motivationdold}

Let $(X, e)$ be some nice based topological space. Denote by $SP^k(X, e)$ the symmetric product, i.e.

\begin{equation*}
SP^{k}(X, e) = X^k/\Sigma_k = (\underbrace{X\times \ldots \times X}_k)/\Sigma_k,
\end{equation*}
where $\Sigma_k$ is the symmetric group acting on $X^k$ by permuting the factors. We denote elements  $a \in SP^{k}(X,e)$ by unordered set $a = \{p_1,\ldots,p_k\}$. We have the following embedding
\begin{equation*}
SP^{k}(X,e) \hookrightarrow SP^{k+1}(X,e), \;\;\; \{p_1,\ldots, p_k\} \hookrightarrow \{e, p_1,\ldots, p_k\}
\end{equation*}
So, we get a sequence of embeddings
\begin{equation*}
SP^0(X,e) = e \in X=SP^1(X,e) \hookrightarrow SP^2(X,e) \hookrightarrow \ldots
\end{equation*}
We define the infinite symmetric product
\begin{equation*}
SP(X,e) = \bigcup_{k=0}^{\infty}SP^{k}(X,e) = colim(SP^k(X,e)),
\end{equation*}
where a set $V \subset SP(X,e)$ is closed if and only if $V \cap SP^k(X,e)$ is closed for any $k$. In other words, points of $SP(X,e)$ are infinite unordered sets $\{\ldots,e,p_1,\ldots,p_k,e\ldots\}$, where only finitely many terms are not the basepont $e$.

\vspace{0.1in}

\begin{remark}
For simplicity, we write $\{p_1,\ldots,p_k\}$ instead of $\{\ldots,e,p_1,\ldots,p_k, e,\ldots\}$, where $p_j \neq e$ for any $j=1,\ldots,k$.
\end{remark}

\vspace{0.07in}

The infinite symmetric product is commutative, associative monoid with unity (a group without inverses). Indeed, we define
\begin{equation*}
\begin{gathered}
\{p_1,\ldots, p_r\} + \{q_1,\ldots,q_s\} = \{p_1,\ldots,p_r,q_1,\ldots,q_s\},\\
\{e\} + \{p_1,\ldots, p_r\} = \{p_1,\ldots, p_r\}.
\end{gathered}
\end{equation*}
Let $\Delta^n$ be the standard $n-$simplex. Consider a monoid
\begin{equation*}
Map(\Delta^n, SP(X,e)),
\end{equation*}
where $Map$ stands for all continuous maps. Since there are $n+1$ embeddings  $i_k: \Delta^{n-1} \hookrightarrow \Delta^n$ as facets, we get $n+1$ induced maps (restrictions)
\begin{equation*}
\begin{gathered}
\partial_k: Map(\Delta^n, SP(X,e)) \rightarrow Map(\Delta^{n-1}, SP(X,e)), \\
 \partial_k =f\circ i_k, \;\; f\in Map(\Delta^n, SP(X,e)).
\end{gathered}
\end{equation*}
Let $Map(\Delta^n, SP(X,e))^{+}$ be the Grothendieck group of the monoid $Map(\Delta^n, SP(X,e))$ (e formally add inverses respecting the structure of the monoid). Denote
\begin{equation*}
TC_n = Map(\Delta^n, SP(X,e))^{+}
\end{equation*}
If $f \in Map(\Delta^n, SP(X,e))$, then we denote its inverse in $TC_{n}$ by $-f$. We define the following map:
\begin{equation*}
d: TC_n \rightarrow TC_{n-1}, \;\;\; d = \sum\limits_{k=0}^{n}(-1)^k\partial_k.
\end{equation*}
Direct computations show that $d^2 = 0$ and we get a chain complex
\begin{equation*}
\rightarrow TC_{n+1} \rightarrow TC_n \rightarrow TC_{n-1} \rightarrow \ldots \rightarrow TC_0 \rightarrow 0.
\end{equation*}

\begin{example}
Consider $TC_0(X) = Map(\Delta^0, SP(X, e))^+$, where $\Delta^0$ is a point. We see that $TC_0$ is isomorphic to $\mathbb{Z}[X]/\mathbb{Z}[e]$, i.e. formal finite linear combinations of points of $X$ modulo multiples of $e$.
\end{example}

The theorem below was proved by Dold and Thom in $1958$.

\begin{theorem}(Dold-Thom, \cite{DoldThom})
Let $H(TC_{*}, d)$ be the homology groups of the complex $(TC_{*},d)$. There is an isomorphism between $H(TC_{*}, d)$ and the ordinary reduced singular homology groups $\widetilde{H}_{*}^{sing}(X, \mathbb{Z})$.
\end{theorem}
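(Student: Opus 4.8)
The plan is to recognize the complex $(TC_*, d)$ as the Moore complex of a simplicial abelian group, reduce its homology to the homotopy of the symmetric product, and then bring in the classical computation of $\pi_*(SP(X,e))$. First I would observe that $n \mapsto Map(\Delta^n, SP(X,e))$ is precisely the singular simplicial set $\mathrm{Sing}\,SP(X,e)$, and that the pointwise operation inherited from the topological abelian monoid $SP(X,e)$ makes it a simplicial abelian monoid. Passing to the Grothendieck group in each simplicial degree therefore produces a simplicial abelian group $TC_\bullet$ whose face operators are exactly the $\partial_k$ of the statement. The simplicial identities then give $d^2 = 0$ for free, and by the Dold--Kan correspondence the homology of the unnormalized Moore complex $(TC_*, d)$ computes the homotopy groups of the realized space:
\begin{equation*}
H_n(TC_*, d) \;\cong\; \pi_n\bigl(\lvert TC_\bullet \rvert\bigr).
\end{equation*}

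Next I would identify $\lvert TC_\bullet\rvert$ up to weak equivalence. Since geometric realization is a left adjoint it commutes with degreewise group completion, so $\lvert TC_\bullet\rvert \simeq \lvert \mathrm{Sing}\, SP(X,e)\rvert^{+}$, and the counit $\lvert\mathrm{Sing}\,Y\rvert \to Y$ is a weak equivalence of topological monoids. Applying the bar construction $B$ and then $\Omega$ (that is, group completion $\Omega B$) preserves this equivalence, giving $\pi_n(\lvert TC_\bullet\rvert) \cong \pi_n\bigl(SP(X,e)^{+}\bigr)$. For connected $X$ the monoid $SP(X,e)$ is itself connected, and the group-completion theorem of McDuff--Segal shows that $SP(X,e) \to SP(X,e)^{+}$ is a homology isomorphism between simple spaces, hence $\pi_n(SP(X,e)^{+}) \cong \pi_n(SP(X,e))$, removing the completion altogether in that case.

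Finally I would invoke the classical computation $\pi_n(SP(X,e)) \cong \widetilde{H}_n^{sing}(X;\mathbb{Z})$. If a self-contained argument is wanted, I would instead establish this by the original quasifibration method: $SP$ sends a cofibration $A \hookrightarrow X$ to a quasifibration $SP(X) \to SP(X/A)$ with fiber $SP(A)$, which yields the long exact sequence and excision, so that $X \mapsto \pi_*(SP(X,e))$ satisfies the axioms of a reduced homology theory; evaluating its coefficients on $S^0$ gives $\mathbb{Z}$ concentrated in degree $0$, and uniqueness of ordinary homology theories completes the identification. The main obstacle is precisely this last circle of ideas --- either the group-completion theorem used above or the quasifibration analysis underlying the classical statement --- since that is where the genuine topology, rather than formal simplicial algebra, resides. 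By contrast, the reduction of $(TC_*, d)$ to $\pi_*$ of a space is routine once the simplicial abelian group structure has been made explicit.
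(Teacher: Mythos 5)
The paper does not prove this statement: it is quoted as a classical theorem of Dold--Thom (1958) and used purely as motivation, so there is no internal proof to compare yours against. Judged on its own terms, your outline is a correct sketch of the standard modern argument, and you correctly locate where the genuine topology lives. The reduction of $(TC_*,d)$ to homotopy groups is indeed formal: $n\mapsto Map(\Delta^n, SP(X,e))$ is $\mathrm{Sing}\,SP(X,e)$ as a simplicial abelian monoid, degreewise Grothendieck completion gives a simplicial abelian group, and the Moore-complex homology of a simplicial abelian group is $\pi_*$ of its realization. Note also that the paper's own Remark after the theorem records $\widetilde{H}_*(X,\mathbb{Z})\cong \pi_*(SP(X,e))$ as a companion result of Dold--Thom, so your argument in effect reduces the chain-level statement to that homotopy-level statement; as a free-standing proof it is therefore only as complete as your treatment of the quasifibration step, which you explicitly defer.

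Two points deserve more care than your sketch gives them. First, the step $\lvert TC_\bullet\rvert \simeq \lvert \mathrm{Sing}\,SP(X,e)\rvert^{+}$: the ``left adjoints commute'' argument identifies the point-set object (the naive topological Grothendieck group), but it does not by itself guarantee that this naive group completion has the homotopy type of the homotopy-theoretic group completion $\Omega B$. That it does here is special to the situation: $SP(X,e)$ is a \emph{free} abelian monoid, so its naive completion is the free topological abelian group $AG(X)$, for which the identification $\pi_*\cong\widetilde{H}_*$ is itself the content of (the modern form of) Dold--Thom; alternatively one restricts to connected $X$, where a connected $H$-space is already grouplike. Second, the theorem as stated does not assume $X$ connected, and your appeal to McDuff--Segal only disposes of the completion in the connected case; for general $X$ you should simply work with the completed object throughout, where $\pi_0$ of the free abelian group on $\pi_0(X)$ modulo the basepoint component correctly recovers $\widetilde{H}_0(X,\mathbb{Z})$.
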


\begin{remark}
Dold and Thom also proved that $\widetilde{H}_{*}(X, \mathbb{Z})$ is isomorphic to $\pi_{*}(SP(X, e))$.
\end{remark}

\vspace{0.06in}

The complex of Dold-Thom has an alternative construction. Let $X$ be some nice topological space (not based). Consider
\begin{equation*}
\coprod_{k=1}SP^k(X),
\end{equation*}
where $\sqcup$ is the disjoint union. Note that $SP^0(X)$ is not included into the disjoint union.   We define the addition on $\coprod_{k=1}SP^k(X)$ as before
\begin{equation*}
\{p_1,\ldots, p_r\} + \{q_1,\ldots,q_s\} = \{p_1,\ldots,p_r,q_1,\ldots,q_s\}.
\end{equation*}
and get the addition on $Map(\Delta^n, \coprod_{k=1}SP^k(X))$. We say that an element of $f \in Map(\Delta^n, SP^n(X,e))$ is irreducible if
\begin{equation*}
f \neq f_1 + f_2, \quad f_1, f_2 \in Map(\Delta^n, \coprod_{k=1}SP^k(X)),
\end{equation*}
We denote the set of irreducible maps by $Ir(Map(\Delta^n, \coprod_{k=1}SP^k(X)))$. Then, we consider a group of finite formal linear combinations
\begin{equation*}
K_n(X) = \mathbb{Z}[Ir(Map(\Delta^n, \coprod_{k=1}SP^k(X)))].
\end{equation*}
In the same way, we can define a group
\begin{equation}\label{requiredfor}
K(Y, X) = \mathbb{Z}[Ir(Map(Y, \coprod_{k=1}SP^k(X)))],
\end{equation}
where $Y$ is a topological spaces. Indeed, the structure of $\Delta_n$ is not used to define $K_n$.

\begin{example}
Consider a map $f: \Delta^0 \rightarrow SP^k(X)$. Since $\Delta^0$ is a point, we get that $f(\Delta^0)$ is a point $\{x_1, \ldots, x_k\}$. Let $f_i: \Delta^0 \rightarrow X$ be a map such that $f_i(\Delta^0) = x_i$. Then,
\begin{equation*}
f = f_1 + \ldots + f_k.
\end{equation*}
This shows that $f$ is irreducible if and only if $k=1$. Therefore, $K_0$ is isomorphic to finite formal linear combinations of points of $X$ with integer coefficients.
\end{example}

We define the restriction $\partial_k$ and the differential $d$ by the same formulas (given above) and get a chain complex
\begin{equation*}
\rightarrow K_{n+1} \rightarrow K_n \rightarrow K_{n-1} \rightarrow \ldots \rightarrow K_0 \rightarrow 0
\end{equation*}

\begin{theorem}(Dold-Thom, \cite{DoldThom})
Let $H(K_{*}, d)$ be the homology groups of the complex $(K_{*},d)$. There is an isomorphism between $H(K_{*}, d)$ and the ordinary singular homology groups $H_{*}^{sing}(X, \mathbb{Z})$.
\end{theorem}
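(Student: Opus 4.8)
The plan is to deduce this from the first Dold--Thom theorem, $H(TC_*,d)\cong\widetilde H_*^{sing}(X,\mathbb Z)$, already recorded above, by comparing the two complexes directly. The first thing I would establish is a unique factorization lemma: for each $n$ the commutative monoid $\mathcal M_n := \bigsqcup_{k\geqslant 0} Map(\Delta^n, SP^k(X))$ (addition by concatenation of configurations, with $SP^0$ the unit) is \emph{free} on the set $Ir_n$ of irreducible maps. Existence of a factorization into irreducibles is immediate by induction on $k$, since a nontrivial decomposition strictly lowers the degree; the content is uniqueness. Granting this, the Grothendieck group of $\mathcal M_n$ is exactly $\mathbb Z[Ir_n]=K_n(X)$, so $K_*$ is the group completion of the family-monoid complex, and all the structure ($\partial_k$, $d$) is induced from $\mathcal M_*$.

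The second and central step is an algebraic comparison of the based and unbased pictures. A continuous map $\Delta^n\to SP(X,e)$ has compact image and therefore factors through some finite stage $SP^k(X)\subset SP(X,e)$; moreover two such lifts represent the same based map iff they agree after adjoining finitely many copies of the constant basepoint family $c_e\colon\Delta^n\to SP^1(X)=X$, $c_e\equiv e$. In $SP(X,e)$ one has $A\cup\{e\}=A$, so $c_e$ maps to the unit. Hence $Map(\Delta^n,SP(X,e))$ is the quotient monoid $\mathcal M_n/(x+c_e\sim x)$, and since group completion preserves such quotients it sends this to the quotient by the subgroup generated by $[c_e]$. Because $c_e$ is a single irreducible generator of $K_n=\mathbb Z[Ir_n]$, and because each facet restriction of $c_e$ is again the constant basepoint family, $\mathbb Z\cdot c_e^{(\bullet)}$ is a subcomplex of $K_*$ isomorphic to the simplicial chain complex of a point, and
\begin{equation*}
0 \longrightarrow C_*(pt) \longrightarrow K_*(X) \longrightarrow TC_*(X) \longrightarrow 0
\end{equation*}
is a short exact sequence of complexes.

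Finally I would run the associated long exact sequence in homology. Since $H_n(C_*(pt))=\mathbb Z$ for $n=0$ and $0$ otherwise, exactness gives $H_n(K_*)\cong H_n(TC_*)=\widetilde H_n^{sing}(X)=H_n^{sing}(X)$ for all $n\geqslant 2$. For degree $0$ I would compute $H_0(K_*)$ by hand: $K_0=\mathbb Z[X]$, and the image of $d$ on irreducible $1$-families (single continuous paths $\gamma\colon\Delta^1\to X$, with $d\gamma=\gamma(1)-\gamma(0)$) identifies $H_0(K_*)$ with $\mathbb Z[\pi_0(X)]=H_0^{sing}(X)$. The class $[c_e^{(0)}]=[e]$ is a nonzero, non-bounding $0$-cycle, so the map $\mathbb Z=H_0(C_*(pt))\to H_0(K_*)$ is injective; exactness of the five-term tail then forces the connecting homomorphism $H_1(TC_*)\to H_0(C_*(pt))$ to vanish, whence $H_1(K_*)\cong H_1(TC_*)=H_1^{sing}(X)$ as well. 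Assembling these gives $H(K_*,d)\cong H_*^{sing}(X,\mathbb Z)$, the reduced-versus-unreduced discrepancy being precisely the copy of $C_*(pt)$ split off by the sequence above.

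The main obstacle is the factorization lemma of the first paragraph, i.e. that continuous families of configurations over $\Delta^n$ decompose uniquely into irreducibles. The geometry is genuinely present here: over a one-dimensional base a family always splits into continuous points, but over higher simplices irreducible families of degree $>1$ occur --- for instance the family $z^2-t$ of root-pairs over a $2$-disk in $\mathbb C$ admits no continuous splitting, since that would furnish a continuous square root. I would prove the lemma by pulling back the tautological branched cover of $SP^k(X)$ along $f$, decomposing the resulting total space over $\Delta^n$ into connected components, and checking that the multiplicity is locally constant on each component, so that every component carries a well-defined irreducible subfamily; uniqueness then follows because the components are intrinsic to $f$. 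Naturality of this decomposition under the face maps $i_k\colon\Delta^{n-1}\hookrightarrow\Delta^n$ is what makes the displayed sequence one of chain complexes, and this compatibility must be verified alongside the lemma.
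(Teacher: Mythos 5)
Your reduction to the first Dold--Thom theorem via the sequence $0 \to C_*(pt) \to K_* \to TC_* \to 0$ is an attractive architecture, and you correctly isolate where all the risk sits: the claim that $\mathcal{M}_n = \bigsqcup_k Map(\Delta^n, SP^k(X))$ is a \emph{free} commutative monoid on its irreducibles, so that its Grothendieck group is $\mathbb{Z}[Ir_n] = K_n(X)$. Unfortunately that lemma is false, and the paper itself contains the counterexample (in the discussion of $C^0(Y,X^n)/\Sigma_n$): over $\Delta^1$, the family $f(t) = \{t - \tfrac12,\, \tfrac12 - t\}$ admits the two factorizations $\{t-\tfrac12\} + \{\tfrac12 - t\}$ and $\{|t-\tfrac12|\} + \{-|t-\tfrac12|\}$ into degree-one (hence irreducible) summands, and these are distinct multisets of maps. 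Consequently $\mathcal{M}_1^{+}$ satisfies a relation that $\mathbb{Z}[Ir_1]$ does not, the two groups are not isomorphic, and your middle term is $\mathcal{M}_*^{+}$ rather than $K_*$. Your proposed proof of the lemma fails on exactly this example: the pulled-back total space (two crossing line segments) is connected, the multiplicity is not locally constant across the crossing, and the connected component does not single out a preferred irreducible subfamily. Freeness of this kind does hold in the algebraic setting (finite correspondences decompose uniquely into irreducible components), which is why the Suslin--Voevodsky formalism works there; for merely continuous families it breaks down, and this is precisely the subtlety the paper is gesturing at with that example.

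For what it is worth, the paper offers no proof of this statement --- it is quoted from Dold--Thom, whose actual argument runs through quasifibrations and the identification of $\pi_*(SP(X,e))$ with reduced homology rather than through any chain-level comparison --- so there is no route in the paper to measure yours against. The second half of your argument (compactness forcing factorization through a finite stage, group completion commuting with the quotient by $x + c_e \sim x$, the long exact sequence, and the degree $0$ and $1$ analysis) is sound as conditional reasoning, but it is conditioned on an identification of $K_*$ with a group completion that does not hold. To salvage the approach you would either need to work with $\mathcal{M}_*^{+}$ throughout and then separately relate $\mathcal{M}_*^{+}$ to $\mathbb{Z}[Ir_*]$ (which would require showing the extra relations are killed in homology, a nontrivial statement in its own right), or abandon the irreducible-generator model and argue directly.
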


We have the following question:

\begin{center}
Assume that spaces $X, Y$ have some extra structures (symplectic manifolds, Riemannian manifold, complex manifolds...) and consider the complexes defined above, where maps respect their extra structures. Can we use the ideas discussed above to  define cohomology theory for spaces with extra?
\end{center}

Suslin, Morel and Voevodskiy used the complex, defined above, to construct Suslin homology and motivic homotopy theory of algebraic varieties.

\subsection{Motivic cohomology groups of algebraic varieties}

The reader may skip this section. In this section we give semi-true facts about motivic cohomology groups, but show how the ideas explained in the previous section can be used to define new invariants of algebraic varieties.
\\

Let $X$ be an algebraic variety. We define an algebraic $n-$simplex
\begin{equation*}
\Delta^n_{alg} = \{(z_0, \ldots, z_{n}) \in \mathbb{C}^{n+1} \; | \; z_0 + \ldots + z_{n} = 1\}
\end{equation*}
We define embedding of $(n-1)-$simplex into $n-$simplex
\begin{equation*}
i_k(\Delta^{n-1}) = \Delta^n \cap \{z_k = 0\}, \;\;\; k=0, \ldots, n.
\end{equation*}

Let us note that $SP^N(X)$ is also an algebraic variety and assume that we know how to define infinite symmetric product $SP(X)$. Let $Map_{alg}(\cdot, \cdot)$ be the set of algebraic maps between algebraic varieties. Arguing as in the previous section, we note that $Map_{alg}(\Delta^n, SP(X))$ is a monoid. We consider the Grothendieck group and a chain complex
\begin{equation*}
\begin{gathered}
C_n = Map_{alg}(\Delta^n, SP(X))^{+}, \\
d: C_n \rightarrow C_{n-1}, \;\;\; d = \sum\limits_{k=1}^{n+1} (-1)^k\partial_k, \;\;\; \partial_k(f) = f \circ i_k, \;\;\; d^2 = 0.
\end{gathered}
\end{equation*}
The homology groups $H^{sus}_{*}(X) = H(C_{*}(X), d)$ are called the Suslin homology of $X$.

\begin{example}
Let $S_g$ be a Riemann surface of genus $g$ and $J(S_g)$ be its Jacobian. We have
\begin{equation*}
H_0^{sus}(S_g) = \mathbb{Z} \times J(S_g).
\end{equation*}
In particular, if $T$ is $2-$torus, then $H_0^{sus}(T) = \mathbb{Z} \times T$.
\end{example}

Let $Y$ be an algebraic variety and $U \subset Y$ be a Zarisky open subset. As in the previous section, we get a presheaf of chain complexes
\begin{equation*}
U \rightarrow Map_{alg}^{+}(\Delta^{*} \times U, \; SP(X)).
\end{equation*}
This presheaf is a sheaf (we do not discuss topology) and we get hypercohomology groups of this sheaf. As a result, we get that

\begin{center}
Any variety $X$ assign hepercohomology groups to all varieties $Y$. This assignment is called the motive of $X$.
\end{center}

Many nontrivial algebraic invariants are hidden in these hypercohomology groups.

\subsection{Some problems to define symplectic motivic cohomology groups}

Let $X$ be a symplectic manifold. We can not mimic algebraic construction of motivic cohomology groups and define symplectic analogue. There are the following problems:

\vspace{0.1in}
\noindent
1. The symmetric product $SP^N(X)$ is not a smooth manifold. Singularities in algebraic geometry come in a natural way, but not in symplectic geometry. To be able to study symplectic (or some other maps) between $Y$ and $SP^N(X)$ we need to define a symplectic structure on $SP^N(X)$. In fact, $SP^N(X)$ is stratified by smooth symplectic maps and we can try to work with stratified symplectic manifolds. This is very inconvenient because it's hard to deal with maps between stratified spaces and it's hard to prove that something is functorial. So, we need to find some convenient symplectic analogue of $SP^N(X)$.

\vspace{0.1in}
\noindent
2. We need to define symplectic simplices. Standard topological simplices do not work for us. Algebraic simplices $\Delta^n_{alg}$ (defined in the previous section) are symplectic manfolds and inherit symplectic structure from $\mathbb{C}^{n+1}$. Unfortunately, this symplectic structure looks ugly.

There is a bigger problem. When we define some cohomology groups, we want them to be homotopy invariant. The author do not know how to define homotopy invariant cohomology groups of symplectic manifolds using algebraic simplices.

To avoid the mentioned problems, we consider cubical complexes instead of simplicial complexes.

\section{Preliminaries}\label{preliminaries}

\subsection{Category $\mathcal{S}\mathcal{S}ymp$}

We define a symplectic category $\mathcal{S}\mathcal{S}ymp$, where

\vspace{0.06in}
\noindent
1. Objects are symplectic manifolds $(X, \omega_X)$ without boundary.

\vspace{0.06in}
\noindent
2. The set of morphisms $\mathcal{S}\mathcal{S}ymp(X, Y)$ between $X$ and $Y$ is the set of symplectic embeddings, i.e $f \in \mathcal{S}\mathcal{S}ymp(X, Y)$ if $f^{*}\omega_Y = \omega_X$ and $f$ is embedding.

\subsection{Category $\mathcal{J}\mathcal{S}ymp$}

We consider a category, where

\vspace{0.06in}
\noindent
1. Objects are symplectic manifolds $(X, \omega_X, J_X)$  with $\omega_X$-compatible almost complex structure $J_X$. We assume that manifolds do not have boundaries.

\vspace{0.06in}
\noindent
2. The set of morphisms $\mathcal{J}\mathcal{S}ymp(X, Y)$ between $X$ and $Y$ is the set of pseudomorphic maps, i.e $f \in \mathcal{J}\mathcal{S}ymp(X, Y)$ if $df \circ J_X = J_Y \circ df$.

\begin{remark}
As we mentioned, the construction of cohomology groups holds true for any almost complex manifolds, not necessarily symplectic. The construction is the same, but we consider only symplectic manifolds with compatible almost complex structure.
\end{remark}

\section{Appropriate spaces to do homotopy theory}\label{aprspacetodo}

As it is discussed in the introduction, the categories $\mathcal{S}\mathcal{S}ymp$ and $\mathcal{J}\mathcal{S}ymp$ are not rich enough to do homotopy theory. In this section we construct an appropriate category to do homotopy theory (see introduction for the philosophy behind the construction).

\subsection{Notation}

In this section we prove all results simultaneously for the categories $\mathcal{S}\mathcal{S}ymp$ and $\mathcal{J}\mathcal{S}ymp$. To simplify notation we denote by $\mathcal{S}ymp$ either $\mathcal{S}\mathcal{S}ymp$, or $\mathcal{J}\mathcal{S}ymp$. So, if some lemma works for all these categories, then we say that it works for $\mathcal{S}ymp$. We denote elements of morphisms $\mathcal{S}ymp(X, Y)$ between $X$ and $Y$ by $f$, $g \ldots$

\subsection{Category of presheaves on $\mathcal{S}ymp$}\label{sheavesonsymp}

Denote by $\mathcal{S}ymp^{op}$ the opposite category and by $\mathcal{S}et$ the category of sets. Let us consider the category of presheaves $PrSh_{\mathcal{S}ymp}$, i.e. functors
\begin{equation*}
\mathcal{F}: \mathcal{S}ymp^{op} \rightarrow \mathcal{S}et.
\end{equation*}
Recall that $\mathcal{S}ymp$ stands for either  $\mathcal{S}\mathcal{S}ymp$, or $\mathcal{J}\mathcal{S}ymp$.

\begin{remark}
We can also consider functors to the category of topological spaces. All theorems below work for presheaves of topological spaces if we assume that $\mathcal{S}\mathcal{S}ymp(Y, X)$ and $\mathcal{J}\mathcal{S}ymp$ induce topology from the space of all continuous maps from $Y$ to $X$. To simplify this paper we consider presheaves of sets.
\end{remark}

Morphisms of the category $PrSh_{\mathcal{S}ymp}$ are natural transformations. Let us recall that a natural transformation $\eta$ from $\mathcal{F}_1$ to $\mathcal{F}_2$ is a family of maps $\eta_X: \mathcal{F}_1(X) \rightarrow \mathcal{F}_2(X)$, for all $X \in \mathcal{S}ymp$, such that the following diagram commutes for any $f \in \mathcal{S}ymp(X, Y)$
\begin{equation*}
\begin{gathered}
\mathcal{F}_1(X) \xrightarrow{\eta_X}  \mathcal{F}_2(X) \\
\Big\uparrow f^{*} \;\;\;\;\;\;\;\;\;\;\;\; \Big\uparrow f^{*} \\
\mathcal{F}_1(Y) \xrightarrow{\eta_Y}  \mathcal{F}_2(Y)
\end{gathered}
\end{equation*}
We say that $\eta$ is natural isomorphism if $\eta_X$ is a bijection for any $X$.

Let us unwind this definition. Let $\mathcal{F}$ be a presheaf and $X$ be a symplectic manifold. For any open subsets $V \subset U$ of $X$ we have sets $\mathcal{F}(X)$, $\mathcal{F}(U)$, $\mathcal{F}(V)$. Let $i_V: V \hookrightarrow U$, $i_U: U \hookrightarrow X$  be  embeddings. Since $\mathcal{F}$ is a functor from the opposite category, we get maps of sets
\begin{equation*}
\begin{gathered}
\mathcal{F}(i_U): \mathcal{F}(X) \rightarrow \mathcal{F}(U), \;\;\; \mathcal{F}(i_V): \mathcal{F}(U) \rightarrow \mathcal{F}(V), \\
\mathcal{F}(i_U \circ i_V) = \mathcal{F}(i_V) \circ \mathcal{F}(i_U).
\end{gathered}
\end{equation*}
This means that we have a presheaf on $X$. As a result we see that
\begin{equation*}
\begin{gathered}
\text{ any object of $PrSh_{\mathcal{S}ymp}$ defines a presheaf of sets} \\
\text{on any manifold from $\mathcal{S}ymp$}.
\end{gathered}
\end{equation*}
If $f \in \mathcal{F}(X)$, then we denote
\begin{equation*}
\mathcal{F}(i_U)(f) = f|_U.
\end{equation*}

A functor $\mathcal{F}$ is called representable if there exists $X \in \mathcal{S}ymp$ such that for any $Z \in \mathcal{S}ymp$ we have
\begin{equation*}
\mathcal{F}(Z) = \mathcal{S}ymp(Z,X).
\end{equation*}
We denote this functor by $\mathcal{F}_X$. For any $f\in \mathcal{S}ymp(Y, X)$ and $h\in \mathcal{S}ymp(Y, Z)$ we get
\begin{equation*}
\begin{gathered}
f_{*}: \mathcal{F}_Y(Z) \rightarrow \mathcal{F}_X(Z), \;\;\;\; f_{*}:  \mathcal{S}ymp(Z, Y) \rightarrow \mathcal{S}ymp(Z, X)  \\
h^{*}: \mathcal{F}_X(Z) \rightarrow \mathcal{F}_X(Y), \;\;\;\; h^{*}:  \mathcal{S}ymp(Z, X) \rightarrow \mathcal{S}ymp(Y, X) \\
\end{gathered}
\end{equation*}
From the Yoneda Lemma we have
\begin{equation*}
PrSh_{\mathcal{S}ymp}(\mathcal{F}_X, \mathcal{F}_Y) = \mathcal{S}ymp(X, Y).
\end{equation*}
So, we identify $X$ with the functor $\mathcal{F}_X$ and see that $\mathcal{S}ymp$ is a full subcategory of $PrSh_{\mathcal{S}ymp}$.

\begin{lemma}
Manifolds $X, Y \in \mathcal{S}ymp$ are isomorphic if and only if $\mathcal{F}_X$ is naturally isomorphic to $\mathcal{F}_Y$, i.e there is a natural bijection between $\mathcal{S}ymp(Z, X)$ and $\mathcal{S}ymp(Z, Y)$ for any $Z \in \mathcal{S}ymp$.
\end{lemma}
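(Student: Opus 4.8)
The plan is to deduce this lemma almost entirely from the Yoneda identity $PrSh_{\mathcal{S}ymp}(\mathcal{F}_X, \mathcal{F}_Y) = \mathcal{S}ymp(X, Y)$ just recorded above. Indeed, this lemma is exactly the assertion that the assignment $X \mapsto \mathcal{F}_X$ reflects isomorphisms, and the Yoneda identity says that this assignment is fully faithful. So the real content has already been supplied; what remains is the formal deduction that a fully faithful functor preserves and reflects isomorphisms.

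For the forward implication I would not even need Yoneda, since any functor preserves isomorphisms. Concretely, if $\varphi \in \mathcal{S}ymp(X,Y)$ is an isomorphism with inverse $\psi \in \mathcal{S}ymp(Y,X)$, then $\varphi_{*}\colon \mathcal{S}ymp(Z,X) \to \mathcal{S}ymp(Z,Y)$, $g \mapsto \varphi \circ g$, has two-sided inverse $\psi_{*}$, so each component is a bijection. Naturality with respect to the presheaf structure maps $h^{*}$ is just associativity of composition, $(\varphi \circ g)\circ h = \varphi \circ (g \circ h)$, so $\varphi_{*}$ is a natural isomorphism $\mathcal{F}_X \to \mathcal{F}_Y$. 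For the converse, suppose $\eta\colon \mathcal{F}_X \to \mathcal{F}_Y$ is a natural isomorphism, with inverse natural transformation $\eta^{-1}\colon \mathcal{F}_Y \to \mathcal{F}_X$. Under the Yoneda identification these correspond to morphisms $\varphi \in \mathcal{S}ymp(X,Y)$ and $\psi \in \mathcal{S}ymp(Y,X)$. Granting that the Yoneda bijection is compatible with composition (so that the transformation attached to $\psi \circ \varphi$ is $\eta^{-1}\circ \eta$, and $\mathrm{id}_X$ is attached to $\mathrm{id}_{\mathcal{F}_X}$), injectivity of the Yoneda bijection forces $\psi \circ \varphi = \mathrm{id}_X$, and symmetrically $\varphi \circ \psi = \mathrm{id}_Y$. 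Hence $\varphi$ is an isomorphism and $X \cong Y$.

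The only genuine point requiring care is this last compatibility: the Yoneda identity as stated is a bijection of hom-sets, and to transport inverses I must know it is moreover functorial, i.e. that $X \mapsto \mathcal{F}_X$ really is a functor sending composites to composites. In the representable case this is transparent, since $\eta$ is postcomposition by $\varphi$ and postcomposition is associative, so unwinding the definition gives the compatibility directly rather than requiring a separate appeal to naturality. I therefore expect no serious obstacle here: the substantive step, full faithfulness, is already established above, and the present lemma is its formal consequence.
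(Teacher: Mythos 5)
Your proposal is correct and follows essentially the same route as the paper: both arguments rest on the Yoneda lemma, with the converse obtained by extracting $\varphi = \eta_X(\mathrm{id}_X)$ and $\psi = \eta_Y^{-1}(\mathrm{id}_Y)$ and checking that they are mutually inverse. The paper carries out the naturality-square chase explicitly where you package it as the formal fact that a fully faithful functor reflects isomorphisms, but unwinding your appeal to compatibility of the Yoneda bijection with composition reproduces exactly that chase.
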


\begin{proof}
Assume that $f: X \rightarrow Y$ is an isomorphism.  Then,  $f_{*}: \mathcal{S}ymp(Z, X) \rightarrow \mathcal{S}ymp(Z, Y)$ provides the required natural bijection.

Assume that there is natural bijection $\eta_Z: \mathcal{S}ymp(Z, X) \rightarrow \mathcal{S}ymp(Z, Y)$ for any $Z$. If $Z = X$, then  we get the bijection
\begin{equation*}
\mathcal{S}ymp(X, X) \xrightarrow{\eta_X} \mathcal{S}ymp(X, Y).
\end{equation*}
If $Z = Y$, then we have the bijection
\begin{equation*}
\mathcal{S}ymp(Y, X) \xrightarrow{\eta_Y} \mathcal{S}ymp(Y, Y).
\end{equation*}
Denote $f = \eta_X(id_X)$ and $g = \eta_Y^{-1}(id_Y)$.  By definition we have the following commutative diagram
\begin{equation*}
\begin{gathered}
\mathcal{S}ymp (X, X) \xrightarrow{\eta_X} \mathcal{S}ymp(X, Y) \\
\uparrow f^{*} \qquad \qquad \uparrow f^{*} \\
\mathcal{S}ymp(Y, X) \xrightarrow{\eta_Y} \mathcal{S}ymp(Y, Y)
\end{gathered}
\end{equation*}
We see that $\eta_X \circ f^{*} = f^{*} \circ \eta_Y$. Let us apply this identity to $g$ and obtain
\begin{equation*}
\eta_X \circ g(f) = f \Leftrightarrow g(f) = \eta_X^{-1} \circ f = id_X.
\end{equation*}
We get $f(g) = id_Y$ by considering similar diagram and replacing $f$ by $g$.

\end{proof}

The category $PrSh_{\mathcal{S}ymp}$ has all limits and colimits. Let us show how to find colimits of presheaves. Assume that we want to find $colim(\mathcal{F}_{\alpha})$, where $\alpha$ belongs to some index set. Since $\mathcal{F}_{\alpha}(Z) \in \mathcal{S}et$ and all colimits exist in the category of sets, we get that $colim(\mathcal{F}_{\alpha}(Z))$ exists for any $Z$. One can show that
\begin{equation*}
colim(\mathcal{F}_{\alpha})(Z) = colim(\mathcal{F}_{\alpha}(Z)).
\end{equation*}
All limits can be computed in the same way.

We see that $PrSh_{\mathcal{S}ymp}$ is some kind of generalization of the category $\mathcal{S}ymp$.  Unfortunately, this category  is too rich and we need to get rid of some objects. There is a standard argument showing that $PrSh_{\mathcal{S}ymp}$ destroys geometry.   Assume that $U, V \subset X$ are open subsets and $U \cup V = X$.  Let $i_U$, $i_V$ be corresponding embeddings of $U \cap V$ into $U$ and $V$, respectively. Then, the fact that $U \cup V = X$ in terms of category theory means that $X$ is the colimit of the following diagram
\begin{equation}\label{unionofsets}
\begin{gathered}
  U \cap V \longrightarrow U \\
\quad \big\downarrow \qquad \quad \big\downarrow  \\
\quad V \; \longrightarrow \; X
\end{gathered}
\end{equation}
In other words, $U \sqcup V/(i_U(x) \sim i_V(x)) = X$, where $\sqcup$ stands for the disjoint union. In $PrSh_{\mathcal{S}ymp}$ we have the corresponding representable functors $\mathcal{F}_X$, $\mathcal{F}_U$, $\mathcal{F}_V$, $\mathcal{F}_{U \cap V}$. Since we replace our objects by functors, it is natural to expect that $\mathcal{F}_X$ is the colimit of the following diagram
\begin{equation}\label{desiredcolimit}
\begin{gathered}
\quad \quad \quad \mathcal{F}_{U \cap V} \longrightarrow \mathcal{F}_U \\
\big\downarrow \\
\mathcal{F}_V
\end{gathered}
\end{equation}
i.e. is `union' of functors $\mathcal{F}_U$, $\mathcal{F}_V$. Unfortunately, the colimit $\widetilde{\mathcal{F}}$ of diagram $(\ref{desiredcolimit})$ evaluated at a manifold $Y$ is given by
\begin{equation*}
\widetilde{\mathcal{F}}(Y) = \mathcal{F}_U(Y) \cup \mathcal{F}_V(Y),
\end{equation*}
where $\cup$ is union of sets (not disjoint). We see that $\widetilde{\mathcal{F}}(Y)$ consists of maps from $Y$ to $U$ and from $Y$ to $V$. Note that $\mathcal{F}_{X}(Y)$ consists of maps  from $Y$ to $X$ and the image of $Y$ is not supposed to be entirely in $U$ or $V$. So, we do not want $\widetilde{\mathcal{F}}$ to be the colimit.

Since symplectic manifolds are constructed by gluing open symplectic manifolds, it is very important to avoid the mentioned problem. So, we lose our geometry when we consider presheaves. We want the representable functor $\mathcal{F}_X$  to be obtained by `gluing'  functors $\mathcal{F}_U$, $\mathcal{F}_V$. The problem mentioned above says that presheaves do not preserve existing colimits.

\subsection{Category of sheaves on $\mathcal{S}ymp$}\label{sheavesonsymp3}

As we already discussed, any element of $PrSh_{\mathcal{S}ymp}$ defines a presheaf of sets on any $X \in \mathcal{S}ymp$. We need to consider a category of sheaves $Sh_{\mathcal{S}ymp}$ on $\mathcal{S}ymp$. Sheaves allow to add new colimits taking into account already existing ones.

\begin{definition}
A presheaf $\mathcal{F} \in PrSh_{\mathcal{S}ymp}$ is a sheaf if $\mathcal{F}$ defines a sheaf of sets on any $X \in \mathcal{S}ymp$. Let us give more details. Let $\cup_{\alpha} U_{\alpha} = X$ be a covering of $X$ by open subsets. A presheaf $\mathcal{F} \in PrSh_{\mathcal{S}ymp}$ is a sheaf if and only if  the following is satisfied for any $X$ and any covering:

\vspace{0.09in}
\noindent
1. If $f, g \in \mathcal{F}(X)$ satisfy $f|_{U_{\alpha}} = g|_{U_{\alpha}}$ for any $\alpha$, then $f=g$.

\vspace{0.09in}
\noindent
2. If $f_{\alpha} \in \mathcal{F}(U_{\alpha})$  satisfy $f_{\alpha}|_{U_{\alpha} \cap U_{\beta}} = f_{\beta}|_{U_{\alpha} \cap U_{\beta}}$ for any $\alpha, \beta$, then there exists $f \in \mathcal{F}(X)$ such that $f|_{U_{\alpha}} = f_{\alpha}$.

\vspace{0.09in}
\noindent
3. $\mathcal{F}(\emptyset) = pt$, where $\emptyset$ is the empty set and $pt$ is one-element set.
\vspace*{0.11in}
\noindent
We denote sheaves on $\mathcal{S}ymp$ by $Sh_{\mathcal{S}ymp}$.
\end{definition}

Let us give an equivalent definition

\begin{definition}
A presheaf $\mathcal{F} \in PrSh_{\mathcal{S}ymp}$ is a sheaf if for any $X$ and any open covering $\cup_{\alpha} U_{\alpha} = X$ by open sets
the following is an equalizer diagram
\begin{equation*}
\mathcal{F}(X) \rightarrow \prod\limits_{\alpha}\mathcal{F}(U_{\alpha}) \rightrightarrows \prod\limits_{\beta, \gamma}\mathcal{F}(U_{\beta} \cap U_{\gamma}),
\end{equation*}
where $\prod$ is the product of sets.
\end{definition}

\vspace*{0.05in}
\begin{remark}
Any representable presheaf $\mathcal{F}_X$ is a sheaf, i.e. $\mathcal{F}_X \in Sh_{\mathcal{S}ymp}$.
\end{remark}

\vspace*{0.08in}

Let us show that  $\mathcal{F}_X$ is the colimit of diagram $(\ref{desiredcolimit})$ in the category $Sh_{\mathcal{S}ymp}$. Let $\mathcal{F} \in Sh_{\mathcal{S}ymp}$ be an arbitrary sheaf. Since $X$ is the colimit of diagram $(\ref{unionofsets})$ , we get that (by definition of sheaves) $\mathcal{F}_{U \cap V}$ is colimit of the following diagram:
\begin{equation*}
\begin{gathered}
\mathcal{F}(U \cap V) \longleftarrow \mathcal{F}(U) \\
\big\uparrow \quad \quad \;\; \quad \big\uparrow \\
\mathcal{F}(V) \longleftarrow \mathcal{F}(X)
\end{gathered}
\end{equation*}
From Yoneda lemma we obtain that $\mathcal{F}(X) = Sh_{\mathcal{S}ymp}(\mathcal{F}_X, \mathcal{F})$ and the diagram above is equivalent to
\begin{equation*}
\begin{gathered}
Sh_{\mathcal{S}ymp}(\mathcal{F}_{U \cap V}, \mathcal{F}) \longleftarrow Sh_{\mathcal{S}ymp}(\mathcal{F}_{U }, \mathcal{F}) \\
\big\uparrow  \qquad \qquad \qquad \big\uparrow \\
Sh_{\mathcal{S}ymp}(\mathcal{F}_{ V}, \mathcal{F}) \longleftarrow Sh_{\mathcal{S}ymp}(\mathcal{F}_{X}, \mathcal{F})
\end{gathered}
\end{equation*}

Since $\mathcal{F}$ is arbitrary, we get that $\mathcal{F}_X$ is the colimit of  diagram $(\ref{desiredcolimit})$ in the category $Sh_{\mathcal{S}ymp}$. In the same way, we can prove similar fact about finite number of open subsets.

 \vspace{0.05in}

So, without loss of any information, we make the replacement (let us recall that $\mathcal{S}ymp$ stands for either $\mathcal{S}\mathcal{S}ymp$, or $\mathcal{J}\mathcal{S}ymp$)

\begin{equation*}
 X \xrightarrowdbl{replace \;  by \;  sheaf } \mathcal{S}ymp(-, X)= \mathcal{F}_X \in Sh_{\mathcal{S}ymp}.
\end{equation*}

We have the following theorem:

\begin{theorem}\label{theoremsheafification} (see $\cite{sheaves1}$) There exists a functor $Sheaf$
\begin{equation*}
Sheaf: \;\; PrSh_{\mathcal{S}ymp} \rightarrow Sh_{\mathcal{S}ymp}
\end{equation*}
called  sheafification functor such that

\vspace{0.05in}
\noindent
1.  $\mathcal{F} \rightarrow Sh(\mathcal{F})$ is isomorphism if and only if $\mathcal{F}$ is a sheaf

\vspace{0.05in}
\noindent
2.  To compute a colimit in $Sh_{\mathcal{S}ymp}$ we need first to compute it in $PrSh_{\mathcal{S}ymp}$ and then apply the sheafification functor $Sheaf$.

\end{theorem}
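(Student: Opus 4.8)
The plan is to realize $Sheaf$ as the left adjoint to the inclusion $\iota : Sh_{\mathcal{S}ymp} \hookrightarrow PrSh_{\mathcal{S}ymp}$, built explicitly by Grothendieck's plus construction applied twice. First I would observe that the sheaf condition stated above is precisely the sheaf condition for the Grothendieck topology on $\mathcal{S}ymp$ whose covering families of an object $X$ are the open covers $\{U_\alpha \hookrightarrow X\}$ with $\bigcup_\alpha U_\alpha = X$; one checks this is a genuine topology, since pullbacks of open covers along morphisms are again open covers (symplectic embeddings and pseudoholomorphic maps are continuous) and refinements compose. Given this, for a presheaf $\mathcal{F}$ and an object $X$ I define the plus construction
$$\mathcal{F}^{+}(X) = \operatorname*{colim}_{\mathcal{U}} \Big\{ (f_\alpha) \in \prod_\alpha \mathcal{F}(U_\alpha) \;:\; f_\alpha|_{U_\alpha \cap U_\beta} = f_\beta|_{U_\alpha \cap U_\beta} \;\; \forall\, \alpha,\beta \Big\},$$
where $\mathcal{U} = \{U_\alpha\}$ ranges over the open covers of $X$ ordered by refinement, together with the canonical comparison map $\theta_{\mathcal{F}} : \mathcal{F} \to \mathcal{F}^{+}$.

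This colimit is filtered, because any two open covers of $X$ admit the common refinement $\{U_\alpha \cap V_\beta\}$. Using filteredness — filtered colimits of sets commute with the finite limits defining matching families — I would prove the two standard lemmas: (a) $\mathcal{F}^{+}$ is always separated, i.e.\ satisfies axiom 1; and (b) if $\mathcal{F}$ is separated, then $\mathcal{F}^{+}$ satisfies axiom 2 as well, hence is a sheaf. Consequently $\mathcal{F}^{++}$ is a sheaf for every presheaf $\mathcal{F}$, and I set $Sheaf(\mathcal{F}) = \mathcal{F}^{++}$ with unit $\eta_{\mathcal{F}} = \theta_{\mathcal{F}^{+}} \circ \theta_{\mathcal{F}}$, extended to a functor in the evident way.

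Next I would establish the universal property. The key point is that $\theta_{\mathcal{F}}$ is initial among maps from $\mathcal{F}$ into separated presheaves and is an isomorphism when $\mathcal{F}$ is already a sheaf; iterating, every morphism $\mathcal{F} \to \mathcal{G}$ with $\mathcal{G}$ a sheaf factors uniquely through $\eta_{\mathcal{F}}$, which yields the adjunction bijection
$$Sh_{\mathcal{S}ymp}\big(Sheaf(\mathcal{F}), \mathcal{G}\big) \xrightarrow{\;\sim\;} PrSh_{\mathcal{S}ymp}(\mathcal{F}, \mathcal{G}).$$
Part 1 is then immediate: if $\mathcal{F}$ is a sheaf then $\theta_{\mathcal{F}}$ and $\theta_{\mathcal{F}^{+}}$ are isomorphisms, so $\eta_{\mathcal{F}}$ is; conversely, if $\eta_{\mathcal{F}}$ is an isomorphism then $\mathcal{F} \cong \mathcal{F}^{++}$ is a sheaf. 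Part 2 follows formally, since a left adjoint preserves all colimits and $\iota$ is fully faithful: for a diagram of sheaves $\{\mathcal{F}_\alpha\}$ and any sheaf $\mathcal{G}$,
$$Sh_{\mathcal{S}ymp}\big(Sheaf(\operatorname*{colim}^{PrSh}_\alpha \mathcal{F}_\alpha), \mathcal{G}\big) = \lim_\alpha PrSh_{\mathcal{S}ymp}(\mathcal{F}_\alpha, \mathcal{G}) = \lim_\alpha Sh_{\mathcal{S}ymp}(\mathcal{F}_\alpha, \mathcal{G}),$$
which identifies the sheafification of the presheaf-level colimit with the colimit in $Sh_{\mathcal{S}ymp}$.

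I expect the main obstacle to be the verification that the stated condition genuinely defines a Grothendieck topology and that the plus construction behaves correctly here — specifically the base-change and transitivity axioms for covers, and the filteredness argument underpinning lemmas (a) and (b). The geometric input is mild (continuity of the morphisms guarantees that open covers pull back to open covers), but one must also ensure that the covers of each fixed $X$ form an essentially small filtered system, so that the colimit defining $\mathcal{F}^{+}$ exists set-theoretically; this is where the smallness assumption on $\mathcal{S}ymp$ enters. Once the topology and filteredness are in place the remainder is the formal adjoint-functor argument, and I would cite \cite{sheaves1} for the general statement.
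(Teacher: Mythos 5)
Your proposal is correct and is essentially the argument the paper relies on: the paper gives no proof of its own but cites \cite{sheaves1} (Mac Lane--Moerdijk), where sheafification is constructed exactly by the twice-iterated plus construction for the Grothendieck topology of open covers, with parts 1 and 2 following from the resulting left adjointness to the inclusion of sheaves into presheaves. Your flagged caveats (stability of open covers under pullback along symplectic embeddings or pseudoholomorphic maps, and the essential smallness of the system of covers of a fixed $X$) are the right ones to check and do go through here.
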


\subsection{Some properties of $Sh_{\mathcal{S}ymp}$}\label{sheavesonsymp2}

Let us show how to define different topological operations in the category $Sh_{\mathcal{S}ymp}$. Recall that $\mathcal{S}ymp$ stands for either $\mathcal{S}\mathcal{S}ymp$  or $\mathcal{J}\mathcal{S}ymp$.

Below we assume that $X, Y, Z \in \mathcal{S}ymp$.

\vspace{0.07in}
\noindent
\textbf{Intersection and union of submanifolds.} Let $W_1, W_2 \subset X$ be symplectic submanifolds (pseudoholomorphic, in case of $\mathcal{J}\mathcal{S}ymp$). The intersection $W_1 \cap W_2$  and union $W_1 \cup W_2$ are not always smooth manifolds. We consider functors $\widetilde{\mathcal{F}_{W_1 \cap W_2}}$, $\widetilde{\mathcal{F}_{W_1 \cup W_2}}$ such that
\begin{equation*}
\begin{gathered}
\widetilde{\mathcal{F}_{W_1 \cap W_2}}(Z) = \mathcal{F}_{W_1}(Z) \cap \mathcal{F}_{W_2}(Z), \\
\widetilde{\mathcal{F}_{W_1 \cup W_2}}(Z) = \mathcal{F}_{W_1}(Z) \cup \mathcal{F}_{W_2}(Z).
\end{gathered}
\end{equation*}
In general, these functors are not sheaves and we need to sheafify them (see Theorem $\ref{theoremsheafification}$). We define
\begin{equation*}
\mathcal{F}_{W_1 \cap W_2} = Sheaf(\widetilde{\mathcal{F}_{W_1 \cap W_2}}), \quad \mathcal{F}_{W_1 \cup W_2} = Sheaf(\widetilde{\mathcal{F}_{W_1 \cup W_2}}).
\end{equation*}
So, the sheaves $\mathcal{F}_{W_1 \cap W_2}$, $\mathcal{F}_{W_1 \cup W_2}$ are analogues of intersection and union, respectively. If $W_1 \cap W_2$, $W_1 \cup W_2$ are smooth, then these sheaves are representable.

There is another way to define these sheaves. We can define $\mathcal{F}_{W_1 \cap W_2}$ as a limit of the following diagram (fiber product) in the category $Sh_{\mathcal{S}ymp}$

\begin{equation*}
\begin{gathered}
\mathcal{F}_{W_1 \cap W_2} \longrightarrow \mathcal{F}_{W_1} \\
\big\downarrow \quad \quad \;\; \quad \big\downarrow \\
\mathcal{F}_{W_2} \longrightarrow \mathcal{F}_X
\end{gathered}
\end{equation*}
where horizontal and vertical arrows to $\mathcal{F}_X$ are induced by embeddings of $W_1, W_2$ into $X$. We can define $\mathcal{F}_{W_1 \cup W_2}$ as a colimit of the following diagram in the category of $Sh_{\mathcal{S}ymp}$
\begin{equation*}
\begin{gathered}
\mathcal{F}_{W_1 \cap W_2} \longrightarrow \mathcal{F}_{W_1} \\
\big\downarrow \quad \quad \;\; \quad \big\downarrow \\
\mathcal{F}_{W_2} \longrightarrow \mathcal{F}_{W_1 \cup W_2}
\end{gathered}
\end{equation*}

\vspace{0.07in}
\noindent
\textbf{Quotient.} Let $ W \subset X$ be a symplectic submanifold (pseudoholomorphic, in case of $\mathcal{J}\mathcal{S}ymp$). Since
\begin{equation*}
\mathcal{S}ymp(Y, W) = \mathcal{F}_W(Y) \subset \mathcal{F}_X(Y) = \mathcal{S}ymp(Y, X)
\end{equation*}
for any $Y$, we define a presheaf
\begin{equation*}
(\widetilde{\mathcal{F}_X/\mathcal{F}_W})(Y) =  \mathcal{S}ymp(Y,X)/\mathcal{S}ymp(Y, W).
\end{equation*}

In general, $\widetilde{\mathcal{F}_X/\mathcal{F}_W}$ is not a sheaf. We sheafify this presheaf (see Theorem $\ref{theoremsheafification}$) and get a sheaf
\begin{equation*}
\mathcal{F}_{X/W} = Sheaf(\widetilde{\mathcal{F}_X/\mathcal{F}_W}).
\end{equation*}
We define the quotient  $X/W$ as the sheaf $\mathcal{F}_{X/W}$.

Roughly speaking, we identify maps from $Y$ to $X$ if they are different only on $W$.

In general, consider $\mathcal{F}, \mathcal{G} \in Sh_{\mathcal{S}ymp}$ such that $\mathcal{G}(Y) \subset \mathcal{F}(Y)$ for any $Y$. We define
\begin{equation}\label{quotientsymp}
\begin{gathered}
(\widetilde{\mathcal{F/G}})(Y) = \mathcal{F}(Y)/\mathcal{G}(Y), \\
\mathcal{F/G} = Sheaf(\widetilde{\mathcal{F/G}}).
\end{gathered}
\end{equation}

\vspace*{0.09in}
\noindent
\textbf{Smash product.} Let $(X_i, x_i)$ be symplectic (almost complex) manifolds with marked points $x_i \in X_i$, where $i=1,\ldots,n$. Denote
\begin{equation*}
\begin{gathered}
X = (X_1 \times \ldots \times X_n, \; \oplus_{i=1}^n\omega_{X_i}), \\
W_i = X_1 \times \ldots \times X_{i-1} \times x_i \times X_{i+1} \times \ldots \times X_n \subset X
\end{gathered}
\end{equation*}
Define a presheaf $\mathcal{F}_{W_1} \cup \ldots \cup \mathcal{F}_{W_n}$ such that
\begin{equation*}
(\mathcal{F}_{W_1} \cup \ldots \cup \mathcal{F}_{W_n})(Z) = \mathcal{F}_{W_1}(Z) \cup \ldots \cup \mathcal{F}_{W_n}(Z).
\end{equation*}
We define the smash product by the following formula:
\begin{equation}\label{smash}
\bigwedge\limits^{n}_{i=1}(X_i, x_i) = \mathcal{F}_X/(\mathcal{F}_{W_1} \cup \ldots \cup \mathcal{F}_{W_n}).
\end{equation}

\section{Symmetric products in $\mathcal{S}\mathcal{S}ymp$,  $\mathcal{J}\mathcal{S}ymp$}

\subsection{Topological symmetric products and sheaves}\label{topcorr}

In this section we discuss how to define topological symmetric products in terms of sheaves. This section should be considered as a philosophical motivation and we skip some details. We use similar ideas to define symmetric products in other categories.

\vspace{0.07in}

Let $X$ be some nice topological space. Let $X^n$ be the $n$th power of $X$ and let $\Sigma_n$ be the symmetric group acting on $X^n$ by permuting the factors. We define the symmetric product as in Section $\ref{motivationdold}$
\begin{equation*}
SP^n(X) = X^n/\Sigma_n.
\end{equation*}

Let $Y$ be a nice topological space. Consider the set of continuous maps $C^0(Y, X^n)$. Any map $f \in C^0(Y, X^n)$ is defined by $n$ ordered set of continuous maps $(f_1, \ldots, f_n)$, where $f_i: Y \rightarrow X$. The symmetric group $\Sigma_n$ acts on $X^n$  and as a result acts on $C^0(Y, X^n)$. We take a quotient $C^0(Y, X^n)/\Sigma_n$ and note that elements of the quotient are unordered set of  maps. Two elements $\{f_1, \ldots, f_n\}, \{g_1, \ldots, g_n\} \in C^0(Y, X^n)/\Sigma_n$ are equal if
\begin{equation*}
\{f_1(y), \ldots, f_n(y) \} = \{ g_1(y), \ldots, g_n(y)  \} \;\;\; \text{for any $y \in Y$},
\end{equation*}
where the equality is equality of unordered sets.

\begin{example}
Let $\mathbb{R}$ be the real line. Consider maps from $\mathbb{R}$ to $\mathbb{R}$
\begin{equation*}
f_1(x) = x, \;\;\; f_2(x) = -x, \;\;\; g_1(x) = |x|, \;\;\; g_2(x) = -|x|.
\end{equation*}
We see that $\{f_1, f_2\} = \{g_1, g_2\} \in C^0(\mathbb{R}, \mathbb{R}^2)/\Sigma_2$.
\end{example}

Any element $\{f_1, \ldots, f_n\} \in C^0(Y, X^n)/\Sigma_n$ defines a subset $F \subset Y \times X$ in the following way
\begin{equation*}
F = \bigcup_{i=1}^n\bigcup_{y \in Y}(y, f_i(y).
\end{equation*}
Note that if $f_1(y), \ldots, f_n(y)$ are distinct for any $y$, then the projection $F \rightarrow Y$ is trivial $n$-covering.

In terms of Section $\ref{sheavesonsymp}$ we have a presheaf on the category of topological spaces
\begin{equation*}
Y \rightarrow C^0(Y, X^n)/\Sigma_n.
\end{equation*}
In particular, we get a presheaf on each $Y$ by assigning
\begin{equation*}
U \rightarrow C^0(U, X^n)/\Sigma_n \quad \text{to each open $U \subset Y$}.
\end{equation*}

Let us show that $C^0(U, X^n)/\Sigma_n$ is not a sheaf. Let $U_1, U_2 \subset Y$ be open subsets such that $U_1 \cup U_2 = Y$. Consider a subset $F \subset Y \times X$ such that the projection $\pi: F \rightarrow Y$ is some nontrivial $n$-covering. Assume that the covering is trivial over $U_1$ and $U_2$. Then, the sections of these coverings provide the following elements
\begin{equation*}
\{f_1, \ldots, f_n\} \in C^0(U_1, X^n)/\Sigma_n,  \quad \{g_1, \ldots, g_n\} \in C^0(U_2, X^n)/\Sigma_n.
\end{equation*}
Moreover, $\{f_1, \ldots, f_n\} = \{g_1, \ldots, g_n\}$ on $U_1 \cap U_2$. Since $F \rightarrow Y$ is not trivial, we see that $F$ is not defined by any $h \in  C^0(Y, X^n)/\Sigma_n$. This shows that we can not glue maps $\{f_1, \ldots, f_n\}$ and $\{g_1, \ldots, g_n\}$ and the presheaf $C^0(\cdot, X^n)/\Sigma_n$ is not a sheaf.

We sheafify $C^0(\cdot, X^n)/\Sigma_n$  and get a sheaf
\begin{equation*}
TCor_n(Y, X) = Sheaf(C^0(\cdot, X^n)/\Sigma_n)(Y),
\end{equation*}
where $T$ stands for topological and $Cor$ for correspondence. We denote elements of $TCor_n(Y, X)$ by letters $F, G, \ldots$ We also abuse notation and denote subsets of $Y \times X$ also by symbols $F, G, \ldots$ We will see below that these objects are related.

\begin{example}
Elements of $TCor_1(Y, X)$ are standard continuous maps from $Y$ to $X$.
\end{example}

\begin{example}
We see that $C^0(Y, X^n)/\Sigma_n \subset TCor_n(Y, X)$.
\end{example}

\begin{lemma}
Any element $F \in TCor_n(Y, X)$ provides a map $Y \rightarrow SP^n(X)$ and defines a subset of $F \subset Y \times X$.
\end{lemma}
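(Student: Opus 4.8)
The plan is to exploit the universal property of sheafification, since $TCor_n(Y,X)$ is by definition the sheafification of the presheaf $\mathcal{P}\colon U \mapsto C^0(U, X^n)/\Sigma_n$ on $Y$. First I would record what a section $F \in TCor_n(Y,X)$ actually is: after passing to a sufficiently fine open cover $\{U_\alpha\}$ of $Y$, it is represented by a family $s_\alpha \in C^0(U_\alpha, X^n)/\Sigma_n$ that agree on the overlaps $U_\alpha \cap U_\beta$, two such families being identified when they coincide on a common refinement. Each $s_\alpha$ is an unordered tuple $\{f_1^\alpha, \ldots, f_n^\alpha\}$ of continuous maps $U_\alpha \to X$.

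Next I would build a morphism of presheaves on $Y$ given, on an open $U$, by
\begin{equation*}
\Phi_U\colon C^0(U, X^n)/\Sigma_n \longrightarrow C^0(U, SP^n(X)), \qquad \{f_1,\ldots,f_n\} \longmapsto \bigl(y \mapsto \{f_1(y),\ldots,f_n(y)\}\bigr).
\end{equation*}
This is well defined because the target map depends only on the unordered tuple, i.e. is invariant under the $\Sigma_n$-action, so it descends through the quotient; and it plainly commutes with restriction to smaller opens, so $\Phi$ is natural. The crucial observation is that the target presheaf $U \mapsto C^0(U, SP^n(X))$ is already a sheaf: continuity of a map into the fixed space $SP^n(X)$ is a local condition, and locally defined continuous maps agreeing on overlaps glue uniquely to a continuous map. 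Since sheafification is left adjoint to the inclusion of sheaves into presheaves, the presheaf morphism $\Phi$ factors uniquely through $TCor_n(-,X)$, yielding $\Phi^{+}\colon TCor_n(-,X) \to C^0(-, SP^n(X))$. Evaluating at $Y$ then sends $F$ to the desired map $\phi_F := \Phi^{+}_Y(F)\colon Y \to SP^n(X)$; concretely $\phi_F$ is the map obtained by gluing the local maps $\Phi_{U_\alpha}(s_\alpha)\colon U_\alpha \to SP^n(X)$, which the sheaf property of $C^0(-, SP^n(X))$ permits.

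Finally I would produce the subset by setting
\begin{equation*}
F := \{(y,x) \in Y \times X \; : \; x \in \phi_F(y)\} \subset Y \times X,
\end{equation*}
where $\phi_F(y) \in SP^n(X)$ is read as an unordered set of points of $X$; equivalently one glues the local subsets $\bigcup_i \bigcup_{y \in U_\alpha}(y, f_i^\alpha(y))$, which agree on overlaps precisely because the underlying unordered tuples $s_\alpha$ do. The main obstacle is nothing more than the bookkeeping around sheafification: one must ensure the assignment $F \mapsto \phi_F$ is independent of the representing cover and family. I expect the universal property of sheafification to dispose of this automatically, so that the only points genuinely requiring care are verifying that $U \mapsto C^0(U, SP^n(X))$ is a sheaf and that $\Phi$ truly descends through the $\Sigma_n$-quotient.
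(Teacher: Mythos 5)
Your proposal is correct and follows the same underlying idea as the paper: locally a section of $TCor_n(\cdot,X)$ is an unordered tuple $\{f_1,\dots,f_n\}$ of continuous maps, the map to $SP^n(X)$ is $y\mapsto\{f_1(y),\dots,f_n(y)\}$, and the subset is the union of the graphs. The paper simply performs this local construction directly and asserts that the pieces glue, whereas your appeal to the universal property of sheafification (factoring through the sheaf $C^0(\cdot,SP^n(X))$) is a cleaner way to dispose of the same well-definedness bookkeeping; the mathematical content is identical.
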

\begin{proof}
For any point $y \in Y$ there exists a small neighborhood $U$ and unordered set of maps $\{f_1, \ldots, f_n\} \in C^0(U, X^n)/\Sigma_n$ such that $F|_U = \{f_1, \ldots, f_n\}$. We define
\begin{equation*}
\begin{gathered}
Y \rightarrow SP^n(X), \quad y \rightarrow \{f_1(y), \ldots, f_n(y)\}\\
F = \bigcup_{i=1}^n\bigcup_{y \in U}(y, f_i(y)) \subset Y \times X.
\end{gathered}
\end{equation*}
\end{proof}

Subsets of $Y \times X$ do not always define elements of $TCor_n(Y, X)$ uniquely. If $f \in C^0(Y, X)$ is a continuous map, then $\{f, f\} \in TCor_2(Y, X)$ and subsets of $Y \times X$ corresponding to $f$ and $\{f, f\}$ coincide. In fact, elements of $TCor_n$ have more geometric definition if we fix $n$ and count graphs with multiplicities.

\begin{definition}\label{sympcorr}
Let $F \subset Y \times X$ be a subset and $\pi: F \rightarrow Y$ be the projection. We say that $F$ is topological correspondence of order $n$ between $Y$ and $X$ if any point $y \in Y$ has a neighborhood $U \subset Y$ and $n$ maps $f_i: U \rightarrow X$ such that
\begin{equation}\label{corrdef}
\pi^{-1}(U) = \bigcup_{y \in Y}\bigcup_{i=1}^n(y, f_i(y)), \;\;\;\; f_i \in C^{0}(U, X)
\end{equation}
We assume that maps $f_1,\ldots, f_n$ are unordered. These maps are not supposed to be distinct. We abuse notation and denote elements of $TCor$ and topological correspondences by the same symbols $F, G, \ldots$
\end{definition}

If $f_1(y), \ldots, f_n(y)$ are distinct for any $y\in Y$, then the projection $F \rightarrow Y$ is some $n$-covering (not necessarily trivial).

\begin{lemma}\label{corrsheafbij}
There is a bijection between topological correspondences of order $n$ and elements of $TCor_n(Y, X)$.
\end{lemma}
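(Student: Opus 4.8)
The plan is to unwind the definition of $TCor_n(Y,X)=Sheaf\bigl(C^0(\cdot,X^n)/\Sigma_n\bigr)(Y)$ through the explicit description of the sheafification and match it term by term with Definition \ref{sympcorr}. Recall (see \cite{sheaves1}, and Theorem \ref{theoremsheafification}) that for the presheaf $\mathcal{P}=C^0(\cdot,X^n)/\Sigma_n$ on $Y$ the double plus-construction gives the following concrete form: an element of $Sheaf(\mathcal{P})(Y)$ is represented by a pair $(\{U_\alpha\},\{s_\alpha\})$, where $\{U_\alpha\}$ is an open cover of $Y$ and $s_\alpha\in\mathcal{P}(U_\alpha)=C^0(U_\alpha,X^n)/\Sigma_n$ are local sections having the same germ at every point of each overlap $U_\alpha\cap U_\beta$; two such matching families represent the same element precisely when they agree on a common refinement. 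Thus each $s_\alpha$ is an unordered $n$-tuple $\{f_1^\alpha,\ldots,f_n^\alpha\}$ of continuous maps $U_\alpha\to X$, and the whole datum is a compatible system of such local tuples. I would then build a bijection by constructing maps in both directions and checking they are mutually inverse.

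For the forward map $\Phi$, start from a matching family $(\{U_\alpha\},\{s_\alpha\})$ with $s_\alpha=\{f_1^\alpha,\ldots,f_n^\alpha\}$ and set
\begin{equation*}
F=\bigcup_\alpha\bigcup_{i=1}^{n}\mathrm{graph}(f_i^\alpha)\subset Y\times X,
\end{equation*}
equipped with the local tuples $s_\alpha$. The preceding Lemma already shows that on each $U_\alpha$ the subset $F$ satisfies the defining relation $(\ref{corrdef})$, so $F$ is a topological correspondence of order $n$; the germ-matching condition on overlaps guarantees that $F$, together with its local multiplicity data, is independent of the chosen representative of the sheafification class.

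For the backward map $\Psi$, start from a topological correspondence $F$ of order $n$. By Definition \ref{sympcorr} every $y\in Y$ has a neighborhood $U_\alpha$ carrying an unordered tuple $\{f_1^\alpha,\ldots,f_n^\alpha\}$ realizing $\pi^{-1}(U_\alpha)$; these are local sections $s_\alpha\in\mathcal{P}(U_\alpha)$, and the $U_\alpha$ cover $Y$. One then verifies that on $U_\alpha\cap U_\beta$ the tuples $s_\alpha$ and $s_\beta$, which both describe the same fibered set $F|_{U_\alpha\cap U_\beta}$, have equal germs, so that $(\{U_\alpha\},\{s_\alpha\})$ is a matching family and defines an element of $TCor_n(Y,X)$. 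Finally $\Phi\circ\Psi=\mathrm{id}$ and $\Psi\circ\Phi=\mathrm{id}$ follow by comparing underlying subsets and local tuples.

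The hard part will be the germ-agreement step in $\Psi$. The underlying subset of $Y\times X$ does \emph{not} by itself determine the unordered tuple wherever a fiber has fewer than $n$ points: already a simple graph and the doubled tuple $\{f,f\}$ share the same support, and for $n\geq 3$ one has genuinely distinct tuples $\{f,f,g\}$ and $\{f,g,g\}$ with identical support. Consequently the bijection can hold only if a "topological correspondence of order $n$" is read as carrying its local multiplicity data — the germ of the unordered $n$-tuple at each point — rather than the bare subset $F$, which is exactly what the remark preceding the definition intends by "count graphs with multiplicities." With that reading the germ-agreement on overlaps is literally the sheafification's matching condition, and the two descriptions coincide; the care needed is to define equality of correspondences as agreement of germs on a common refinement, so that it matches the equivalence used in the plus-construction, and to confirm that the neighborhoods furnished by Definition \ref{sympcorr} form an admissible cover on which the presheaf sections are genuinely defined.
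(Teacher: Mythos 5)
Your proposal is correct and follows essentially the same route as the paper: both directions are built by passing between local unordered $n$-tuples and the glued subset of $Y\times X$, with the sheaf/matching condition doing the work on overlaps. The one place you go beyond the paper is your closing observation that for fixed $n\geq 3$ distinct tuples such as $\{f,f,g\}$ and $\{f,g,g\}$ share the same underlying subset, so the bijection only holds if a correspondence is read as carrying its local multiplicity data; the paper acknowledges this issue in the surrounding discussion (``count graphs with multiplicities'') but its proof silently assumes it by saying the maps are ``provided by $F$,'' so your explicit caveat is a genuine and worthwhile sharpening rather than a deviation.
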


\begin{proof}
Let $F \subset Y \times X$ be a topological correspondence. By definition, for any small  $U \subset Y$ we have $F|_U = \cup_{y \in Y}\cup_{i=1}^n(y, f_i(y))$. We see that $\{f_1, \ldots, f_n\} \in C^0(U, X^n)/\Sigma_n$. Since these unordered maps are provided by $F$, we get that they coincide on the intersections of open subsets. Gluing these elements for all open subsets we obtain an element of $TCor_n(Y, X)$.

Let $F \in TCor_n(Y, X)$. For any small $U \subset Y$ we have $F|_U = \{f_1, \ldots, f_n\}$.  We define $F|_{U} = \cup_{y \in Y}\cup_{i=1}^n(y, f_i(y)$. Since $F\in TCor_n(Y, X)$ is a sheaf, we see that subsets of $F|_{U} \subset Y \times X$ coincide on the intersections of $U \subset Y$.
\end{proof}

\noindent
Define
\begin{equation*}
\widetilde{TCor}(Y, X) = \coprod_{n=0}TCor_n(Y, X),
\end{equation*}
where $\coprod$ is the disjoint union and $TCor_0(Y, X)$ is one-element set (we assume that $X^0$ is a point).

Let $(X, e)$ be a based nice topological space. We would like to define infinite symmetric product $SP(X, e)$ (see Section \ref{motivationdold}) in terms of sheaves. Let $F \in SCor_{n+1}(Y, X)$, $G \in SCor_n(Y, X)$ be correspondences and $U \subset Y$ be a small open subset such that $F|_U = \{f_1, \ldots, f_{n+1}\}$ and $G|_U = \{g_1, \ldots, g_n\}$.  We say that $F$ and $G$ are equivalent if
\begin{equation*}
f_{n+1} = e, \quad \{f_1, \ldots, f_n\} = \{g_1, \ldots, g_n\}
\end{equation*}
for any small $U \subset Y$. In other words, $F$ is equivalent to $G$ if they differ by the constant correspondence $Y \times e$. We take quotient by this equivalence relation and define $\widetilde{TCor}(Y, X)/\sim$. We get a presheaf on any $Y$ by assigning
\begin{equation*}
U \rightarrow \widetilde{TCor}(U, X)/\sim, \quad \text{ to each open $U \subset Y$}.
\end{equation*}
Let us show that $\widetilde{TCor}(\cdot, X)/\sim$ is not a sheaf on $Y$. We can glue sections on finite number of open subsets, but we have a problem with gluing infinitely many sections. Consider infinite number of open subsets $U_1, U_2 \ldots \subset Y$ such that $\cup U_i = Y$ and $U_i \cap U_j \neq \emptyset$ if and only if $|i-j| \leqslant 1$ (see the picture below). Define $F_n \in \widetilde{TCor}(U_n, X)/\sim \;$ as follows
\begin{equation*}
F_n = \{f_{n,1}, \ldots, f_{n,n}\} \in \widetilde{TCor}(U_n, X)/\sim, \;\;\; f_{n, i} \not\equiv e \;\; \text{on} \;\; U_n \;\; \text{for any $i$}.
\end{equation*}
Assume that $f_{n,n} \equiv e$ on $U_{n-1} \cap U_n$, but not on $U_n$. Suppose that $F_{n-1} \sim F_{n}$ on $U_{n-1} \cap U_{n}$. We can glue $F_{n-1}$ and $F_{n}$ to get a correspondence on $U_{n-1} \cup U_{n}$. Let us glue all the topological correspondences $F_n$ and denote the final element by $F$. Since $F$ can not be locally represented by the same number of maps (need more and more maps when the number $n$ increases), we get that $F \notin \widetilde{TCor}(Y, X)/\sim$. So, $\widetilde{TCor}(\cdot, X)/\sim$ is not a sheaf and is just a presheaf.

Consider the sheafification
\begin{equation*}
TCor(Y, X) = Sheaf(\widetilde{TCor}(\cdot, X)/\sim)(Y).
\end{equation*}

\begin{figure}[h]
\centering
\includegraphics[width=0.58\linewidth]{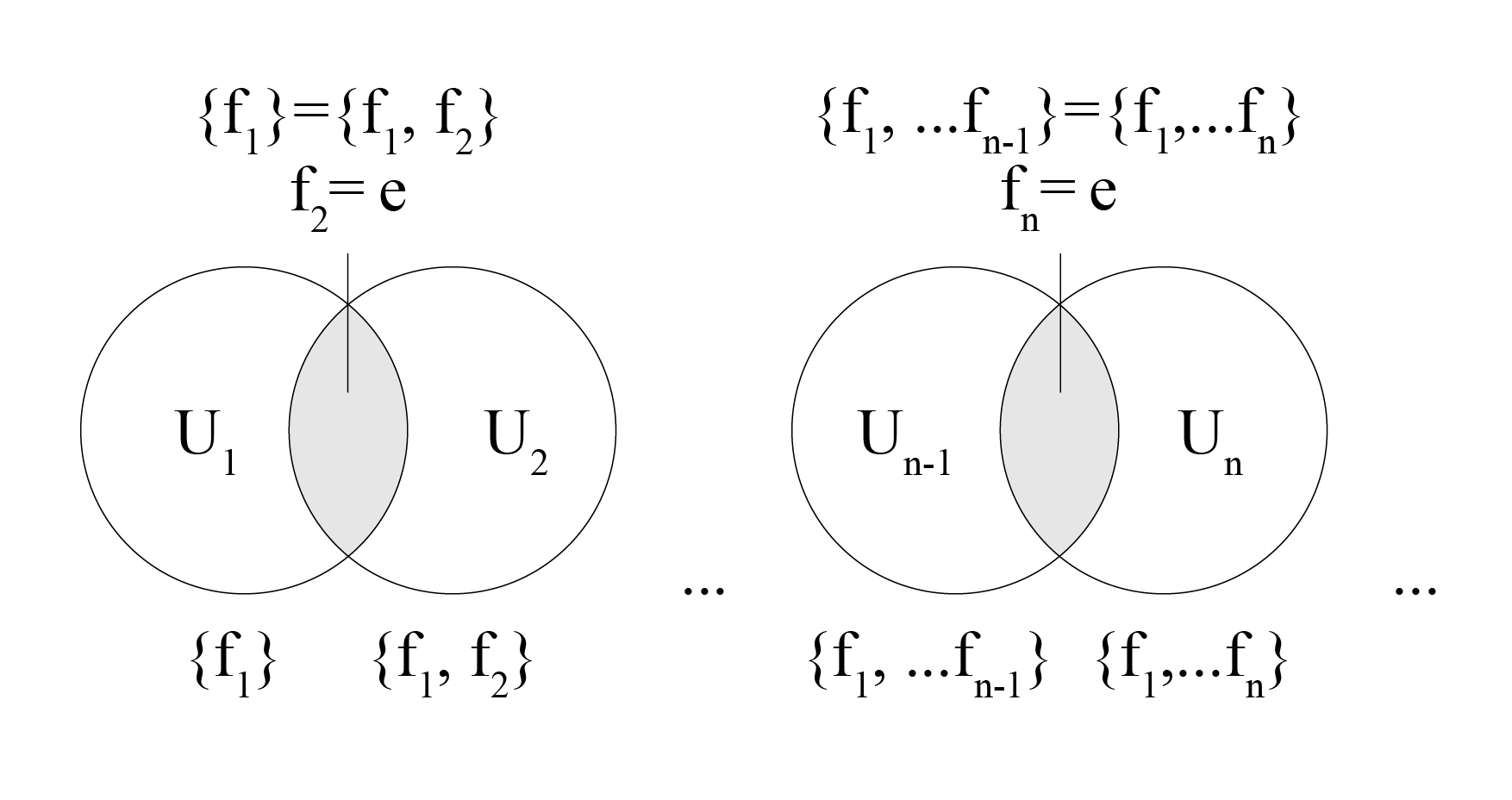}
\end{figure}

Consider an arbitrary element $F \in TCor(Y, X)$. Let $y \in Y$ be an arbitrary point. Then there exists a neighborhood $U \subset Y$ of $y$ and $n=n(y)$ (depends on $y$) maps
\begin{equation*}
f_1, \ldots, f_n: U \rightarrow X, \quad f_1, \ldots, f_n \not\equiv e
\end{equation*}
such that  $F|_U = \{f_1, \ldots, f_n\}$. The element $F$ provides a well defined map
\begin{equation*}
\begin{gathered}
f: Y \rightarrow SP(X, e) \\
f(y) = \{\ldots, e, f_1(y), \ldots, f_n(y), e, \ldots\}.
\end{gathered}
\end{equation*}

In Section $\ref{motivationdold}$ we constructed two chain complexes using $SP(X, e)$ and $\coprod_{k=1}SP^k(X)$. We already constrcuted an analogue of $SP(X, e)$ in terms of sheaves. The analogue of the union can be constructed in a similar way.

Let $U \subset Y$ be an open subset. Then, the assignment
\begin{equation*}
U \rightarrow \coprod_{k=1}TCor_k(U, X)
\end{equation*}
defines a presheaf on $Y$. The following simple example shows that this is not a sheaf. Let $U_1, U_2 \subset Y$ be open subsets such that $U_1 \cap U_2 = \emptyset$. Consider $F \in \coprod_{k=1}TCor_k(U_1, X)$ and $G \in \coprod_{k=1}TCor_k(U_2, X)$, where $n \neq k$. There is no element $H \in \coprod_{k=1}TCor_k(U_1 \cup U_2, X)$ such that $H|_{U_1} = F$ and $H|_{U_2} = G$, i.e. we can not glue elements $F$ and $G$.

We sheafify $\coprod_{k=1}TCor_k(\cdot, X)$ and note that sheafifed $\coprod_{k=1}TCor_k(Y, X)$ is an analogue of $Map(Y, \coprod_{k=1}SP^k)$ in terms of sheaves.

\vspace{0.1in}

We showed in this section that maps to symmetric products can be defined as global sections of some sheaf. In fact, the same technique can be applied to define symmetric products in the categories $\mathcal{S}\mathcal{S}ymp$ and $\mathcal{J}\mathcal{S}ymp$. As soon as we have the symmetric products, we can argue as in Section $\ref{motivationdold}$ to define (co)homology groups of symplectic manifolds.

\subsection{Symmetric products in $\mathcal{S}\mathcal{S}ymp$ and symplectic correspondences $SCor$}\label{sympcorrsheaves}

In this section we work with the category $\mathcal{S}\mathcal{S}ymp$. In the beginning of this section we argue as in Section $\ref{topcorr}$.

Recall that the symmetric products have singularities and are not symplectic manifolds. We argue as in Section $\ref{sheavesonsymp2}$ and represent symplectic manifolds as sheaves on $\mathcal{S}\mathcal{S}ymp$.

Let $X = (X, \omega_X)$, $Y = (Y, \omega_Y)$ be symplectic manifolds. We assume that $Y \times X = (Y \times X, \; \omega_Y \oplus \omega_X)$ and $X^n = (X^n, \oplus^n\omega_X)$, where $X^n$ is the $n$th power of $X$ and $n\geqslant 1$. Consider a symplectic embedding
\begin{equation*}
f: Y \rightarrow X^n, \quad f(y) = (f_1(y), \ldots, f_n(y)).
\end{equation*}
The symmetric group $\Sigma_n$ acts by symplectomorphisms on $X^n$ (permuting the factors) and as a result acts on $\mathcal{S}\mathcal{S}ymp(Y, X^n)$. The quotient $\mathcal{S}\mathcal{S}ymp(Y, X^n)/\Sigma_n$ consists of $n$ unordered maps $\{f_1, \ldots, f_n\}$ from $Y$ to $X$ such that
\begin{equation*}
f_1^{*}\omega_X + \ldots + f_n^{*}\omega_X = \omega_Y.
\end{equation*}
Two elements $\{f_1, \ldots, f_n\}, \{g_1, \ldots, g_n\} \in \mathcal{S}\mathcal{S}ymp(Y, X^n)/\Sigma_n$ are equal if
\begin{equation*}
\{f_1(y), \ldots, f_n(y) \} = \{ g_1(y), \ldots, g_n(y)  \} \;\;\; \text{for any $y \in Y$},
\end{equation*}
where the equality is equality of unordered sets. Let $h \in \mathcal{S}\mathcal{S}ymp(Z, Y)$ be a symplectic embedding. We define
\begin{equation*}
\begin{gathered}
h^{*}: \mathcal{S}\mathcal{S}ymp(Y, X^n)/\Sigma_n \rightarrow \mathcal{S}\mathcal{S}ymp(Z, X^n)/\Sigma_n, \\
h^{*}\{f_1, \ldots, f_n\} = \{f_1 \circ h, \ldots, f_n \circ h\}.
\end{gathered}
\end{equation*}
Since
\begin{equation*}
(f_1 \circ h)^{*}\omega_X + \ldots + (f_n \circ h)^{*}\omega_X = h^{*}\omega_Y = \omega_Z
\end{equation*}
and $(h \circ h')^{*} = (h')^{*} \circ h^{*}$ for any $h' \in \mathcal{S}\mathcal{S}ymp(Z', Z)$, we get a presheaf of sets on the category $\mathcal{S}\mathcal{S}ymp$
\begin{equation*}
\mathcal{S}\mathcal{S}ymp^{op} \rightarrow \mathcal{S}et, \quad Y \rightarrow \mathcal{S}\mathcal{S}ymp(Y, X^n)/\Sigma_n.
\end{equation*}
Arguing as in Section $\ref{topcorr}$, we can show that this presheaf is not a sheaf. We sheafify this presheaf (see Theorem $\ref{theoremsheafification}$) and denote
\begin{equation*}
SCor_n(Y, X) = Sheaf(\mathcal{S}\mathcal{S}ymp(\cdot, X^n)/\Sigma_n)(Y).
\end{equation*}

As in the previous section, we can show that any element of $SCor_n(Y, X)$ defines a subset of $Y \times X$. If $F \in SCor_n(Y, X)$, then any point $y \in Y$ has a small open $U \subset Y$ such that $F|_U = \{f_1, \ldots, f_n\}$. We define
\begin{equation*}
\bigcup_{y \in Y} \bigcup_{i=1}^n(y,  f_i(y)).
\end{equation*}
As before, $SCor_n(Y, X)$ has more geometric definition.

\begin{definition}\label{sympcorr}
Let $F \subset Y \times X$ be a subset and $\pi: F \rightarrow Y$ be the projection. We say that $F$ is a symplectic correspondence of order $n$ between $Y$ and $X$ if any point of $Y$ has a neighborhood $U \subset Y$ and $n$ maps $f_i: U \rightarrow X$ such that
\begin{equation}\label{corrdef}
\begin{gathered}
\pi^{-1}(U) = \bigcup\limits_{i=1}^n(U, f_i(U)), \;\;\;\; f_i \in C^{\infty}(U, X)\\
f_1^{*}\omega_X + \ldots + f_n^{*}\omega_X = \omega_Y.
\end{gathered}
\end{equation}
\end{definition}

If $f_1(y), \ldots, f_n(y)$ are distinct for any $y\in Y$, then the projection $F \rightarrow Y$ is an $n$-covering.

\begin{lemma}
There is a bijection between symplectic correspondences of order $n$ and elements of $SCor_n(Y, X)$
\end{lemma}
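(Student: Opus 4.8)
The plan is to follow the proof of the topological bijection in Lemma \ref{corrsheafbij}, with the one extra ingredient that the symplectic condition of Definition \ref{sympcorr} is exactly what makes the local data an honest element of $\mathcal{S}\mathcal{S}ymp(U, X^n)/\Sigma_n$. I would produce two mutually inverse assignments and then check that they are inverse, which is formal once both directions are in place.

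First I would go from a symplectic correspondence $F \subset Y \times X$ to an element of $SCor_n(Y, X)$. By definition each $y \in Y$ has a neighborhood $U$ and unordered maps $f_1, \dots, f_n \in C^\infty(U, X)$ with $\pi^{-1}(U) = \bigcup_{i=1}^n (U, f_i(U))$ and $f_1^{*}\omega_X + \dots + f_n^{*}\omega_X = \omega_Y$. The key observation is that the last identity reads $(f_1, \dots, f_n)^{*}(\oplus^n \omega_X) = \omega_Y$, and since $\omega_Y$ is nondegenerate the product map $(f_1, \dots, f_n) : U \to X^n$ is a symplectic immersion; shrinking $U$ if necessary it is a symplectic embedding, so $\{f_1, \dots, f_n\}$ is a well-defined element of $\mathcal{S}\mathcal{S}ymp(U, X^n)/\Sigma_n$. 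Because these unordered sets are simply read off from the single subset $F$, they agree on overlaps $U \cap U'$, so they form a compatible family of local sections of the presheaf $\mathcal{S}\mathcal{S}ymp(\cdot, X^n)/\Sigma_n$; by definition of sheafification such a family is precisely a global section of $SCor_n(Y, X)$.

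Conversely, given $F \in SCor_n(Y, X)$, over each small $U$ we have $F|_U = \{f_1, \dots, f_n\} \in \mathcal{S}\mathcal{S}ymp(U, X^n)/\Sigma_n$, and I would assign to it the subset $\bigcup_{i=1}^n (U, f_i(U)) \subset U \times X$. The sheaf axioms guarantee that these local subsets agree on overlaps, so they glue to a single subset $F \subset Y \times X$, and membership in $\mathcal{S}\mathcal{S}ymp(U, X^n)$ forces the required local normalization $\sum_i f_i^{*}\omega_X = \omega_Y$; hence $F$ is a symplectic correspondence of order $n$. The two assignments are visibly inverse to one another, exactly as in Lemma \ref{corrsheafbij}.

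The main point to be careful about, and the only place where the argument genuinely differs from the topological case, is the passage between the analytic datum ``$n$ smooth maps with $\sum_i f_i^{*}\omega_X = \omega_Y$'' and the categorical datum ``a point of $\mathcal{S}\mathcal{S}ymp(U, X^n)/\Sigma_n$'', because morphisms in $\mathcal{S}\mathcal{S}ymp$ are symplectic embeddings rather than arbitrary tuples of maps. The nondegeneracy of $\omega_Y$ is what upgrades the tuple to an immersion, and immersions are local embeddings, so after replacing $U$ by a smaller neighborhood the local data really does define a morphism in $\mathcal{S}\mathcal{S}ymp$. Everything else is the sheaf-theoretic gluing already established for $TCor_n$.
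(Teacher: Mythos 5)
Your proof is correct and follows the same route as the paper, which simply says the argument mimics the topological case of Lemma \ref{corrsheafbij}. The one point you add beyond the paper's proof --- that nondegeneracy of $\omega_Y$ upgrades the tuple $(f_1,\dots,f_n)$ satisfying $\sum_i f_i^{*}\omega_X=\omega_Y$ to an immersion, hence a symplectic embedding into $X^n$ after shrinking $U$, so that the local data really lies in $\mathcal{S}\mathcal{S}ymp(U,X^n)/\Sigma_n$ --- is exactly the step the paper leaves implicit, and it is worth spelling out.
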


\begin{proof}
The proof mimics the proof of Lemma $\ref{corrsheafbij}$.
\end{proof}

We denote the symplectic correspondence associated to $F \in SCor_n$ by the same symbol $F$.

If  $F \in SCor_n(Y, X)$ is defined by unordered maps $f_1, \ldots, f_n$ from $Y$ to $X$ (defined globally on $Y$, not locally), then we denote
\begin{equation*}
F = \{f_1, \ldots, f_n\}.
\end{equation*}

\begin{example}
We see that $\mathcal{S}\mathcal{S}ymp(Y, X^n)/\Sigma_n \subset SCor_n(Y, X)$. Geometrically, elements of $\mathcal{S}\mathcal{S}ymp(Y, X^n)/\Sigma_n$ are union of $n$ graphs of maps from $Y$ to $X$.
\end{example}

The example below shows that we need to be careful thinking about elements of $SCor_n(Y, X)$ as subsets of $Y \times X$.

\begin{example}
Let $D \subset \mathbb{C}$ be the unit disk of the complex plane, where $\mathbb{C}$ is endowed by the standard symplectic form $\omega_{\mathbb{C}}$. Consider
\begin{equation*}
f_1, f_2: \; D \rightarrow D, \quad f_1(z) = f_2(z) = \frac{z}{\sqrt{2}}.
\end{equation*}
We see that $f_1^{*}\omega_{\mathbb{C}} + f_2^{*}\omega_{\mathbb{C}} = \omega_{\mathbb{C}}$. This shows that $\{f_1, f_2\} \in SCor_2(D, D)$. The graphs of $f_1$ and $f_2$ coincide and the subset $F \subset D \times D$ corresponding to $\{f_1, f_2\}$ is a graph of $f_1$.
\end{example}

The example above shows that if we want o replace $SCor_n$ by subsets, then we need to consider the subsets with `multiplicities' (this word does not have a rigorous meaning). That is the reason to fix $n$ in the definition of symplectic correspondences.

\begin{example}\label{corrpoint}
We assume that a point $pt$ is a symplectic manifold and $2$-form on $pt$ is equal to zero. Elements of $SCor_n(pt, X)$ are unordered set of points $\{x_1, \ldots, x_n\}$, where $x_i \in X$.
\end{example}

Let us define isotropic correspondences. Since the construction is very similar to the construction above, we give less details. We define a presheaf $Iso_{X^n}$ on $\mathcal{S}\mathcal{S}ymp$ such that $Iso_{X^n}(Y)$ is the following set of unordered maps
\begin{equation*}
\begin{gathered}
f_1, \ldots, f_n \in C^{\infty}(Y, X), \quad f_1^{*}\omega_X + \ldots + f_n^{*}\omega_X = 0.
\end{gathered}
\end{equation*}
Note that maps $f_1, \ldots, f_n$ are unordered. We sheafify $Iso_{X^n}$ and denote
\begin{equation*}
SCor^0_n(Y, X) = Sheaf(Iso_{X^n})(Y).
\end{equation*}

\begin{example}
Unordered constant maps $g_i : Y \rightarrow x_i$ define an isotropic correspondence $\{g_1, \ldots, g_n\} \in SCor^0_n(Y, X)$.
\end{example}

The goal is to consider symplectic correspondences up to isotropic correspondences. Consider $F \in Scor_n(Y, X), \;$ $G \in Scor^0_k(Y, X)$ and small open $U \subset Y$ such that  $F|_U = \{f_1, \ldots, f_n\}$ and $G|_{U} = \{g_1, \ldots, g_k\}$. Note that
\begin{equation*}
f_1^{*}\omega_X + f_n^{*}\omega_X + g_1^{*}\omega_X + \ldots + g_k^{*}\omega_X = \omega_Y + 0 = \omega_Y.
\end{equation*}

We define $F + G \in SCor_{n+k}(Y, X)$ as a symplectic correspondence such that
\begin{equation*}
(F + G)|_U = \{f_1, \ldots, f_n, g_1, \ldots, g_k \}.
\end{equation*}
Consider
\begin{equation*}
\widetilde{SCor}(Y, X) = \coprod_{n=0} SCor_n(Y, X).
\end{equation*}
We say that two symplectic correspondences $F_1\in SCor_n(Y,X)$ and $F_2 \in SCor_{n+k}(Y, X)$ are equivalent if there exists $G \in SCor^0_k(Y, X)$ such that
\begin{equation*}
F_2 = F_1 + G.
\end{equation*}
Let us consider $\widetilde{SCor}(\cdot, X))/\sim$, where $\sim$ is the mentioned equivalence relation.

\begin{example}
Assume that $f: Y \rightarrow X$ is a symplectic embedding and $x_1, x_2 \in X$ are considered as  constant maps from $Y$ to $X$. Note that $\{x_1, x_2\} \in SCor^0_2(Y, X)$. Then, $\{f, x_1, x_2\} \in Scor_3(Y, X)$ and $\{f, x_1, x_2\} = \{f\}$ as elements of $\widetilde{SCor}(Y, X))/\sim$.
\end{example}

\begin{lemma}\label{presheaflemmasympcor}
\begin{equation*}
Y \rightarrow (\widetilde{SCor}(Y,X))/\sim.
\end{equation*}
is a presheaf on $\mathcal{S}\mathcal{S}ymp$.
\end{lemma}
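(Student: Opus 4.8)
The plan is to exhibit $Y \mapsto (\widetilde{SCor}(Y,X))/\sim$ as a contravariant functor $\mathcal{S}\mathcal{S}ymp^{op} \to \mathcal{S}et$. The underlying sets are already constructed, so the entire task is to produce a pullback $h^{*}$ for every symplectic embedding $h \in \mathcal{S}\mathcal{S}ymp(Z,Y)$ and to verify the functor axioms. I would start from the graded pieces: by the preceding lemmas each $SCor_n(\cdot,X)$ and each $SCor^0_k(\cdot,X)$ is a sheaf on $\mathcal{S}\mathcal{S}ymp$, hence in particular a contravariant functor, so the pullbacks $h^{*}\colon SCor_n(Y,X)\to SCor_n(Z,X)$ and $h^{*}\colon SCor^0_k(Y,X)\to SCor^0_k(Z,X)$ already exist and are functorial. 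Taking the disjoint union over $n$ gives a functorial map $h^{*}\colon \widetilde{SCor}(Y,X)\to\widetilde{SCor}(Z,X)$ on the ungraded object, before passing to $\sim$.

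The key step is to show that this $h^{*}$ descends to the quotient by $\sim$, and for this I would argue on local sections. Any $F\in SCor_n(Y,X)$ is, on a small open $U\subset Y$, of the form $F|_U=\{f_1,\dots,f_n\}$, and the pullback is given locally by $(h^{*}F)|_{h^{-1}(U)}=\{f_1\circ h,\dots,f_n\circ h\}$. Two facts follow from this local description. First, $h^{*}$ commutes with the addition of correspondences: if $F\in SCor_n(Y,X)$, $G\in SCor^0_k(Y,X)$ and $(F+G)|_U=\{f_1,\dots,f_n,g_1,\dots,g_k\}$, then pulling back term by term yields $h^{*}(F+G)=h^{*}F+h^{*}G$. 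Second, $h^{*}$ preserves isotropy: if $G|_U=\{g_1,\dots,g_k\}$ with $g_1^{*}\omega_X+\dots+g_k^{*}\omega_X=0$, then $(g_1\circ h)^{*}\omega_X+\dots+(g_k\circ h)^{*}\omega_X=h^{*}(g_1^{*}\omega_X+\dots+g_k^{*}\omega_X)=h^{*}(0)=0$, so $h^{*}G\in SCor^0_k(Z,X)$.

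Combining the two observations, suppose $F_2=F_1+G$ with $G\in SCor^0_k(Y,X)$, that is $F_1\sim F_2$. Then $h^{*}F_2=h^{*}F_1+h^{*}G$ with $h^{*}G\in SCor^0_k(Z,X)$, so $h^{*}F_1\sim h^{*}F_2$. Hence $h^{*}$ carries the generating relation of $\sim$ to itself, and therefore factors through a well-defined map $\bar{h}^{*}\colon (\widetilde{SCor}(Y,X))/\sim\ \to (\widetilde{SCor}(Z,X))/\sim$. The functor axioms $(id_Y)^{*}=id$ and $(h\circ h')^{*}=(h')^{*}\circ h^{*}$ then pass to the quotient from the corresponding identities on $\widetilde{SCor}$, which themselves hold because each $SCor_n(\cdot,X)$ is already a functor.

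I expect the only genuinely delicate point to be the well-definedness of $h^{*}$ on the sheafified objects: a general element of $SCor_n(Y,X)$ is not globally a collection of $n$ maps but only locally so, so the local formulas above must be checked to glue into a consistent global section. This consistency is precisely the functoriality of the sheafification functor of Theorem~\ref{theoremsheafification}, applied to the pullback already defined on the presheaf $\mathcal{S}\mathcal{S}ymp(\cdot,X^n)/\Sigma_n$; once that is invoked, all remaining verifications are the routine term-by-term computations sketched above.
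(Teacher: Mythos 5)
Your proposal is correct and follows essentially the same route as the paper: define $h^{*}$ by the local formula $\{f_1\circ h,\dots,f_n\circ h\}$ on small opens, check that $h^{*}$ commutes with the sum of correspondences and sends isotropic correspondences to isotropic correspondences so that it descends to the quotient by $\sim$, and then verify $(h\circ h')^{*}=(h')^{*}\circ h^{*}$. Your extra framing of the graded pieces $SCor_n(\cdot,X)$ as already-functorial sheafifications is a harmless repackaging of the same local computation the paper carries out directly.
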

\begin{proof}
We need to show that $\widetilde{SCor}(Y,X)/\sim$ is a functor from $\mathcal{S}\mathcal{S}ymp^{op}$ to the category of sets. Consider $h \in \mathcal{S}\mathcal{S}ymp(Z, Y)$ and a symplectic correspondence $F$ between $Y$ and $X$. Let $z \in Z$ be an arbitrary point and let $V \subset Z$ be a small neighborhood of $z$. There exists a neighborhood $U \subset Y$ of $h(z)$ such that $F|_U = \{f_1, \ldots, f_n\}$. If $V$ is small, then $h(V) \subset U$. We define $h^{*}F|_V = \{f_1 \circ h, \ldots, f_n \circ h\}$. Consider $F + G$, where $G \in SCor^0_k(Y, X)$. Then, $(F + G)|_U =  \{f_1, \ldots, f_n, g_1, \ldots, g_k\}$ and  we get
\begin{equation*}
\begin{gathered}
h^{*}(F + G)|_U = \{f_1 \circ h, \ldots, f_n \circ h, g_1 \circ h, \ldots, g_k \circ h\} = \\
\{f_1 \circ h, \ldots, f_n \circ h\} + \{g_1 \circ h, \ldots, g_k \circ h\}   \Rightarrow \\
h^{*}(F + G) =  h^{*}F + h^{*}G \sim h^{*}F, \quad h^{*}G \in SCor^0_k(Z, X).
\end{gathered}
\end{equation*}
This means that $h^{*}$ is well-defined on $\widetilde{SCor}(Y, X)/\sim$ and we have a map
\begin{equation*}
h^{*}: \widetilde{SCor}(Y, X)/\sim \;\;\; \rightarrow \;\;\;  \widetilde{SCor}(Z, X)/\sim.
\end{equation*}
If $h' \in \mathcal{S}\mathcal{S}ymp(Z', Z)$,  then in the similar way we define $(h')^{*}$ and $(h \circ h')^{*}$. Moreover, the definition shows that $(h \circ h')^{*} = (h')^{*} \circ h^{*}$. This proves that $\widetilde{SCor}(\cdot, X)$ is a presheaf on $\mathcal{S}\mathcal{S}ymp$.

\end{proof}

Let us show that $\widetilde{SCor}(Y,X))/\sim$ is not a sheaf. In fact, we do not need to show it and we can just sheafify the presheaf. Sheafification of a sheaf gives us the same sheaf. However, it is useful to prove that the presheaf is not a sheaf because it can help to undestand elements of the resulting sheaf.

We argue as in Section $\ref{topcorr}$, we consider infinite number of open subsets $U_1, U_2 \ldots \subset Y$ such that $\cup U_i = Y$ and $U_i \cap U_j \neq \emptyset$ if and on $|i-j| \leqslant 1$. Define $F_n \in \widetilde{SCor}(U_n, X)/\sim \;$ as follows:
\begin{equation*}
\begin{gathered}
F_n = \{f_{n,1}, \ldots, f_{n,n}\} \in \widetilde{SCor}(U_n, X)/\sim, \\
\text{$f_{n, i}$ is not isotropic on $U_n$  for any $i$}.
\end{gathered}
\end{equation*}
Assume that $f_{n,n}$ is isotropic on $U_{n-1} \cap U_n$, but not on $U_n$. Suppose that $F_{n-1} \sim F_{n}$ on $U_{n-1} \cap U_{n}$ (they are equal up to isotropic correspondence). We can glue $F_{n-1}$ and $F_{n}$ to get a correspondence on $U_{n-1} \cup U_{n}$. Let us glue all the topological correspondences $F_n$ and denote the final element by $F$. Since there is no number $m$ such that $F$ is defined by $m$ maps on each $U_n$, we get that $F$ does not belong to $\widetilde{SCor}(Y,X))/\sim$.

\begin{definition}
We sheafify (see Theorem $\ref{theoremsheafification}$) the presheaf $\widetilde{SCor}(\cdot, X)/\sim$ and denote
\begin{equation*}
SCor(Y, X) = Sheaf(\widetilde{SCor}(\cdot, X)/\sim)(Y).
\end{equation*}
\end{definition}

Let $F \in SCor(Y, X)$ be an arbitrary element. For any $y \in Y$ there is a neighborhood $U$ of $y$ and a number $n = n(y)$ (depends on $y$) such that
\begin{equation*}
F|_U = \{ f_1, \ldots, f_n \} \;\;\; \; \text{modulo isotropic correspondence}.
\end{equation*}
We say that a correspondence is bounded if there exists $N$ such that $n(y) < N$ for any $y$.

\begin{definition}\label{boundedcorr}
We denote the set of bounded correspondences by $SCor_b(Y, X)$.
\end{definition}
Note that there is a filtration on $SCor_b(Y, X)$. We define
\begin{equation}\label{filtration1}
SCor(Y, X; r) = \{\text{bounded correspondences with $n(y) \leqslant e^r$ for any $r$}\}
\end{equation}
The reason to consider $e^r$ is explained in Section $\ref{operationcorr}$. Note that $SCor(Y, X; r)$ is symplectic analogue of maps from $Y$ to $SP^{e^r}(X)$. Obviously, for compact $Y$ and $X$ we have $Scor_b(Y, X) = SCor(Y, X)$.

\begin{remark}
As we can see from Section $\ref{topcorr}$, the set $SCor(Y, X)$ is a symplectic analogue of maps $Y \rightarrow SP(X, e)$ and the sheaf $SCor(\cdot, X)$ is analogue of $SP(X,e)$.
\end{remark}

\begin{example}\label{corrpoint}
We discussed above that the set $SCor_n(pt, X)$ consists of unordered set of points $\{x_1, \ldots, x_n\}$. Note that maps $pt \rightarrow x_i$ are isotropic and symplectic correspondences. This means that,
\begin{equation*}
\{x_1, \ldots, x_n\} = \{x_1\} + \{x_2 + \ldots + x_n\} = \{x_1\} \;\;\; \text{as elements of $SCor$}.
\end{equation*}
This shows that $Scor(pt, X)$ consists of one element. It represens the class of points.
\end{example}

\begin{example}
Assume that $dim(Y)>0$. We see that
\begin{equation*}
\mathcal{S}Symp(Y, X^n)/\Sigma_n \subset SCor(Y, X), \quad \text{for any $n$ }.
\end{equation*}
In particular, $\mathcal{S}\mathcal{S}ymp(Y, X) \subset SCor(Y, X)$.
\end{example}

\begin{example}
There is no symplectic embedding of $(X \times X, \omega_X \oplus \omega_X)$ into $(X, \omega_X)$. Let $pr_1$, $pr_2$ be the projections of $X \times X$ onto the first and second factors, respectively.  The identity map of $X \times X$ can be written as $(pr_1, pr_2)$. This means that the unordered pair $\{pr_1, pr_2\}$ belongs to $Cor_2(X \times X, X) \subset SCor(X \times X, X)$.
\end{example}

\begin{example}\label{prodembedd}
If there exists symplectic embedding $f = (f_1, \ldots, f_n) \in \mathcal{S}\mathcal{S}ymp(Y, X^n)$, then $SCor(Y, X) \neq \emptyset$. Indeed, unordered set of maps $\{f_1, \ldots, f_n\} \in \mathcal{S}\mathcal{S}ymp(Y, X^n)/\Sigma_n \subset SCor_n(Y, X)$. As result $f$ belongs to $SCor(Y, X)$.
\end{example}

\noindent
There is the following theorem of Gromov:

\begin{theorem}(\cite[Corollary B, page 87]{Gromov})
If $Y$ is contractible and $dim(Y)<dim(Z)$, then every embedding (not necessarily symplectic) $Y \rightarrow Z$ is isotopic to a symplectic one.
\end{theorem}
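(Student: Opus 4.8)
The plan is to deduce this from Gromov's h-principle for isosymplectic embeddings in positive codimension, which converts the analytic problem of finding a symplectic embedding isotopic to a given smooth embedding into a purely homotopy-theoretic (``formal'') problem. Write $\omega_Y$, $\omega_Z$ for the symplectic forms and let $f_0 \colon Y \hookrightarrow Z$ be the given embedding. The isosymplectic embedding theorem asserts that $f_0$ is isotopic to an embedding $f_1$ with $f_1^{*}\omega_Z = \omega_Y$ as soon as two conditions hold: (i) the cohomological compatibility $f_0^{*}[\omega_Z] = [\omega_Y]$ in $H^2(Y;\mathbb{R})$, and (ii) the existence of a \emph{formal solution}, i.e. a homotopy through fibrewise-injective bundle maps $F_t \colon TY \to TZ$ covering $f_0$ with $F_0 = df_0$ and $F_1$ a fibrewise linear symplectic map ($F_1^{*}\omega_Z = \omega_Y$ pointwise). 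So it suffices to verify (i) and (ii) using only the contractibility of $Y$ and the strict dimension inequality.

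Condition (i) is immediate: since $Y$ is contractible, $H^2(Y;\mathbb{R}) = 0$, hence $[\omega_Y] = 0 = f_0^{*}[\omega_Z]$ and the two classes trivially agree.

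For condition (ii) I would trivialise everything over the contractible base. Both $TY$ and $f_0^{*}TZ$ are trivial bundles, say $TY \cong Y \times \mathbb{R}^{2m}$ and $f_0^{*}TZ \cong Y \times \mathbb{R}^{2n}$ with $2m < 2n$, so a bundle monomorphism covering $f_0$ is just a map $Y \to \mathrm{Mono}(\mathbb{R}^{2m}, \mathbb{R}^{2n})$ into the space of injective linear maps, and the symplectic ones form the nonempty subspace $\mathrm{Sp}(\mathbb{R}^{2m},\mathbb{R}^{2n})$ (it contains the standard linear symplectic inclusion). The space $\mathrm{Mono}(\mathbb{R}^{2m}, \mathbb{R}^{2n})$ is homotopy equivalent to the Stiefel manifold $V_{2m}(\mathbb{R}^{2n})$, which is connected because $2n > 2m$. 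Since $Y$ is contractible, the section $df_0$ is homotopic to a constant map, and that constant can be joined inside the connected space $\mathrm{Mono}(\mathbb{R}^{2m},\mathbb{R}^{2n})$ to any chosen point of $\mathrm{Sp}(\mathbb{R}^{2m},\mathbb{R}^{2n})$; concatenating produces the required homotopy $F_t$ from $df_0$ to an isosymplectic bundle map. Thus a formal solution exists, and the h-principle upgrades it to an actual isotopy of $f_0$ to a symplectic embedding.

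The genuinely hard step is the h-principle of (i)--(ii) itself: the isosymplectic embedding theorem is proved by Gromov's method (convex integration together with a controlled absorption argument correcting the symplectic area defect), and this is where all the analytic content lives, whereas the two verifications above are soft. Two points deserve care. First, the inequality $\dim Y < \dim Z$ (hence codimension $\geq 2$, as both dimensions are even) is essential: in the equidimensional case rigidity phenomena such as Gromov nonsqueezing obstruct any such h-principle, so the hypothesis cannot be dropped. Second, one must ensure the deformation stays an \emph{embedding} rather than merely an immersion; this is again where positive codimension enters, through general-position/Whitney-type arguments that remove the self-intersections potentially created during convex integration, and it is built into the embedding version of Gromov's theorem being invoked.
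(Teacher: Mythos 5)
The paper does not actually prove this statement: it is imported verbatim from Gromov (Corollary B of \emph{Soft and hard symplectic geometry}) and used as a black box to deduce Lemma~\ref{contremb}, so there is no in-paper argument to compare yours against. What you have written is a reconstruction of how the corollary follows from Gromov's isosymplectic embedding h-principle, and the reconstruction is essentially the standard one. Your verification of the two formal hypotheses is correct: contractibility kills the cohomological obstruction since $H^2(Y;\mathbb{R})=0$, and after trivializing $TY$ and $f_0^{*}TZ$ over the contractible base, the space $\mathrm{Mono}(\mathbb{R}^{2m},\mathbb{R}^{2n})\simeq V_{2m}(\mathbb{R}^{2n})$ is $(2n-2m-1)$-connected, so the section $df_0$ is null-homotopic and its value can be joined inside $\mathrm{Mono}$ to a point of the nonempty isosymplectic locus.

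The one step you should not wave at is the codimension. The statement permits $\dim Z - \dim Y = 2$, but the isosymplectic embedding h-principle in the form you invoke (formal solution plus cohomological condition implies genuine isotopy) is standardly stated for codimension at least $4$; the codimension-two case requires an additional hypothesis, the usual one being that the source manifold is open. That hypothesis does hold here: in the paper's categories all manifolds are boundaryless, and a positive-dimensional contractible manifold without boundary cannot be closed (its top mod-2 homology would be nonzero), hence is open, while the zero-dimensional case is trivial. You need to say this explicitly and cite the codimension-two version of the theorem rather than the general one --- especially since your own closing remark correctly identifies low codimension as exactly the regime where such h-principles can fail.
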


This implies the following:

\begin{lemma}\label{contremb}
If $Y$ is contractible and there exists smooth (not necessarily symplectic) embedding of $Y$ into $X^n$, then $SCor(Y, X) \neq \emptyset$ (even if $dim(Y)> dim(X)$). In particular, $SCor(\mathbb{C}, X) \neq \emptyset$, $SCor(D, X) \neq \emptyset$ for any $X$, where $D$ is a ball, polydisk or similar manifold.
\end{lemma}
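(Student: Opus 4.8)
The plan is to reduce the statement to the existence of a genuine symplectic embedding of $Y$ into some power $X^m$, and then to manufacture such an embedding from the given smooth one by invoking the theorem of Gromov quoted just above. The reduction is immediate from Example \ref{prodembedd}: if we can produce some $f = (f_1, \ldots, f_m) \in \mathcal{S}\mathcal{S}ymp(Y, X^m)$, then the unordered tuple $\{f_1, \ldots, f_m\}$ lies in $\mathcal{S}\mathcal{S}ymp(Y, X^m)/\Sigma_m \subset SCor_m(Y, X) \subset SCor(Y, X)$, whence $SCor(Y, X) \neq \emptyset$. So it suffices to build a single symplectic embedding of $Y$ into a power of $X$.

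First I would inflate the target. By hypothesis there is a smooth embedding $g : Y \to X^n$. Fix a point $x_0 \in X$ and, for each $k \geq 0$, set
\[
g_k : Y \to X^{n+k}, \qquad g_k(y) = (g(y), \underbrace{x_0, \ldots, x_0}_{k}).
\]
Each $g_k$ is again a smooth embedding, since it is injective with injective differential and is a homeomorphism onto its image. Because $X$ is symplectic we have $\dim X \geq 2$, so $\dim X^{n+k} = (n+k)\dim X$ exceeds $\dim Y$ once $k$ is large enough; fix such a $k$ and put $m = n + k$. Now $(X^m, \oplus^m \omega_X)$ is a symplectic manifold, $Y$ is contractible, and $\dim Y < \dim X^m$, so Gromov's corollary applies to $g_k : Y \to X^m$ and yields a symplectic embedding $f = (f_1, \ldots, f_m) : Y \to X^m$ in the same smooth isotopy class. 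Combined with the reduction above, this proves $SCor(Y, X) \neq \emptyset$.

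The special cases follow by exhibiting smooth embeddings. For $Y = \mathbb{C} = \mathbb{R}^2$ one embeds $\mathbb{R}^2$ diffeomorphically onto an open ball inside a coordinate chart of $X$ (using $\dim X \geq 2$), giving a smooth embedding $\mathbb{C} \to X = X^1$ to feed into the argument; for a ball or polydisk $D$ one chooses $n$ with $n \dim X \geq \dim D$, embeds $D$ smoothly into a chart of $X^n$, and proceeds identically. The only point requiring care is the dimension bookkeeping: Gromov's theorem needs the \emph{strict} inequality $\dim Y < \dim X^m$, which is exactly the reason for padding the target with the constant point $x_0$, and which explains why the hypothesis $\dim Y > \dim X$ does no harm. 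Everything else is formal, and the genuinely hard input—upgrading a smooth embedding to a symplectic one—is supplied verbatim by the cited corollary of Gromov.
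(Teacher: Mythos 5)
Your proof is correct and follows essentially the same route as the paper, which simply combines Gromov's quoted corollary with Example \ref{prodembedd} in one line. The only addition is your explicit padding of the target by constant factors to guarantee the strict inequality $\dim Y < \dim X^m$ — a detail the paper leaves implicit but which is a worthwhile clarification, since the stated hypothesis only gives $\dim Y \leq n\dim X$.
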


\begin{proof}
Lemma immediately follows from the theorem above and example $\ref{prodembedd}$.
\end{proof}

\begin{lemma}\label{exactcorr}
If $X$ is exact, $Y$ is not exact and $dim(Y) > 0$, then $SCor(Y, X) = \emptyset$.
\end{lemma}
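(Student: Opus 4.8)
The plan is to argue by contradiction: assuming some $F\in SCor(Y,X)$ exists, I would manufacture a global primitive of $\omega_Y$, contradicting the non-exactness of $Y$. The key input is that $X$ being exact means $\omega_X=d\lambda_X$ for a \emph{globally} defined $1$-form $\lambda_X$ on $X$, whereas $Y$ not exact means $[\omega_Y]\neq 0$ in $H^2_{dR}(Y)$ (note $\dim Y>0$ is what makes this hypothesis meaningful, since a $0$-dimensional symplectic manifold carries $\omega_Y=0$ and is automatically exact). As a warm-up that isolates the mechanism, suppose first that $F$ is represented by globally defined unordered maps $\{f_1,\dots,f_n\}\colon Y\to X$ with $f_1^{*}\omega_X+\dots+f_n^{*}\omega_X=\omega_Y$. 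Then $\mu=\sum_i f_i^{*}\lambda_X$ is a global $1$-form on $Y$ with $d\mu=\sum_i f_i^{*}\omega_X=\omega_Y$. Moreover the equivalence relation defining $SCor$ does not spoil this: replacing $F$ by $F+G$ with $G=\{g_1,\dots,g_k\}$ isotropic changes $\mu$ by $\sum_j g_j^{*}\lambda_X$, which is closed because $d\big(\sum_j g_j^{*}\lambda_X\big)=\sum_j g_j^{*}\omega_X=0$. Hence $d\mu=\omega_Y$ regardless of the representative, forcing $[\omega_Y]=0$, a contradiction.

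The general case requires passing from local branches to a global primitive, and this is where exactness of $X$ is used crucially. Since $\lambda_X$ is defined on all of $X$, the form $pr_X^{*}\lambda_X$ is a globally defined $1$-form on $Y\times X$, where $pr_X$ is the projection. I would restrict it to the geometric correspondence $F\subset Y\times X$ and push it forward along the finite-fibered projection $\pi\colon F\to Y$, summing over the branches of $F$ over each point. Locally, where $F|_U=\{f_1,\dots,f_n\}$, this pushforward is exactly $\sum_i f_i^{*}\lambda_X$, which is smooth and invariant under permutation of the branches; consequently the monodromy of the (possibly nontrivial) covering $\pi$ does not obstruct gluing, and the local expressions agree on overlaps because they are computed from one and the same subset $F$. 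This yields a global $1$-form $\mu$ on $Y$, and the defining condition of a symplectic correspondence gives $d\mu=\pi_{*}\big(pr_X^{*}\omega_X\big)=\omega_Y$ locally, hence globally. Therefore $[\omega_Y]=0$, contradicting non-exactness of $Y$, and since $F$ was arbitrary we conclude $SCor(Y,X)=\emptyset$.

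The hard part will be making the pushforward rigorous as a smooth global $1$-form. The number $n(y)$ of non-isotropic branches need not be constant, branches may collide (so $\pi$ is only a covering over a dense open set), and a branch may degenerate to an isotropic one as $y$ varies; each of these could in principle make the fiberwise sum fail to be smooth or make the local representatives only agree up to the isotropic ambiguity built into the $SCor$ equivalence relation. The clean way around this, which I would pursue, is to sum over \emph{all} branches of a fixed geometric representative $F$ (isotropic ones included): the isotropic branches contribute only closed forms, so they do not affect $d\mu=\omega_Y$, while fixing the geometric subset eliminates the labeling ambiguity. Alternatively one restricts to the dense open locus where $\pi$ is an honest covering and checks that $\mu$ extends by continuity across the degeneration loci, which is enough to compute the de Rham class $[\omega_Y]$. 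Establishing this local-to-global smoothness and compatibility is the single technical obstacle; everything else is the formal computation $d\mu=\omega_Y$ together with the observation that exactness of $X$ is precisely what provides the global $\lambda_X$ needed to define $\mu$ in the first place.
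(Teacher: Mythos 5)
Your approach coincides with the paper's: the published proof is exactly your warm-up computation, namely $\alpha_Y|_U=f_1^{*}\alpha_X+\dots+f_n^{*}\alpha_X$ for a local representative $F|_U=\{f_1,\dots,f_n\}$, followed by the bare assertion that this gives a well-defined global $1$-form with $d\alpha_Y=\omega_Y$. You have correctly isolated the only step that carries content, the gluing of the local primitives.

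That step, however, is a genuine gap, and neither of your proposed workarounds closes it. An element of $SCor(Y,X)$ is a section of the sheafification of $U\mapsto\widetilde{SCor}(U,X)/\sim$, so its local representatives are determined only modulo isotropic correspondences; in particular there is no canonical global geometric subset $F\subset Y\times X$ to push forward along (undercutting your first fix), and branch collisions are not the real issue (so the second fix is aimed at the wrong target). Concretely, on an overlap two admissible representatives satisfy $\{f^{\alpha}_i\}+G=\{f^{\beta}_j\}+G'$ with $G,G'$ isotropic, hence $\sum_i(f^{\alpha}_i)^{*}\lambda_X-\sum_j(f^{\beta}_j)^{*}\lambda_X$ is closed but need not vanish or be exact. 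A family of local primitives of $\omega_Y$ differing on overlaps by closed $1$-forms always exists by the Poincar\'e lemma, and the resulting \v{C}ech $1$-cocycle valued in closed $1$-forms is precisely what represents $[\omega_Y]$ in the \v{C}ech--de Rham complex, so no contradiction with non-exactness of $Y$ is obtained unless that cocycle is shown to be trivial. The obstruction is not hypothetical: for $Y=S^2$ covered by the two punctured-sphere charts and $X=\mathbb{R}^2$, symplectic identifications $f_0,f_1$ of the charts with open disks become compatible on the overlap after adding the isotropic pairs $\{f_1,\sigma\circ f_0\}$ and $\{f_0,\sigma\circ f_0\}$ (with $\sigma(x,y)=(y,x)$ anti-symplectic), so they glue to a section of $SCor(\cdot,\mathbb{R}^2)$ over $S^2$ even though $\omega_{S^2}$ is not exact. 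Your argument (and the paper's) is complete only for correspondences admitting a global representative in some $SCor_n(Y,X)$ before the isotropic quotient, where the unordered local branches genuinely agree on overlaps and $\mu$ glues; to prove the lemma as stated one would have to control the periods of the isotropic contributions, and the example above indicates this cannot be done in general.
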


\begin{proof}
Consider $F \in SCor(Y, X)$ and a small open subset $U \subset Y$ such that $F|_U = \{f_1, \ldots, f_n\}$. Since $X$ is exact, there exists $1$-form such that $\omega_X = d\alpha_X$. Then there is a well-defined $1$-form $\alpha_Y$ locally given by the following formula:
\begin{equation*}
\alpha_Y|_U = f_1^{*}\alpha_X|_{U} + \ldots + f_n^{*}\alpha_X|_U.
\end{equation*}
Moreover, $d\alpha_Y = \omega_Y$. This implies that $\omega_Y$ is exact, but $Y$ is not exact. Therefore $F$ does not exist.

\end{proof}

\subsection{Symmetric products in $\mathcal{J}\mathcal{S}ymp$ and pseudoholomorphic correspondences $JCor$}\label{pseudoholcorr}

We apply the ideas developed in Section $\ref{topcorr}$ to the category $\mathcal{J}\mathcal{S}ymp$ (as we do in Section $\ref{sympcorrsheaves}$).

\begin{remark}
In Section $\ref{motivationdold}$ we used two chain complexes to define singular homology groups. The first one was defined using infinite symmetric product and the second one using the disjoint union of symmetric products. To define cohomology groups of almost complex manifolds we use the disjoint union and do not need to take quotients. So, this section is technically easier.
\end{remark}

\noindent
Let
\begin{equation*}
\begin{gathered}
X = (X, \omega_X, J_X), \;\;\;\; Y= (Y, \omega_Y, J_Y), \\
Y \times X = (Y \times X, \omega_Y \oplus \omega_X, J_X \times J_Y), \;\;\;\; X^n = (X^n, \oplus^n\omega_X, \times^n J_X,)
\end{gathered}
\end{equation*}
where $X^n$ is the $n$th power of $X$. Consider the set $\mathcal{J}\mathcal{S}ymp(Y, X^n)$ of all pseudoholomorphic maps from $Y$ to $X^n$. The permutation group $\Sigma_n$ acts on $X^n$ permuting the factors and acts on $\mathcal{J}\mathcal{S}ymp(Y, X^n)$. Let us take the quotient $\mathcal{J}\mathcal{S}ymp(Y, X^n)/\Sigma_n$. An element of the quotient consists of $n$ unordered pseudoholomorphic maps $\{f_1, \ldots, f_n\}$ from $Y$ to $X$. Two elements $\{f_1, \ldots, f_n\}, \{g_1, \ldots, g_n\} \in \mathcal{J}\mathcal{S}ymp(Y, X^n)/\Sigma_n$ are equal if
\begin{equation*}
\{f_1(y), \ldots, f_n(y) \} = \{ g_1(y), \ldots, g_n(y)  \} \;\;\; \text{for any $y \in Y$},
\end{equation*}
where the equality is equality of unordered sets.

Let $h \in \mathcal{J}\mathcal{S}ymp(Z, Y)$ be a pseudoholomorphic map. We define
\begin{equation*}
\begin{gathered}
h^{*}: \mathcal{J}\mathcal{S}ymp(Y, X^n)/\Sigma_n \rightarrow \mathcal{J}\mathcal{S}ymp(Z, X^n)/\Sigma_n, \\
h^{*}\{f_1, \ldots, f_n\} = \{f_1 \circ h, \ldots, f_n \circ h\}.
\end{gathered}
\end{equation*}
This defines a presheaf on $\mathcal{J}\mathcal{S}ymp$
\begin{equation*}
\mathcal{J}\mathcal{S}ymp^{op} \rightarrow  \mathcal{S}et, \quad Y \rightarrow \mathcal{J}\mathcal{S}ymp(Y, X^n)/\Sigma_n.
\end{equation*}
We sheafify (see Theorem $\ref{theoremsheafification}$) this presheaf  and denote
\begin{equation*}
JCor_n(Y, X) = Sheaf(\mathcal{J}\mathcal{S}ymp(\cdot, X^n)/\Sigma_n)(Y).
\end{equation*}
We assume that $JCor_0(Y, X)$ is a sheaf defined locally by zero maps and define
\begin{equation*}
JCor_0(Y, X) = \emptyset.
\end{equation*}
If $F \in JCor_n(Y, X)$, then any point $y \in Y$ has a small open $U \subset Y$ such that $F$ over $U$ is given by $n$ unordered maps $\{f_1, \ldots, f_n\}$. We define
\begin{equation*}
\bigcup_{y \in Y} \bigcup_{i=1}^n(y, f_i(y)).
\end{equation*}
So, element of $JCor_n(Y, X)$ defines a subset of $Y \times X$. As in the previous sections, $JCor_n(Y, X)$ has more geometric definition.

\begin{definition}\label{sympcorr}
Let $F \subset Y \times X$ be a subset and $\pi: F \rightarrow Y$ be the projection. We say that $F$ is a pseudoholomorphic correspondence of order $n$ between $Y$ and $X$ if any point of $Y$ has a neighborhood $U \subset Y$ and $n$  pseudoholomorphic maps $f_i: U \rightarrow X$ such that
\begin{equation*}
\pi^{-1}(U) = \bigcup\limits_{i=1}^n(U, f_i(U)), \quad f_1, \ldots, f_n \in \mathcal{J}\mathcal{S}ymp(Y, X).
\end{equation*}
We define the empty set as an pseudoholomorphic correspondence of order $0$.
\end{definition}

Note that if $f_1(y), \ldots, f_n(y)$ are distinct for any $y\in Y$, then the projection $F \rightarrow Y$ is an $n$-covering.

\begin{lemma}
There is a bijection between pseudoholomorphic correspondences of order $n$ and elements of $JCor_n(Y, X)$
\end{lemma}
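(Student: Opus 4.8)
The plan is to mimic the proof of Lemma \ref{corrsheafbij}, since $JCor_n$ is defined in exactly the same way as $TCor_n$ but with continuous maps replaced by pseudoholomorphic ones and no summation constraint on the forms. I would construct two maps, one sending a pseudoholomorphic correspondence of order $n$ to an element of $JCor_n(Y, X)$ and one going back, and then check that they are mutually inverse. The key input throughout is that $JCor_n(Y, X) = Sheaf(\mathcal{J}\mathcal{S}ymp(\cdot, X^n)/\Sigma_n)(Y)$ is a sheaf, so local data that agrees on overlaps glues uniquely.

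First I would pass from a correspondence $F \subset Y \times X$ to a global section. By the definition of a pseudoholomorphic correspondence of order $n$, every point of $Y$ has a neighborhood $U$ on which $\pi^{-1}(U) = \bigcup_{i=1}^n (U, f_i(U))$ for unordered pseudoholomorphic maps $f_1, \ldots, f_n \colon U \to X$. Because a map $U \to X^n$ is pseudoholomorphic exactly when each of its components is, the tuple $(f_1, \ldots, f_n)$ is a pseudoholomorphic map, and its $\Sigma_n$-orbit is a well-defined element $\{f_1, \ldots, f_n\} \in \mathcal{J}\mathcal{S}ymp(U, X^n)/\Sigma_n$. Since every such local unordered tuple is read off from the single subset $F$ together with its fixed order $n$, these local sections coincide on every overlap $U \cap U'$; the sheaf axioms then glue them into a global element of $JCor_n(Y, X)$. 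Conversely, starting from $F \in JCor_n(Y, X)$, the sheaf property gives for each $y$ a neighborhood $U$ with $F|_U = \{f_1, \ldots, f_n\}$, and I would set $\pi^{-1}(U) = \bigcup_{i=1}^n (U, f_i(U))$; the separation axiom for the sheaf guarantees these subsets agree on intersections, so they patch to a global subset of $Y \times X$ which is by construction a pseudoholomorphic correspondence of order $n$. That the two assignments invert one another is then formal, as both are built from the same local data $\{f_1, \ldots, f_n\}$.

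The point requiring care, and the only genuine subtlety, is the bookkeeping of multiplicities and the fixed order $n$. As in the symplectic case, distinct local tuples may have the same underlying graph when some $f_i$ coincide, so the bare subset of $Y \times X$ does not by itself recover the element of $JCor_n$. Thus the bijection must be stated between $JCor_n(Y, X)$ and correspondences carrying the full order-$n$ local structure, not between $JCor_n(Y, X)$ and mere subsets; keeping $n$ fixed throughout is precisely what makes the gluing unambiguous and both maps well defined. Everything else is a direct transcription of the argument for Lemma \ref{corrsheafbij}.
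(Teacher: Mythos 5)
Your proposal is correct and follows exactly the route the paper intends: the paper's own proof of this lemma is the one-line statement that it mimics the proof of Lemma \ref{corrsheafbij}, and your argument is a faithful expansion of that topological argument to the pseudoholomorphic setting, including the gluing via the sheaf axioms in both directions. Your closing caveat about multiplicities and the fixed order $n$ matches the paper's own discussion of why bare subsets of $Y \times X$ do not determine correspondences uniquely, so nothing is missing.
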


\begin{proof}
The proof mimics the proof of Lemma $\ref{corrsheafbij}$.
\end{proof}

If  $F \in JCor_n(Y, X)$ is defined by unordered maps $f_1, \ldots, f_n$ from $Y$ to $X$ (defined globally on $Y$, not locally), then we denote
\begin{equation*}
F = \{f_1, \ldots, f_n\}.
\end{equation*}
We consider
\begin{equation*}
\widetilde{JCor}(Y, X) = \coprod_{n=0}JCor_n(Y, X),
\end{equation*}
where $\coprod$ is the disjoint union.

Assume that $Y$ is not connected and $Y_1, Y_2$ are its connected components. Consider $F_1 \in SCor_{n_1}(Y_1, X)$ and $F_2 \in SCor_{n_2}(Y_2, X)$, where $n_1 \neq n_2$. We see that $F_1$ and $F_2$ agree on the intersection of $Y_1, Y_2$, but there is no $F \in \widetilde{SCor}(Y, X)$ such that $F|_{Y_1} = F_1$ and $F|_{Y_2} = F_2$. This shows that $\widetilde{JCor}(\cdot, X)$ is not a presheaf.

We apply the sheafification functor $Sheaf$ (see Theorem $\ref{theoremsheafification}$)  and define
\begin{equation*}
JCor(Y, X) = Sheaf(\widetilde{JCor}(\cdot, X))(Y).
\end{equation*}

Let $F \in JCor(Y, X)$ be an arbitrary element. For any $y \in Y$ there exists a neighborhood $U$ of $y$ and a number $n = n(y)$ (depends on $y$) such that $F|_U = \{f_1, \ldots, f_n\}$.

\begin{example}
Since $JCor_1(Y, X) = \mathcal{J}\mathcal{S}ymp(Y, X)$, we obtain that all standard pseudoholomorphic maps belong to $JCor(Y, X)$. In particular, all constant maps belong to $JCor(Y, X)$.
\end{example}

\begin{example}
Let $S_g$ be a surface of genus $g$, $X$ be a K$\ddot{a}$hler manifold and $f: S_g \rightarrow X$ be a holomorphic map. Consider a meromorphic function $\pi: S_g \rightarrow \mathbb{C}P^1$. Note that $\pi$ defines a branched covering of $\mathbb{C}P^1$. Define a subset of $\mathbb{C}P^1 \times X$
\begin{equation*}
F = \bigcup_{p \in \mathbb{C}P^1}\bigcup_{y \in \pi^{-1}(p)}(p, f(y)).
\end{equation*}
If $p \in \mathbb{C}P^1$ is not a branch point of $\pi$, then there is a small neighborhood $U$ of $p$ such that $\pi^{-1}(U) = \sqcup_{i=1}^nV_i$. Then, $F|_U = \{f|_{V_1}, \ldots, f|_{V_n}\}$. If $p \in \mathbb{C}P^1$ is a branch point and $\pi^{-1}(p) = \{q_1, \ldots, q_m\}$, where $q_i$ has multiplicity $k_i$. In a small neighborhood $U$ of $p$ we have $\pi^{-1}(U) = \sqcup_{i=1}^mV_i$.
\begin{equation*}
F|_U = \{\underbrace{f|V_1, \ldots, f|_{V_1}}_{k_1}, \ldots, \underbrace{f|_{V_m}, \ldots, f|_{V_m}}_{k_m}   \}.
\end{equation*}
This shows that $F$ is a pseudoholomorphic correspondence between $\mathbb{C}P^1$ and $X$.
\end{example}

The set $JCor_n(Y,X)$ is `pseudoholomorphic analogue' of  the set of maps from $Y$ to the symmetric product  $SP^n(X)$. The sheaf $JCor(\cdot, X)$ is `pseudoholomorphic analogue' of $SP^n(X)$.

\section{Abelian groups of symmetric products $SC$ and $JC$}\label{abelcorr}

In this Section we construct analogue of the Grothendieck groups $Map(Y, SP(X,e))^{+}$ (see Section $\ref{motivationdold}$).

\subsection{Group of pseudoholomorphic correspondences $JC$}\label{abelpseudcorr}

Consider
\begin{equation*}
f = (f_1, \ldots, f_n) \in \mathcal{J}\mathcal{S}ymp(Y, X^n), \quad g = (g_1, \ldots, g_k) \in \mathcal{J}\mathcal{S}ymp(Y, X^k).
\end{equation*}
We can combine these maps and get
\begin{equation*}
\begin{gathered}
(f,g) = (f_1, \ldots, f_n, g_1, \ldots, g_k): \;\; Y \rightarrow X^{n+k}, \quad (f, g) \in \mathcal{J}\mathcal{S}ymp(Y, X^{n+k}), \\
(f, g)(y) = (f(y), g(y)).
\end{gathered}
\end{equation*}
If we have unordered sets of pseudoholomorphic maps
\begin{equation*}
f=\{f_1, \ldots, f_n\} \in \mathcal{J}\mathcal{S}ymp(Y, X^n)/\Sigma_n, \;\; g=\{g_1, \ldots, g_k\} \in  \mathcal{J}\mathcal{S}ymp(Y, X^k)/\Sigma_k,
\end{equation*}
then we define a sum $f+g$ as combination of these maps (they are unordered) into an unordered set
\begin{equation*}
f + g = \{f_1, \ldots, f_n, g_1, \ldots, g_k\} \in \mathcal{J}\mathcal{S}ymp(Y, X^{n+k})/\Sigma_{n+k}.
\end{equation*}
We give the following definition:

\begin{definition}
Consider correspondences $F, G \in JCor(Y, X)$. Let $y \in Y$ be an arbitrary point and $U$ be a small neighborhood of $y$ such that $F|_U = \{f_1, \ldots, f_n\}$ and $G|_U = \{g_1, \ldots, g_k\}$. We define $F+G$ as a sheaf locally defined as follows
\begin{equation*}
(F + G)|_U = \{f_1, \ldots, f_n, g_1, \ldots, g_k \}.
\end{equation*}
Recall that the emptyset is pseudoholomorphic correspondence of order $0$ and we get that $F + \emptyset = F$.
\end{definition}

We see that $(JCor(Y, X), +)$ is associative, commutative monoid (a group without inverses), where the zero elements is the empty set. Then, there is the Grothedieck group of the monoid.

\begin{definition}
Let $JCor(Y, X)^{+}$ be the Grothedieck group of the monoid $(JCor(Y, X), +)$. We denote
\begin{equation*}
JC(Y, X) = JCor(Y, X)^{+}.
\end{equation*}
Note that $JC(Y, X)$ is abelian.
\end{definition}

\begin{remark}
There is another way to define $JC(Y, X)$. Let us say that $F \in JCor(Y, X)$ is irreducible if $F \neq F_1 + F_2$, where $F_1, F_2 \neq \emptyset$. Denote the set of irreducible correspondences by $Ir(JCor(Y, X))$. Consider the set of formal finite linear combinations $\mathbb{Z}[Ir(JCor(Y, X))]$. Then,
\begin{equation*}
JC(Y, X) = \mathbb{Z}[Ir(JCor(Y, X))]/\mathbb{Z}[\emptyset].
\end{equation*}
This definition requires some technical lemmas. We do not want to go into details and want to avoid this definition.
\end{remark}

\begin{example}
Assume that $Y = pt$. Any element $F \in JCor(pt, X)$ has the form $F = \{x_1, \ldots, x_n\} = \{x_1\} + \ldots + \{x_n\}$. This shows that $JC(pt, X)$ is isomorphic to a group $\mathbb{Z}[X]$ of finite formal linear combinations of points of $X$.
\end{example}

\begin{example}
Note that $\mathcal{J}\mathcal{S}ymp(Y, X) \subset JCor(Y, X) \subset JC(Y, X)$.
\end{example}

Let us prove the following lemma:

\begin{lemma}
Consider $h \in \mathcal{J}\mathcal{S}ymp(Z, Y)$. Then, $h^{*}(F +G) = h^{*}F + h^{*}G$. This shows that $h^{*}: JCor(Y, X) \rightarrow JCor(Z, X)$ defines a homomorphism of monoids.
\end{lemma}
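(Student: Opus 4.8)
The plan is to verify the identity locally and then invoke the sheaf axioms, exactly as in the proof of Lemma~\ref{presheaflemmasympcor}. Both $h^{*}(F+G)$ and $h^{*}F + h^{*}G$ are elements of $JCor(Z,X)$, i.e. global sections of the sheaf $JCor(\cdot, X)$ over $Z$. By the separation (identity) axiom of a sheaf, two such sections coincide as soon as they agree on the members of some open cover of $Z$. So it suffices to produce an open cover of $Z$ on which both sides are represented by the same unordered set of pseudoholomorphic maps.

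First I would fix an arbitrary point $z \in Z$ and build a suitable neighborhood. Since $F, G \in JCor(Y,X)$, the point $h(z) \in Y$ has a neighborhood over which each of $F$ and $G$ is given by finitely many maps; intersecting the two neighborhoods produced by $F$ and $G$ and shrinking if necessary, we may assume $F|_U = \{f_1, \ldots, f_n\}$ and $G|_U = \{g_1, \ldots, g_k\}$ over one common $U$. By continuity of $h$ there is a neighborhood $V \subset Z$ of $z$ with $h(V) \subset U$, and over such $V$ the pullback is by definition given by precomposition with $h$.

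Then I would simply compare the two local descriptions over $V$. On one hand, $(F+G)|_U = \{f_1, \ldots, f_n, g_1, \ldots, g_k\}$ by the definition of the sum, so
\begin{equation*}
h^{*}(F+G)|_V = \{f_1 \circ h, \ldots, f_n \circ h, g_1 \circ h, \ldots, g_k \circ h\}.
\end{equation*}
On the other hand,
\begin{equation*}
(h^{*}F + h^{*}G)|_V = \{f_1 \circ h, \ldots, f_n \circ h\} + \{g_1 \circ h, \ldots, g_k \circ h\} = \{f_1 \circ h, \ldots, f_n \circ h, g_1 \circ h, \ldots, g_k \circ h\}.
\end{equation*}
The two unordered sets coincide, so the two sections agree over $V$. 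Letting $z$ range over $Z$ yields an open cover on which both sides agree, and the separation axiom upgrades this to the global equality $h^{*}(F+G) = h^{*}F + h^{*}G$. Since $h^{*}$ clearly sends the empty correspondence (the monoid unit) to the empty correspondence, this identity shows $h^{*}$ is a homomorphism of the monoids $(JCor(Y,X),+)$ and $(JCor(Z,X),+)$.

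I do not expect any deep obstacle here; the only point requiring care is the bookkeeping of neighborhoods: one must shrink so that $F$ and $G$ are both represented by finitely many maps over the \emph{same} $U$ with $h(V) \subset U$, and one must keep in mind that for general elements of $JCor$ the number of local maps $n = n(y)$ may vary from point to point, so the verification is genuinely local rather than global. Once this common refinement is set up, the equality is a tautology about unordered sets, and the sheaf separation axiom is precisely what converts the local equality into the global one.
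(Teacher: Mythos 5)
Your proposal is correct and follows essentially the same route as the paper: both verify the identity locally by writing $F+G$ over a small neighborhood as the combined unordered set and observing that precomposition with $h$ distributes over that set, then pass to the global equality. Your version merely spells out the neighborhood bookkeeping and the appeal to the sheaf separation axiom more explicitly than the paper does.
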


\begin{proof}
By definition $F + G$ locally is defined by unordered maps $\{f_1, \ldots, f_n, g_1, \ldots, g_k\}$ and $h^{*}(F + G)$ locally is defined by maps
\begin{equation*}
\{f_1 \circ h, \ldots, f_n \circ h, g_1 \circ h, \ldots, g_k \circ h \} = \{f_1 \circ h, \ldots, f_n \circ h\} + \{g_1 \circ h, \ldots, g_k \circ h  \}.
\end{equation*}
This implies that globally we have $h^{*}(F + G) = h^{*}F + h^{*}G$.

\end{proof}

\begin{lemma}
Any pseudoholomorphic map $h \in \mathcal{J}\mathcal{S}ymp(Z, Y)$ defines a homomorphism of groups
\begin{equation*}
h^{*}: \;\;\; JC(Y, X) \rightarrow JC(Z, X).
\end{equation*}
\end{lemma}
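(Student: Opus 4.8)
The plan is to invoke the functoriality of the Grothendieck group construction, thereby reducing the statement to the monoid homomorphism established in the preceding lemma. First I would recall that the preceding lemma shows $h^{*}: JCor(Y, X) \rightarrow JCor(Z, X)$ is a homomorphism of commutative monoids, that is, $h^{*}(F + G) = h^{*}F + h^{*}G$, and moreover $h^{*}(\emptyset) = \emptyset$ since composing the empty correspondence with $h$ again yields the empty correspondence. Thus $h^{*}$ respects both the addition and the unit of the monoid structure on $JCor$.

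Next I would appeal to the universal property of the Grothendieck completion. For any commutative monoid $M$ the canonical map $\iota_M: M \rightarrow M^{+}$ is universal among monoid homomorphisms from $M$ into abelian groups: every monoid homomorphism $M \rightarrow A$ with $A$ abelian factors uniquely through $\iota_M$ as a group homomorphism $M^{+} \rightarrow A$. Applying this to the composite $JCor(Y, X) \xrightarrow{h^{*}} JCor(Z, X) \xrightarrow{\iota} JC(Z, X)$ produces a unique group homomorphism $h^{*}: JC(Y, X) = JCor(Y, X)^{+} \rightarrow JC(Z, X)$ extending $h^{*}$ on the monoid. This is the cleanest route, as it packages the entire verification into a known universal property.

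Alternatively, and more explicitly, I would represent a general element of $JC(Y, X)$ as a formal difference $F - G$ with $F, G \in JCor(Y, X)$, where by definition of the Grothendieck group $F - G = F' - G'$ precisely when $F + G' + H = F' + G + H$ for some $H \in JCor(Y, X)$. I would then set $h^{*}(F - G) := h^{*}F - h^{*}G$. The homomorphism property $h^{*}\bigl((F - G) + (F' - G')\bigr) = h^{*}(F - G) + h^{*}(F' - G')$ then follows immediately from the additivity of $h^{*}$ on $JCor(Y, X)$.

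The only point requiring genuine verification, and hence the main (though mild) obstacle, is well-definedness of the explicit formula: one must confirm that the assignment descends to equivalence classes. If $F + G' + H = F' + G + H$, then applying the monoid homomorphism $h^{*}$ to both sides gives $h^{*}F + h^{*}G' + h^{*}H = h^{*}F' + h^{*}G + h^{*}H$, which is exactly the relation witnessing $h^{*}F - h^{*}G = h^{*}F' - h^{*}G'$ in $JC(Z, X)$. Everything else is formal and inherited from the preceding lemma together with the universal property of the Grothendieck completion.
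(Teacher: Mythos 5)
Your proposal is correct and follows essentially the same route as the paper, which simply cites the preceding monoid-homomorphism lemma together with the definition of the Grothendieck group; you merely spell out the universal property and the well-definedness check that the paper leaves implicit.
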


\begin{proof}
This immediately follows from the previous lemma and the definition of the Grothendieck group.
\end{proof}

Roughly speaking, in the previous lemma we define $h^{*}(-F) = -h^{*}F$.

\subsection{Group of symplectic correspondences $SC$}\label{abelsympcorr}

Definitions given in the previous section do not make sense in $SCor$. Consider
\begin{equation*}
f = (f_1, \ldots, f_n) \in \mathcal{S}\mathcal{S}ymp(Y, X^n), \quad g = (g_1, \ldots, g_k) \in \mathcal{S}\mathcal{S}ymp(Y, X^k).
\end{equation*}
Then
\begin{equation*}
\begin{gathered}
(f, g) \not\in \mathcal{S}\mathcal{S}ymp(Y, X^{n+k}), \\
f^{*}_1\omega_X + \ldots + f_n^{*}\omega_X + g_1^{*}\omega_X + \ldots + g_k^{*}\omega_X = 2\omega_Y \neq \omega_Y.
\end{gathered}
\end{equation*}
This shows that if $F, G \in SCor(Y, X)$, then $F + G \not\in SCor(Y, X)$.

\begin{definition}\label{grousympcorr}
Let $\mathbb{Z}[SCor(Y, X)]$ be a group of finite formal linear combinations. We define a group
\begin{equation*}
SC(Y, X) = \mathbb{Z}[SCor(Y, X)].
\end{equation*}
If $SCor(Y, X) = \emptyset$, then we assume that $SC(Y, X) = \mathbb{Z}[\emptyset] = 0$.
\end{definition}

\begin{definition}\label{abboundsymcorr}
We also have an abelian group $SC_b(Y, X) = \mathbb{Z}[SCor_b(Y, X)]$ of bounded correspondences.
\end{definition}

We defined filtration on $SCor_b(Y, X)$ by formula $(\ref{filtration1})$. This filtration can be  extended to filtration on $SC_b(Y, X)$. We define
\begin{equation}\label{filtration}
SC_b(Y, X; r) = \mathbb{Z}[SCor_b(Y,X; r)].
\end{equation}

\vspace*{0.06in}

Consider $h \in \mathcal{S}\mathcal{S}ymp(Z, Y)$. We define $h^{*}: SC(Y,X) \rightarrow SC(Z, X)$ by
\begin{equation*}
h^{*}(r_1F + r_2G) = r_1h^{*}F + r_2h^{*}G.
\end{equation*}
We get the following lemma

\begin{lemma}
The group $SC(\cdot, X)$ is a presheaf on $\mathcal{S}\mathcal{S}ymp$.
\end{lemma}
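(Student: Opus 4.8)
The plan is to reduce the statement to the already-established fact that $SCor(\cdot, X)$ is a sheaf — and hence in particular a presheaf — on $\mathcal{S}\mathcal{S}ymp$, and then to observe that passing to free abelian groups is itself functorial. A presheaf on $\mathcal{S}\mathcal{S}ymp$ valued in abelian groups is by definition a functor $\mathcal{S}\mathcal{S}ymp^{op} \to \mathcal{A}b$, so what I must produce is: for every $Y$ a group $SC(Y, X)$ (already given by Definition $\ref{grousympcorr}$); for every $h \in \mathcal{S}\mathcal{S}ymp(Z, Y)$ a homomorphism $h^{*}: SC(Y, X) \to SC(Z, X)$ (already given by the linear-extension formula preceding the lemma); and finally the two functoriality identities $(id_Y)^{*} = id$ and $(h \circ h')^{*} = (h')^{*} \circ h^{*}$.

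First I would recall that $SCor(\cdot, X) = Sheaf(\widetilde{SCor}(\cdot, X)/\sim)$. By Lemma $\ref{presheaflemmasympcor}$ the assignment $Y \mapsto \widetilde{SCor}(Y, X)/\sim$ is already a functor from $\mathcal{S}\mathcal{S}ymp^{op}$ to $\mathcal{S}et$, with pullback given locally by $h^{*}\{f_1, \ldots, f_n\} = \{f_1 \circ h, \ldots, f_n \circ h\}$ and satisfying $(h \circ h')^{*} = (h')^{*} \circ h^{*}$; the sheafification functor of Theorem $\ref{theoremsheafification}$ preserves this functoriality. Hence at the level of sets we obtain, for each $h \in \mathcal{S}\mathcal{S}ymp(Z, Y)$, a map $h^{*}: SCor(Y, X) \to SCor(Z, X)$ with $(id_Y)^{*} = id$ and $(h \circ h')^{*} = (h')^{*} \circ h^{*}$.

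Second, the free abelian group construction $\mathbb{Z}[-]$ is a functor $\mathcal{S}et \to \mathcal{A}b$: a set map $\phi: A \to B$ extends uniquely to a homomorphism $\mathbb{Z}[\phi]$ sending each generator $a$ to $\phi(a)$, and this respects composition and identities. The homomorphism defined on $SC(Y, X) = \mathbb{Z}[SCor(Y, X)]$ by $h^{*}(r_1 F + r_2 G) = r_1 h^{*}F + r_2 h^{*}G$ is precisely $\mathbb{Z}[h^{*}]$ applied to the set map $h^{*}$ on correspondences. Thus $SC(\cdot, X) = \mathbb{Z}[-] \circ SCor(\cdot, X)$ is a composite of two functors, and the required identities $(id_Y)^{*} = id$ and $(h \circ h')^{*} = (h')^{*} \circ h^{*}$ follow by applying $\mathbb{Z}[-]$ to the corresponding identities for $SCor$. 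The degenerate case is harmless: if $SCor(Y, X) = \emptyset$ then $SC(Y, X) = \mathbb{Z}[\emptyset] = 0$, and any pullback out of the zero group is the zero map, trivially compatible with composition.

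I expect no genuine obstacle here: the content is entirely formal once one invokes that $SCor(\cdot, X)$ is a presheaf. The only point that warrants an explicit line is that the linearly extended $h^{*}$ of Definition $\ref{grousympcorr}$ is well-defined (independent of the chosen expression of a formal sum) and coincides with $\mathbb{Z}[h^{*}]$, which is immediate from the universal property of the free abelian group on the set $SCor(Y, X)$.
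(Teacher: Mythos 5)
Your proof is correct and is essentially the paper's argument spelled out in full: the paper simply asserts that the lemma ``follows immediately from the definition,'' and what you have written --- functoriality of $SCor(\cdot, X)$ inherited from Lemma~\ref{presheaflemmasympcor} via sheafification, composed with the free abelian group functor $\mathbb{Z}[-]$, with the convention $\mathbb{Z}[\emptyset]=0$ handled as the degenerate case --- is precisely the definitional content being invoked. No gap; your version is just more explicit than the paper's.
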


\begin{proof}
This follows immediately from the definition.
\end{proof}

\section{Some operations with symmetric products}\label{operationsection}

We give some definitions simultaneously for the categories $\mathcal{S}\mathcal{S}ymp$ and $\mathcal{J}\mathcal{S}ymp$. We denote by $Cor$ either $SCor$ or $JCor$.  So,  the reader can replace $Cor$ by $SCor$ or $JCor$.

\vspace{0.1in}
\noindent
In this section we construct the following operations:

\vspace{0.07in}
\noindent
1. Consider $F \in Cor(Z, Y)$, $G \in Cor(Y, X)$. We define a composition $G \circ F \in Cor(Z, X)$. Moreover, the composition can be extended to
\begin{equation*}
SC(Y, X) \circ SC(Z, Y) \rightarrow SC(Z, X), \quad JC(Y, X) \circ JC(Z, Y) \rightarrow JC(Z, X).
\end{equation*}

\vspace*{0.07in}
\noindent
2. Consider $F \in SCor(Y, X)$, $G \in SCor(Z, X)$. There is a wedge sum operation $F \vee G \in SCor(Y \times Z, X)$. The wedge sum can be extended
\begin{equation*}
SC(Y, X) \vee SC(Z, X) \rightarrow SC(Y \times Z, X).
\end{equation*}

\subsection{Composition of correspondences}\label{operationcorr}

\textbf{Topological motivation}. Let $Z, Y, X$ be some nice topological spaces. Consider symmetric products $SP^k(Y)$, $SP^n(X)$ and continuous maps
\begin{equation*}
\begin{gathered}
f: Z \rightarrow SP^k(Y), \quad g: Y \rightarrow SP^n(X), \\
f(z) = \{f_1(z), \ldots, f_k(z)\}, \quad g(y) = \{g_1(y), \ldots, g_n(y)\}.
\end{gathered}
\end{equation*}
We define
\begin{equation*}
g \circ f: Z \rightarrow SP^{nk}(X), \;\;\; g \circ f(z) = \{g_1 \circ f_1(z), g_2 \circ f_1, \ldots, g_k \circ f_k(z)\},
\end{equation*}
i.e. we  consider all possible compositions of $f_i$ and $g_j$.

Let us recall that the maps $f$, $g$, $, g\circ f$ define subsets $F \subset Z \times Y$, $G \subset Y \times X$ and $G \circ F \subset Z \times X$, respectively. Moreover, the projections $F \rightarrow Z$, $G \rightarrow Y$, $G \circ F \rightarrow Z$ are surjective. The set $G \circ F \subset Z \times X$ has a geometric illustration (see the picture below).

\begin{figure}[h]
\includegraphics[width=0.3\linewidth]{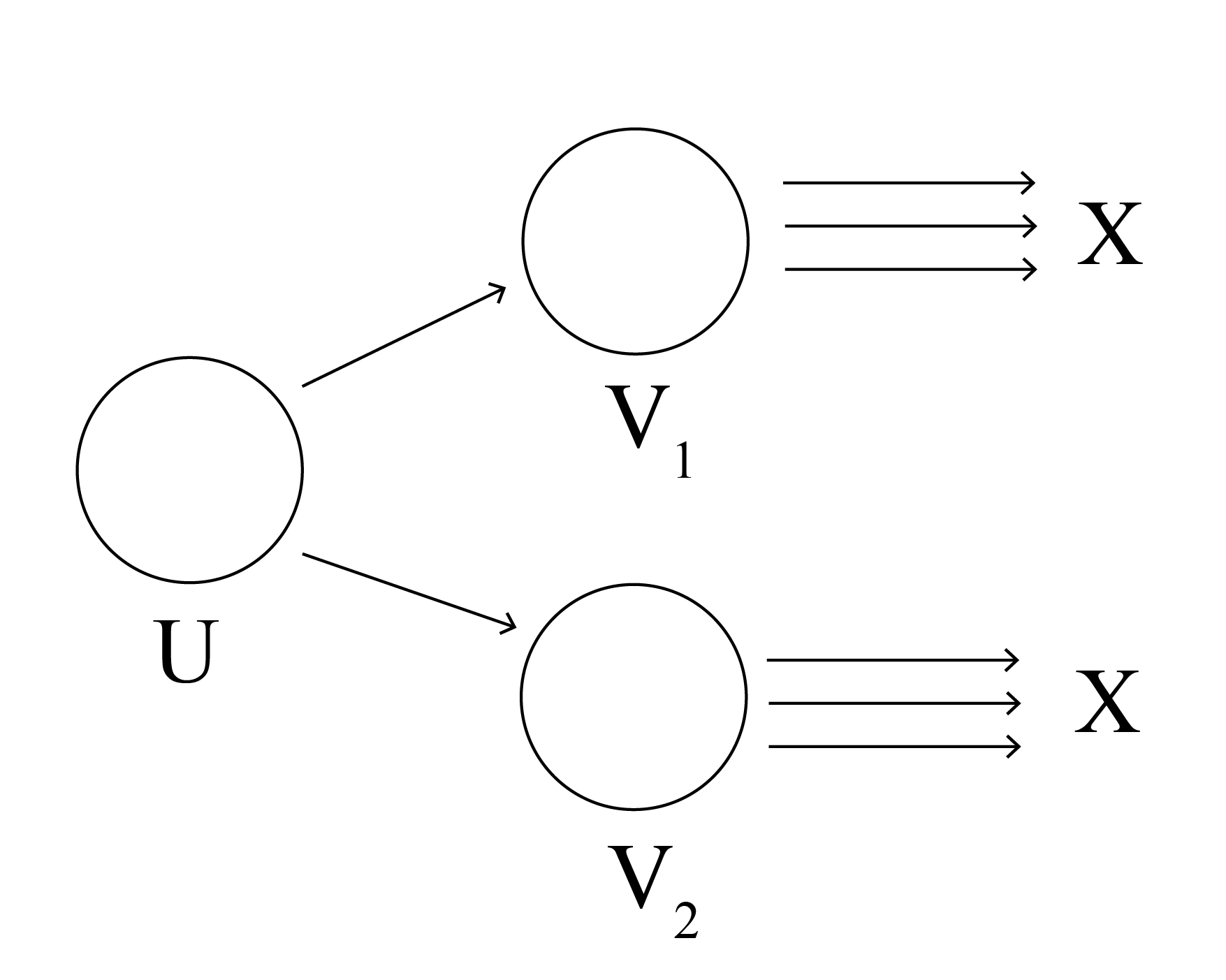}
\includegraphics[width=0.8\linewidth]{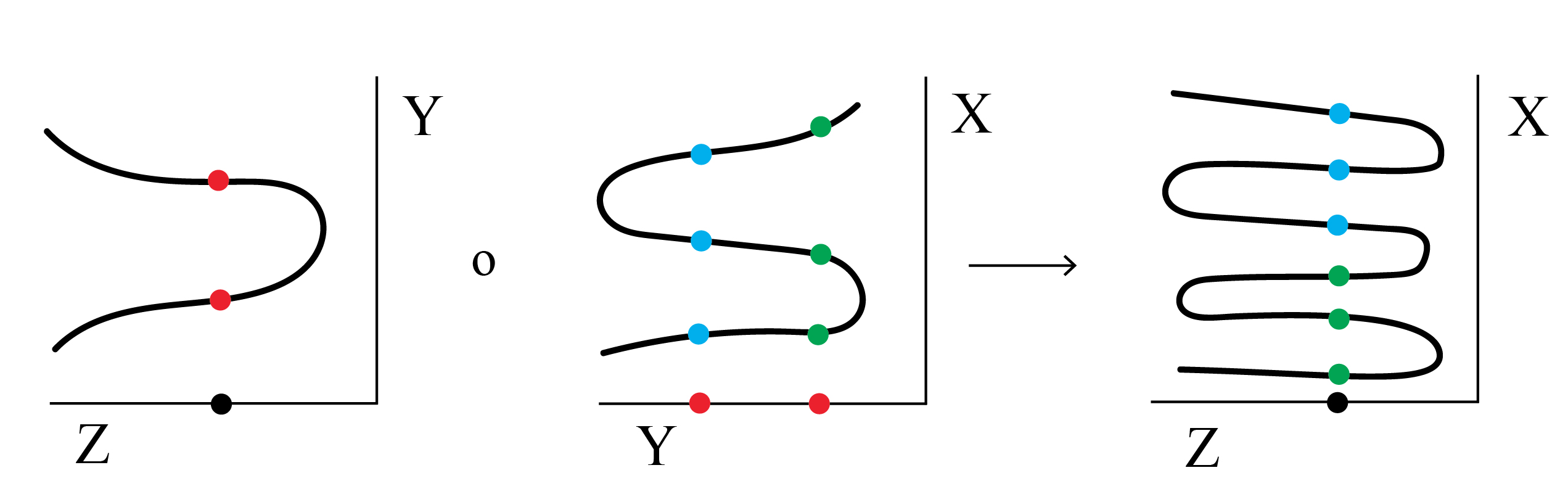}
\end{figure}

Our correspondences $SCor$, $JCor$ are analogues of symmetric products and in this section we define their composition.

\vspace{0.11in}
\noindent
\textbf{Composition of $JC$ and $SC$.} Consider $F \in Cor_k(Z, Y)$, $G \in Cor_n(Y, X)$. Let $U \subset Z$ be a small open subset such that $F|_U = \{f_1, \ldots, f_k \}$. Let $V_i \subset Y$ be small open subsets such that $f_i(U) \subset V_i$ and $G|_{V_i} = \{g_{i,1}, \ldots, g_{i, n}\}$. We define $G \circ F \in Cor_{nk}(Z, X)$ as a correspondence locally given by the following unordered maps
\begin{equation*}
(G \circ F)|_U = \{ \ldots, g_{i, j} \circ f_i, \ldots \}, \;\;\; \text{all possible compositions.}
\end{equation*}
In case of $Scor$, we see that
\begin{equation*}
\sum\limits_{i,j}(g_{i, j}\circ f_i)^{*}\omega_X = \sum\limits_{i}f_i^{*}\omega_Y = \omega_Z.
\end{equation*}
There is nothing to check in case of $JCor$ because composition of pseudoholomorphic maps is pseudoholomorphic.

\vspace*{0.1in}

As a result, we get
\begin{equation*}
Cor_k(Y, X) \circ Cor_n(Z, Y) \rightarrow Cor_{nk}(Z, X).
\end{equation*}

Let us extend our definition to $JCor$ and $SCor$

\begin{lemma}
There is a well-defined composition
\begin{equation*}
JCor(Y, X) \circ JCor(Z, Y) \rightarrow JCor(Z, X).
\end{equation*}
\end{lemma}

\begin{proof}
Consider $F \in JCor(Z, Y)$ and $G \in JCor(Y, X)$. Let $U \subset Z$, $V_i \subset Y$ be small open subsets such that $f_i(U) \subset V_i$ and
\begin{equation*}
F|_U = \{f_1, \ldots, f_k\}, \quad G|_{V_i} = \{g_{i, 1}, \ldots, g_{i, n_i}\}.
\end{equation*}
Since $V_i$ may belong to different connected components, we get that numbers $n_i$ are not supposed to be equal (see the definition of $JCor$).  We give the same definition
\begin{equation*}
(G \circ F)|_U = \{ \ldots, g_{i, j} \circ f_i, \ldots \}.
\end{equation*}
We define $G \circ F$  as a sheaf on $Z$ locally defined by the maps given above.

\end{proof}

Let us define the composition of elements of $SCor$. Since we take quotient by isotropic correspondences to define  $Scor$,  we need the following lemma.

\begin{lemma}\label{compwelldefined}
Consider $F \in SCor_k(Z, Y)$, $G \in SCor_n(Y, X)$. The following holds
\begin{equation*}
\begin{gathered}
(G + \; \text{isotropic correspondence}) \circ F = G \circ F + \; \text{isotropic correspondence} \\
G  \circ (F + \; \text{isotropic correspondence}) = G \circ F + \; \text{isotropic correspondence},
\end{gathered}
\end{equation*}
\end{lemma}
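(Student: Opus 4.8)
The plan is to reduce both identities to a single local computation: composing with an isotropic correspondence only appends extra sheets whose total pullback of $\omega_X$ vanishes, so the resulting correspondence differs from $G\circ F$ by an isotropic one. Throughout I would fix a small connected open $U\subset Z$, chosen fine enough that every correspondence in sight is represented by honest unordered tuples of smooth maps with the expected number of sheets, carry out the pullback bookkeeping there, and only at the end invoke the sheaf property to glue the local isotropic remainders into a global one.

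First I would treat the identity $(G+P)\circ F = G\circ F + (\text{isotropic})$, where $P\in SCor^0_m(Y,X)$. Choose $U$ with $F|_U=\{f_1,\ldots,f_k\}$ and, for each $i$, an open $V_i\supset f_i(U)$ with $G|_{V_i}=\{g_{i,1},\ldots,g_{i,n}\}$ and $P|_{V_i}=\{p_{i,1},\ldots,p_{i,m}\}$, where $\sum_{l=1}^m p_{i,l}^{*}\omega_X=0$ on $V_i$ by isotropy of $P$. By the definition of composition,
\[
(G+P)\circ F\,|_U = \{\, g_{i,j}\circ f_i \,\}\cup\{\, p_{i,l}\circ f_i \,\},
\]
and the first collection is exactly $(G\circ F)|_U$. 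The remainder $R:=\{\, p_{i,l}\circ f_i \,\}$ is isotropic because
\[
\sum_{i=1}^{k}\sum_{l=1}^{m}(p_{i,l}\circ f_i)^{*}\omega_X
=\sum_{i=1}^{k} f_i^{*}\Big(\sum_{l=1}^{m} p_{i,l}^{*}\omega_X\Big)
=\sum_{i=1}^{k} f_i^{*}(0)=0.
\]

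For the second identity, write $F'=F+Q$ with $Q\in SCor^0_m(Z,Y)$, so $F|_U=\{f_1,\ldots,f_k\}$ and $Q|_U=\{q_1,\ldots,q_m\}$ with $\sum_{p}q_p^{*}\omega_Y=0$. Choosing opens $W_p\supset q_p(U)$ with $G|_{W_p}=\{g'_{p,1},\ldots,g'_{p,n}\}$, the composition splits as
\[
(G\circ F')|_U = \{\, g_{i,j}\circ f_i \,\}\cup\{\, g'_{p,s}\circ q_p \,\},
\]
again with the first piece equal to $(G\circ F)|_U$. Here the decisive input is that $G$ is a genuine symplectic correspondence, so $\sum_{s}(g'_{p,s})^{*}\omega_X=\omega_Y$ on each $W_p$; hence the remainder $R':=\{\, g'_{p,s}\circ q_p \,\}$ satisfies
\[
\sum_{p=1}^{m}\sum_{s=1}^{n}(g'_{p,s}\circ q_p)^{*}\omega_X
=\sum_{p=1}^{m} q_p^{*}\Big(\sum_{s=1}^{n}(g'_{p,s})^{*}\omega_X\Big)
=\sum_{p=1}^{m} q_p^{*}\omega_Y=0,
\]
using isotropy of $Q$ in the last step, and is therefore isotropic.

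The two vanishing displays are the entire analytic content, and note the pleasant asymmetry: in the first case the inner sum is already zero by isotropy of $P$, whereas in the second it is the symplectic-correspondence property of $G$ that converts the inner sum into $\omega_Y$ before isotropy of $Q$ kills it. The step I expect to require the most care is not the algebra but the globalization: I would verify that the locally defined remainders $R$ and $R'$ agree on overlaps — they are cut out by the same composed maps $p_{i,l}\circ f_i$, respectively $g'_{p,s}\circ q_p$, on any overlap of two admissible choices of $U,V_i,W_p$ — so that by the sheaf property of $SCor^0(\cdot,X)$ they glue to a global isotropic correspondence. Once this is in place, both identities hold as stated, and composition descends to the quotient $\widetilde{SCor}(\cdot,X)/\sim$, which is precisely what is needed to define composition on $SCor(Z,X)$.
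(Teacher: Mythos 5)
Your proposal is correct and follows essentially the same route as the paper: in each case you isolate the extra sheets coming from the isotropic factor and show their total pullback of $\omega_X$ vanishes, using isotropy of the appended correspondence in one case and the defining identity $\sum_s (g'_{p,s})^{*}\omega_X=\omega_Y$ of $G$ followed by isotropy of $Q$ in the other. The only difference is cosmetic (you treat the two identities in the opposite order and spell out the gluing of the local remainders, which the paper leaves implicit).
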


\begin{proof}
Consider an isotropic correspondence $H \in SCor^0_m(Z, Y)$. The correspondence  $F + H \in SCor_{k+m}(Z, Y)$ over some small $U \subset Z$ is given by
\begin{equation*}
(F + H)|_U = \{f_1, \ldots, f_k, h_1, \ldots, h_m\}.
\end{equation*}
Consider an open $W_{\ell} \subset Y$ such that $h_{\ell}(U) \subset W_{\ell}$ (we assume that $U$ and $W_{\ell}$ are small enough and $f_i(U) \subset V_i$) and
\begin{equation*}
G|_{W_{\ell}} = \{g_{\ell, 1}', \ldots, g_{\ell, n}'\}.
\end{equation*}
Then, by definition we have
\begin{equation*}
\begin{gathered}
(G \circ (F+H))|_U = \{ \ldots, g_{i, j} \circ f_i, \ldots, g_{\ell, j}' \circ h_{\ell}, \ldots \},\\
i=1, \ldots, k, \;\;\; j=1, \ldots, n, \ldots, \ell = 1, \ldots, m.\\
\end{gathered}
\end{equation*}
Note that
\begin{equation*}
\sum\limits_{\ell, j}(g_{\ell, j}' \circ h_{\ell})^*\omega_X = \sum\limits_{\ell}h_{\ell}^{*}\omega_Y = 0.
\end{equation*}
This means that $G \circ H \in SCor^0_{n+m}(Y, X)$, i.e. is an isotropic correspondence. Therefore,
\begin{equation*}
G \circ (F + H)  = G \circ F +  G \circ H = G \circ F\; + \text{isotropic correspondence}.
\end{equation*}

Consider an isotropic correspondence $H \in SCor^0_m(Y, X)$. Locally, over $V_i$ (recall that $f_i(U) \subset V_i$), we have
\begin{equation*}
(G + H)|_{V_i} = \{ \ldots, g_{i, j}, \ldots, h_{i, \ell}, \ldots\}.
\end{equation*}
By definition, we get
\begin{equation*}
((G  + H) \circ F)|_U = \{ \ldots, g_{i, j} \circ f_i, \ldots, h_{i, \ell} \circ f_i, \ldots   \}.
\end{equation*}
Arguing as above, we see that $(G + H) \circ F = G \circ F + H \circ F$ and $H \circ F \in SCor^0_{m+k}(Y, X)$. This means that
\begin{equation*}
(G + H) \circ F  = G \circ F + H \circ F = G \circ F \; + \text{isotropic correspondence}.
\end{equation*}

\end{proof}

\begin{lemma}
There is a well-defined operation
\begin{equation*}
SCor(Y, X) \circ SCor(Z, Y) \rightarrow SCor(Z, X).
\end{equation*}
\end{lemma}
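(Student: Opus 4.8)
The plan is to reduce the global statement to the fixed-order composition $Cor_k(Y,X)\circ Cor_n(Z,Y)\to Cor_{nk}(Z,X)$ constructed above, and then glue. First I would define $G\circ F$ purely locally. Given $F\in SCor(Z,Y)$ and $G\in SCor(Y,X)$, for each point $z\in Z$ I choose a neighborhood $U$ on which $F$ is represented (modulo isotropic correspondences) by finitely many maps $\{f_1,\ldots,f_k\}$; shrinking $U$, I choose neighborhoods $V_i\subset Y$ with $f_i(U)\subset V_i$ on which $G|_{V_i}=\{g_{i,1},\ldots,g_{i,n_i}\}$, and set $(G\circ F)|_U=\{\ldots, g_{i,j}\circ f_i,\ldots\}$ over all admissible $i,j$. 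Since an element of the sheaf $SCor(\cdot,Y)$ has a point-dependent order, the local orders $n_i$ of $G$ need not agree (the $V_i$ may lie in different regions of $Y$), so the total number of composed maps is $\sum_i n_i$ rather than a single product $nk$; this is exactly the phenomenon already handled in the $JCor$ composition and it poses no difficulty, because on each fixed $U$ the collection is finite.

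Second, I would check that this local rule lands in $SCor$ and descends to the isotropic quotient. The symplectic-area identity
\begin{equation*}
\sum_{i,j}(g_{i,j}\circ f_i)^{*}\omega_X=\sum_i f_i^{*}\omega_Y=\omega_Z
\end{equation*}
was verified in the fixed-order case and holds verbatim on each such $U$, so $(G\circ F)|_U$ is a genuine symplectic correspondence. To pass to $\widetilde{SCor}(\cdot,X)/\sim$ I would invoke Lemma \ref{compwelldefined}: although that lemma is stated for fixed orders, the composition is defined locally on neighborhoods where $F$ and $G$ have constant order, so applying it on each such neighborhood shows that replacing $F$ or $G$ by an equivalent correspondence alters $G\circ F$ only by an isotropic correspondence. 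I would also verify independence of the chosen representatives, which follows because $F$ and $G$ are sheaves and hence their local representatives agree on overlaps, via the correspondence–section bijection of Lemma \ref{corrsheafbij}.

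Finally, the gluing step: the locally defined correspondences $(G\circ F)|_U$ are determined entirely by the maps $f_i$ and $g_{i,j}$, and on overlaps $U\cap U'$ these maps coincide by the sheaf property of $F$ and $G$, so the local data assemble into a single global section of $SCor(\cdot,X)$ (Theorem \ref{theoremsheafification}). The main obstacle I expect is not any single computation but the bookkeeping of this gluing in the presence of a point-dependent order: one must ensure that as $z$ ranges over $Z$ the number and identity of the composed maps fit together into a genuine sheaf section rather than merely a presheaf section. I would handle this by working throughout on small enough $U$ that all relevant $V_i$ and all local orders are constant, and then appeal to the sheaf axioms to assemble the global operation.
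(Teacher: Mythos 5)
Your proposal follows essentially the same route as the paper: define the composition locally by all pairwise compositions $g_{i,j}\circ f_i$ on neighborhoods where the local orders are constant, use Lemma \ref{compwelldefined} to see the result is independent of the choice of representatives modulo isotropic correspondences, and assemble the local data into a global section by the sheaf property. Your additional care about the point-dependent orders $n_i$ and the overlap bookkeeping is consistent with (and slightly more explicit than) the paper's argument.
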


\begin{proof} Consider $F \in SCor(Z, Y)$ and $G \in SCor(Y, X)$. Let $U \subset Z$, $V_i \subset Y$ be small open subsets such that $f_i(U) \subset V_i$ and
\begin{equation*}
\begin{gathered}
F|_U = \{f_1, \ldots, f_k\} \;\;\; \text{ modulo isotropic correspondences},\\
G|_{V_i} = \{g_{i, 1}, \ldots, g_{i, n_i}\} \;\;\; \text{ modulo isotropic correspondences},
\end{gathered}
\end{equation*}
where numbers $n_i$ are not supposed to be equal.  From Lemma $\ref{compwelldefined}$ we get that
\begin{equation*}
(G \circ F)|_U = \{ \ldots, g_{i, j} \circ f_i, \ldots \} \;\;\; \text{ modulo isotropic part correspondences}.
\end{equation*}
We define $G \circ F$ as a sheaf on $Z$ locally defined by the maps given above.

\end{proof}

Let $id_X$ be the identity map $X \rightarrow X$ and we denote the corresponding element in $Cor(X, X)$ by the same symbol $id_X$. The definition says that for any $F \in Cor(Y, X)$ we have
\begin{equation*}
id_X \circ F = F, \;\;\;\; F \circ id_Y = F.
\end{equation*}

The defined composition can be extended to $SC$ and $JC$ in the following way
\begin{equation}\label{linearcomp1}
\begin{gathered}
SC(Y, X) \circ SC(Z, Y) \rightarrow SC(Z, X) \;\;\; \text{and} \;\;\; JC(Y, X) \circ JC(Z, Y) \rightarrow JC(Z, X) \\
(\sum\limits_{i=1}^{n}r_i{F}_i) \circ (\sum\limits_{j=1}^mk_jG_j)  = \sum\limits_{i,j}r_ik_j(F_i\circ G_j),
\end{gathered}
\end{equation}
where $r_i \in \mathbb{Z}$.

\vspace{0.07in}

In the same way we can define composition of bounded correspondences $SC_b(Y, X)$. We defined filtration of bounded correspondences by formula $\ref{filtration}$. The following lemma is the reson to introduce $e^r$ to define the filtration.
\begin{lemma}
We have
\begin{equation*}
SC(Y, X; r) \circ SC(Z, Y; s) \subset SC(Z, X; r+s).
\end{equation*}
\end{lemma}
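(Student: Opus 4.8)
The plan is to reduce the statement to single correspondences using bilinearity of the composition, and then to bound the pointwise order of the composite. By formula $(\ref{linearcomp1})$ the composition on $SC$ is $\mathbb{Z}$-bilinear, so every element of $SC(Y,X;r)\circ SC(Z,Y;s)$ is an integer combination of products $G\circ F$ with $G\in SCor(Y,X;r)$ and $F\in SCor(Z,Y;s)$. Since $SC(Z,X;r+s)=\mathbb{Z}[SCor(Z,X;r+s)]$ by $(\ref{filtration})$, it suffices to prove $G\circ F\in SCor(Z,X;r+s)$ for such $F$ and $G$.

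To do this I would fix an arbitrary point $z\in Z$ and estimate the local order $n(z)$ of $G\circ F$. By the definition of the filtration $(\ref{filtration1})$ there is a neighborhood $U$ of $z$ on which $F$ is given, modulo isotropic correspondence, by $k=n_F(z)\leqslant e^s$ unordered maps $\{f_1,\ldots,f_k\}$; and for each $i$ one can choose a small $V_i\supset f_i(U)$ on which $G$ is given, modulo isotropic correspondence, by $n_i=n_G(f_i(z))\leqslant e^r$ unordered maps $\{g_{i,1},\ldots,g_{i,n_i}\}$. Now I would invoke the local description of the composition from Section $\ref{operationcorr}$, namely
\begin{equation*}
(G\circ F)|_U=\{\,g_{i,j}\circ f_i\,\},\qquad i=1,\ldots,k,\ \ j=1,\ldots,n_i,
\end{equation*}
which is a list of $\sum_{i=1}^k n_i$ compositions. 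The key estimate is then
\begin{equation*}
\sum_{i=1}^k n_i\ \leqslant\ k\cdot e^r\ \leqslant\ e^s\cdot e^r\ =\ e^{r+s}.
\end{equation*}
Since passing to the quotient by isotropic correspondences and identifying coinciding graphs can only decrease the number of maps, the local order of $G\circ F$ at $z$ is at most $e^{r+s}$. As $z$ was arbitrary, $G\circ F$ is a bounded correspondence with $n(z)\leqslant e^{r+s}$ everywhere, i.e. $G\circ F\in SCor(Z,X;r+s)$, and the lemma follows by bilinearity.

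The computation is short, so the only point requiring care is that the orders $n_F,n_G$ are defined only modulo isotropic correspondences, and one must check this does not inflate the count. This is harmless for an upper bound: the isotropic quotient only removes maps from the local list, so the inequality $\sum_i n_i\leqslant e^{r+s}$ survives after reduction. The actual content of the lemma is therefore the multiplicativity $e^s\cdot e^r=e^{r+s}$, which converts the multiplicative behaviour of local orders under composition into an additive filtration; this is precisely the reason the filtration indices were chosen to be $e^r$ rather than $r$, as anticipated in the remark preceding the statement.
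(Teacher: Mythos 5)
Your proof is correct and follows essentially the same route as the paper: both reduce to the local count of maps defining $G\circ F$ and use the multiplicative bound $n\cdot m\leqslant e^r e^s=e^{r+s}$. Your version is marginally more careful in two harmless respects (allowing the local order of $G$ to vary over the neighborhoods $V_i$, and noting that the isotropic quotient can only decrease the count), but the substance is identical.
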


\begin{proof}
Let $F$ and $G$ be correspondences such that in small neighborhoods of $y$ and $z$ we have $n(y) \leqslant e^r$ and $m(z) \leqslant e^s$ for any $y \in Y$ and $z \in Z$. Then, in a small neighborhood of $z$ the correspondence $G \circ F$ is given by $n(y)m(z)$ maps. We have
\begin{equation*}
n(y)m(z) \leqslant e^re^s = e^{r+s} \;\; \Rightarrow \;\; G \circ F \in SC(Z, X; r+s).
\end{equation*}
\end{proof}

\subsection{Wedge sum of symplectic correspondences}\label{wedgesum}

Consider $F \in SCor_k(Y, X)$, $G \in SCor_n(Z, X)$. Let $U \subset Y$, $V \subset Z$ be small open subsets  such that
\begin{equation*}
F|_U = \{ f_1, \ldots, f_k \}, \quad  G|_V = \{ g_1, \ldots, g_n \}.
\end{equation*}
We define $F \vee G \in SCor_{n+k}(Y \times Z, X)$ as a correspondence defined over $U \times V$ as follows
\begin{equation*}
\begin{gathered}
(F \vee G)|_{U \times V} = \{f_1 \circ pr_Y, \ldots, f_k \circ pr_Y, g_1 \circ pr_Z, \ldots, g_n \circ pr_Z \}.
\end{gathered}
\end{equation*}
where $pr_Y$, $pr_Z$ are projections of $Y \times Z$ onto $Y$ and $Z$, respectively. We see that
\begin{equation*}
\begin{gathered}
(f_1 \circ pr_Y)^{*}\omega_X + \ldots + (f_n \circ pr_Y)^{*}\omega_X + (g_1 \circ pr_Z)^{*}\omega_X + \ldots + (g_m \circ pr_Z)^{*}\omega_X = \\
 pr_Y^{*}\omega_Y + pr_Z^{*}\omega_Z = \omega_Y \oplus \omega_Z.
\end{gathered}
\end{equation*}
We obtain
\begin{equation*}
SCor_k(Y, X) \vee SCor_n(Z, X) \rightarrow SCor_{n+k}(Y \times Z, X).
\end{equation*}

Since elements of $SCor(Y, X)$ are equivalence classes, we need to prove the following lemma:

\begin{lemma}\label{compwelldefined2}
Consider $F \in SCor_k(Y, Y)$, $G \in SCor_n(Z, X)$. The following holds
\begin{equation*}
\begin{gathered}
(F + \; \text{isotropic correspondence}) \vee G = F \vee G + \; \text{isotropic correspondence} \\
F  \vee (G + \; \text{isotropic correspondence}) = F \vee G + \; \text{isotropic correspondence},
\end{gathered}
\end{equation*}
\end{lemma}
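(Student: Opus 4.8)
The plan is to mimic the proof of Lemma \ref{compwelldefined}, replacing composition by the wedge sum. I would work in a product chart $U \times V$ with $U \subset Y$, $V \subset Z$ small, use the explicit local formula for $F \vee G$, and observe that the only content is that an isotropic correspondence stays isotropic after pulling back along a projection. Concretely, the defining vanishing $\sum_i h_i^*\omega_X = 0$ of an isotropic piece is preserved because pulling back a vanishing sum of forms under $pr_Y$ or $pr_Z$ still vanishes.

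First I would treat the first identity. Let $H \in SCor^0_m(Y, X)$ be isotropic, so on a small $U \subset Y$ we have $H|_U = \{h_1, \ldots, h_m\}$ with $\sum_i h_i^*\omega_X = 0$. Then $(F + H)|_U = \{f_1, \ldots, f_k, h_1, \ldots, h_m\}$, and the wedge-sum formula over $U \times V$ gives the decomposition
\begin{equation*}
\bigl((F + H)\vee G\bigr)|_{U\times V} = (F\vee G)|_{U\times V} + \{h_1\circ pr_Y, \ldots, h_m\circ pr_Y\}.
\end{equation*}
It then remains to check that the extra piece is isotropic on $Y \times Z$. Since $(h_i\circ pr_Y)^*\omega_X = pr_Y^*(h_i^*\omega_X)$, summing yields
\begin{equation*}
\sum_i (h_i\circ pr_Y)^*\omega_X = pr_Y^*\Bigl(\sum_i h_i^*\omega_X\Bigr) = pr_Y^*(0) = 0,
\end{equation*}
so $\{h_1\circ pr_Y, \ldots, h_m\circ pr_Y\} \in SCor^0_m(Y\times Z, X)$, which is exactly the claim for the first equation.

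The second identity is entirely symmetric: for an isotropic $H' \in SCor^0_m(Z, X)$ with $H'|_V = \{h_1', \ldots, h_m'\}$ and $\sum_j (h_j')^*\omega_X = 0$, the wedge-sum formula gives $(F\vee(G+H'))|_{U\times V} = (F\vee G)|_{U\times V} + \{h_1'\circ pr_Z, \ldots, h_m'\circ pr_Z\}$, and the same projection computation with $pr_Z$ in place of $pr_Y$ shows the added piece lies in $SCor^0_m(Y\times Z, X)$. Finally, because these identities hold on every product chart and both the wedge sum and the isotropy condition are defined locally, they glue to global identities in $SCor(Y\times Z, X)$. I do not expect a genuine obstacle here, since this is the wedge-sum counterpart of the composition lemma; the only point requiring minor care is the bookkeeping that the local decomposition into $F\vee G$ plus a pulled-back isotropic part is compatible with the equivalence relation defining $SCor$, which is immediate once the added piece is identified as isotropic.
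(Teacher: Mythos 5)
Your proposal is correct and follows essentially the same route as the paper's proof: decompose $((F+H)\vee G)|_{U\times V}$ locally into $(F\vee G)|_{U\times V}$ plus the pulled-back piece $\{h_1\circ pr_Y,\ldots,h_m\circ pr_Y\}$, identify that piece as an isotropic correspondence, and argue symmetrically with $pr_Z$ for the second identity. Your explicit check that $\sum_i(h_i\circ pr_Y)^{*}\omega_X = pr_Y^{*}\bigl(\sum_i h_i^{*}\omega_X\bigr)=0$ is the (correct) justification the paper leaves implicit when it labels the extra term an isotropic correspondence.
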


\begin{proof}
Consider an isotropic correspondence $H \in SCor^0_m(Y, X)$. The correspondence  $F + H \in SCor_{k+m}(Y, X)$ over some small $U \subset Y$ is given by
\begin{equation*}
(F + H)|_U = \{f_1, \ldots, f_k, h_1, \ldots, h_m\}.
\end{equation*}
By definition we have
\begin{equation*}
\begin{gathered}
((F+H) \vee G)|_{U \times V} = \\
\{f_1 \circ pr_Y, \ldots, f_k \circ pr_Y, h_1 \circ pr_Y, \ldots, h_m \circ pr_Y,  g_1 \circ pr_Z, \ldots, g_n \circ pr_Z \} =\\
(F \vee G)|_{U \times V} + \{ h_1 \circ pr_Y, \ldots, h_m \circ pr_Y \} = \\
(F \vee G)|_{U \times V} \;\; + \;\; \text{isotropic correspondence} \Rightarrow \\
(F+H) \vee G  = F \vee G  \; + \; \text{isotropic correspondence $H \circ pr_Y$}.
\end{gathered}
\end{equation*}

Consider an isotropic correspondence $H \in SCor^0_m(Z, X)$. Over small $U \subset Z$ we have
\begin{equation*}
(G + H)|_{U} = \{ g_1, \ldots, g_n, h_1, \ldots, h_m\}.
\end{equation*}
By definition, we get
\begin{equation*}
\begin{gathered}
(F \vee (G + H))|_U = \\
\{f_1 \circ pr_Y, \ldots, f_k \circ pr_Y, g_1 \circ pr_Z, \ldots, g_n \circ pr_Z, h_1 \circ pr_Z, \ldots, h_m \circ pr_Z, \} =\\
(F \vee G)|_{U \times V} + \{ h_1 \circ pr_Z, \ldots, h_m \circ pr_Z \} = \\
(F \vee G)|_{U \times V} \;\; + \;\; \text{isotropic correspondence} \Rightarrow \\
F \vee (G + H) = F \vee G \; + \; \text{isotropic correspondence $H \circ pr_Z$}.
\end{gathered}
\end{equation*}

\end{proof}

\begin{lemma}
The $\vee$ operation induces a well-defined operation
\begin{equation*}
SCor(Y, X) \vee SCor(Z, Y) \rightarrow SCor(Y \times Z, X),
\end{equation*}
\end{lemma}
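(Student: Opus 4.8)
The plan is to extend the wedge operation from the graded pieces $SCor_k(Y,X)\vee SCor_n(Z,X)\to SCor_{n+k}(Y\times Z,X)$, already constructed above, to the full sheaves, exactly as was done for the composition in the previous subsection. The argument splits into two essentially independent parts: a local-to-global (gluing) step, showing that the pointwise construction really is a global section of the sheaf $SCor(Y\times Z,X)$, and a well-definedness step, showing that the result does not depend on the local representatives chosen modulo isotropic correspondences.

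First I would define the operation locally. Given $F\in SCor(Y,X)$ and $G\in SCor(Z,X)$, every point $(y,z)\in Y\times Z$ has a product neighbourhood $U\times V$ on which $F|_U=\{f_1,\ldots,f_k\}$ and $G|_V=\{g_1,\ldots,g_n\}$ modulo isotropic correspondences, with $k=k(y)$ and $n=n(z)$ possibly varying. I set
\[
(F\vee G)|_{U\times V}=\{f_1\circ pr_Y,\ldots,f_k\circ pr_Y,\; g_1\circ pr_Z,\ldots,g_n\circ pr_Z\}.
\]
The form computation carried out just above shows the sum of the pulled-back forms equals $\omega_Y\oplus\omega_Z$, so this is a genuine symplectic correspondence of order $n+k$ on $U\times V$; since $pr_Y$ and $pr_Z$ are symplectic, no further checking of the correspondence condition is needed.

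The gluing step is next. Because $F$ and $G$ are sections of the sheaves $SCor(\cdot,X)$, over any overlap the two local representatives of $F$ (respectively of $G$) coincide as unordered sets modulo isotropic correspondences. The wedge formula only post-composes each local map with a fixed projection, so permuting the local data or enlarging it by isotropic pieces alters the right-hand side only by a permutation and by an isotropic correspondence; this is precisely the content of Lemma $\ref{compwelldefined2}$. Hence the locally defined correspondences agree on overlaps as classes modulo isotropic correspondences, and by the sheaf property of $SCor(\cdot,X)$ they glue to a global element $F\vee G\in SCor(Y\times Z,X)$. Finally, well-definedness on the equivalence classes that constitute $SCor$ is again Lemma $\ref{compwelldefined2}$: replacing $F$ by $F+H$ with $H$ isotropic, or $G$ by $G+H'$, changes $F\vee G$ only by the isotropic correspondence $H\circ pr_Y$ (respectively $H'\circ pr_Z$), so the class of $F\vee G$ depends only on the classes of $F$ and $G$.

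I expect the gluing to be the main obstacle. The subtlety is that the local orders $k(y)$ and $n(z)$ vary over $Y$ and $Z$, so $F\vee G$ has non-constant local order on $Y\times Z$, and the sheaf-compatibility must be verified as classes modulo isotropic correspondences rather than on the nose — which is exactly why Lemma $\ref{compwelldefined2}$ is needed. The remaining input is the routine form computation already displayed, so combining the two steps yields the asserted well-defined operation $SCor(Y,X)\vee SCor(Z,X)\to SCor(Y\times Z,X)$.
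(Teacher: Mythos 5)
Your proposal is correct and follows essentially the same route as the paper: define $F\vee G$ locally on product neighbourhoods $U\times V$ via pre-composition with the projections, invoke the displayed form computation to see the result is a symplectic correspondence, and use Lemma \ref{compwelldefined2} to get independence of the local representatives modulo isotropic correspondences, so that the local data glue to a global section of the sheaf. The paper's own proof is just a terser version of this; your extra discussion of the varying local orders $k(y)$, $n(z)$ and of the gluing step fills in details the paper leaves implicit.
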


\begin{proof} Consider $F \in SCor(Y, X)$ and $G \in SCor(Z, X)$. Let $U \subset Y$, $V \subset Z$ be small open subsets such that
\begin{equation*}
\begin{gathered}
F|_U = \{f_1, \ldots, f_k\} \;\;\; \text{ modulo isotropic correspondences},\\
G|_{V} = \{g_{1}, \ldots, g_{n}\} \;\;\; \text{ modulo isotropic correspondences},
\end{gathered}
\end{equation*}
From Lemma $\ref{compwelldefined2}$ we see that (note that numbers $n, k$ are not constant and depend on points of $Y$ and $Z$)
\begin{equation*}
\begin{gathered}
(F \vee G)|_{U \times V} = \{ f_1 \circ pr_Y, \ldots, f_k \circ pr_Y, g_1 \circ pr_Z, \ldots, g_n \circ pr_Z \} \\
 \text{ modulo isotropic part correspondences}.
\end{gathered}
\end{equation*}
We define $F \vee G$ as a sheaf on $Y \times Z$ locally defined by the maps given above.

\end{proof}

Let us prove a simple lemma which will be used later.

\begin{lemma}\label{simplelemma}
Let $i: Y \times p \hookrightarrow Y \times Z$ be  embedding, where $p \in Z$ is a point. Consider $G \in SCor(Z, X)$. Then, for any $F \in SCor(Y, X)$,  we have
\begin{equation*}
i^{*}(F \vee G) = (F \vee G) \circ i  = F \in SCor(Y, X),
\end{equation*}
where equality above is equality in $SCor(Y, X)$
\end{lemma}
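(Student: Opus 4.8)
The goal is to show that restricting the wedge $F \vee G$ along the embedding $i : Y \times p \hookrightarrow Y \times Z$ recovers $F$ itself inside $SCor(Y, X)$. Since everything is defined locally (correspondences are sheaves built from unordered sets of maps modulo isotropic parts), the plan is to unwind the definitions over a small open set and then check that the local descriptions patch together. The statement $i^*(F\vee G) = (F\vee G)\circ i$ is immediate: the restriction along $i$ is, by the definition of the presheaf structure on $SCor$, precisely precomposition with the embedding $i$ viewed as a correspondence of order one (this is the $h^*$ construction from Lemma~\ref{presheaflemmasympcor} with $h = i$). So the real content is the second equality, $(F\vee G)\circ i = F$.

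First I would fix a point $y \in Y$ and a small neighborhood $U \subset Y$ of $y$ together with a small neighborhood $V \subset Z$ of the fixed point $p$, chosen so that $F|_U = \{f_1,\ldots,f_k\}$ and $G|_V = \{g_1,\ldots,g_n\}$ (modulo isotropic correspondences). By the definition of the wedge sum in Section~\ref{wedgesum}, over $U \times V$ we have
\begin{equation*}
(F \vee G)|_{U \times V} = \{f_1 \circ pr_Y, \ldots, f_k \circ pr_Y, g_1 \circ pr_Z, \ldots, g_n \circ pr_Z\}
\end{equation*}
modulo isotropic correspondences. The embedding $i$ sends $U \times p$ into $U \times V$, and precomposing each of the listed maps with $i$ means replacing $pr_Y \circ i$ and $pr_Z \circ i$ by their values: since $pr_Y \circ i = id_U$ and $pr_Z \circ i \equiv p$ is the constant map to the point $p$, the restriction becomes
\begin{equation*}
(F \vee G)\circ i|_{U} = \{f_1, \ldots, f_k, g_1(p), \ldots, g_n(p)\},
\end{equation*}
where $g_j(p)$ denotes the constant map $U \to X$ with value $g_j(p)$.

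**The key step and the main obstacle.**

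Now I would observe that each constant map $g_j(p) : U \to X$ pulls back $\omega_X$ to zero, so the unordered collection $\{g_1(p),\ldots,g_n(p)\}$ is an isotropic correspondence in $SCor^0_n(U, X)$. By the definition of $SCor$ (Section~\ref{sympcorrsheaves}), correspondences are taken modulo isotropic correspondences, so these constant terms can be discarded, leaving $\{f_1,\ldots,f_k\} = F|_U$. Since this identification holds over every small $U$ and the local descriptions are the restrictions of global sheaf elements, the gluing axiom for the sheaf $SCor(\cdot, X)$ forces $(F\vee G)\circ i = F$ globally in $SCor(Y, X)$. The main thing requiring care — the step I would treat most carefully — is verifying that the passage from the local equality modulo isotropic correspondences to the global equality is legitimate: one must check that the isotropic parts being discarded are genuinely the restrictions of a single global isotropic correspondence, so that the quotient is taken consistently across overlaps. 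This is exactly the compatibility that Lemma~\ref{compwelldefined2} was set up to guarantee, so I would invoke it to conclude that the local identifications are coherent and descend to the claimed global equality in $SCor(Y, X)$.
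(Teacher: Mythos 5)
Your proposal is correct and follows essentially the same route as the paper's proof: restrict $F\vee G$ along $i$ over a small $U$, observe that the $g_j\circ pr_Z$ terms become the constant maps $g_j(p)$, which form an isotropic correspondence and hence vanish in $SCor(Y,X)$, leaving $F|_U$. Your extra care about gluing the local identifications is harmless (the discarded isotropic part is the single global constant correspondence $\{g_1(p),\ldots,g_n(p)\}$, so coherence across overlaps is automatic), but the paper treats this as immediate.
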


\begin{proof}
By definition,
\begin{equation*}
i^{*}: \;\; SCor(Y \times Z, \; X) \rightarrow SCor(Y \times p, \; X), \quad i^{*}(F \vee G) = (F \vee G) \circ i.
\end{equation*}
For a small $U \subset Y$ we have
\begin{equation*}
\begin{gathered}
\big((F \vee G) \circ i \big)|_U =  \{  f_1 \circ pr_Y, \ldots, f_k \circ pr_Y, g_1(p), \ldots, g_n(p)  \}.
\end{gathered}
\end{equation*}
Since $\{ g_1(p), \ldots, g_n(p)\}$ is isotropic correspondence (set of constant maps) and $(f_i \circ pr_Y)|_{Y \times p} = f_i$, we get that
\begin{equation*}
(F \vee G) \circ i = F + \text{isotropic correspondence} = F \;\; \text{as elements of $SCor(Y, X)$}.
\end{equation*}
\end{proof}

The wedge sum can be extended to $SC$ in the following way
\begin{equation}\label{linearcomp2}
\begin{gathered}
SC(Y, X) \vee SC(Z, X) \rightarrow SC(Y \times Z, X) \\
(\sum\limits_{i=1}^{n}r_i{F}_i) \vee (\sum\limits_{j=1}^mk_jG_j)  = \sum\limits_{i,j}r_ik_jF_i \vee G_j.
\end{gathered}
\end{equation}

\section{Homotopy equivalence in $\mathcal{S}\mathcal{S}ymp$ and in $\mathcal{J}\mathcal{S}ymp$}\label{homequivalence}

\subsection{Homotopy equivalence in $\mathcal{S}\mathcal{S}ymp$}\label{homequivsymp}

Let $M = (M, \omega_M, p_0, p_1)$ be a connected symplectic manifold with two marked points.

\vspace{0.05in}
\noindent
Consider  embeddings
\begin{equation*}
\begin{gathered}
i_0: Y = Y \times p_0 \hookrightarrow Y \times M, \;\;\; i_1: Y = Y \times p_1 \hookrightarrow Y \times M, \\
\end{gathered}
\end{equation*}
We see that $i_0, i_1 \in \mathcal{S}\mathcal{S}ymp(Y, Y \times M)$ and denote by the same symbols the associated correspondences in $i_0, i_1 \in SC(Y, Y \times M)$ (recall that $SCor(Y, Y\times M) \subset SC(Y, Y \times M)$.

\begin{definition}
We say that $F, G \in C(Y, X)$ are $M$-homotopic if there exists  $H \in SC(Y \times M, X)$ such that
\begin{equation*}
H \circ i_0 = F, \;\;\; H \circ i_1 = G.
\end{equation*}
We denote $M$-homotopy equivalent maps by $F \simeq G$.
\end{definition}

\begin{lemma}
Assume that there  exists $I \in SCor(M, X)$. Then, $M$-homotopy relation defines equivalence relation on $SC(Y, X)$.
\end{lemma}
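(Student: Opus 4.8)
The plan is to verify the three axioms of an equivalence relation, and the key observation is that, because $SC(Y\times M,X)$ is an abelian group and composition with $i_0,i_1$ is $\mathbb{Z}$-linear (see $(\ref{linearcomp1})$), homotopies here can be combined by addition and subtraction rather than by the concatenation one uses for paths in ordinary topology. The single place the hypothesis $I\in SCor(M,X)$ enters is in producing, for every $F\in SC(Y,X)$, a canonical ``constant'' homotopy $H_F:=F\vee I\in SC(Y\times M,X)$, where $\vee$ is extended $\mathbb{Z}$-linearly over $SC$ as in $(\ref{linearcomp2})$.

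First I would establish reflexivity, which is the only axiom with geometric content. Given $F\in SC(Y,X)$, I set $H_F=F\vee I$. Applying Lemma $\ref{simplelemma}$ with the point $p_0\in M$ and then with $p_1\in M$ gives $H_F\circ i_0=F$ and $H_F\circ i_1=F$; extending over formal integer combinations of correspondences term by term (using linearity of $\vee$ and of composition) shows these identities hold for all $F\in SC(Y,X)$. Hence $F\simeq F$, witnessed by $H_F$. This is where $I$ is indispensable: without a correspondence living over $M$ there is no way to thicken $F$ to an element of $SC(Y\times M,X)$ restricting to $F$ at both marked points.

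Next, symmetry and transitivity both reduce to linear algebra in the group $SC(Y\times M,X)$. For symmetry, suppose $F\simeq G$ via $H$, so that $H\circ i_0=F$ and $H\circ i_1=G$. I would take $H'=H_F+H_G-H$. Using linearity of the maps $(-)\circ i_0$ and $(-)\circ i_1$ together with the reflexivity identities $H_F\circ i_k=F$ and $H_G\circ i_k=G$, one computes $H'\circ i_0=F+G-F=G$ and $H'\circ i_1=F+G-G=F$, so $G\simeq F$. For transitivity, suppose $F\simeq G$ via $H_1$ and $G\simeq K$ via $H_2$; I would take $H_3=H_1+H_2-H_G$ and check that $H_3\circ i_0=F+G-G=F$ and $H_3\circ i_1=G+K-G=K$, giving $F\simeq K$.

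The only real obstacle is conceptual rather than computational: recognizing that reflexivity is the nontrivial axiom, after which the abelian group structure of $SC$ makes symmetry and transitivity purely formal. In writing this up I would carefully justify two points of rigor. First, that Lemma $\ref{simplelemma}$ applies verbatim at \emph{both} marked points $p_0,p_1$; its proof uses only that the evaluations $g_j(p)$ are constant maps, hence isotropic, which holds for any point of $M$. Second, that every identity is asserted in $SC(Y,X)$, i.e.\ modulo isotropic correspondences, so that the cancellations such as $F+G-F=G$ are genuine equalities of classes rather than of chosen representatives; this is automatic once we work inside the group of formal $\mathbb{Z}$-combinations, but it is worth stating explicitly.
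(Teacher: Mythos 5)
Your proof is correct, and the reflexivity step coincides with the paper's (both use $H_F = F\vee I$ together with Lemma \ref{simplelemma} at each marked point). Where you genuinely diverge is in symmetry and transitivity. The paper's argument there is partly geometric: it invokes a symplectomorphism $\gamma\in\mathcal{S}ymp(M,M)$ with $\gamma(p_0)=p_1$ and $\gamma(p_1)=p_0$ (asserted to exist for any connected $M$, which rests on two-point transitivity of the symplectomorphism group), proves symmetry by precomposing the homotopy with $id_Y\times\gamma$, and proves transitivity with the combination $H_1 - H_2\circ(id_Y\times\gamma) + F_3\vee I$. You instead exploit only the abelian group structure of $SC(Y\times M,X)$ and the $\mathbb{Z}$-bilinearity of composition: $H'=H_F+H_G-H$ for symmetry and $H_3=H_1+H_2-H_G$ for transitivity, with the hypothesis $I\in SCor(M,X)$ entering only through the constant homotopies $H_F=F\vee I$. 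Both arguments are valid; yours is more economical in that it eliminates the auxiliary map $\gamma$ entirely and makes transparent that, once reflexivity is available, the remaining axioms are formal consequences of linearity. The paper's route has the minor advantage that its symmetry step does not use $I$ at all, and its use of $\gamma$ foreshadows the analogous argument in the pseudoholomorphic setting of Section \ref{homequivpseud}, where the swap map is taken as an explicit hypothesis. Your two points of rigor (that Lemma \ref{simplelemma} applies at either marked point, and that all identities are read modulo isotropic correspondences) are exactly the right ones to spell out.
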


\begin{remark}
If  we replace $SC(Y, X)$ by $\mathcal{S}ymp(Y, X)$, then the homotopy relation does not define equivalence relation. For example, if $dim(X) = dim(Y)$, then any map $f \in \mathcal{S}\mathcal{S}ymp(Y, X)$ is not homotopy equivalent to itself. We even can not replace $SC$ by $SCor(Y, X)$ (transitivity is not satisfied).
\end{remark}

\begin{proof} Let $\gamma \in \mathcal{S}ymp(M, M)$ be a map such that
\begin{equation*}
\gamma(p_0) = p_1, \quad \gamma(p_1) = p_0.
\end{equation*}
Note that $\gamma$ exists for any connected $M$ and we assumed that $M$ is connected. Denote by $id_Y \times \gamma$ the obvious map from $Y \times M$ to $Y \times M$.

\vspace{0.09in}
\noindent
\emph{Reflexivity}. We need to show that $F \simeq F$, where $F \in SC(Y, X)$. Lemma $\ref{simplelemma}$ says that $(F \vee I) \circ i_0 = F$ as elements of $SCor(Y, X)$. In the same way, $(F \vee I) \circ i_1 = F$. Since $F \vee I \in SCor(Y \times M, X)$, we get $F \vee I$ is the required homotopy.

\vspace{0.06in}
\noindent
\emph{Symmetry.} We need to show that $F \simeq G \Rightarrow G \simeq F$. Assume that a homotopy between $F$, $G$ is provided by $H$ and $H \circ i_0 = F$, $H \circ i_1 = G$. Then, $H \circ (id_Y \times \gamma)$ provides a homotopy between $G$ and $F$. Indeed,

\begin{equation*}
\begin{gathered}
(H \circ (id_Y \times \gamma)) \circ i_0(Y) = (H \circ (id_Y \times \gamma))(Y \times p_0) = H|_{Y \times p_1}  = G, \\
(H \circ (id_Y \times \gamma)) \circ i_1 (Y) = (H \circ (id_Y \times \gamma))(Y \times p_1) = H|_{Y \times p_0} = F.
\end{gathered}
\end{equation*}

\vspace{0.06in}
\noindent
\emph{Transitivity.} Assume that $F_1 \simeq F_2$ and $F_2 \simeq F_3$, where $F_1, F_2, F_3 \in C(Y, X)$. We need to show that $F_1 \simeq F_3$. We have

\begin{equation*}
\begin{gathered}
H_1 \in SC(Y \times M, X), \;\;\; H_2 \in SC(Y \times M, X), \\
H_1 \circ i_0 = F_1, \;\; H_1 \circ i_1 = F_2, \;\; H_2 \circ i_0 = F_2, \;\; H_2 \circ i_1 = F_3.
\end{gathered}
\end{equation*}
Consider
\begin{equation*}
H = H_1 - H_2 \circ (id_Y \times \gamma) + F_3 \vee I \in SC(Y \times M, X).
\end{equation*}
We obtain  (see Lemma $\ref{simplelemma}$)
\begin{equation*}
\begin{gathered}
H \circ i_0 = F_1 - F_3 + F_3 = F_1, \\
H \circ i_1 = F_2 - F_2 + F_3 = F_3.
\end{gathered}
\end{equation*}
So, transitivity is proved.
\end{proof}

Let us prove the following theorem

\begin{theorem}\label{deformtheorem}
Let $\varphi_t \in \mathcal{S}\mathcal{S}ymp(Y, X)$ be a symplectic isotopy. If there exists  $I \in SCor(M, X)$, then $\varphi_0$ is $M$-homotopic to $\varphi_1$.
\end{theorem}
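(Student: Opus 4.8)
The plan is to realize the isotopy directly as a single symplectic correspondence on $Y\times M$, using the given $I\in SCor(M,X)$ to account for the symplectic area of the $M$-direction. Since $M$ is connected I would first fix a smooth function $\tau:M\to[0,1]$ with $\tau(p_0)=0$ and $\tau(p_1)=1$, obtained by extending a parametrized path from $p_0$ to $p_1$. The naive candidate is the single map $\Psi:Y\times M\to X$ given by $\Psi(y,m)=\varphi_{\tau(m)}(y)$, which by construction restricts to $\varphi_0$ on $Y\times p_0$ and to $\varphi_1$ on $Y\times p_1$. The goal is then to upgrade $\Psi$ to an honest $H\in SC(Y\times M,X)$ with $H\circ i_0=\varphi_0$ and $H\circ i_1=\varphi_1$.

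The obstruction to $\Psi$ itself being a symplectic correspondence is a short computation. Writing $\xi_t$ for the generating symplectic vector field of the isotopy and $\beta_t=\varphi_t^{*}(\iota_{\xi_t}\omega_X)$, which is a closed $1$-form on $Y$ (and exact when the isotopy is Hamiltonian), one finds $\Psi^{*}\omega_X=\omega_Y-d\tau\wedge\beta_\tau$. Thus $\Psi^{*}\omega_X$ differs from the required area $\omega_Y\oplus\omega_M$ by the closed $2$-form $\omega_M+d\tau\wedge\beta_\tau$. The pure $\omega_M$ part is exactly what $I$ supplies: pulling $I$ back along the projection $pr_M:Y\times M\to M$ produces local maps $I_j\circ pr_M$ with total pullback $\omega_M$. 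What remains is the mixed term $d\tau\wedge\beta_\tau$, and I would absorb it into a correction correspondence $C$ on $Y\times M$ whose total area equals $\omega_M+d\tau\wedge\beta_\tau$ and which, crucially, restricts to an \emph{isotropic} correspondence over both $p_0$ and $p_1$. Setting $H=\Psi+C$ then gives an element of $SCor(Y\times M,X)$ of the correct total area $\omega_Y\oplus\omega_M$.

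Granting such a $C$, the endpoint conditions are immediate: over $Y\times p_0$ every map coming from the $I$- and correction-part becomes constant in the $M$-direction, hence isotropic, while $\Psi$ restricts to $\varphi_0$; since constant maps are isotropic, Lemma $\ref{simplelemma}$ gives $H\circ i_0=\varphi_0$ in $SCor(Y,X)$, and symmetrically $H\circ i_1=\varphi_1$, so $\varphi_0\simeq\varphi_1$. The main obstacle is precisely the construction of $C$, i.e. realizing the closed form $\omega_M+d\tau\wedge\beta_\tau$ as the total area of a genuine symplectic correspondence that is isotropic over the two marked points. I expect to reduce this to a tractable situation in two steps: first, using transitivity of $M$-homotopy (already established in the preceding lemma) to replace $\varphi_0\simeq\varphi_1$ by a chain of short steps $\varphi_{t_k}\simeq\varphi_{t_{k+1}}$, over which $\beta_\tau$ is uniformly small and a Moser-type deformation of the $I$-part is unobstructed; and second, if necessary, reducing a general symplectic isotopy to a Hamiltonian one so that $\beta_t=dS_t$ is exact and $d\tau\wedge\beta_\tau$ becomes cohomologically trivial, making the Moser correction available.
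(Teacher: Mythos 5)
Your candidate homotopy is exactly the one the paper uses: the single map $\Psi(y,q)=\varphi_{\tau(q)}(y)$ (your $\tau$ is the paper's cutoff $\ell$) thrown together with $I$ pulled back along the projection to $M$, and your treatment of the endpoint conditions (the $I$-part restricts to constant, hence isotropic, maps over $p_0$ and $p_1$, so $H\circ i_0=\varphi_0$ and $H\circ i_1=\varphi_1$ in $SCor$) agrees with the paper's. Where you diverge is that you actually compute $\Psi^{*}\omega_X$: the differential of $\Psi$ at $(y,q)$ is $d\varphi_{\tau(q)}+d\tau\otimes\xi_{\tau(q)}$, which produces the cross term $d\tau\wedge\beta_\tau$ with $\beta_t=\varphi_t^{*}(\iota_{\xi_t}\omega_X)$, and this does not vanish unless $\iota_{\xi_t}\omega_X$ vanishes along $\varphi_t(Y)$. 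The paper's proof instead asserts that the map $(y,q)\mapsto\varphi_{\tau(q)}(y)$ pulls $\omega_X$ back to $\omega_Y$ because each individual $\varphi_t$ is symplectic, and concludes directly that $H$ has total area $\omega_Y\oplus\omega_M$. Your computation is the correct one, so you have identified a real defect in the argument rather than reproduced it.

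However, your proposal does not close the gap it identifies. Everything after \enquote{granting such a $C$} is conditional on producing a correspondence on $Y\times M$ whose total area is $\omega_M+d\tau\wedge\beta_\tau$, and neither of your two suggested routes is carried out. Subdividing the isotopy via transitivity makes $\beta_\tau$ small but does not make the defect form zero, and it is not explained how a \enquote{Moser-type deformation of the $I$-part} would realize a prescribed closed $2$-form with a mixed term as a finite sum $\sum_j h_j^{*}\omega_X$ for genuine maps $h_j:Y\times M\to X$; that is a nontrivial realization problem, not a standard Moser argument. Reducing to the Hamiltonian case, your fallback, would prove a strictly weaker statement than Theorem $\ref{deformtheorem}$ as stated. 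So as a proof the proposal is incomplete; its value is that it makes explicit the cross term that the paper's own proof overlooks, which is precisely what any complete proof of this theorem must handle.
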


\begin{proof}
Let $\ell: M \rightarrow [0,1]$ be a function that is equal to $0$ in a neighborhood of $p_0$ and is equal to $1$ in a neighborhood of $p_1$. Let $U \subset M$ be a small open subsets such that $I|_U$ defined by an unordered set of maps from $M$ to $X$. Let us define the following unordered set of maps on $Y \times U$
\begin{equation*}
H|_{Y \times U} = \{ \varphi_{\ell(q)}(y), \; I|_U \}, \quad \text{where $y \in Y$ and $q \in U$}.
\end{equation*}
Since $\varphi_t$ is symplectomorphism, we have $\varphi_t^{*}\omega_X = \omega_Y$ for any $t$. By definition, maps $I|_U$ pull back $\omega_X$ to $\omega_M$. This shows that the maps defining $H|_{Y \times U}$ pull back $\omega_X$ to $\omega_Y \oplus \omega_M$ and
\begin{equation*}
H|_{X \times U} \in SCor(Y \times U, X).
\end{equation*}
Let $U_{\alpha}$ be a covering of $M$ by small open subsets. Since $I \in SCor(M, X)$, we can glue correspondences $H|_{Y \times U_{\alpha}}$ and get a correspondence
\begin{equation*}
H \in SCor(Y \times M, X).
\end{equation*}
We see that $H \circ i_0 \in SCor(Y, X)$ and
\begin{equation*}
H|_{Y \times p_0} = \{ \varphi_{\ell(p_0)}(x), \; I|_{p_0} \}, \quad \text{where $I|_{p_0}$ is a set of maps $p_0 \rightarrow X$}.
\end{equation*}
This means that $I|_{p_0}$ is an isotropic correspondence. Since $\ell(p_0) = 0$, we get
\begin{equation*}
\begin{gathered}
H|_{Y \times p_0} = \varphi_0 + I|_{p_0} = \varphi_0 \;\; + \;\; \text{isotropic correspondence} = \\
\varphi_0 \;\; \text{as elements of $SCor(Y, X)$}.
\end{gathered}
\end{equation*}
In the same way
\begin{equation*}
\begin{gathered}
H|_{Y \times p_1} = \varphi_1 + I|_{p_1} = \varphi_1 \;\; + \;\; \text{isotropic correspondence} = \\
\varphi_1 \;\; \text{as elements of $SCor(Y, X)$}.
\end{gathered}
\end{equation*}
Therefore, $H$ provides an $M$-homotopy between $\varphi_0$ and $\varphi_1$.

\end{proof}

\subsection{Homotopy equivalence in $\mathcal{J}\mathcal{S}ymp$}\label{homequivpseud}

Let $M = (M, \omega_M, J_M, p_0, p_1)$ is a connected symplectic manifold  endowed with $\omega_M-$compatible almost complex structure $J_M$ and two marked points..

\vspace{0.05in}
\noindent
Consider  pseudoholomorphic embeddings
\begin{equation*}
\begin{gathered}
i_0: Y = Y \times p_0 \hookrightarrow Y \times M, \;\; i_1: Y = Y \times p_1 \hookrightarrow Y \times M, \\
\end{gathered}
\end{equation*}
We see that $i_0, i_1 \in \mathcal{J}\mathcal{S}ymp(Y, Y \times M)$ and denote by the same symbols the associated correspondences in $i_0, i_1 \in JC(Y, Y \times M)$ (recall that $JCor(Y, Y\times M) \subset JC(Y, Y \times M)$.

\begin{definition}
We say that $F, G \in JC(Y, X)$ are $M$-homotopic if there exists  $H \in JC(Y \times M, X)$ such that
\begin{equation*}
H \circ i_0 = F, \;\;\; H \circ i_1 = G.
\end{equation*}
We denote $M$-homotopy equivalent maps by $F \simeq G$.
\end{definition}

\vspace*{0.04in}

\begin{lemma}
Assume that there  exists $\gamma \in \mathcal{J}\mathcal{S}ymp(M, M)$ such that
\begin{equation*}
\gamma(p_0) = p_1, \quad \gamma(p_1) = p_0.
\end{equation*}
Then, $M$-homotopy relation defines equivalence relation on $JC(Y, X)$.
\end{lemma}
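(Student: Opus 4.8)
The plan is to mimic the proof of the corresponding lemma in $\mathcal{S}\mathcal{S}ymp$, but to exploit a simplification available in $\mathcal{J}\mathcal{S}ymp$: the projection $pr_Y \colon Y \times M \rightarrow Y$ is itself pseudoholomorphic. Indeed, with the product almost complex structure $J_Y \times J_M$ on $Y \times M$, the differential $d(pr_Y)$ intertwines $J_Y \times J_M$ with $J_Y$, so $pr_Y \in \mathcal{J}\mathcal{S}ymp(Y \times M, Y)$ and hence $pr_Y \in JC(Y \times M, Y)$. Because pseudoholomorphic morphisms need not be embeddings (unlike symplectic morphisms), I will not need an auxiliary element $I \in JCor(M, X)$ nor the isotropic-correction machinery used in the symplectic case. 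Throughout I use that composition extends $\mathbb{Z}$-bilinearly to $JC$ (formula $(\ref{linearcomp1})$) and is associative, together with the two identities $(id_Y \times \gamma) \circ i_0 = i_1$ and $(id_Y \times \gamma) \circ i_1 = i_0$, which follow directly from $\gamma(p_0) = p_1$, $\gamma(p_1) = p_0$ and pseudoholomorphicity of $id_Y \times \gamma$.

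For reflexivity I would set $H = F \circ pr_Y \in JC(Y \times M, X)$. Since $pr_Y \circ i_0 = pr_Y \circ i_1 = id_Y$, associativity gives $H \circ i_0 = F \circ (pr_Y \circ i_0) = F$ and likewise $H \circ i_1 = F$, so $F \simeq F$. For symmetry, if $H$ realizes $F \simeq G$ (so $H \circ i_0 = F$ and $H \circ i_1 = G$), then $H' = H \circ (id_Y \times \gamma)$ realizes $G \simeq F$: using the swap identities, $H' \circ i_0 = H \circ i_1 = G$ and $H' \circ i_1 = H \circ i_0 = F$.

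For transitivity, suppose $H_1$ realizes $F_1 \simeq F_2$ and $H_2$ realizes $F_2 \simeq F_3$. Here is where the Grothendieck-group structure of $JC$ enters: I would take
\[
H = H_1 - H_2 \circ (id_Y \times \gamma) + F_3 \circ pr_Y \in JC(Y \times M, X).
\]
Applying $-\circ i_0$ and $-\circ i_1$ term by term (legitimate by $\mathbb{Z}$-bilinearity) and using the swap identities together with the reflexivity computation $(F_3 \circ pr_Y) \circ i_0 = (F_3 \circ pr_Y) \circ i_1 = F_3$, one obtains $H \circ i_0 = F_1 - F_3 + F_3 = F_1$ and $H \circ i_1 = F_2 - F_2 + F_3 = F_3$, so $F_1 \simeq F_3$.

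The only genuinely checkable points, and where I would spend care, are the preliminary structural facts rather than the algebra: verifying that $pr_Y$ and $id_Y \times \gamma$ are morphisms of $\mathcal{J}\mathcal{S}ymp$, and that the composition $\circ$ on $JC$ is associative and $\mathbb{Z}$-bilinear, so that the term-by-term evaluation in the transitivity step is valid. Once these are in hand, each of reflexivity, symmetry, and transitivity reduces to a one-line composition identity. The main conceptual obstacle is simply recognizing that, in contrast to $\mathcal{S}\mathcal{S}ymp$, one may use the honest projection $pr_Y$ as the constant homotopy; this removes the need for the hypothesis $I \in SCor(M,X)$ and replaces it by the single swap hypothesis on $\gamma$.
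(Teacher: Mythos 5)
Your proposal is correct and follows essentially the same route as the paper: the paper also uses the pseudoholomorphic projection $\pi = pr_Y$ to build the constant homotopy $F \circ \pi$ for reflexivity, precomposes with $id_Y \times \gamma$ for symmetry, and takes $H = H_1 - H_2 \circ (id_Y \times \gamma) + F_3 \circ \pi$ for transitivity. The extra care you devote to checking that $pr_Y$ and $id_Y \times \gamma$ are morphisms of $\mathcal{J}\mathcal{S}ymp$ and that composition is $\mathbb{Z}$-bilinear is implicit in the paper but entirely consistent with it.
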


\vspace*{0.05in}

\begin{remark}
As in the previous section, if  we replace $JC(Y, X)$ by $\mathcal{J}\mathcal{S}ymp(Y, X)$, then the homotopy relation does not define equivalence relation.
\end{remark}

\begin{proof} Denote by $id_Y \times \gamma$ the obvious map from $Y \times M$ to $Y \times M$. Let $\pi: Y \times M \rightarrow Y$ be the projection and note that $\pi$ is pseudoholomorphic.

\vspace{0.08in}
\noindent
\emph{Reflexivity}. We need to show that $F \simeq F$, where $F \in JC(Y, X)$. Consider $F \circ \pi \in JC(Y \times M, X)$. We see that $F \circ \pi \circ i_0 = F \circ \pi \circ i_1 = F$. So, $F \circ \pi$ is the required homotopy.

\vspace{0.06in}
\noindent
\emph{Symmetry.} We need to show that $F \simeq G \Rightarrow G \simeq F$. Assume that a homotopy between $F$, $G$ is provided by $H$ and $H \circ i_0 = F$, $H \circ i_1 = G$. Then
$H \circ (id_Y \times \gamma)$ provides a homotopy between $G$ and $F$. Indeed,
\begin{equation*}
\begin{gathered}
(H \circ (id_Y \times \gamma)) \circ i_0(Y) = (H \circ (id_Y \times \gamma))(Y \times p_0) = H|_{Y \times p_1}  = G, \\
(H \circ (id_Y \times \gamma)) \circ i_1 (Y) = (H \circ (id_Y \times \gamma))(Y \times p_1) = H|_{Y \times p_0} = F.
\end{gathered}
\end{equation*}

\vspace{0.06in}
\noindent
\emph{Transitivity.} Assume that $F_1 \simeq F_2$ and $F_2 \simeq F_3$, where $F_1, F_2, F_3 \in JC(Y, X)$. We need to show that $F_1 \simeq F_3$. We have
\begin{equation*}
\begin{gathered}
H_1 \in JC(Y \times M, X), \;\;\; H_2 \in JC(Y \times M, X), \\
H_1 \circ i_0 = F_1, \;\; H_1 \circ i_1 = F_2, \;\; H_2 \circ i_0 = F_2, \;\; H_2 \circ i_1 = F_3.
\end{gathered}
\end{equation*}
Consider
\begin{equation*}
H = H_1 - H_2 \circ (id_Y \times \gamma) + F_3 \circ \pi \in JC(Y \times M, X).
\end{equation*}
We have
\begin{equation*}
\begin{gathered}
H \circ i_0 = F_1 - F_3 + F_3 = F_1, \\
H \circ i_1 = F_2 - F_2 + F_3 = F_3.
\end{gathered}
\end{equation*}
Transitivity is proved.
\end{proof}

\section{Cohomology groups}\label{symphomgroup}

In this section we define cohomology groups motivated by Section $\ref{motivationdold}$.

As before, we give some definitions simultaneously for $\mathcal{S}\mathcal{S}ymp$ and $\mathcal{J}\mathcal{S}ymp$. To give all definitions simultaneously we denote
\begin{equation*}
\begin{gathered}
\text{$\mathcal{S}\mathcal{S}ymp$ and $\mathcal{J}\mathcal{S}ymp$ by $\mathcal{S}ymp$}, \\
\text{$SCor$ and $JCor$ by $Cor$}, \\
\text{$SC$ and $JC$ by $C$}.
\end{gathered}
\end{equation*}
If we work with $SCor$, then $M$ is a connected symplectic manifold with two marked points $(M, \omega_M, p_0, p_1)$.

If we work with $JCor$, then $M$ is a connected symplectic manifold $(M, \omega_M, J_M, p_0,p_1)$ with fixed $\omega_M$-compatible almost complex structure $J_M$ and two marked points.

\vspace{0.05in}
\noindent
We also consider
\begin{equation*}
\begin{gathered}
M^n = (\underbrace{M \times \ldots M}_{n}, \; n\oplus\omega_M), \;\; M^0 = pt \;\;\; \text{in case of $\mathcal{S}\mathcal{S}ymp$ and $SCor$}, \\
M^n = (\underbrace{M \times \ldots M}_{n}, \; n\oplus\omega_M, \times^nJ_M), \;\; M^0 = pt \;\;\;  \text{in case of $\mathcal{J}\mathcal{S}ymp$ and $JCor$}
\end{gathered}
\end{equation*}

\subsection{Cohomology groups}

We define
\begin{equation*}
JC_n^{M}(Y, X) = JC(Y \times M^n, X), \quad SC_n^{M}(Y, X) = SC(Y \times M^n, X).
\end{equation*}
Since $JC_n^M(Y, X)$ always contains constant correspondences, we get that it is never empty. On the other hand, $SCor(Y \times M, X)$ may be empty (see Lemma $\ref{exactcorr}$), but we assumed  that $\mathbb{Z}[\emptyset] = 0$ (see definition $\ref{grousympcorr}$). This shows that $JC_n^M(Y, X)$ and $SC_n^M(Y, X)$ are never empty.

\vspace{0.08in}

To give definitions simultaneously we denote
\begin{equation*}
\text{$JC_n^{M} \;$ and $\; SC_n^{M} \;$ by $ \;C_n^{M}$}
\end{equation*}
We denote elements of $C_n^M(Y, X)$ by Greek letters $\sigma$ or $\tau$.

\vspace{0.1in}
\noindent
Let $d_{i, \varepsilon} \in \mathcal{S}ymp(M^n, \; M^{n+1})$ be embeddings defined as follows
\begin{equation*}
d_{i, \varepsilon}(M^{n}) = (\underbrace{M \times \ldots \times M}_{i} \times p_{\varepsilon} \times M \ldots \times M), \;\;\; \varepsilon = \{0,1\}.
\end{equation*}
Let $id_Y$ be the identity map from $Y$ to $Y$. We see that
\begin{equation*}
id_Y \times d_{i, \varepsilon} \in \mathcal{S}ymp(Y \times M^n \rightarrow Y \times M^{n+1}).
\end{equation*}
and we denote by the same symbol the associated correspondence in $Cor(Y \times M^n, \; Y \times M^{n+1})$. We have the induced restriction maps (see Sections $\ref{sympcorrsheaves}$, $\ref{pseudoholcorr}$)
\begin{equation*}
\begin{gathered}
d^{i, \varepsilon} = (id_Y \times d_{i, \varepsilon})^{*} : Cor(Y \times M^{n+1}, \; X) \rightarrow Cor(Y \times M^n, \; X), \\
\sigma \rightarrow \sigma \circ (id_Y \times d_{i, \varepsilon}).
\end{gathered}
\end{equation*}
We extend $d^{i, \varepsilon}$ linearly to $C_{n+1}^M(Y, X)$ and define a differential
\begin{equation*}
\begin{gathered}
d^{i, \varepsilon}\sum_{j=1}^{n}r_j\sigma_j = \sum\limits_{j=1}^nr_jd^{i, \varepsilon}\sigma_j, \;\;\;\; \sigma_j \in C_{n+1}^M(Y, X), \;\; r_j \in \mathbb{Z} \\
d = \sum\limits_{\varepsilon =0}^1\sum\limits_{i=0}^{n-1}(-1)^{\varepsilon + i }d^{i, \varepsilon}.
\end{gathered}
\end{equation*}
For simplicity we denote
\begin{equation*}
d^{i, \varepsilon}\sigma = \sigma|_{Y \times M^{i} \times p_{\varepsilon} \times M^{n-i-1}}.
\end{equation*}

\begin{example}
Let us show that $d^2(C_2^M(Y, X)) = 0$. We have
\begin{equation*}
\begin{gathered}
d\sigma|_{Y \times M \times M} = \sigma|_{Y \times p_0 \times M} - \sigma|_{Y \times p_1 \times M} - \sigma|_{Y \times M \times p_0} + \sigma|_{Y, M \times p_1}, \\
d^2\sigma|_{Y, M \times M} = \sigma|_{Y, p_0 \times p_0} - \sigma|_{Y, p_0 \times p_1}- \sigma|_{Y, p_1 \times p_0} + \sigma|_{Y, p_1 \times p_1} - \\ \sigma|_{Y, p_0 \times p_0} + \sigma|_{Y, p_1 \times p_0} + \sigma|_{Y, p_0 \times p_1} - \sigma|_{Y, p_1 \times p_1} = 0.
\end{gathered}
\end{equation*}
\end{example}

\vspace{0.11in}

In general, for each $i < j$ we see that $d^2$ contains two equal terms with opposite signs
\begin{equation*}
d\sigma|_{Y \times M^n} = (-1)^{i + \varepsilon}\sigma|_{Y \times M^{i} \times p_{\varepsilon} \times M^{n-i-1}} \;\; + \;\; (-1)^{j + \varepsilon}\sigma|_{Y \times M^{j} \times p_{\varepsilon} \times M^{n-j-1}} \; + \ldots \\
\end{equation*}
\begin{equation*}
\begin{gathered}
d^{2}\sigma(Y \times M^n) = (-1)^{i+j + \varepsilon -1}\sigma|_{Y \times M^{i} \times p_{\varepsilon} \times M^{j-i-1} \times p_0 \times M^{n-j-1}} \;\; + \\
(-1)^{i+j + \varepsilon}\sigma|_{Y \times M^{i} \times p_{\varepsilon} \times M^{j-i - 1} \times p_{\varepsilon} \times M^{n-j-1}} \; + \ldots
\end{gathered}
\end{equation*}
We assume that $M^{j-i-1}$ is omitted in the formula above if $i-j-1 = 0$. This shows that
\begin{equation*}
d^2 = 0.
\end{equation*}

So, we get a chain complex $(C_{\bullet}^{M}(Y, X), d)$ and denote its homology groups by
\begin{equation*}
H(C_{\bullet}^{M}(Y, X), d) = H_{\bullet}^{M}(Y, X).
\end{equation*}

\vspace{0.1in}

\begin{definition}
We say that groups $H_{\bullet}^{M}(Y, X)$ are $(X, M)-$motivic cohomology of $Y$.

In case of $\mathcal{S}\mathcal{S}ymp$ and $SCor$ we denote the cohomology groups by  $SH_{\bullet}^{M}(Y, X)$.

In case of $\mathcal{J}\mathcal{S}ymp$ and $JCor$ we denote the cohomology groups by  $JH_{\bullet}^{M}(Y, X)$.
\end{definition}

\vspace{0.1in}

Lemma $\ref{chainmap}$ below shows that these cohomology groups are contrvariant with respect to $Y$. That is the reason to call cohomology, not homology.

\subsection{Functoriality}\label{functoriality}

We  defined the composition (see Section $\ref{operationcorr}$)
\begin{equation*}
C(Y, X) \circ C(Z, Y) \rightarrow C(Z, X)
\end{equation*}
Since $C_n^M(Z, Y) = C( Z \times M^n, \; Y)$, we have
\begin{equation*}
C(Y, X) \circ C_n^M(Z, Y) \rightarrow C_n^M(Z, X)\\
\end{equation*}
In other words, every $G \in C(Y, X)$ defines
\begin{equation*}
G_{*}: C_n^M(Z, Y) \rightarrow C_n^M(Z, X).
\end{equation*}

Consider the identity map $id_{M^n}: M^n \rightarrow M^n$ and the associated correspondence (diagonal of $M^n \times M^n$) by the same symbol. If $F \in Cor(Z, Y)$, then
\begin{equation*}
F \times id_{M^n} \in Cor(Z \times M^n, \; Y \times M^n).
\end{equation*}
This shows that multiplying by $id_{M^n}$ we can transfer any element of $Cor(Z, Y)$ into element of $Cor(Z \times M^n, Y \times M^n)$. So, we have
\begin{equation*}
\begin{gathered}
C_n^M(Y, X) \circ C(Z, Y) \rightarrow C_n^M(Z, X), \\
\sigma \rightarrow \sigma \circ (F \times id_{M^n}) \; \;\;\; F \in C(Z, Y), \;\; \sigma \in C_n^M(Y, X).
\end{gathered}
\end{equation*}
This means that any element $F \in C(Z, Y)$ defines (we abuse notation and denote it by $F^{*}$)
\begin{equation*}
F^{*}: C_n^M(Y, X) \rightarrow C_n^M(Z, X).
\end{equation*}

\begin{lemma}\label{chainmap} Consider $F \in C(Z, Y)$ and $G \in C(Y, X)$. We have the induced maps
\begin{equation*}
F^{*}: H_{\bullet}^M(Y, X) \rightarrow H_{\bullet}^M(Z, X), \;\;\; G_{*}: H_{\bullet}^M(Z, Y) \rightarrow H_{\bullet}(Z, X).
\end{equation*}
In particular, $F$ and $G$ may be the standard symplectic maps from $\mathcal{S}ymp(Z, Y)$ and $\mathcal{S}ymp(Y, X)$, respectively.
\end{lemma}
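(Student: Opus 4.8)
The plan is to show that the two chain-level operators constructed just above the statement, namely $G_{*}\colon C_{n}^{M}(Z,Y)\to C_{n}^{M}(Z,X)$ and $F^{*}\colon C_{n}^{M}(Y,X)\to C_{n}^{M}(Z,X)$, are chain maps, i.e.\ commute with the differential $d$. Once this is done, each descends to a homomorphism of homology groups, which are precisely the asserted $G_{*}$ and $F^{*}$. Since $d$, $G_{*}$ and $F^{*}$ are all $\mathbb{Z}$-linear (see $(\ref{linearcomp1})$, $(\ref{linearcomp2})$ and the definition of $d$), and since $d=\sum_{\varepsilon,i}(-1)^{\varepsilon+i}d^{i,\varepsilon}$, it suffices to prove that each individual face operator $d^{i,\varepsilon}=(id\times d_{i,\varepsilon})^{*}$ commutes with $G_{*}$ and with $F^{*}$ on correspondence generators; the signed sum then commutes automatically.

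For $G_{*}$ the verification is immediate from associativity of composition. For $\sigma\in C_{n}^{M}(Z,Y)=C(Z\times M^{n},Y)$ the operator $G_{*}$ is post-composition with $G\in C(Y,X)$, whereas $d^{i,\varepsilon}$ is pre-composition with $id_{Z}\times d_{i,\varepsilon}$; because these act on opposite sides of $\sigma$ we get
\begin{equation*}
G_{*}(d^{i,\varepsilon}\sigma)=G\circ\big(\sigma\circ(id_{Z}\times d_{i,\varepsilon})\big)=\big(G\circ\sigma\big)\circ(id_{Z}\times d_{i,\varepsilon})=d^{i,\varepsilon}(G_{*}\sigma),
\end{equation*}
using only associativity of composition of correspondences. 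Summing over $i,\varepsilon$ gives $G_{*}d=dG_{*}$, so $G_{*}$ is a chain map and induces $G_{*}\colon H_{\bullet}^{M}(Z,Y)\to H_{\bullet}^{M}(Z,X)$.

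For $F^{*}$ both operators act on the source side, so I first record the interchange identity for products of correspondences. For $F\in C(Z,Y)$ one has, as correspondences $Z\times M^{n}\to Y\times M^{n+1}$,
\begin{equation*}
(id_{Y}\times d_{i,\varepsilon})\circ(F\times id_{M^{n}})=F\times d_{i,\varepsilon}=(F\times id_{M^{n+1}})\circ(id_{Z}\times d_{i,\varepsilon}),
\end{equation*}
which is checked locally: if $\{f_{j}\}$ are the maps defining $F$ over a small open set, then both sides are given there by the same maps $f_{j}\times d_{i,\varepsilon}$. Combining this with associativity yields, for $\sigma\in C_{n+1}^{M}(Y,X)$,
\begin{equation*}
F^{*}(d^{i,\varepsilon}\sigma)=\sigma\circ(id_{Y}\times d_{i,\varepsilon})\circ(F\times id_{M^{n}})=\sigma\circ(F\times id_{M^{n+1}})\circ(id_{Z}\times d_{i,\varepsilon})=d^{i,\varepsilon}(F^{*}\sigma).
\end{equation*}
Hence $F^{*}d=dF^{*}$ and $F^{*}$ induces $F^{*}\colon H_{\bullet}^{M}(Y,X)\to H_{\bullet}^{M}(Z,X)$.

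The only genuine content, and hence the main point to watch, is that the two formal identities used above—associativity of $\circ$ and the interchange law for $\times$—hold at the level of the sheaf-theoretic objects $Cor$, not merely for honest maps. Both are verified locally, where a correspondence is an unordered tuple of smooth (resp.\ pseudoholomorphic) maps, so that associativity and the product identity reduce to the familiar statements for ordinary function composition and then glue to global correspondences. In the symplectic case $C=SC$ one must additionally check that the isotropic correspondences used in the equivalence relation behave well under these operations; this is exactly the content of Lemma $\ref{compwelldefined}$, so all equalities descend to $SCor$. Finally, since $\mathcal{S}ymp(Z,Y)\subset C(Z,Y)$ and $\mathcal{S}ymp(Y,X)\subset C(Y,X)$, the induced maps specialize to honest symplectic (resp.\ pseudoholomorphic) maps, which gives the last assertion.
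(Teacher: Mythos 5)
Your proposal is correct and follows essentially the same route as the paper: $G_{*}$ commutes with each face operator by associativity of composition, $F^{*}$ commutes via the interchange identity $(id_Y\times d_{i,\varepsilon})\circ(F\times id_{M^{n}})=(F\times id_{M^{n+1}})\circ(id_Z\times d_{i,\varepsilon})$, and both extend linearly to $C$. Your added remarks on verifying these identities locally for correspondences and on compatibility with isotropic correspondences (Lemma \ref{compwelldefined}) are sound extra care, not a different argument.
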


\begin{proof} Let us prove the lemma for $F \in Cor(Z, Y)$ and prove for linear combinations after. We need to show that $F^{*}$ commutes with the differential. Consider  $\sigma \in C_n^M(Y, X) = Cor(Y \times M^n, X)$. By definition, we have
\begin{equation*}
\begin{gathered}
d = \sum\limits_{\epsilon=1}^1 \sum\limits_{i=0}^{n-1}(-1)^{\varepsilon + i }d^{i, \varepsilon}, \;\;\;\; d^{i, \varepsilon}\sigma = \sigma \circ (id_Y \times d_{i, \varepsilon}), \\
F^{*}(d^{i, \varepsilon}\sigma) = \sigma \circ (id_Y \times d_{i, \varepsilon}) \circ (F \times id_{M^n}) = \sigma \circ (F \times d_{i, \varepsilon}) = \\
\sigma \circ (F \times id_{M^{n-1}}) \circ (id_Z \times d_{i, \varepsilon}) = d^{i, \varepsilon}F^{*}\sigma.
\end{gathered}
\end{equation*}
So, we obtain that $d$ commutes with $F^{*}$. Hence, there is the induced map
\begin{equation*}
F^{*}: H_{\bullet}^M(Y, X) \rightarrow H_{\bullet}^M(Z, X).
\end{equation*}
Suppose that  $F = r_1F_1 + r_2F_2$. Since each term is a chain map, we get that the sum is also a chain map.

\vspace{0.08in}

We need to prove that $G_{*}$ commutes with $d$. We have
\begin{equation*}
G_{*}(d^{i, \varepsilon}\sigma) = G \circ \big(\sigma \circ (id_Z \times d_{i, \varepsilon})\big) = \big( G \circ \sigma \big) \circ (id_Z \times d_{i, \varepsilon}) = d^{i, \varepsilon}(G_{*}\sigma).
\end{equation*}
This means that we have the induced map
\begin{equation*}
G_{*}: H_{\bullet}^M(Z, Y) \rightarrow H_{\bullet}^M(Z, X).
\end{equation*}
\end{proof}

\section{Homotopy invariance of cohomology groups}\label{homequivalence2}

\subsection{Homotopy invariance of $H_{\bullet}^M(Y, X)$}

Again, in this section, we prove some theorems simultaneously for $\mathcal{S}\mathcal{S}ymp$ and $\mathcal{J}\mathcal{S}ymp$. To prove all theorems simultaneously we denote
\begin{equation*}
\begin{gathered}
\text{$\mathcal{S}\mathcal{S}ymp$ and $\mathcal{J}\mathcal{S}ymp$ by $\mathcal{S}ymp$}, \\
\text{$SCor$ and $JCor$ by $Cor$}, \\
\text{$SC$ and $JC$ by $C$}, \\
\text{$SH_{\bullet}^M(Y, X)$ and $JH_{\bullet}^M(Y, X)$ by $H_{\bullet}^M(Y, X)$}
\end{gathered}
\end{equation*}

Consider embeddings (they are symplectic and pseudoholomorphic)
\begin{equation}\label{embmaps}
\begin{gathered}
i_0: Y = Y \times p_0 \hookrightarrow Y \times M, \;\; i_1: Y = Y \times p_1 \hookrightarrow Y \times M, \\
j_0: X = X \times p_0 \hookrightarrow X \times M, \;\; j_1: X = X \times p_1 \hookrightarrow X \times M, \\
\end{gathered}
\end{equation}
We denote by the same symbols the associated correspondences.

\vspace{0.07in}

\begin{theorem}\label{homotopyinv1}
Assume that maps $F_0, F_1 \in C(Y, Z)$ are $M$-homotopic. Then $F^{*}_0, F^{*}_1$ induce the same map
\begin{equation*}
F_0^{*} = F_1^{*}: H_{\bullet}^M(Z, X) \rightarrow H_{\bullet}^M(Y, Z).
\end{equation*}
In particular, $i_0^{*} = i_1^{*}: H_{\bullet}^M(Y \times M, X) \rightarrow H_{\bullet}^M(Y, X)$.

Similarly, assume that  maps $G_0, G_1 \in Cor(X, Z)$ are $M$-homotopic. Then $(G_0)_{*}$ and $(G_1)_{*}$ induce the same map
\begin{equation*}
\begin{gathered}
(G_0)_{*} = (G_1)_{*}: H_{\bullet}^M(Y, X) \rightarrow H_{\bullet}^M(Y, Z) \\
\end{gathered}
\end{equation*}
In particular,  $(j_0)_{*} = (j_1)_{*}: H_{\bullet}^M(Y, X) \rightarrow H_{\bullet}^M(Y, X \times M)$.
\end{theorem}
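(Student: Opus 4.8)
The plan is to construct, for each of the two homotopies, an explicit cubical prism operator and to show it is a chain homotopy between the two induced chain maps; equality on $H_{\bullet}^{M}$ is then automatic from homological algebra. I would prove the statement for $F_0^{*},F_1^{*}$ first and obtain the statement for $(G_0)_{*},(G_1)_{*}$ as its mirror image.

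By definition of $M$-homotopy there is $H\in C(Y\times M,Z)$ with $H\circ i_0=F_0$ and $H\circ i_1=F_1$. Recalling from Section \ref{functoriality} that $F^{*}\sigma=\sigma\circ(F\times id_{M^n})$, I would define a map $s\colon C_n^{M}(Z,X)\to C_{n+1}^{M}(Y,X)$ by
\[
s(\sigma)=\sigma\circ(H\times id_{M^n}),
\]
where $Y\times M\times M^n$ is identified with $Y\times M^{n+1}$ by placing the homotopy factor $M$ as the zeroth coordinate, and $s$ is extended linearly. This is well defined: $H\times id_{M^n}$ is the product-with-identity correspondence already used in Section \ref{functoriality}, and its composite with $\sigma$ is again a correspondence (in the $SCor$ case the $\omega$-pullback condition is inherited from those of $H$ and $\sigma$, and passing to the quotient by isotropic correspondences is legitimate by Lemma \ref{compwelldefined}).

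The heart of the argument is the computation of $ds+sd$. I would split the differential on $C_{n+1}^{M}(Y,X)$ according to whether the distinguished ($i=0$) coordinate or an original coordinate ($i\geqslant 1$) is restricted. Using $(H\times id_{M^n})\circ(id_Y\times d_{0,\varepsilon})=F_{\varepsilon}\times id_{M^n}$ together with $H\circ i_{\varepsilon}=F_{\varepsilon}$, the two $i=0$ terms evaluate to $F_0^{*}\sigma$ (at $p_0$) and $F_1^{*}\sigma$ (at $p_1$), contributing $F_0^{*}\sigma-F_1^{*}\sigma$ once the sign $(-1)^{\varepsilon}$ is inserted; the terms with $i\geqslant 1$ commute with $s$ and, after the index shift $i\mapsto i-1$ and the resulting change of sign, reassemble into $-\,s(d\sigma)$. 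Hence $ds+sd=F_0^{*}-F_1^{*}$ on $C_{\bullet}^{M}(Z,X)$, so $F_0^{*}=F_1^{*}$ on $H_{\bullet}^{M}$. The statement for $(G_0)_{*},(G_1)_{*}$ is obtained the same way: given $K\in C(X\times M,Z)$ with $K\circ j_0=G_0$, $K\circ j_1=G_1$, and recalling $G_{*}\sigma=G\circ\sigma$, I would set $s'(\sigma)=K\circ(\sigma\times id_M)$, now regarding the homotopy factor as the last coordinate of $M^{n+1}$; restricting at that last coordinate produces $(G_0)_{*}\sigma$ and $(G_1)_{*}\sigma$, and the identical bookkeeping gives the chain homotopy. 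The two ``in particular'' assertions then follow by taking $H=id_{Y\times M}$, which exhibits $i_0\simeq i_1$, and $K=id_{X\times M}$, which exhibits $j_0\simeq j_1$.

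I expect the only genuinely delicate point to be the sign-and-index accounting in the middle paragraph: one must fix the placement of the homotopy coordinate consistently so that restricting an original coordinate really commutes with $s$, and track the signs $(-1)^{\varepsilon+i}$ carefully enough that the surviving terms telescope exactly to $-\,s(d\sigma)$ rather than to that expression with a spurious sign. Everything else (well-definedness of $s$, the boundary evaluations, extension by linearity) is routine given the composition calculus of Section \ref{operationcorr} and Lemma \ref{chainmap}.
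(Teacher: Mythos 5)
Your proof is correct and follows essentially the same route as the paper: the prism operator $s(\sigma)=\sigma\circ(H\times id_{M^n})$ is exactly the paper's map $K=(\text{reindexing})\circ H^{*}$, and your $s'(\sigma)=K\circ(\sigma\times id_M)$ is its $H_{*}\circ(\times\, id_M)$, with the same choices $H=id_{Y\times M}$, $K=id_{X\times M}$ for the two "in particular" claims. The only difference is that you spell out the sign-and-index bookkeeping for $ds+sd$, which the paper compresses into "direct computation shows that $dK=F_0^{*}-F_1^{*}\pm Kd$."
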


\begin{proof}
Denote by $H \in C(Y \times M, Z)$ the homotopy between $F_0, F_1$. We have our chain maps
\begin{equation*}
F_0^{*}, F_1^{*}: C_n^M(Z, X) \rightarrow C_n^M(Y, X), \;\;\; H^{*}: C_n^M(Z, X) \rightarrow C_n^M(Y \times M, X).
\end{equation*}
Any element $\sigma  \in C_n^M(Y \times M, X)$ can be considered as an element of $C_{n+1}^M(Y, X)$. So, composing this with $H^{*}$ we get
\begin{equation*}
K: C_n^M(Z, X) \xrightarrow{H^{*}} C_n^M(Y \times M, \; X) \rightarrow C_{n+1}^M(Y, X).
\end{equation*}
Direct computation shows that
\begin{equation*}
dK = F_0^{*} - F_1^{*} \pm Kd.
\end{equation*}
This means that $F_0^{*}, F_1^{*}$ are chain homotopic. If $Z = Y \times M$, $F_0 = i_0$, $F_1 = i_1$, $H = id_Y \times id_M$, then we get that $i_0^{*}=i_1^{*}$.
\\

Let us prove the second part. Denote by $H \in SC(X \times M, Z)$ the homotopy between $G_1, G_2$. We have the chain maps
\begin{equation*}
(G_0)_{*}, (G_1)_{*}: C_n^M(Y, X) \rightarrow C_n^M(Y, Z), \;\;\;  H_{*}: C_n^M(Y, X \times M) \rightarrow C_n^M(Y, Z).
\end{equation*}
Let us recall that multiplying by the diagonal $id_M \subset M \times M$ we can transfer any element of $C(Y, X)$ into $C(Y \times M, X \times M)$. We have
\begin{equation*}
C_n^M(Y, X) = C(Y \times M^n, \; X) \xrightarrow{\times id_{M}} C(Y \times M^n \times M, \; X\times M) = C_{n+1}^M(Y, X \times M).
\end{equation*}
Composing with $H_{*}$ we get
\begin{equation*}
K: C_{n}^M(Y, X) \xrightarrow{\times id_M} C_{n+1}^M(Y, X \times M) \xrightarrow{H_{*}} C_{n+1}(Y, Z).
\end{equation*}
We see that
\begin{equation*}
\begin{gathered}
d^{n+1, 0}(H_{*} \circ (\sigma \times id_M) ) = H_{*} \circ (\sigma \times p_0) = (G_0)_{*} \circ \sigma \\
d^{n+1, 1}(H_{*} \circ (\sigma \times id_M) ) = H_{*} \circ (\sigma \times p_1) = (G_1)_{*} \circ \sigma.
\end{gathered}
\end{equation*}
This implies that
\begin{equation*}
dK = (G_0)_{*} - (G_1)_{*} \pm Kd.
\end{equation*}
So, $(G_0)_{*}, (G_1)_{*}$ are chain homotopic. The last statement can be proved by assuming that $Z = X \times M$, $G_0 = j_0$, $G_1 = j_1$, $H = id_X \times id_M$.

\end{proof}

The following Lemma follows immediately from Theorem $\ref{deformtheorem}$ and Thereom $\ref{homotopyinv1}$.

\begin{lemma}\label{cohominvhom}
Consider a symplectic isotopies $\varphi_t \in \mathcal{S}\mathcal{S}ymp(X, X)$, $\psi_t \in \mathcal{S}\mathcal{S}ymp(Y, Y)$. If $SCor(M, X) \neq \emptyset$, then $(\varphi_0)_{*} = (\varphi_1)_{*}$ and $\psi_0^{*} = \psi_1^{*}$.
\end{lemma}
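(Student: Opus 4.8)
The plan is to run, for each of the two equalities, the same two-move argument: use Theorem \ref{deformtheorem} to promote a symplectic isotopy to an honest $M$-homotopy of correspondences, and then quote the matching half of Theorem \ref{homotopyinv1} to conclude that $M$-homotopic correspondences induce identical maps on $H_{\bullet}^M$. The only thing to keep straight is which space carries the auxiliary correspondence $I$ supplied by the nonemptiness hypothesis.

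First I would treat $(\varphi_0)_{*}=(\varphi_1)_{*}$. I regard the isotopy as a path $\varphi_t \in \mathcal{S}\mathcal{S}ymp(X,X) \subset SCor(X,X) \subset C(X,X)$. Since $SCor(M,X)\neq\emptyset$, Theorem \ref{deformtheorem} (with both base and target equal to $X$) yields an $H \in C(X\times M, X)$ with $H\circ i_0=\varphi_0$ and $H\circ i_1=\varphi_1$, so that $\varphi_0\simeq\varphi_1$ in $C(X,X)$. Because $\varphi_0,\varphi_1\in Cor(X,X)$, the covariant half of Theorem \ref{homotopyinv1} (take $G_0=\varphi_0$, $G_1=\varphi_1$, $Z=X$) applies verbatim and gives $(\varphi_0)_{*}=(\varphi_1)_{*}$ on $H_{\bullet}^M(Y,X)$ for every $Y$.

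Next I would treat $\psi_0^{*}=\psi_1^{*}$ dually. Viewing $\psi_t$ as a path in $\mathcal{S}\mathcal{S}ymp(Y,Y)\subset C(Y,Y)$, I would again invoke Theorem \ref{deformtheorem}, now with base and target both equal to $Y$, to obtain $\psi_0\simeq\psi_1$ in $C(Y,Y)$, and then finish with the contravariant half of Theorem \ref{homotopyinv1}, taking $F_0=\psi_0$, $F_1=\psi_1$, $Z=Y$, which delivers $\psi_0^{*}=\psi_1^{*}$ on $H_{\bullet}^M(Y,X)$.

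The hard part, such as it is, is bookkeeping rather than geometry: the homotopy built in Theorem \ref{deformtheorem} is assembled from an $I$ whose target matches the target of the isotopy. For the $\varphi$-claim that target is $X$, so the stated hypothesis $SCor(M,X)\neq\emptyset$ is exactly what gets consumed. For the $\psi$-claim the isotopy has target $Y$, so the construction instead needs $SCor(M,Y)\neq\emptyset$; hence the pullback statement should be read with this nonemptiness assumption on $Y$ (which is harmless, and holds automatically in the situations where the groups $H_{\bullet}^M(Y,X)$ are the ones of interest). Once the correct target is tracked in each case, both equalities are immediate concatenations of the two cited theorems.
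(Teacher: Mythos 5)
Your proof is correct and is exactly the paper's argument: the paper's entire proof is the one-line remark that the lemma follows from Theorem \ref{deformtheorem} combined with Theorem \ref{homotopyinv1}, which is precisely the two-move concatenation you carry out. Your additional observation about the hypothesis is well taken: as stated the lemma only assumes $SCor(M,X)\neq\emptyset$, but promoting $\psi_t$ to an $M$-homotopy via Theorem \ref{deformtheorem} genuinely requires $SCor(M,Y)\neq\emptyset$, so the second equality should indeed be read with that (apparently omitted) hypothesis.
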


\subsection{Canceling $M$ in $JH_{\bullet}^M(Y \times M, X)$ and $JH_{\bullet}^M(Y, X \times M)$}\label{cancelpseud}

This section is technically much easier than the next one because the projection $X \times M \rightarrow X$ is pseudoholomorphic, but not symplectic.

\begin{theorem}\label{invarianttheorem}
Assume that there exists a pseudoholomorphic map $\gamma: \;  M \times M \rightarrow M$ such that
\begin{equation*}
\gamma(p_0 \times M) = p_0, \quad  \gamma(p_1 \times M) = id_M.
\end{equation*}
Let $pr_X$, $pr_Y$ be the projections of $X \times M$ onto $X$ and $Y \times M$ onto $Y$, respectively. Then,
\begin{equation*}
(pr_X)_{*}: JH_{\bullet}^M(Y, X \times M) \rightarrow JH_{\bullet}^M(Y, X).
\end{equation*}
is an isomorphism and the inverse is $(j_0)_{*}$.

Similarly,
\begin{equation*}
(pr_Y)^{*}: JH_{\bullet}^M(Y, X)  \rightarrow JH_{\bullet}^M(Y \times M, X)
\end{equation*}
is an isomorphism and the inverse is $(i_0)^{*}$.

\end{theorem}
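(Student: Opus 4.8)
The plan is to exhibit $(j_0)_*$ and $(i_0)^*$ as two-sided inverses by checking both composites, using that one composite is the identity already at the level of maps and the other becomes the identity after passing to cohomology by homotopy invariance (Theorem \ref{homotopyinv1}). Concretely, for the first isomorphism note that $pr_X \circ j_0 = \mathrm{id}_X$ as a pseudoholomorphic map $X \to X$, so by functoriality (Lemma \ref{chainmap}) one has $(pr_X)_* \circ (j_0)_* = (pr_X \circ j_0)_* = (\mathrm{id}_X)_* = \mathrm{id}$ on $JH_\bullet^M(Y, X)$. It therefore remains to show that the other composite $(j_0)_* \circ (pr_X)_* = (j_0 \circ pr_X)_*$ is the identity on $JH_\bullet^M(Y, X \times M)$; since $j_0 \circ pr_X$ is the pseudoholomorphic self-map $(x, m) \mapsto (x, p_0)$ of $X \times M$, this will follow once I show it is $M$-homotopic to $\mathrm{id}_{X \times M}$.

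First I would build the homotopy out of $\gamma$. Define $H \colon (X \times M) \times M \to X \times M$ by $H((x, m), t) = (x, \gamma(t, m))$, where $t$ denotes the homotopy parameter (the last $M$-factor). This is a composition of projections with the pseudoholomorphic map $\gamma$, hence pseudoholomorphic, so $H \in JCor_1((X \times M) \times M, X \times M) \subset JC((X \times M) \times M, X \times M)$. Using $\gamma(p_0, m) = p_0$ I get $H \circ i_0 = j_0 \circ pr_X$, and using $\gamma(p_1, m) = m$ I get $H \circ i_1 = \mathrm{id}_{X \times M}$; these are exactly the two endpoint conditions in the definition of $M$-homotopy. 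Hence $j_0 \circ pr_X \simeq \mathrm{id}_{X \times M}$, and the covariant half of Theorem \ref{homotopyinv1} gives $(j_0 \circ pr_X)_* = (\mathrm{id}_{X \times M})_* = \mathrm{id}$, completing the first isomorphism.

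The second isomorphism is entirely parallel, using the contravariant functoriality of $(-)^*$ and the same homotopy. Since $pr_Y \circ i_0 = \mathrm{id}_Y$, functoriality gives $(i_0)^* \circ (pr_Y)^* = (pr_Y \circ i_0)^* = \mathrm{id}$ on $JH_\bullet^M(Y, X)$. For the reverse composite $(pr_Y)^* \circ (i_0)^* = (i_0 \circ pr_Y)^*$ I would reuse the map $H((y, m), t) = (y, \gamma(t, m))$ on $(Y \times M) \times M$ to witness an $M$-homotopy $i_0 \circ pr_Y \simeq \mathrm{id}_{Y \times M}$ (the endpoint check is identical), and then invoke the contravariant half of Theorem \ref{homotopyinv1} to conclude $(i_0 \circ pr_Y)^* = \mathrm{id}$ on $JH_\bullet^M(Y \times M, X)$.

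I expect no serious obstacle: the content is entirely in the formula $H((x, m), t) = (x, \gamma(t, m))$ together with the two defining properties of $\gamma$, after which everything reduces to the already-established functoriality and homotopy invariance. The only points requiring care are bookkeeping ones, namely confirming that $H$ genuinely lands in the pseudoholomorphic correspondences (immediate, since $\gamma$ and the projections are pseudoholomorphic) and matching the roles of source and target in Theorem \ref{homotopyinv1} in each of the two composites, so that the induced maps compose in the claimed order.
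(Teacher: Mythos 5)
Your proposal is correct and follows essentially the same route as the paper: both composites are handled by functoriality plus homotopy invariance, with the whole content residing in the map $id\times\gamma$ built from $\gamma$. The only difference is presentational — you package $id\times\gamma$ directly as an explicit $M$-homotopy $H$ between $j_0\circ pr_X$ (resp.\ $i_0\circ pr_Y$) and the identity and invoke the general statement of Theorem \ref{homotopyinv1}, whereas the paper encodes the same data in a commutative diagram through $X\times M\times M$ and then applies the special case $(j_0)_{*}=(j_1)_{*}$; your version is, if anything, slightly more direct.
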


\begin{remark}
Conditions of this theorem are satisfied when $M = (\mathbb{C}, \; 0, \; 1)$. The map $\gamma$ is defined in the following way
\begin{equation*}
\gamma(z_1, z_2) = z_1z_2.
\end{equation*}
Also, this conditions are satisfied for some polydisks in $\mathbb{C}^n$ and some other open sets of $\mathbb{C}^n$.
\end{remark}

\begin{proof}

Let $i_0$, $i_1, j_0, j_1$ be maps defined by $(\ref{embmaps})$. We obviously have the following
\begin{equation*}
\begin{gathered}
pr_Y \circ i_0 = id_Y  \;\; \Rightarrow  \;\; i_0^{*} \circ pr_Y^{*} = id^{*}_Y. \\
pr_X \circ j_0 =  id_X \;\;  \Rightarrow \;\;  (pr_X)_{*} \circ (j_0)_{*} = (id_X)_{*}.
\end{gathered}
\end{equation*}
We need to prove that
\begin{equation}\label{requiredtoprove2}
\begin{gathered}
(j_0)_{*} \circ (pr_X)_{*}  = (id_{X \times M})_{*},\\
pr_Y^{*} \circ i_0^{*} = id_{Y\times M}^{*}.
\end{gathered}
\end{equation}

First let us prove the lemma below.

\begin{lemma}\label{commutativediag2}
The following diagram is commutative
\begin{equation*}
\begin{gathered}
\begin{tikzcd}
&[2em] X \times M &[2em] \arrow{l}[above]{j_0} X
  \\[2em]
  X \times M \arrow{ru}[left]{id_{X \times M}} \arrow{r}{j_1 \times id_M \;\;}  & X \times M \times M \arrow{u}[right]{id_X \times \gamma } &  \arrow{l}[above]{j_0 \times id_M} X \times M \arrow{u}[right]{pr_X}
\end{tikzcd}
\end{gathered}
\end{equation*}
\end{lemma}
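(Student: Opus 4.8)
The plan is to verify the diagram by checking its two constituent cells separately—the triangle on the left (bottom-left vertex to top-middle vertex) and the square on the right—relying only on the two defining properties of $\gamma$, namely $\gamma(p_0, m) = p_0$ and $\gamma(p_1, m) = m$ for all $m \in M$. Every arrow in the diagram is an honest pseudoholomorphic map (an embedding, a projection, or the given $\gamma$), so commutativity reduces to equality of set-theoretic compositions, which I would check on an arbitrary point.

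First I would treat the left triangle, comparing the arrow $id_{X \times M}$ from the bottom-left $X \times M$ to the top-middle $X \times M$ against the composite $(id_X \times \gamma) \circ (j_1 \times id_M)$. For a point $(x, m)$ one has $(j_1 \times id_M)(x, m) = (x, p_1, m)$ and then $(id_X \times \gamma)(x, p_1, m) = (x, \gamma(p_1, m)) = (x, m)$, where the last equality uses that the restriction of $\gamma$ to $p_1 \times M$ is $id_M$. Since $id_{X \times M}(x, m) = (x, m)$, the triangle commutes.

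Next I would treat the right square, comparing the composite $j_0 \circ pr_X$ (from the bottom-right $X \times M$ up to $X$, then into the top-middle $X \times M$) against the composite $(id_X \times \gamma) \circ (j_0 \times id_M)$. For a point $(x, m)$ one computes $(j_0 \times id_M)(x, m) = (x, p_0, m)$ followed by $(id_X \times \gamma)(x, p_0, m) = (x, \gamma(p_0, m)) = (x, p_0)$, using $\gamma(p_0 \times M) = p_0$; on the other branch $pr_X(x, m) = x$ and $j_0(x) = (x, p_0)$. The two agree, so the square commutes. As the triangle and the square together exhaust the diagram, the whole diagram is commutative.

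There is no genuine obstacle in this lemma: the content is entirely carried by the two boundary conditions on $\gamma$. The only point requiring care is bookkeeping—reading off the direction of each arrow in the \texttt{tikz-cd} correctly (in particular that $j_0$, $j_1 \times id_M$, and $j_0 \times id_M$ all point \emph{into} the middle vertices) and tracking which $M$-factor $\gamma$ contracts. I would therefore keep the computation explicit and pointwise to avoid any sign or ordering slip.
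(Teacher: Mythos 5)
Your proposal is correct and coincides with the paper's own argument: the paper likewise verifies the left triangle and the right square by an explicit pointwise computation on an arbitrary $(x,q) \in X \times M$, using only $\gamma(p_0,q)=p_0$ and $\gamma(p_1,q)=q$. Nothing is missing.
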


\begin{proof}
Let $(x, q) \in X \times M$ be an arbitrary point. The following computations prove the lemma.
\begin{equation*}
\begin{gathered}
(id_X \times \gamma)\circ (j_0 \times id_M)(x, q) = (x, \gamma(p_0, q)) = (x, p_0), \\
j_0 \circ pr_X(x, q) = j_0(x) = (x, p_0), \\
(id_X \times \gamma) \circ (j_1 \times id_M)(x, q) = (x, \gamma(p_1, q)) = (x, q).
\end{gathered}
\end{equation*}
\end{proof}

\noindent
Since $(j_0)_{*} = (j_1)_{*}$ (see Lemma $\ref{homotopyinv1}$), we obtain that
\begin{equation*}
(j_0)_{*} \circ (pr_X)_{*} = (id_X \times \gamma)_{*} \circ (j_0 \times id_M)_{*} =  (pr_X \times \gamma)_{*} \circ (j_1 \times id_M)_{*} = id_{X \times M}.
\end{equation*}
As a result, we proved the first equality from $(\ref{requiredtoprove2})$. Let us prove the second equality.

\begin{lemma}
The following diagram is commutative
\begin{equation*}
\begin{gathered}
\begin{tikzcd}
&[2em] Y \times M &[2em] \arrow{l}[above]{i_0} Y
  \\[2em]
  Y \times M \arrow{ru}[left]{id_{Y \times M}} \arrow{r}{i_1 \times id_M \;\;}  & Y \times M \times M \arrow{u}[right]{id_Y \times \gamma } &  \arrow{l}[above]{i_0 \times id_M} Y \times M \arrow{u}[right]{pr_Y}
\end{tikzcd}
\end{gathered}
\end{equation*}
\end{lemma}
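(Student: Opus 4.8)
The plan is to verify commutativity of this diagram by a direct pointwise computation on $Y \times M$, exactly mirroring the proof of Lemma \ref{commutativediag2} with $X$ replaced by $Y$, the maps $j_0, j_1, pr_X$ replaced by $i_0, i_1, pr_Y$, and $id_X \times \gamma$ replaced by $id_Y \times \gamma$. The only inputs are the definitions $i_0(y) = (y, p_0)$, $i_1(y) = (y, p_1)$ together with the two hypotheses of the ambient Theorem \ref{invarianttheorem} on $\gamma$, namely $\gamma(p_0 \times M) = p_0$ and $\gamma(p_1 \times M) = id_M$, i.e. $\gamma(p_0, q) = p_0$ and $\gamma(p_1, q) = q$ for every $q \in M$.

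First I would check the left triangle, i.e. that $(id_Y \times \gamma) \circ (i_1 \times id_M) = id_{Y \times M}$. Taking an arbitrary $(y,q) \in Y \times M$, one has $(i_1 \times id_M)(y,q) = (y, p_1, q)$ and then $(id_Y \times \gamma)(y, p_1, q) = (y, \gamma(p_1, q)) = (y, q)$, where the last equality uses $\gamma(p_1 \times M) = id_M$. This already reproduces $id_{Y \times M}$.

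Next I would check the right square, i.e. that $(id_Y \times \gamma) \circ (i_0 \times id_M) = i_0 \circ pr_Y$. For $(y,q) \in Y \times M$ one gets $(id_Y \times \gamma)(i_0 \times id_M)(y,q) = (id_Y \times \gamma)(y, p_0, q) = (y, \gamma(p_0, q)) = (y, p_0)$, using $\gamma(p_0 \times M) = p_0$, while on the other side $i_0 \circ pr_Y(y, q) = i_0(y) = (y, p_0)$; the two agree.

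Since both the triangle and the square commute, the whole diagram commutes. There is no genuine obstacle here: the statement is a formal consequence of the definitions and of the behavior of $\gamma$ on $p_0 \times M$ and $p_1 \times M$, and the computation is identical in structure to that of Lemma \ref{commutativediag2}. The one place where the hypothesis of Theorem \ref{invarianttheorem} is actually needed is precisely these two defining relations for $\gamma$. Once this lemma is established, it feeds into the proof of the second equality in $(\ref{requiredtoprove2})$ exactly as Lemma \ref{commutativediag2} fed into the first: applying $(\cdot)^{*}$ and invoking $i_0^{*} = i_1^{*}$ from Theorem \ref{homotopyinv1} yields $pr_Y^{*} \circ i_0^{*} = id_{Y \times M}^{*}$.
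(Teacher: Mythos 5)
Your proposal is correct and matches the paper's proof exactly: the paper simply states that the proof coincides with that of Lemma \ref{commutativediag2} after replacing $j_0, j_1, X$ by $i_0, i_1, Y$, which is precisely the pointwise computation you carry out using $\gamma(p_0, q) = p_0$ and $\gamma(p_1, q) = q$. Nothing is missing.
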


\begin{proof}
The proof coincides withe the proof of Lemma $\ref{commutativediag2}$. We need to replace $j_0, j_1, X$ by $i_0, i_1, Y$.
\end{proof}

Since $(i_0)^{*} = (i_1)^{*}$ (see Theorem \ref{homotopyinv1}), we have
\begin{equation*}
(pr_Y)^{*} \circ (i_0)^{*} =  (i_0 \times id_M)^{*} \circ (id_Y \times \gamma)^{*}=  (i_1 \times id_M)^{*} \circ (id_Y \times \gamma)^{*} = id_{Y \times M}^{*}.
\end{equation*}
This proves the second equality from $(\ref{requiredtoprove2})$.
\end{proof}

\subsection{Canceling $M$ in $SH_{\bullet}^M(Y \times M, X)$ and $SH_{\bullet}^M(Y, X \times M)$}\label{cancelsymp}

As we mentioned above this section is technically more complicated, but ideas are the same.

\begin{theorem}\label{invarianttheorem}
Assume that there exists a symplectic isotopy $\gamma_t \in \mathcal{S}\mathcal{S}ymp(M \times M, \; M \times M)$ such that $\gamma_0 = id_{M \times M}$ and
\begin{equation*}
\gamma_1: M \times M \rightarrow M \times M, \;\;\;\; \gamma_1(p_1 \times q) = (q \times p_0), \;\; \forall \; q\in M.
\end{equation*}
If there exists $J \in SCor(M, X)$, then the following map is isomorphism
\begin{equation*}
(j_0)_{*}: \; SH_{\bullet}^M(Y, X) \rightarrow SH_{\bullet}^M(Y, X \times M).
\end{equation*}
and the inverse is $(id_X \vee J)_{*}$.

If there exists  $I \in SCor(M, Y)$, then the following map is isomorphism
\begin{equation*}
i_0^{*}: \;   SH_{\bullet}^M(Y \times M, X)  \rightarrow SH_{\bullet}^M(Y, X).
\end{equation*}
and the inverse is $(id_Y \vee I)^{*}$.
\end{theorem}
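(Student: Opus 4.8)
The plan is to follow the strategy of the pseudoholomorphic case (Section~\ref{cancelpseud}), replacing the projection $pr_X$—which fails to be symplectic—by the wedge correspondence $id_X \vee J \in SCor(X\times M, X)$, and replacing the map $\gamma$ by the time-one map $\gamma_1$ of the given isotopy. I will carry out the first assertion in detail; the second is obtained by the same construction with $(Y, I, i_0^{*})$ in place of $(X, J, (j_0)_{*})$ and with the contravariant $(\cdot)^{*}$ replacing the covariant $(\cdot)_{*}$. The easy half is immediate from Lemma~\ref{simplelemma}: taking $F = id_X$ and $G = J$ gives $(id_X \vee J)\circ j_0 = id_X$ in $SCor(X, X)$, hence $(id_X \vee J)_{*}\circ (j_0)_{*} = id$ on $SH_{\bullet}^M(Y, X)$.

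For the other half I must show $(j_0)_{*}\circ (id_X \vee J)_{*} = (j_0\circ(id_X \vee J))_{*} = id$ on $SH_{\bullet}^M(Y, X\times M)$. First I introduce the combine correspondence $\Phi = id_{X\times M}\vee (j_0\circ J) \in SCor(X\times M\times M,\; X\times M)$, which over a chart where $J = \{\psi_i\}$ sends $(x, a, b)$ to $\{(x,a)\}\cup\{(\psi_i(b), p_0)\}$, together with the two symplectic embeddings $\alpha_0, \alpha_1 : X\times M \to X\times M\times M$, given by $\alpha_0(x,q) = (x, p_0, q)$ and $\alpha_1(x,q) = (x, q, p_0)$. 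A direct check yields $\Phi\circ\alpha_0 = j_0\circ(id_X \vee J)$ and $\Phi\circ\alpha_1 = id_{X\times M}$, the latter holding in $SCor(X\times M, X\times M)$ because the leftover maps $(x,q)\mapsto(\psi_i(p_0), p_0)$ are constant, hence isotropic, and are discarded. By functoriality of the pushforward it then suffices to prove $(\alpha_0)_{*} = (\alpha_1)_{*}$, since this gives $(j_0\circ(id_X \vee J))_{*} = \Phi_{*}\circ(\alpha_0)_{*} = \Phi_{*}\circ(\alpha_1)_{*} = (id_{X\times M})_{*} = id$.

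The crux, and the main obstacle, is the identity $(\alpha_0)_{*} = (\alpha_1)_{*}$, and this is exactly where the isotopy hypothesis is used. Writing $\alpha_{\varepsilon} = id_X\times\beta_{\varepsilon}$ with $\beta_0(q) = (p_0, q)$ and $\beta_1(q) = (q, p_0)$, I note that $\beta_1 = \gamma_1\circ\kappa$ where $\kappa(q) = (p_1, q)$, because $\gamma_1(p_1, q) = (q, p_0)$. Since $id_X\times\gamma_t$ is a symplectic isotopy from the identity to $id_X\times\gamma_1$ and $SCor(M,\; X\times M\times M)\neq\emptyset$ (for instance $m\mapsto (x_0, m, m_0)$ is a symplectic embedding), Theorem~\ref{deformtheorem} together with Theorem~\ref{homotopyinv1} yields $(id_X\times\gamma_1)_{*} = (id_X\times\gamma_0)_{*} = id$, so that $(\alpha_1)_{*} = (id_X\times\kappa)_{*}$. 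Finally $\kappa$ and $\beta_0$ are the two embeddings $M\to M\times M$ into the first factor at $p_1$ and at $p_0$; conjugating by the swap symplectomorphism of $M\times M$ identifies them with the standard embeddings at $p_1$ and $p_0$ in the second factor, so $(\kappa)_{*} = (\beta_0)_{*}$ by the homotopy invariance $(j_0)_{*} = (j_1)_{*}$ of Theorem~\ref{homotopyinv1}. Chaining these gives $(\alpha_1)_{*} = (\alpha_0)_{*}$, finishing the hard half.

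For the second assertion one repeats the construction with $\Psi = id_{Y\times M}\vee(i_0\circ I)$ and the analogous embeddings $\alpha_0', \alpha_1'$, using the contravariance $(G\circ F)^{*} = F^{*}\circ G^{*}$: the easy half is $(id_Y \vee I)\circ i_0 = id_Y$ from Lemma~\ref{simplelemma}, and the hard half follows from $(\alpha_0')^{*} = (\alpha_1')^{*}$, proved exactly as above from the same isotopy $\gamma_t$ and $SCor(M,\; Y\times M\times M)\neq\emptyset$. I expect the delicate points to be (i) choosing $\Phi$ so that both relations hold on the nose modulo isotropic correspondences, and (ii) correctly chaining the two distinct homotopy-invariance inputs—the isotopy $\gamma_t$ and the factor swap—to pass from $(\alpha_1)_{*}$ to $(\alpha_0)_{*}$; everything else is bookkeeping parallel to Section~\ref{cancelpseud}.
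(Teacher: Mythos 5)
Your proposal is correct and follows essentially the same route as the paper: the correspondence $\Phi = id_{X\times M}\vee (j_0\circ J)$, the two relations $\Phi\circ(j_0\times id_M) = j_0\circ(id_X\vee J)$ and $\Phi\circ(id_X\times\gamma_1)\circ(j_1\times id_M) = id_{X\times M}$ modulo isotropic correspondences, and the passage to cohomology via isotopy invariance of $(id_X\times\gamma_1)_{*}$ together with $(j_0)_{*}=(j_1)_{*}$ are exactly the paper's two commutative diagrams glued at the cohomology level. Your extra steps (the explicit swap conjugation to reduce $(\alpha_0)_{*}=(\alpha_1)_{*}$ to the literal statement of Theorem~\ref{homotopyinv1}, and the witness $m\mapsto(x_0,m,m_0)$ for $SCor(M, X\times M\times M)\neq\emptyset$ needed to invoke Lemma~\ref{cohominvhom}) are only more explicit bookkeeping of points the paper leaves implicit.
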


\begin{remark}
Conditions of this theorem are satisfied when $M = (\mathbb{C}, \; 1, \; -1, \; \frac{i}{2}dz\wedge d\overline{z})$ or $M = (D_r, \; a, \; -a, \; \frac{i}{2}dz\wedge d\overline{z})$, where $D_r \subset \mathbb{C}$ is a disk of radius $r$. The map $\gamma_1$ obviously exists. For example, consider
\begin{equation*}
\begin{gathered}
\gamma_t = \begin{pmatrix}
cos(\frac{\pi t}{2}) & sin(\frac{\pi t}{2}) \\
-sin(\frac{\pi t}{2}) & cos(\frac{\pi t}{2})
\end{pmatrix}.
\end{gathered}
\end{equation*}
Easy to see that $\gamma_t \in \mathcal{S}\mathcal{S}ymp(M \times M, \; M \times M)$ is the required isotopy. Lemma $\ref{contremb}$ says that the correspondences $I$ and $J$ exist.
\end{remark}

\begin{proof}
Denote by $id_Y$, $id_X$ the correspondences associated with the identity maps of $Y$ and $X$, respectively. We have the following symplectic correspondences:
\begin{equation*}
id_Y \vee I \in Cor(Y \times M, Y), \;\;\; id_X \vee J \in Cor(X \times M, X)
\end{equation*}
Let $i_0$, $i_1, j_0, j_1$ be maps defined by $(\ref{embmaps})$. Lemma $\ref{simplelemma}$ says that (equality below is considered in $SCor$, i.e. up to isotropic correspondences)
\begin{equation*}
\begin{gathered}
(id_Y \vee I) \circ i_0  = id_Y  \;\; \Rightarrow  \;\; i_0^{*} \circ (id_Y \vee F)^{*} = id^{*}_Y. \\
(id_X \vee J) \circ j_0 =  id_X \;\;  \Rightarrow \;\;  (id_X \vee J)_{*} \circ (j_0)_{*} = (id_X)_{*}.
\end{gathered}
\end{equation*}
We need to prove that
\begin{equation}\label{requiredtoprove}
\begin{gathered}
(j_0)_{*} \circ (id_X \vee J)_{*}  = (id_{X \times M})_{*},\\
(id_Y \vee I)^{*} \circ i_0^{*} = id_{Y\times M}^{*}.
\end{gathered}
\end{equation}

\begin{lemma}\label{commutativediag}
The following two diagrams are commutative
\begin{equation*}
\begin{gathered}
\begin{tikzcd}
&[2em] X \times M
  \\[2em]
 X \times M  \arrow{ru}[left]{id_{X \times M}} \arrow{r}{j_1 \times id_M \;\;}  & X \times M \times M \arrow{u}[right]{\big(id_{X \times M} \vee (j_0 \circ J)\big) \circ (id_X \times \gamma_1)}
\end{tikzcd}
\hspace{0.1in}
\begin{tikzcd}
X \times M
&[2em] X  \arrow{l}[above]{j_0}  \\[2em]
X \times M \times M  \arrow{u}[right]{id_{X \times M} \vee (j_0 \circ J)}
& X \times M \arrow{u}[right]{id_X \vee J}  \arrow{l}[above]{j_0 \times id_M}
\end{tikzcd}
\end{gathered}
\end{equation*}
\end{lemma}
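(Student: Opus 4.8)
The plan is to verify each square by unwinding the local formulas for the wedge sum (Section \ref{wedgesum}) and for composition (Section \ref{operationcorr}), and then reading off the resulting unordered set of maps on both sides. Throughout I would fix a small open set on which $J$ is represented by an unordered set of symplectic maps $\{J_1, \ldots, J_m\}$, $J_i : M \to X$; then $j_0 \circ J$ is locally $\{j_0 \circ J_1, \ldots, j_0 \circ J_m\}$ with $(j_0 \circ J_i) : q \mapsto (J_i(q), p_0)$, and by the local formula for $\vee$ the correspondence $id_{X \times M} \vee (j_0 \circ J) \in SCor(X \times M \times M, \, X \times M)$ is represented by the maps $\{\, pr_{X \times M}, \ (j_0 \circ J_1) \circ pr_M, \ldots, (j_0 \circ J_m) \circ pr_M \,\}$, where $pr_{X \times M}$ and $pr_M$ are the projections of $X \times M \times M$ onto its first two and its last factor. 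Since $j_0$, $j_1$, $\gamma_1$ and the various products with identities are genuine (order one) maps, every composition below is computed simply by substituting these single-valued maps into the formula $(G \circ F)|_U = \{\, g_{i,j} \circ f_i \,\}$ from Section \ref{operationcorr}.

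I would treat the second diagram first, since it is an exact equality. Both composites run from the bottom-right corner $X \times M$ to the top-left corner $X \times M$. Passing through the top-right corner $X$ gives $j_0 \circ (id_X \vee J)$; as $id_X \vee J$ is locally $\{ pr_X, J_1 \circ pr_M, \ldots, J_m \circ pr_M\}$, postcomposing with the single map $j_0$ yields $\{\, (x,q) \mapsto (x,p_0), \ (x,q) \mapsto (J_i(q), p_0)\,\}$. Passing through the bottom-left corner $X \times M \times M$ gives $\big(id_{X \times M} \vee (j_0 \circ J)\big) \circ (j_0 \times id_M)$; substituting the single map $j_0 \times id_M : (x,q) \mapsto (x, p_0, q)$ into the generators listed above produces $pr_{X \times M} \circ (j_0 \times id_M) : (x,q) \mapsto (x, p_0)$ together with $(j_0 \circ J_i) \circ pr_M \circ (j_0 \times id_M) : (x,q) \mapsto (J_i(q), p_0)$. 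The two unordered sets of maps coincide literally, so this square commutes on the nose.

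For the first diagram I would compute the composite $\big(id_{X \times M} \vee (j_0 \circ J)\big) \circ (id_X \times \gamma_1) \circ (j_1 \times id_M)$ and compare it with $id_{X \times M}$. Tracking a point, $j_1 \times id_M$ sends $(x,q) \mapsto (x, p_1, q)$; then $id_X \times \gamma_1$ sends this to $(x, \gamma_1(p_1, q)) = (x, q, p_0)$, where I use precisely the defining property $\gamma_1(p_1 \times q) = (q \times p_0)$ of $\gamma_1$; finally applying the generators of $id_{X \times M} \vee (j_0 \circ J)$ produces the unordered set $\{\, (x,q) \mapsto (x,q), \ (x,q) \mapsto (J_i(p_0), p_0)\,\}$. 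The first map is $id_{X \times M}$, while each remaining map is constant in both arguments, hence pulls $\omega_X \oplus \omega_M$ back to $0$ and is an isotropic correspondence. Because elements of $SCor$ are taken modulo isotropic correspondences, the whole composite equals $id_{X \times M}$ in $SCor(X \times M, X \times M)$, which is exactly commutativity of the triangle.

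The only genuinely delicate point is the bookkeeping: one must keep straight which copy of $M$ each projection $pr_M$ and each factor of $\gamma_1$ refers to, reconciling the grouping $X \times (M \times M)$ used for $id_X \times \gamma_1$ with the grouping $(X \times M) \times M$ used for the wedge sum, and one must remember that the target equalities live in $SCor$ rather than on the nose. The first diagram commutes only after discarding the constant branches $(x,q) \mapsto (J_i(p_0), p_0)$, so the essential (though routine) observation is that these extra maps are isotropic and therefore invisible in $SCor$; this is the same mechanism already exploited in Lemma \ref{simplelemma}.
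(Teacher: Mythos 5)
Your proposal is correct and follows essentially the same route as the paper's proof: unwind the local representatives of the wedge sums, observe that the square through $X$ commutes as a literal equality of unordered sets of maps, and observe that the triangle commutes only after discarding the constant (hence isotropic) branches $(x,q)\mapsto(J_i(p_0),p_0)$, which is exactly the mechanism the paper uses.
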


\begin{proof}
First, note that
\begin{equation*}
\begin{gathered}
j_1 \times id_M: \;\; X \times M \rightarrow X \times p_1 \times M, \\
\big((id_X \times \gamma_1) \circ (j_1 \times id_M) \big): \;\; X \times M  \rightarrow  X \times M \times p_0.
\end{gathered}
\end{equation*}
Let $U \subset M$ be a small open subset such that
\begin{equation*}
J|_U = \{f_1, \ldots, f_n\}, \;\;\; f_i \in C^{\infty}(U, X).
\end{equation*}
Let us prove that the first diagram is commutative. We have
\begin{equation*}
\begin{gathered}
\big(id_{X \times M} \vee (j_0 \circ J)\big)|_{X \times U \times p_0} = \{id_{X \times U}, \; f_1(p_0) \times p_0, \ldots, f_n(p_0) \times p_0   \} = \\
id_{X \times U} + \text{isotropic correspondence} = id_{X \times U} \;\; \text{as elements of $SCor(X \times U, X \times U)$}.
\end{gathered}
\end{equation*}
As a result, we obtain
\begin{equation*}
\begin{gathered}
\big(id_{X \times M} \vee (j_0 \circ J)\big) \circ (id_X \times \gamma_1) \circ (j_1 \times id_M) = id_{X \times M}.
\end{gathered}
\end{equation*}
Let us prove that the second diagram is commutative.
\begin{equation*}
\begin{gathered}
\big(j_0 \circ (id_X \vee J) \big)|_{X \times U} = \{j_0 \circ id_X, \; j_0 \circ f_1, \ldots, j_0 \circ f_n\}, \\
\big( (id_{X \times M} \vee (j_0 \circ J) )\circ (j_0 \times id_M) \big)|_{X \times U} = (id_{X \times M} \vee (j_0 \circ J) )|_{X \times p_0 \times U} = \\
\{j_0 \circ id_{X}, \; j_0 \circ f_1, \ldots, j_0 \circ f_n  \}.
\end{gathered}
\end{equation*}
\end{proof}

We can not glue these two diagrams into one commutative diagram. It turns out that we can glue them on cohomology level. Since $\gamma_1$ is symplectically isotopic to identity, we get from Theorem $\ref{cohominvhom}$ that
\begin{equation*}
\begin{gathered}
(\gamma_1)_{*} = (id_{M \times M})_{*}, \quad (id_X \times \gamma_1)_{*} = (id_{X \times M \times M})_{*}, \\
\bigm(\big(id_{X \times M} \vee (j_0 \circ J)\big) \circ (id_X \times \gamma_1) \bigm)_{*} =  \big(id_{X \times M} \vee (j_0 \circ J)\big)_{*} \circ (id_X \times \gamma_1)_{*} =  \\
\big(id_{X \times M} \vee (j_0 \circ J)\big)_{*}.
\end{gathered}
\end{equation*}
This implies that we have the following commutative diagram
\begin{equation*}
\begin{gathered}
\begin{tikzcd}
&[2em] H_{\bullet}^M(Y,\;  X \times M) &[2em] \arrow{l}[above]{(j_0)_{*}} H_{\bullet}^M(Y, \; X)
  \\[2em]
 H_{\bullet}^M(Y,\; X \times M) \arrow{ru}[left]{(id_{X \times M})_{*}} \arrow{r}{(j_1 \times id_M)_{*} \;\;}  & H_{\bullet}^M(Y, \; X \times M \times M) \arrow{u}[right]{\big(id_{X \times M} \vee (j_0 \circ J)\big)_{*} } &  \arrow{l}[above]{(j_0 \times id_M)_{*}} H_{\bullet}^M(Y, \; X \times M) \arrow{u}[right]{(id_X \vee J)_{*}}
\end{tikzcd}
\end{gathered}
\end{equation*}
Since $(j_0)_{*} = (j_1)_{*}$ (see Lemma $\ref{homotopyinv1}$), we obtain
\begin{equation*}
\begin{gathered}
(j_0)_{*} \circ (id_X \vee J)_{*} = \big(id_{X \times M} \vee (j_0 \circ J)\big)_{*} \circ (j_0 \times id_M)_{*} =  \\
\big(id_{X \times M} \vee (j_0 \circ J)\big)_{*} \circ (j_1 \times id_M)_{*} = id_{X \times M}.
\end{gathered}
\end{equation*}
As a result, we proved the first equality from $(\ref{requiredtoprove})$. Let us prove the second equality.

\begin{lemma}
The following diagrams are commutative
\begin{equation*}
\begin{gathered}
\begin{tikzcd}
&[2em] Y \times M
  \\[2em]
 Y \times M \arrow{ru}[left]{id_{Y \times M}} \arrow{r}{i_1 \times id_M \;\;}  & Y \times M \times M \arrow{u}[right]{\big(id_{Y \times M} \vee (i_0 \circ I)\big) \circ (id_Y \times \gamma_1)}
\end{tikzcd}
\hspace{0.1in}
\begin{tikzcd}
Y \times M
&[2em] Y  \arrow{l}[above]{i_0}  \\[2em]
Y \times M \times M  \arrow{u}[right]{id_{Y \times M} \vee (i_0 \circ I)}
& Y \times M \arrow{u}[right]{id_Y \vee I}  \arrow{l}[above]{i_0 \times id_M}
\end{tikzcd}
\end{gathered}
\end{equation*}
\end{lemma}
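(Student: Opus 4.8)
The plan is to transcribe the proof of Lemma~\ref{commutativediag} almost verbatim, substituting $i_0, i_1, Y, I$ for $j_0, j_1, X, J$; no step of that argument used any property of $X$ or $J$ beyond the defining identity of $\gamma_1$ and the fact that constant correspondences are isotropic, both of which hold equally for $Y$ and $I$. Concretely, I would fix a small open $U \subset M$ on which $I$ is represented by an unordered set of smooth maps $I|_U = \{f_1, \ldots, f_n\}$, $f_i \in C^{\infty}(U, Y)$, verify each diagram locally over $Y \times U$, and then glue; since both composites are honest correspondences, agreement modulo isotropic correspondences on the members of a cover of $M$ is enough.

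For the left-hand (triangular) diagram I would first trace images. The map $i_1 \times id_M$ sends $Y \times M$ into $Y \times p_1 \times M$, and the defining property $\gamma_1(p_1 \times q) = q \times p_0$ then forces $(id_Y \times \gamma_1) \circ (i_1 \times id_M)$ to land in $Y \times M \times p_0$. Restricting the remaining arrow to this locus gives
\begin{equation*}
\big(id_{Y \times M} \vee (i_0 \circ I)\big)\big|_{Y \times U \times p_0} = \{ id_{Y \times U}, \; f_1(p_0) \times p_0, \ldots, f_n(p_0) \times p_0 \}.
\end{equation*}
The extra terms $f_i(p_0) \times p_0$ are constant maps, hence an isotropic correspondence, so the right-hand side equals $id_{Y \times U}$ as an element of $SCor(Y \times U, Y \times U)$. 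Gluing over a cover of $M$ yields $\big(id_{Y \times M} \vee (i_0 \circ I)\big) \circ (id_Y \times \gamma_1) \circ (i_1 \times id_M) = id_{Y \times M}$, which is the asserted commutativity.

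For the right-hand (square) diagram I would evaluate both composites over $Y \times U$. One route gives $\big(i_0 \circ (id_Y \vee I)\big)|_{Y \times U} = \{ i_0 \circ id_Y, \; i_0 \circ f_1, \ldots, i_0 \circ f_n \}$, while the other gives $\big((id_{Y \times M} \vee (i_0 \circ I)) \circ (i_0 \times id_M)\big)|_{Y \times U} = (id_{Y \times M} \vee (i_0 \circ I))|_{Y \times p_0 \times U}$, which is again $\{ i_0 \circ id_Y, \; i_0 \circ f_1, \ldots, i_0 \circ f_n \}$. The two unordered sets coincide, so the square commutes.

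I do not expect any genuine obstacle here, as the statement is a routine mirror of Lemma~\ref{commutativediag}. The only points demanding care are remembering that all equalities are taken in $SCor$, i.e. modulo isotropic correspondences (so the constant-map terms generated by $\vee$ may be dropped), and correctly invoking the defining property of $\gamma_1$ together with Lemma~\ref{simplelemma} to identify which slice — $p_0$ or $p_1$ — each map is restricted to.
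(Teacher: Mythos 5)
Your proposal is correct and is exactly what the paper does: its proof of this lemma consists of the single remark that one mimics the proof of Lemma \ref{commutativediag}, replacing $j_0, j_1, X, J$ by $i_0, i_1, Y, I$, which is precisely the substitution you carry out in detail. The local computations over $Y \times U$, the use of $\gamma_1(p_1 \times q) = q \times p_0$, and the discarding of the constant (isotropic) terms all match the earlier argument verbatim.
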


\begin{proof}
The proof mimics the proof of Lemma $\ref{commutativediag}$. We just need to replace $j_0, j_1, X, J$ by $i_0, i_1, Y, I$.
\end{proof}

Arguing as before, we see that $(id_{Y } \times \gamma_1)^{*} = id^{*}_{Y \times M \times M}$. We have the following commutative diagram
\begin{equation*}
\begin{gathered}
\begin{tikzcd}
&[2em]   H_{\bullet}^M(Y \times M \;  X) \arrow{ld}[left]{(id_{Y \times M})^{*}} \arrow{r}[above]{(i_0)^{*}} \arrow{d}[right]{\big(id_{Y \times M} \vee (i_0 \circ I)\big)^{*} } &[2em]   H_{\bullet}^M(Y, \; X) \arrow{d}[right]{(id_Y \vee I)^{*}}
\\[2em]
H_{\bullet}^M(Y \times M,\; X)   & H_{\bullet}^M(Y \times M \times M, \; X) \arrow{l}[above]{(i_1 \times id_M)^{*} \;\;} \arrow{r}[above]{(i_0 \times id_M)^{*}} & H_{\bullet}^M(Y \times M, \; X)
\end{tikzcd}
\end{gathered}
\end{equation*}
Since $(i_0)_{*} = (i_1)_{*}$, we have
\begin{equation*}
\begin{gathered}
 (id_Y \vee I)^{*} \circ (i_0)^{*} =  (i_0 \times id_M)^{*} \circ \big(id_{Y \times M} \vee (i_0 \circ I)\big)^{*} =  \\
 (i_1 \times id_M)^{*} \circ \big(id_{Y \times M} \vee (i_0 \circ I)\big)^{*} = id_{Y \times M}^{*}.
\end{gathered}
\end{equation*}
This proves the second equality from $(\ref{requiredtoprove})$.
\end{proof}

\section{Triangulated persistence category of symplectic manifolds}\label{triangperscat}

Triangulated Persistence Categories are defined by Biran, Cornea, and Zhang in \cite{Octav1}, \cite{Octav2}. We do not give definitions here and the reader  can find all the details in the mentioned papers.

\begin{remark}
In a similar way triangulated persistence category of almost complex manifolds can be constructed. We consider only simplectic manifolds to simplify the paper.
\end{remark}

\subsection{Topological motivation}
Let $Y$, $Z$, $X$ be some nice topological spaces. We defined abelian group $K(Y, X)$ by formula $(\ref{requiredfor})$. Recall that (discussed at the beginning of Section $\ref{operationcorr}$) maps
\begin{equation*}
Z \rightarrow SP^k(Y), \;\;\; Y \rightarrow SP^n(X)
\end{equation*}
can be composed and we obtain a map $Z \rightarrow SP^{nk}(X)$. Extending this composition linearly to linear combinations of such maps we get bilinear operation
\begin{equation*}
K(Y, X) \times K(Z, Y) \xrightarrow{\circ} K(Z, X).
\end{equation*}
Let $\mathcal{T}op$ be the category of topological spaces. Consider a category $\mathcal{T}$, where

\begin{equation*}
Ob(\mathcal{T}) = Ob(\mathcal{T}op), \;\;\; Mor_{\mathcal{T}}(Y, X) = K(Y, X).
\end{equation*}

Let $Y \sqcup Z$ be the disjoint union. Then, the standard inclusions $i_Y: Y \hookrightarrow Y \sqcup Z$, $i_Z: Z \hookrightarrow Y \sqcup Z$ provides a map
\begin{equation*}
K(Y \sqcup Z, X) \xrightarrow{(\circ i_Y, \; \circ i_X )} K(Y,X) \oplus K(Z, X).
\end{equation*}
Easy to see that this map is isomorphism of groups. This implies that $Y \sqcup Z$ is the coproduct of $Y$ and $Z$ in the category $\mathcal{T}$. Since the set of morphisms is abelian and compositions are bilinear, we get that $Y \sqcup Z$ is also a product. Note that the empty set is the zero object of $\mathcal{T}$. As a result, we see that all finite products and coproducts exist and get the following lemma.

\begin{lemma}
The category $\mathcal{T}$ is additive.
\end{lemma}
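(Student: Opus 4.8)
The plan is to verify directly the three defining axioms of an additive category: that $\mathcal{T}$ is enriched over abelian groups with bilinear composition, that it possesses a zero object, and that it admits all finite biproducts. Most of the ingredients have already been assembled in the paragraphs preceding the statement, so the proof will largely consist of organizing them and supplying the one remaining verification.

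First I would record that $\mathcal{T}$ is preadditive: by construction $Mor_{\mathcal{T}}(Y,X) = K(Y,X)$ is a group of finite formal $\mathbb{Z}$-linear combinations, hence abelian, and the composition $K(Y,X) \times K(Z,Y) \to K(Z,X)$, obtained by extending linearly the composition of maps into symmetric products, is $\mathbb{Z}$-bilinear. The zero object is the empty set $\emptyset$, which was already identified as the zero object of $\mathcal{T}$; being simultaneously initial and terminal, it also serves as the empty (nullary) biproduct.

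The only point left is the existence of binary biproducts; combined with the zero object and an easy induction this yields all finite biproducts. Here I would use the isomorphism $K(Y \sqcup Z, X) \cong K(Y,X) \oplus K(Z,X)$ established above, which already exhibits $Y \sqcup Z$ together with its inclusions $i_Y, i_Z$ as the coproduct of $Y$ and $Z$ in $\mathcal{T}$. To upgrade this coproduct to a biproduct I would invoke the standard fact that in a preadditive category a finite coproduct is automatically a product: using the universal property of the coproduct together with the zero morphisms, define projections $p_Y, p_Z$ by $p_Y i_Y = id_Y$, $p_Y i_Z = 0$, $p_Z i_Z = id_Z$, $p_Z i_Y = 0$, and then check $i_Y p_Y + i_Z p_Z = id_{Y \sqcup Z}$ by precomposing with $i_Y$ and $i_Z$ and appealing once more to the universal property.

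The step I expect to require the most care is precisely this last verification: one must ensure that the projections constructed abstractly are genuine elements of the relevant $K$-groups and that the idempotent relations $p_Y i_Z = 0$, etc., hold exactly. Both are immediate consequences of the decomposition $K(Y \sqcup Z, X) \cong K(Y,X) \oplus K(Z,X)$ and of bilinearity of composition, so no input beyond what has already been proven is needed. Assembling the three facts — preadditive, zero object, and finite biproducts — then shows that $\mathcal{T}$ is additive.
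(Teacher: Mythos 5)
Your proposal is correct and follows essentially the same route as the paper: establish that $Y \sqcup Z$ is a coproduct via the isomorphism $K(Y \sqcup Z, X) \cong K(Y,X) \oplus K(Z,X)$, note the empty set is the zero object, and use preadditivity (abelian hom-groups with bilinear composition) to conclude that finite coproducts are biproducts. You merely spell out in more detail the standard coproduct-to-biproduct upgrade that the paper states in one line.
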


Consider bounded below cochain complexes $Com^{-}(\mathcal{T})$ and its homotopy category $\mathcal{K}(\mathcal{T})$. It is known that homotopy category of chain complexes of any additive category is triangulated (see \cite[Th. 11.2.6]{categories}).

We started with topological spaces and constructed the triangulated category. The goal of this section to develop similar idea for symplectic manifolds.

\subsection{Triamgulated category of symplectic manifolds}

Recall that in Section $\ref{abelsympcorr}$ we defined bounded symplectic correspondences $SC_{b}(Y, X)$. Let $\widetilde{\mathcal{B}}$ be a category, where

\begin{equation*}
\begin{gathered}
\text{$Ob(\widetilde{\mathcal{B}}) = \;$ \emph{connected}  symplectic manifolds},\\
Mor_{\widetilde{\mathcal{B}}}(Y, X) = SC_{b}(Y, X).
\end{gathered}
\end{equation*}

Let $Z_1, \ldots, Z_n, Z_1', \ldots, Z_k', Z_1'', \ldots, Z_m''$ be connected symplectic manifolds. Define  $n \times k$ and $k \times m$ matrices $F$, $G$ with entries  $(F)_{i, j} \in SC_b(Z_i, Z_j')$ and $(G)_{i, j} \in SC_b(Z_i', Z_j'')$, respectively. Then, we define their composition $G \circ F$ as an $n \times m$ matrix with following entries
\begin{equation}\label{prodmat}
(G \circ F)_{i, j} = \sum\limits_{\ell = 1}^k G_{\ell, j} \circ F_{i, \ell} \in SC_{b}(Z_i, Z_j'').
\end{equation}
where $\circ$ is the defined composition of correspondences (see Section $\ref{operationcorr}$). In other words, we multiply these matrices.

Consider matrices  $F_1, F_2$ and $G_1, G_2$ with entries from $SC_b(Z_i, Z_j')$ and $SC_b(Z_i', Z_j'')$, respectively. We define composition between linear combinations of such matrices linearly, i.e.
\begin{equation*}
(\lambda_1G_1 + \lambda_2G_2) \circ (\mu_1F_1 + \mu_2F_2) = \sum\limits_{i,j = 1}^2\lambda_i\mu_j F_i \circ G_j.
\end{equation*}

If $dim(Y) \neq dim(X)$, then the disjoint union $Y \sqcup X$ is not a manifold, but we want to have a category with all disjoint unions. Let us assume that the empty set is a symplectic manifod. Define a new category $\mathcal{A}$ such that

\begin{equation*}
\begin{gathered}
\text{$Ob(\mathcal{B}) = $ \{all disjoint unions of connected symplectic manifolds $\sqcup_{i=1}^nZ_i$, $\; n$ is arbitrary\}  } \\
Mor_{\mathcal{B}}\big(\sqcup_{i=1}^nZ_i, \; \sqcup_{j=1}^k Z_j'\big) = \; \{\text{finite formal finite linear combinations of the defined} \\
\text{ $n \times k$ matrices with entries from $SC_b(Z_i, Z_j')$}\},\\
Mor_{\mathcal{B}}(\sqcup_{i=1}^nZ_i, \; \emptyset) = Mor_{\mathcal{B}}(\emptyset, \; \sqcup_{i=1}^nZ_i) = 0, \\
\text{Composition is defined by formula $\ref{prodmat}$}.
\end{gathered}
\end{equation*}
In particular, for connected symplectic $Y$, $X$ we have
\begin{equation*}
Mor_{\mathcal{B}}(Y, X) = SC_b(Y, X).
\end{equation*}

\begin{lemma}\label{triangproof}
The category $\mathcal{B}$ is additive, where $Y \oplus X$ is the disjoint union $Y \sqcup X$ and the empty set is the zero object. The category $\widetilde{\mathcal{B}}$ is full subcategory of $\mathcal{B}$.
\end{lemma}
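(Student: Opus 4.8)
The plan is to verify in turn the three defining properties of an additive category — enrichment in abelian groups with bilinear composition, existence of a zero object, and existence of all finite biproducts — after which the assertion about $\widetilde{\mathcal{B}}$ will be immediate. Indeed, the objects of $\widetilde{\mathcal{B}}$ are exactly the one-term disjoint unions $Z$, which are objects of $\mathcal{B}$, and for connected $Y,X$ the $1\times 1$ matrices give $Mor_{\mathcal{B}}(Y,X)=SC_b(Y,X)=Mor_{\widetilde{\mathcal{B}}}(Y,X)$, so $\widetilde{\mathcal{B}}$ is full and no morphisms are added or lost.

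First I would establish the $\mathbf{Ab}$-enrichment. Each entry group $SC_b(Z_i,Z_j')=\mathbb{Z}[SCor_b(Z_i,Z_j')]$ is abelian by Definition $\ref{abboundsymcorr}$, so the $n\times k$ matrices with such entries form an abelian group under entrywise addition, and this is precisely $Mor_{\mathcal{B}}(\sqcup_iZ_i,\sqcup_jZ_j')$. The identity of $\sqcup_{i=1}^nZ_i$ is the diagonal matrix carrying $id_{Z_i}$ on the diagonal and $0$ off it; the identity laws follow from $id_Z\circ(-)=(-)=(-)\circ id_Z$ for correspondences (Section $\ref{operationcorr}$). Bilinearity of the matrix composition and its compatibility with the linear extension are exactly the formula displayed just before the lemma together with $(\ref{linearcomp1})$, while associativity reduces, entry by entry, to associativity of the composition of correspondences. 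The latter holds because composition is computed locally by composing the finitely many smooth local branches, where associativity of map composition is manifest; moreover boundedness is preserved by the filtration estimate $SC(Y,X;r)\circ SC(Z,Y;s)\subset SC(Z,X;r+s)$, so every matrix product $(\ref{prodmat})$ again lands in $SC_b$ and the finitely many summands raise no issue.

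Next I would dispatch the remaining two properties. The empty set is a zero object since $Mor_{\mathcal{B}}(\sqcup_iZ_i,\emptyset)=Mor_{\mathcal{B}}(\emptyset,\sqcup_iZ_i)=0$ by definition, giving a unique morphism into and out of $\emptyset$. For binary biproducts I claim $Y\sqcup X$ realizes $Y\oplus X$: I would define inclusions $\iota_Y,\iota_X$ and projections $p_Y,p_X$ as the evident $1\times 2$ and $2\times 1$ matrices whose single nonzero entry is an identity correspondence. Matrix multiplication $(\ref{prodmat})$ then yields $p_Y\circ\iota_Y=id_Y$, $p_X\circ\iota_X=id_X$, $p_Y\circ\iota_X=0=p_X\circ\iota_Y$, and $\iota_Y\circ p_Y+\iota_X\circ p_X=id_{Y\sqcup X}$, which is exactly the biproduct identity in an $\mathbf{Ab}$-enriched category; induction gives all finite biproducts. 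Equivalently, as in the topological model one checks that a matrix is the same datum as its tuple of rows, so that $Mor_{\mathcal{B}}(\sqcup_iZ_i,W)\cong\bigoplus_iMor_{\mathcal{B}}(Z_i,W)$ and dually, exhibiting $\sqcup_iZ_i$ simultaneously as product and coproduct.

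The main obstacle I anticipate is bookkeeping rather than conceptual: one must confirm that the composition of correspondences is genuinely associative — the excerpt defines this composition and proves it well defined in Section $\ref{operationcorr}$ but does not explicitly record associativity — and that it never leaves the bounded subgroups $SC_b$, so that $(\ref{prodmat})$ truly defines a morphism of $\mathcal{B}$. Both points follow from the fiberwise description together with the filtration bound, and I would carry them out carefully before the biproduct argument, since the entire matrix calculus underlying additivity rests on them.
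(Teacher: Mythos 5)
Your proposal is correct and follows essentially the same route as the paper: exhibit $\emptyset$ as the zero object, use the matrix calculus to show $Z_1\sqcup Z_2$ satisfies the (bi)product universal property via the $1\times 2$ inclusion matrices, and invoke the $\mathbf{Ab}$-enrichment with bilinear composition. Your explicit attention to associativity of the composition of correspondences and to preservation of boundedness is a reasonable extra check that the paper leaves implicit, but it does not change the argument.
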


\begin{proof}
Let $Z_1, Z_2$ be symplectic manifolds. We have an $1 \times 2$ matrices $\delta_k \in Mor_{\mathcal{A}}(Z_k, Z_1 \sqcup Z_2)$ such that
\begin{equation*}
\delta_1 = (id_{Z_1}, 0), \;\;\; \delta_2 = (0, id_{Z_2}).
\end{equation*}
We get the following map
\begin{equation*}
Mor_{\mathcal{B}}(Z_1 \sqcup Z_2, \sqcup_{j=1}^m Z_j') \xrightarrow{(\circ \delta_1, \circ \delta_2 )} \bigoplus_{i=1}^2 Mor_{\mathcal{B}}(Z_i, \sqcup_{j=1}^m Z_j').
\end{equation*}
Definitions show that the map above is isomorphism (we write matrices as direct sums of their columns). This implies that $Z_1 \sqcup Z_2$ is coproduct of $Z_1$ and $Z_2$. In the same way we can show that coproduct of arbitrary elements exists.

Since the set of morphisms are abelian groups and compositions are bilinear, we obtain that all coproducts are also products. We see that the empty set is the zero object. As a result, all finite limits and colimits exist and $\mathcal{B}$ is additive.

\vspace{0.07in}
The last part of the lemma follows from definitions.

\end{proof}

Consider a category $Com^{-}(\mathcal{B})$ of bounded below cochain complexes. Let us recall that objects of this category are cochain complexes $(C_{\bullet}, d_C)$
\begin{equation*}
0  \rightarrow C_{-k} \xrightarrow{d^{-k}_C} \ldots \rightarrow C_{n-1} \xrightarrow{d^{n-1}_C} C_n \xrightarrow{d^n_C}  \ldots, \;\;\;\;  C_i \in \mathcal{B}.
\end{equation*}
Morphisms between $(C_{\bullet}, d_C)$ and $(K_{\bullet}, d_K)$ are chain maps, i.e collection of morphisms $F_n \in Mor_{\mathcal{B}}(C_n, K_n)$ such that $F_n \circ d_C^{n-1} = d_K^{n-1} \circ F_{n-1}$ for any $n$. A chain map $F$ is called isomorphism if each $F_n$ is isomorphism.  We say that two chain maps $F, G: (C_{\bullet}, d_C) \rightarrow (K_{\bullet}, d_K)$ are homotopic if there exist morphisms   $H_n \in Mor_{\mathcal{B}}(C_n, K_{n-1})$ such that $F_n - G_n = d_K^{n-1} H_n \pm H_{n+1} d^{n}_C$ for any $n$.

\begin{example}\label{examplechain1}
Let $M = (M, p)$ be a symplectic manifold with a marked point and $M^n$ be its $n$th power. We have embeddings $i_k: M^n \rightarrow M^{k} \times p \times M^{n-k} \subset M^{n+1}$ and define $d^n = \sum\limits_{k=0}^n (-1)^k i_k \in SC_b(M^n, M^{n+1})$. We see that $d^{n+1} d^n = 0$ and define a chain complex
\begin{equation*}
0 \rightarrow Y, \xrightarrow{d^0} Y \times M \xrightarrow{id_Y \times d^1} \ldots \xrightarrow{id_Y \times d^{n-1}} Y \times M^{n} \xrightarrow{id_Y \times d^n} \ldots
\end{equation*}
Here $d^0$ maps $Y$ to $Y \times p \subset Y \times M$. We denote this complex by $Y_{\bullet}(M, p)$. Consider a complex $X_{\bullet}(M, p)$ and note that any $F \in SC_{b}(Y, X)$ defines a chain map from $Y_{\bullet}(M, p) \rightarrow X_{\bullet}(M, p)$ with $F_n = F \times id_{M^n}$.
\end{example}

\begin{example}\label{examplechain2}
Let $M = (M, p_0, p_1)$ be a symplectic manifold with two marked points. We have embeddings $d_{i, \varepsilon}: M^{i} \times p_{\varepsilon} \times M^{n-i} \rightarrow M^{n+1}$, where $\varepsilon = 0, 1$. Define $d_{n+1} = \sum\limits_{\varepsilon =0}^1\sum\limits_{i=0}^{n}(-1)^{\varepsilon + i }d_{i, \varepsilon}$ and note that $d_{n+1} \circ d_n = 0$. We get the following chain complex
\begin{equation*}
0 \rightarrow Y  \xrightarrow{d^0} Y \times M \xrightarrow{id_Y \times d^1} \ldots \xrightarrow{id_Y \times d^{n-1}} Y \times M^{n} \xrightarrow{id_Y \times d^n} \ldots
\end{equation*}
and denote this complex by $Y_{\bullet}(M, p_0, p_1)$. Consider another complex $X_{\bullet}(M, p_0, p_1)$. As in the previous example, any  $F \in SC_b(Y, X)$ defines a chain map from $Y_{\bullet}(M, p_0, p_1) \rightarrow X_{\bullet}(M, p_0, p_1)$ with $F_n = F \times id_{M^n}$.
\end{example}

\vspace*{0.08in}

Consider a category $\mathcal{K}(\mathcal{B})$, where

\begin{equation*}
\begin{gathered}
Ob(\mathcal{K}(\mathcal{B})) = Ob(Com^{-}(\mathcal{B})), \\
Mor_{\mathcal{K}(\mathcal{B})}(C_{\bullet}, K_{\bullet}) = \{ \text{chain maps} \}/\simeq,
\end{gathered}
\end{equation*}
where $F \simeq G$ if they are chain homotopic.

\begin{lemma}
The category $\mathcal{K}(\mathcal{B})$ is triangulated.
\end{lemma}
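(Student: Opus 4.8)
The plan is to verify that $\mathcal{K}(\mathcal{B})$ satisfies the hypotheses of the standard theorem (see \cite[Th. 11.2.6]{categories}) asserting that the homotopy category of bounded below cochain complexes over \emph{any} additive category is triangulated. The single nontrivial hypothesis is that $\mathcal{B}$ be additive, and this is exactly the content of Lemma \ref{triangproof}. Thus the bulk of the argument consists of exhibiting the triangulated structure explicitly and recording that the axioms hold formally; no further geometric input about symplectic correspondences is required, since everything takes place at the level of the additive category $\mathcal{B}$.

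First I would define the shift (translation) functor $[1]$ on $\mathcal{K}(\mathcal{B})$ by $(C_{\bullet}[1])_n = C_{n+1}$ with differential $d_{C[1]} = -d_C$, and by the identity on morphisms. Since shifting a bounded below complex again yields a bounded below complex, $[1]$ restricts to an autoequivalence of $Com^{-}(\mathcal{B})$ and descends to $\mathcal{K}(\mathcal{B})$. Next, for a chain map $F: C_{\bullet} \to K_{\bullet}$ I would form the mapping cone with $\mathrm{Cone}(F)_n = C_{n+1} \oplus K_n$ (a disjoint union, hence an object of $\mathcal{B}$) and differential
\begin{equation*}
d_{\mathrm{Cone}(F)} = \begin{pmatrix} -d_C & 0 \\ F & d_K \end{pmatrix},
\end{equation*}
which is well defined precisely because $\mathcal{B}$ is additive, so the biproducts $C_{n+1} \oplus K_n$ and the matrix morphisms between them exist; note that $\mathrm{Cone}(F)$ is again bounded below. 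The distinguished triangles of $\mathcal{K}(\mathcal{B})$ are then declared to be those triangles isomorphic in $\mathcal{K}(\mathcal{B})$ to a standard cone triangle
\begin{equation*}
C_{\bullet} \xrightarrow{F} K_{\bullet} \xrightarrow{\iota} \mathrm{Cone}(F) \xrightarrow{\pi} C_{\bullet}[1],
\end{equation*}
where $\iota$ and $\pi$ are the evident inclusion and projection.

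Finally I would check the axioms TR1--TR4. TR1 and TR2 are immediate: the cone of an identity is contractible, an isomorphic triangle is distinguished by fiat, and the rotation of a cone triangle is isomorphic in $\mathcal{K}(\mathcal{B})$ to a cone triangle. TR3 (completing a morphism of the first two vertices to a morphism of triangles) follows from an explicit chain homotopy built from the matrix differential above. The only axiom demanding genuine bookkeeping is the octahedral axiom TR4, whose verification is the classical diagram chase with iterated mapping cones and again uses nothing beyond additivity of $\mathcal{B}$. The main obstacle is therefore entirely concentrated in the additivity of $\mathcal{B}$, which has already been discharged by Lemma \ref{triangproof}; the remaining verifications are purely formal and are precisely what the cited theorem packages, so invoking it completes the proof.
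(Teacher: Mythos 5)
Your proposal is correct and takes essentially the same route as the paper: the paper's proof is exactly the observation that $\mathcal{B}$ is additive (Lemma \ref{triangproof}) together with the citation of \cite[Th. 11.2.6]{categories} that the homotopy category of cochain complexes over any additive category is triangulated. Your explicit description of the shift functor, mapping cones, and the axioms TR1--TR4 is just an unpacking of what that cited theorem packages, so the two arguments coincide in substance.
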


\begin{proof}
It is known that the homotopy category of chain complexes of any additive category is triangulated (see \cite[Th. 11.2.6]{categories}).
\end{proof}

\subsection{Triangulated persistence category of symplectic manifolds}\label{persistencecat}

In the previous section we constructed the category $\mathcal{B}$ and the triangulated category $\mathcal{K}(\mathcal{B})$. The goal of this section is to construct a refinement of these categories and get a triangulated persistene category $\mathcal{C}$.

\vspace{0.1in}

Let us consider pairs $(Z, r)$, where $Z$ is \emph{connected} symplected manifolds and $r \in \mathbb{R}$. Also, consider formal elements
\begin{equation*}
(Z_1, r_1) \oplus \ldots \oplus (Z_n, r_n),
\end{equation*}
where $Z_1, \ldots, Z_n$ are connected symplectic manifolds. We get a new category $\mathcal{A}$ such that

\begin{equation*}
\begin{gathered}
\text{$Ob(\mathcal{A}) = $ \{elements of the form $(Z_1, r_1) \oplus \ldots \oplus (Z_n, r_n)$, where $n$ is arbitrary\},  } \\
Mor_{\mathcal{A}}\big(\oplus_{i=1}^n(Z_i, r_i), \; \oplus_{j=1}^k (Z_j', r_j')\big) = Mor_{\mathcal{B}}\big(\sqcup_{i=1}^n Z_i, \; \sqcup_{j=1}^k Z_j'\big), \\
Mor_{\mathcal{A}}(\oplus_{i=1}^n(Z_i, r_i), \; \emptyset) = Mor_{\mathcal{A}}(\emptyset, \; \oplus_{i=1}^n(Z_i, r_i)) = 0.
\end{gathered}
\end{equation*}
The set of morphisms are independent of constants in the pairs $(Z_i, r_i)$. The reason to introduce these pairs is explained below, where we discuss a filtration.

\begin{lemma}
The category $\mathcal{A}$ is naturally equivalent to $\mathcal{B}$. The category $\mathcal{A}$  is additive, where the addition of $(Y, r)$ and $(X, s)$ is $(Y,r) \oplus (X, s)$. The empty set is the zero object.
\end{lemma}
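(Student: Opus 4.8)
The plan is to exhibit an explicit forgetful functor $\Phi \colon \mathcal{A} \to \mathcal{B}$ and to show it is an equivalence, and then to transport the additive structure of $\mathcal{B}$ (established in Lemma \ref{triangproof}) across it. First I would define $\Phi$ on objects by discarding the real parameters,
\begin{equation*}
\Phi\big((Z_1, r_1) \oplus \ldots \oplus (Z_n, r_n)\big) = \sqcup_{i=1}^n Z_i, \qquad \Phi(\emptyset) = \emptyset,
\end{equation*}
and on morphisms by the identity map, which makes sense precisely because the Hom-sets of $\mathcal{A}$ are \emph{defined} to coincide with those of $\mathcal{B}$, namely $Mor_{\mathcal{A}}(\oplus_i (Z_i, r_i),\, \oplus_j (Z_j', r_j')) = Mor_{\mathcal{B}}(\sqcup_i Z_i,\, \sqcup_j Z_j')$. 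Since the composition law in $\mathcal{A}$ is declared to be the composition of $\mathcal{B}$ (formula $(\ref{prodmat})$) and the identities agree, $\Phi$ is immediately a functor, with no calculation required.

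Next I would verify that $\Phi$ is an equivalence. It is (in fact) surjective on objects: every object of $\mathcal{B}$ is a disjoint union $\sqcup_{i=1}^n Z_i = \Phi\big((Z_1,0) \oplus \ldots \oplus (Z_n,0)\big)$, and $\Phi(\emptyset)=\emptyset$. It is full and faithful because on each Hom-set it is, by construction, the identity map of a fixed abelian group, so the induced map on morphisms is a bijection automatically. Hence $\Phi$ is an equivalence of categories. As a byproduct I would record that two objects of $\mathcal{A}$ built from the same manifolds $Z_i$ but different parameters are canonically isomorphic, the isomorphism being the identity matrix, which lies in both Hom-sets.

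Finally I would transport additivity. Because the Hom-sets and composition of $\mathcal{A}$ are literally those of $\mathcal{B}$, the abelian group structure on morphisms and the bilinearity of composition carry over verbatim, so $\mathcal{A}$ is $\mathrm{Ab}$-enriched. The empty set is the zero object of $\mathcal{A}$ exactly as in $\mathcal{B}$, since $Mor_{\mathcal{A}}(\,\cdot\,, \emptyset) = Mor_{\mathcal{A}}(\emptyset, \,\cdot\,) = 0$. For the binary biproduct, $\Phi$ sends the formal object $(Y,r) \oplus (X,s)$ to $Y \sqcup X$, which is the biproduct of $Y$ and $X$ in $\mathcal{B}$ by Lemma \ref{triangproof}; since the inclusion morphisms $\delta_1=(id_Y,0)$ and $\delta_2=(0,id_X)$ and the corresponding projections already reside in $Mor_{\mathcal{A}}$ and $\Phi$ is the identity on morphisms, these same matrices realize $(Y,r) \oplus (X,s)$ as the biproduct in $\mathcal{A}$. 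Thus all finite biproducts exist and $\mathcal{A}$ is additive, yielding $\widetilde{\mathcal{B}} \simeq \mathcal{B} \simeq \mathcal{A}$. The argument is essentially formal; the only point that requires care — and the nearest thing to an obstacle — is that $\Phi$ is not injective on objects, so the equivalence and the biproduct identification must be phrased up to the canonical isomorphisms described above rather than as equalities of objects.
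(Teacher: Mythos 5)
Your proposal is correct and follows essentially the same route as the paper: the paper builds the full, faithful, essentially surjective functor $I\colon \mathcal{B}\to\mathcal{A}$, $\sqcup_i Z_i \mapsto \oplus_i (Z_i,0)$, which is just a quasi-inverse of your forgetful functor $\Phi$, and it likewise gets additivity by rerunning the $\delta_1,\delta_2$ biproduct argument of Lemma \ref{triangproof} with the parameters carried along. Your explicit remark that the identification is only up to the canonical isomorphisms $(Z,r)\cong(Z,0)$ is a welcome extra precision but not a substantive difference.
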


\begin{proof}

There is a functor $I: \mathcal{B} \rightarrow \mathcal{A}$
\begin{equation*}
I(\sqcup_{i=1}^n Z_i) = \oplus_{i=1}^n(Z, 0), \;\;\; I(F) = F,
\end{equation*}
where $F$ is a morpism. By definition, this functor is full and faithful. Also, any element $\oplus_{i=1}^n(Z_i, r)$ is isomorphic to $\oplus_{i=1}^n (Z_i, 0) = I(\sqcup_{i=1}^n Z_i)$. This implies that $\mathcal{B}$ is naturally equivalent to $\mathcal{A}$.

The rest of the lemma mimics the proof of Lemma $\ref{triangproof}$  where we need to replace $\sqcup_{i=1}^n Z_i$ by $\oplus_{i=1}^n (Z_i, r_i)$.
\end{proof}

We discussed (see formula $(\ref{filtration}))$ that bounded correspondences $SC_b(Y, Z)$ are filtered. We define the filtration on the set of morphisms of $\mathcal{A}$ in the following way:
\begin{equation*}
\begin{gathered}
Mor_{\mathcal{A}}((Y,r), (X,s); \; k) = SC_b((Y,r), (X,s); \; k) = SC_b(Y, X; \; k+r-s), \\
SC_b\big(\oplus_{i=1}^n (Z_i, r_i), \; \oplus_{j=1}^m (Z_j', r_j'); \; k \big) =  \bigoplus_{i,j} SC_b((Z_i, r_i), (Z_j', r_j'); \; k) = \\
= \bigoplus_{i,j} SC_b(Z_i, Z_j'; \; k+r_i-r_j').
\end{gathered}
\end{equation*}
By definition of the composition of correspondences (see Section $\ref{operationcorr}$), we have
\begin{equation*}
\begin{gathered}
SC_b((Z', r'), (Z'', r''); \; k) \times SC_b((Z,r), (Z',r'); \; \ell) = \\
 SC_b(Z', Z''; \; k+r'-r'') \times SC_b(Z, Z'; \; \ell+r-r') \xrightarrow{\circ} \\
SC_{b}(Z, Z''; \; k + \ell+r - r'') = SC_b((Z, r), (Z'', r''); \; k+\ell).
\end{gathered}
\end{equation*}
There is a subcategory $\mathcal{A}_0 \subset \mathcal{A}$, where
\begin{equation*}
\begin{gathered}
Ob(\mathcal{A}_0) = Ob(\mathcal{A}), \\
Mor_{\mathcal{A}_0}(\oplus_{i=1}^n (Z_i, r_i), \; \oplus_{j=1}^m (Z_j', r_j')) = SC_b(\oplus_{i=1}^n (Z_i, r_i), \; \oplus_{j=1}^m (Z_j', r_j'); \; 0).
\end{gathered}
\end{equation*}

\begin{lemma}
The category $\mathcal{A}_0$ is additive.
\end{lemma}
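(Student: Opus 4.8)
The plan is to deduce the additivity of $\mathcal{A}_0$ directly from the already-established additivity of $\mathcal{A}$, by checking that passing to the level-zero part of the filtration destroys none of the relevant structure. Concretely, $\mathcal{A}_0$ has the same objects as $\mathcal{A}$ and its morphism groups are the subgroups $SC_b(\cdot,\cdot;0) \subset SC_b(\cdot,\cdot)$ cut out by the filtration $(\ref{filtration})$; so I would verify in turn that these subgroups are closed under the four ingredients of an additive structure: (i) they are abelian groups, (ii) composition restricts to them and remains bilinear, (iii) a zero object survives, and (iv) finite biproducts survive with structure maps already lying at level zero.

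For (i), each $SC_b(Y,X;0) = \mathbb{Z}[SCor_b(Y,X;0)]$ is by construction a subgroup of $SC_b(Y,X)$, and the same holds for the finite direct sums defining $Mor_{\mathcal{A}_0}(\oplus_i(Z_i,r_i), \oplus_j(Z_j',r_j'))$; hence these are abelian groups and the inclusion into $\mathcal{A}$ is additive on morphisms. For (ii), the key input is the composition estimate $SC(Y,X;r) \circ SC(Z,Y;s) \subset SC(Z,X;r+s)$ from Section~\ref{operationcorr}: taking $r=s=0$ shows that the composite of two level-zero morphisms is again level zero, so composition restricts to $\mathcal{A}_0$, while bilinearity is inherited from $\mathcal{A}$. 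I also need the identities to belong to $\mathcal{A}_0$: since $id_Z$ is locally a single map it has $n(y)=1 \leqslant e^0$, so $id_Z \in SC_b(Z,Z;0)$, and the diagonal identity matrix of a general object $\oplus_i(Z_i,r_i)$ likewise has all entries at level zero.

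For (iii) the empty set still serves as the zero object, since its morphism groups are $0$ regardless of filtration. For (iv), the matrices $\delta_k$ used in the proof of Lemma~\ref{triangproof} (for instance $\delta_1 = (id_{Z_1},0)$) are built from identities and zeros, hence lie at level zero; moreover the defining formula for the filtered morphism groups between direct sums already exhibits the splitting $SC_b(\oplus_i(Z_i,r_i), W; 0) = \bigoplus_i SC_b((Z_i,r_i), W; 0)$, which is exactly the assertion that $\oplus_i(Z_i,r_i)$ is a coproduct in $\mathcal{A}_0$. Since the Hom-groups are abelian and composition is bilinear, this coproduct is simultaneously a product, so all finite biproducts exist in $\mathcal{A}_0$.

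I expect no serious obstacle here: the entire argument amounts to confirming that the four pieces of structure stay inside the level-zero part of the filtration, and the only point with genuine content — that composition preserves the filtration — is precisely the lemma established earlier via the $e^{r}$ convention. The remaining verifications mirror the proof of Lemma~\ref{triangproof} essentially verbatim, now carried out inside $\mathcal{A}_0$.
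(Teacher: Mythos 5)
Your proof is correct and takes essentially the same route as the paper, whose entire proof is the statement that the argument mimics the proof of Lemma \ref{triangproof} (additivity of $\mathcal{B}$). You additionally spell out the one point the paper leaves implicit — that the level-zero part of the filtration contains the identities and the structure maps $\delta_k$ and is closed under composition via $SC(Y,X;0)\circ SC(Z,Y;0)\subset SC(Z,X;0)$ — which is exactly the right thing to check.
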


\begin{proof}
The proof mimics the proof of Lemma $\ref{triangproof}$.
\end{proof}

There are endofunctors $\Sigma^s: \mathcal{A} \rightarrow \mathcal{A}$ for any $s \in \mathbb{R}$ defined in the following way:
\begin{equation*}
\Sigma^s(\oplus_{i=1}^n(Y_i, r_i)) = \oplus_{i=1}^n(Y_i, r_i + s), \;\;\; \text{$\Sigma^s$ is identity on morphisms}.
\end{equation*}
We see that $\Sigma^r \circ \Sigma^s = \Sigma^{r+s}$ and $\Sigma^0 = id_{\mathcal{A}}$.

\begin{example}\label{examplefiltr}
By definition,
\begin{equation*}
SC_b((Y,r), (Y, s); \; s-r) = SC_b(Y, Y; \; 0).
\end{equation*}
We denote the morphism in $SC_b((Y,r), (Y, s); \; s-r)$ that corresponds to the identity map of $Y$ by $id_Y^{r, s}$. Note that
\begin{equation*}
id_Y^{r', s} \circ id_Y^{r, r'} = id_Y^{r, s}.
\end{equation*}
\end{example}

\vspace*{0.06in}

Consider the category of bounded below cochain complexes $Com^{-}(\mathcal{A})$ and define
\begin{equation*}
Hom^r(C_{\bullet}, K_{\bullet}) = \{\text{chain maps with components $F_n \in SC_b(C_n, K_n; \; r)$ } \}/\simeq_r,
\end{equation*}
where the relation $\simeq_r$ is chain homotopy via a homotopy with components $H_n \in SC_b(C_n, K_n; \; r)$. If $r < s$, then any $F \in Hom^r(C_{\bullet}, K_{\bullet})$ belongs also to $Hom^s(C_{\bullet}, K_{\bullet})$. So, we get the following maps
\begin{equation*}
i_{r, s}: Hom^r(C_{\bullet}, K_{\bullet}) \rightarrow Hom^s(C_{\bullet}, K_{\bullet}).
\end{equation*}
Note that $i_{r,s}$ are not inclusions in general because maps may be homotopic via correspondences from $SC_b(C_n, K_n; \;s)$ but not homotopic via correspondences $SC_b(C_n, K_n; \; r)$. On the other hand, if a morphism is equal to zero in $Hom^r(C_{\bullet}, K_{\bullet})$, then it is equal to zero in $Hom^s(C_{\bullet}, K_{\bullet})$.

\begin{lemma}\label{compwelldef}
The composition
\begin{equation*}
Hom^r(Y_{\bullet}, X_{\bullet}) \times Hom^s(Z_{\bullet}, Y_{\bullet}) \xrightarrow{\circ} Hom^{r+s}(Z_{\bullet}, X_{\bullet}).
\end{equation*}
is well-defined.
\end{lemma}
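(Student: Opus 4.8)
The plan is to unwind what \emph{well-defined} requires here and check each piece: that the representative-level composite $F\circ G$ is again a chain map, that its components land in filtration level exactly $r+s$, and that the construction descends to the homotopy quotients $\simeq_r$, $\simeq_s$, $\simeq_{r+s}$. Throughout, recall that the objects $Z_\bullet,Y_\bullet,X_\bullet$ are complexes in $\mathcal{A}$, so each ``component'' is really a matrix of bounded correspondences; since composition of morphisms in $\mathcal{A}$ is bilinear (formula $(\ref{linearcomp1})$ extended by matrix multiplication $(\ref{prodmat})$) and the filtration of a direct sum splits entrywise, every filtration estimate below is applied entrywise and then summed over the middle index, so it suffices to argue for connected pieces.

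First I would fix chain-map representatives $F\in Hom^r(Y_\bullet,X_\bullet)$ with $F_n\in SC_b(Y_n,X_n;r)$ and $G\in Hom^s(Z_\bullet,Y_\bullet)$ with $G_n\in SC_b(Z_n,Y_n;s)$, and set $(F\circ G)_n=F_n\circ G_n$. This is a chain map by the usual identity: using $d_X F_n=F_{n+1}d_Y$ and $d_Y G_n=G_{n+1}d_Z$ one gets $d_X(F_n\circ G_n)=F_{n+1}\circ G_{n+1}\circ d_Z=(F\circ G)_{n+1}\circ d_Z$. For the filtration claim, the composition--filtration property established earlier, namely $SC_b(\cdot,\cdot;r)\circ SC_b(\cdot,\cdot;s)\subset SC_b(\cdot,\cdot;r+s)$ together with the additive shift convention defining $Mor_{\mathcal{A}}$, gives $F_n\circ G_n\in SC_b(Z_n,X_n;r+s)$. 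Hence $F\circ G\in Hom^{r+s}(Z_\bullet,X_\bullet)$ at the level of representatives.

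The substance is independence of representatives, which I would prove in two symmetric steps. Suppose $G\simeq_s G'$ via a homotopy $H$ with $H_n\in SC_b(Z_n,Y_{n-1};s)$ and $G_n-G'_n=d_YH_n\pm H_{n+1}d_Z$. I claim $F\circ H$, with components $F_{n-1}\circ H_n\colon Z_n\to X_{n-1}$, is an $\simeq_{r+s}$ homotopy from $F\circ G$ to $F\circ G'$. Indeed, since $F$ is a chain map,
\[
d_X(F_{n-1}\circ H_n)\pm (F_n\circ H_{n+1})\circ d_Z = F_n\circ(d_YH_n\pm H_{n+1}d_Z)=F_n\circ(G_n-G'_n)=(F\circ G)_n-(F\circ G')_n,
\]
and $F_{n-1}\circ H_n\in SC_b(Z_n,X_{n-1};r+s)$ again by the composition--filtration property, so the homotopy genuinely lives at level $r+s$. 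Symmetrically, if $F\simeq_r F'$ via $K$ with $K_n\in SC_b(Y_n,X_{n-1};r)$, then $K\circ G$, with components $K_n\circ G_n\in SC_b(Z_n,X_{n-1};r+s)$, is an $\simeq_{r+s}$ homotopy from $F\circ G$ to $F'\circ G$. Concatenating the two steps shows $F\circ G$ depends only on the classes $[F]\in Hom^r$ and $[G]\in Hom^s$, so the composition $Hom^r(Y_\bullet,X_\bullet)\times Hom^s(Z_\bullet,Y_\bullet)\to Hom^{r+s}(Z_\bullet,X_\bullet)$ is well-defined.

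The only delicate point is pure bookkeeping rather than genuine difficulty: one must confirm that each correspondence produced---the composite chain map and \emph{both} composed homotopies---lands in filtration level $r+s$ (and that the homotopy, not merely the chain map, respects the level), which is exactly what the additivity of the filtration under composition guarantees. I expect no real obstacle beyond propagating the filtration degrees and the homotopy signs carefully through the standard ``chain map composed with homotopy is a homotopy'' identities.
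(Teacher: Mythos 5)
Your proposal is correct and follows essentially the same route as the paper: both arguments reduce well-definedness to the observation that composing a chain map with a level-$s$ (resp.\ level-$r$) homotopy produces a level-$(r+s)$ homotopy, using the filtration additivity $SC_b(\cdot,\cdot;r)\circ SC_b(\cdot,\cdot;s)\subset SC_b(\cdot,\cdot;r+s)$; the paper merely phrases it by writing the other representatives as $F+\widetilde F$, $G+\widetilde G$ with $\widetilde F,\widetilde G$ nullhomotopic and killing the cross terms, which is the same computation as your two symmetric steps plus concatenation. Your explicit checks that the composite is a chain map at level $r+s$ and that the matrix/entrywise structure of $\mathcal{A}$ causes no trouble are fine additions but not a different method.
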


\begin{proof}
Consider $F + \widetilde{F}$ and $G + \widetilde{G}$, where $F \in Hom^r(C_{\bullet}, K_{\bullet})$, $G \in Hom^s(L_{\bullet}, C_{\bullet})$, $\widetilde{F}, \widetilde{G}$ are equal to zero in $Hom^r(C_{\bullet}, K_{\bullet})$ and $Hom^s(L_{\bullet}, C_{\bullet})$, respectively. Then,
\begin{equation*}
(F + \widetilde{F}) \circ (G + \widetilde{G}) = F \circ G + F\circ \widetilde{G} + \widetilde{F} \circ G + \widetilde{F} \circ \widetilde{G}.
\end{equation*}
Let $H$ be a homotopy between $\widetilde{G}$ and $0$, i.e. there are maps $H_n \in C(Z_n, Y_{n-1}; \; s)$ such that $\widetilde{G}_n = d_Y^{n-1}H_n \pm H_{n+1}d_Z^n$. We see that
\begin{equation*}
\begin{gathered}
F_{n-1} \circ \widetilde{H}_n \in C(C_n, K_{n-1}; \; r+s) \\
d_K^{n-1}(F_{n-1} \circ H_n) \pm (F_{n} \circ H_{n+1}) d^n_L = F_n \circ (d_K^{n-1} H_n \pm H_{n+1} d^n_L) = F_n \circ \widetilde{G}_n.
\end{gathered}
\end{equation*}
This means that $F \circ \widetilde{G} = 0$ in $Hom^{r+s}(L_{\bullet}, K_{\bullet})$. In the similar way, we show that $\widetilde{F} \circ G = 0$ in $Hom^{r+s}(L_{\bullet}, K_{\bullet})$. This also shows that $\widetilde{F} \circ \widetilde{G}$ is homotopic to $0$ in $Hom^{r+s}(L_{\bullet}, K_{\bullet})$. As a result, we see that the composition is well-defined.

\end{proof}

We define categories $\mathcal{C}$, $\mathcal{C}_0$, and $\mathcal{C}_{\infty}$, where
\begin{equation*}
\begin{gathered}
Ob(\mathcal{C}) = Ob(\mathcal{C}_0) = Ob(\mathcal{C}_{\infty}) = Ob(Com^{-}(\mathcal{A})), \\
Mor_{\mathcal{C}}(C_{\bullet}, K_{\bullet}) = \coprod\limits_{r} Hom^r(C_{\bullet}, K_{\bullet}), \;\;\; Mor_{\mathcal{C}_0}(C_{\bullet}, K_{\bullet}) = Hom^0(C_{\bullet}, K_{\bullet}), \\
Mor_{\mathcal{C}_{\infty}} = \varinjlim Hom^r(C_{\bullet}, K_{\bullet})
\end{gathered}
\end{equation*}
and the limit is taken with respect to maps $i_{r, s}$.

We have a functor $T: \mathcal{C} \rightarrow \mathcal{C}$
\begin{equation*}
T(C_{\bullet}, d_C) = (C_{\bullet + 1}, -d_C), \;\;\; T(F_n) = F_{n+1},
\end{equation*}
where $F_n$ are the components of a chain map $F$. We have the same functor $\mathcal{C}_0 \rightarrow \mathcal{C}_0$ and denote it also by $T$.

\begin{lemma}
The category $\mathcal{C}_0$ is triangulated, where $T$ is the translation functor.
\end{lemma}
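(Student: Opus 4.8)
The plan is to recognize $\mathcal{C}_0$ as the homotopy category of bounded below cochain complexes over the additive category $\mathcal{A}_0$, and then to invoke the same general fact, already used above for $\mathcal{K}(\mathcal{B})$, that such a homotopy category is triangulated. By the previous lemma $\mathcal{A}_0$ is additive. Moreover $Ob(\mathcal{C}_0)=Ob(Com^{-}(\mathcal{A}))$ and $Ob(\mathcal{A})=Ob(\mathcal{A}_0)$, the morphism sets $Hom^0(C_{\bullet},K_{\bullet})$ are by definition the level-$0$ chain maps modulo level-$0$ chain homotopy, and $T$ is the usual shift $(C_{\bullet},d_C)\mapsto(C_{\bullet+1},-d_C)$. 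Thus, up to the harmless convention that the differentials are taken at level $0$ (which is the case in Examples \ref{examplechain1} and \ref{examplechain2}), $\mathcal{C}_0$ is exactly the homotopy category $\mathcal{K}^-(\mathcal{A}_0)$, and the statement then follows from \cite[Th. 11.2.6]{categories}.

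To make this self-contained I would first check that $T$ is an auto-equivalence of $\mathcal{C}_0$: shifting the degree up or down preserves boundedness from below, so $T$ has the evident inverse, and it acts as an isomorphism on each morphism set. Then I would fix the candidate triangulated structure by declaring a triangle distinguished if it is isomorphic in $\mathcal{C}_0$ to a standard one
\begin{equation*}
C_{\bullet}\xrightarrow{f}K_{\bullet}\xrightarrow{\alpha}\mathrm{Cone}(f)\xrightarrow{\beta}T C_{\bullet},
\end{equation*}
where $\mathrm{Cone}(f)_n=C_{n+1}\oplus K_n$ with the standard cone differential built from $d_C$, $d_K$ and $f$, and $\alpha$, $\beta$ are the canonical inclusion and projection. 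Since $C_{\bullet}$ and $K_{\bullet}$ are bounded below, so is $\mathrm{Cone}(f)$; hence bounded below complexes are closed under cones and shifts and inherit the triangulation as a full triangulated subcategory, so no extra work is needed to pass from the unbounded to the bounded below case.

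The step that needs genuine care — and the only place where the persistence filtration enters, distinguishing this lemma from the corresponding statement for $\mathcal{K}(\mathcal{B})$ — is to verify that every piece of the triangulated data lives at filtration level $0$. The structure maps $\alpha,\beta$, the connecting morphisms of the rotated triangles, and the homotopies witnessing axioms TR1–TR4 and the octahedral axiom are all assembled from identity morphisms $id_Y^{r,s}$ and from the given level-$0$ map $f$ by direct sums, sign changes, and composition; none of these operations raises the filtration level, and Lemma \ref{compwelldef} guarantees that a composite of level-$0$ representatives is again a well-defined level-$0$ morphism. The filtration of the differentials of the objects themselves is irrelevant, because the triangulated structure constrains only the chain maps and homotopies, not the internal differentials. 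Consequently the standard verification of the triangulated axioms goes through verbatim inside $\mathcal{C}_0$, which is precisely what the cited theorem provides; I expect the filtration bookkeeping in the octahedral axiom to be the most tedious point, but it is routine once it is observed that every constructed morphism is manifestly of level $0$.
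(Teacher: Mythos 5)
Your proposal is correct and follows essentially the same route as the paper: the paper's proof consists precisely of identifying $\mathcal{C}_0$ with the homotopy category of cochain complexes over the additive category $\mathcal{A}_0$ and citing \cite[Th. 11.2.6]{categories}. The additional bookkeeping you supply (that $T$ is an auto-equivalence, that cones of bounded below complexes are bounded below, and that all structure morphisms and homotopies live at filtration level $0$) is a more careful elaboration of the same argument rather than a different one.
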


\begin{proof}
The category $\mathcal{C}_0$ coincides with the homotopy category of cochain complexes of $\mathcal{A}_0$. Since $\mathcal{A}_0$ is additive, we get that $\mathcal{C}_0$ is triangulated (see \cite[Th. 11.2.6]{categories})

\end{proof}

\begin{lemma}
The category $\mathcal{C}$ is persistence category.
\end{lemma}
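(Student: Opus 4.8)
The plan is to verify, one axiom at a time, the definition of a persistence category from \cite{Octav1}, \cite{Octav2}, since every ingredient has already been assembled in this section. First I would record that for every pair of objects $C_{\bullet}, K_{\bullet} \in Ob(Com^{-}(\mathcal{A}))$ the family $\{Hom^r(C_{\bullet}, K_{\bullet})\}_{r \in \mathbb{R}}$ together with the maps $i_{r,s}$ forms a persistence module over $\mathbb{R}$. The relations $i_{r,r} = id$ and $i_{s,t} \circ i_{r,s} = i_{r,t}$ for $r \leqslant s \leqslant t$ hold because a filtered chain map (respectively, a filtered homotopy) with components in $SC_b(C_n, K_n; \, r)$ is tautologically also one with components in $SC_b(C_n, K_n; \, s)$ for every $s \geqslant r$, and these level-raising assignments compose strictly. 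As emphasized in the text, the $i_{r,s}$ need not be injective; this does not affect the persistence module axioms but must be kept in mind below.

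Next I would check that composition is a filtered bimorphism of persistence modules. The required factorization
\[
Hom^r(Y_{\bullet}, X_{\bullet}) \times Hom^s(Z_{\bullet}, Y_{\bullet}) \xrightarrow{\;\circ\;} Hom^{r+s}(Z_{\bullet}, X_{\bullet})
\]
is exactly Lemma $\ref{compwelldef}$, which itself rests on the subadditivity $SC_b(\cdot\,; r) \circ SC_b(\cdot\,; s) \subset SC_b(\cdot\,; r+s)$ of the filtration on correspondences. Compatibility with the structure maps, i.e. the commutation of $\circ$ with the level-raising maps $i$ in each variable, follows because the $i_{r,s}$ are induced by the tautological inclusions of filtration levels, which are respected by composition of correspondences; bilinearity is inherited from formula $(\ref{linearcomp1})$. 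Finally, the identity morphism $id_{C_{\bullet}}$ has all components in $SC_b(C_n, C_n; \, 0)$, hence lies in $Hom^0$, and unitality holds on the nose.

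Then I would produce the shift structure. The endofunctors $\Sigma^s$ satisfy $\Sigma^r \circ \Sigma^s = \Sigma^{r+s}$ and $\Sigma^0 = id_{\mathcal{A}}$, and extend componentwise to $Com^{-}(\mathcal{A})$ and hence to $\mathcal{C}$. The persistence structure morphisms $C_{\bullet} \rightarrow \Sigma^s C_{\bullet}$ are supplied, component by component, by the elements $id_Y^{r,s}$ of Example $\ref{examplefiltr}$, whose compatibility $id_Y^{r',s} \circ id_Y^{r,r'} = id_Y^{r,s}$ guarantees the coherence of the shift maps. Assembling these three steps shows that $Mor_{\mathcal{C}}(C_{\bullet}, K_{\bullet}) = \coprod_r Hom^r(C_{\bullet}, K_{\bullet})$ is enriched in persistence modules with filtered composition and compatible shifts, which is precisely the definition of a persistence category.

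I expect the main obstacle to be the well-definedness of composition on filtered homotopy classes, already isolated in Lemma $\ref{compwelldef}$: because the maps $i_{r,s}$ are generally non-injective, a correspondence null-homotopic through $SC_b(\cdot\,; s)$ need not be null-homotopic through $SC_b(\cdot\,; r)$, so one must check that composing with a representative vanishing in $Hom^r$ yields something vanishing in $Hom^{r+s}$, and not merely in some strictly higher filtration. This is exactly the homotopy computation carried out in that lemma, and once it is in hand the persistence module and enrichment axioms follow formally. A secondary point is the compatibility of the $i_{r,s}$ with the translation functor $T$ and the shifts $\Sigma^s$, but this reduces again to the strict identities $\Sigma^r \Sigma^s = \Sigma^{r+s}$ and $id_Y^{r',s} \circ id_Y^{r,r'} = id_Y^{r,s}$ recorded above.
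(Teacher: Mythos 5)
Your proposal is correct and follows essentially the same route as the paper: the only nontrivial point is that composition is compatible with the level-raising maps $i_{r,s}$ despite their non-injectivity, and both you and the paper reduce this to the homotopy computation of Lemma \ref{compwelldef} together with the subadditivity $SC_b(\cdot\,;r)\circ SC_b(\cdot\,;s)\subset SC_b(\cdot\,;r+s)$. You are somewhat more systematic in also recording the persistence-module axioms, unitality, and the shift functors $\Sigma^s$, which the paper treats as immediate from the surrounding definitions.
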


\begin{proof}
Let $r',s'$ be integers such that $r<r'$ and $s<s'$. We need to prove that $i_{r, r'}(F) \circ i_{s, s'}(G) = i_{r+ s, r' + s'}(F \circ G)$. Consider $F + F'$ and $G + G'$, where $F' = 0$ and $G' = 0$ in $Hom^{r'}(C_{\bullet}, K_{\bullet})$ and $Hom^{s'}(L_{\bullet}, C_{\bullet})$, respectively. Arguing as Lemma $\ref{compwelldef}$, we can show that $F' \circ G = 0$, $F \circ G' = 0$, and $F' \circ G' = 0$ in $Hom^{r' + s}(L_{\bullet}, K_{\bullet})$, $Hom^{r+ s'}(L_{\bullet}, K_{\bullet})$, and $Hom^{r' + s'}(L_{\bullet}, K_{\bullet})$, respectively. Since $r' + s$ and $r + s'$ are less than $r' + s'$, we get that all the mentioned terms are equal to zero in $Hom^{r' + s'}(L_{\bullet}, K_{\bullet})$. So, we have
\begin{equation*}
i_{r, r'}(F) \circ i_{s, s'}(G) = i_{r+s, r' + s'}(F \circ G).
\end{equation*}
By definition, this means that $\mathcal{C}$ is persistence category.

\end{proof}

\begin{lemma}
The categories $\mathcal{C}_{\infty}$ and $\mathcal{K}(\mathcal{B})$ (from the previous section) are naturally equivalent.
\end{lemma}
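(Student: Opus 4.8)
The plan is to exhibit a functor $\Psi\colon \mathcal{C}_{\infty}\to\mathcal{K}(\mathcal{B})$ and prove it is an equivalence, using the functor $I\colon\mathcal{B}\to\mathcal{A}$ from the previous lemma together with the fact that passing to the colimit over the filtration level $r$ simply erases the filtration. On objects, a complex $C_{\bullet}\in Com^{-}(\mathcal{A})$ with $C_n=\oplus_i(Z_i^{(n)},r_i^{(n)})$ is sent to the complex $\bar C_{\bullet}\in Com^{-}(\mathcal{B})$ obtained by forgetting the labels $r_i^{(n)}$ in each degree; this is exactly a quasi-inverse of $I$ applied degreewise, and since a differential in $\mathcal{A}$ is by definition a differential in $\mathcal{B}$, the result is again a cochain complex. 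On morphisms, an element of $Mor_{\mathcal{C}_{\infty}}(C_{\bullet},K_{\bullet})=\varinjlim_r Hom^r(C_{\bullet},K_{\bullet})$ is represented by a filtered chain map $F\in Hom^r$ for some $r$; forgetting the filtration sends $F$ to an ordinary chain map modulo chain homotopy, that is, to an element of $Mor_{\mathcal{K}(\mathcal{B})}(\bar C_{\bullet},\bar K_{\bullet})$. This is independent of the representative, because the transition maps $i_{r,s}$ become the identity after the filtration is forgotten (a level-$r$ homotopy is in particular a homotopy), and functoriality with respect to composition is precisely the compatibility proved in Lemma \ref{compwelldef}.

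Essential surjectivity is immediate: any $D_{\bullet}\in Com^{-}(\mathcal{B})$ equals $\Psi$ of the complex $I(D_{\bullet})$ in which every label is set to $0$. The substance of the argument is full faithfulness, i.e.\ that the natural map
\[
\theta_{C,K}\colon \varinjlim_r Hom^r(C_{\bullet},K_{\bullet}) \longrightarrow Mor_{\mathcal{K}(\mathcal{B})}(\bar C_{\bullet},\bar K_{\bullet})
\]
is a bijection. The key input is the degreewise identity $\bigcup_r SC_b(C_n,K_n;r)=SC_b(\bar C_n,\bar K_n)$: every bounded correspondence satisfies $n(y)<N$ for some $N$, hence lies in the filtration level $SC_b(\,\cdot\,;r)$ as soon as $e^r\ge N$, and the finitely many label shifts $r_i^{(n)}-r_j^{(n)}$ appearing in the definition of the filtration do not change the union. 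Thus in each fixed degree the filtered subgroups exhaust the full $\mathcal{B}$-morphism group, so $\varinjlim_r SC_b(C_n,K_n;r)=SC_b(\bar C_n,\bar K_n)$.

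It then remains to promote this degreewise statement to chain maps and to homotopy classes: surjectivity of $\theta_{C,K}$ means representing an arbitrary $\mathcal{B}$-chain map by a filtered one, and injectivity means that a filtered chain map which is null-homotopic over $\mathcal{B}$ is already null-homotopic at some finite level $s$. Both reduce to pushing a family of bounded correspondences into a common filtration level and invoking exactness of filtered colimits of abelian groups, so that $\varinjlim_r$ commutes with the kernel condition cutting out chain maps and with the quotient by the homotopy relation. I expect the main obstacle to be exactly this interchange of the colimit with the infinite family of degrees: for complexes unbounded above a chain map (or a homotopy) has infinitely many components $F_n$, each bounded but a priori with no common bound, so one must check that the colimit is formed compatibly with the product over degrees. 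For bounded complexes this is automatic, and in the general case it is the one point that genuinely requires care; once it is settled, $\theta_{C,K}$ is an isomorphism of abelian groups and $\Psi$ is the desired natural equivalence.
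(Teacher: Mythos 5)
Your construction is, up to direction, the same as the paper's: the paper defines $I\colon\mathcal{K}(\mathcal{B})\to\mathcal{C}_{\infty}$ by labelling every summand with $0$ and taking the identity on morphisms, then asserts that $I$ is full, faithful and essentially surjective on isomorphism classes; your $\Psi$ is the evident quasi-inverse that forgets the labels. Essential surjectivity and the degreewise exhaustion $\bigcup_r SC_b(C_n,K_n;r)=SC_b(\bar C_n,\bar K_n)$ are both correct and correctly argued (boundedness gives some $N$ with $n(y)<N$, hence membership in level $r$ once $e^r\ge N$, and the finitely many label shifts in each degree are harmless).

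However, you have not actually proved the statement: the step you defer — promoting the degreewise identity to a bijection $\theta_{C,K}$ on chain maps modulo homotopy — is precisely where all the content lies, and as you yourself observe it is not a routine interchange of colimits. Objects of $Com^{-}(\mathcal{A})$ are bounded below but extend infinitely in positive degrees, so a $\mathcal{B}$-chain map $F$ has infinitely many components $F_n$, each lying in some level $SC_b(C_n,K_n;r_n)$ with no reason for the $r_n$ to be uniformly bounded; such an $F$ lies in no single $Hom^r$ and hence not in $\varinjlim_r Hom^r$, so $\theta_{C,K}$ need not be surjective (and the analogous issue arises for injectivity, where a $\mathcal{B}$-null-homotopy need not live at a finite level). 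Your appeal to ``exactness of filtered colimits'' does not apply, because the group of chain maps is cut out inside an infinite \emph{product} over degrees, and filtered colimits do not commute with infinite products. So either one restricts to bounded complexes, or one must supply an argument producing a uniform level — neither is done in your proposal. For what it is worth, the paper's own proof is equally silent on this point: it simply asserts that $I$ is full and faithful, which is exactly the claim at issue.
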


\begin{proof}
There is embedding $I: \mathcal{K}(\mathcal{B}) \rightarrow \mathcal{C}_{\infty}$, where each cochain complex $C_{\bullet}$ with $C_n = \sqcup_{i=1}^{n} Z_i$ goes to cochain with $I(C_{\bullet})_n = \oplus_{i=1}^n (Z_i, 0)$ and $I$ is identity on morphisms. Since any $\oplus_{i=1}^n(Z_i, r_i)$ is isomorphic to $\oplus_{i=1}^n(Z_i, 0)$. We get that any chain complex from $\mathcal{C}_{\infty}$ is isomorphic to an object of $I(\mathcal{B})$. Since the functor $I$ is full and faithful, we see that $I$ is natural equivalence.

\end{proof}

The category $\mathcal{C}$ inherits endofunctors $\Sigma^s: \mathcal{C} \rightarrow \mathcal{C}$. We define $\Sigma^r(C_{\bullet}, d_C)$ such that
\begin{equation*}
(\Sigma^rC_{\bullet})_n = \Sigma^rC_n, \;\; \text{$\Sigma^r$ is identity on morphisms}.
\end{equation*}

\begin{example}\label{identityfiltr}
We see that the identity map $id_{C_{\bullet}}: C_{\bullet} \rightarrow C_{\bullet}$ consists of identity maps $id_{C_n} \in SC_b(C_n, C_n; \;0)$ and belongs to $Hom^0(C_{\bullet}, C_{\bullet})$.

We constructed morphisms $id_{C_n}^{r, s}$ (see Example \ref{examplefiltr}) and they give us chain maps
\begin{equation*}
id_{C_{\bullet}}^{r, s}: \Sigma^rC_{\bullet} \rightarrow \Sigma^sC_{\bullet}.
\end{equation*}
We see that
\begin{equation*}
id_{C_{\bullet}}^{r',s} \circ id_{C_{\bullet}}^{r,r'} = id_{C_{\bullet}}^{r,s}.
\end{equation*}
\end{example}

\vspace*{0.08in}

In general, the identity map provides natural transformation between functors $\Sigma^r$ and $\Sigma^s$. We have the following identities
\begin{equation*}
\begin{gathered}
Hom^s(\Sigma^rC_{\bullet}, K_{\bullet}) = Hom^{s+r}(C_{\bullet}, K_{\bullet}), \;\;\; Hom^s(C_{\bullet}, \Sigma^rK_{\bullet}) = Hom^{s-r}(C_{\bullet}, K_{\bullet}),\\
Hom^s(\Sigma^rC_{\bullet}, \Sigma^kK_{\bullet}) = Hom^{s+r-k}(C_{\bullet}, K_{\bullet}).
\end{gathered}
\end{equation*}

\begin{lemma}
The functor $T$ and endofunctors $\Sigma^r$ commute for any $r$.
\end{lemma}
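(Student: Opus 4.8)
The plan is to prove $T \circ \Sigma^r = \Sigma^r \circ T$ by directly unwinding the definitions of the two functors, both on objects and on morphisms of $\mathcal{C}$. This is a purely formal verification with no geometric content; the only thing to keep track of is the index shift built into $T$ together with the sign flip of the differential, and the fact that $\Sigma^r$ is the identity on morphisms.

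First I would compute the two composites on an object $(C_{\bullet}, d_C)$. Recall that $T(C_{\bullet}, d_C) = (C_{\bullet + 1}, -d_C)$, so $(TC_{\bullet})_n = C_{n+1}$ with differential $-d_C$, while $\Sigma^r$ acts termwise by $(\Sigma^r C_{\bullet})_n = \Sigma^r C_n$ and is the identity on morphisms, in particular on the differentials. Applying $\Sigma^r$ after $T$ yields the complex whose $n$-th term is $\Sigma^r(C_{n+1})$ with differential $-d_C$; applying $T$ after $\Sigma^r$ yields the complex whose $n$-th term is $(\Sigma^r C_{\bullet})_{n+1} = \Sigma^r(C_{n+1})$, again with differential $-d_C$. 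Here one uses that $\Sigma^r$, being the identity on morphisms, sends $-d_C$ to $-d_C$; equivalently, $\Sigma^r$ is additive and so commutes with negation. Thus both composites produce the same object.

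Next I would check agreement on morphisms. If $F$ is a chain map with components $F_n$, then $T(F)$ has components $F_{n+1}$ and $\Sigma^r(F)$ has components $F_n$, since $\Sigma^r$ is the identity on morphisms. Hence $(\Sigma^r \circ T)(F)$ has components $F_{n+1}$, and $(T \circ \Sigma^r)(F)$ has components $(\Sigma^r F)_{n+1} = F_{n+1}$. The two composites therefore agree on morphisms as well, and they clearly respect the passage to the homotopy relation $\simeq$ and the filtration levels $Hom^r$, so the identity descends to $\mathcal{C}$.

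Since $T \circ \Sigma^r$ and $\Sigma^r \circ T$ coincide on all objects and all morphisms, they are equal as endofunctors of $\mathcal{C}$, which is the assertion. There is essentially no obstacle beyond the index and sign bookkeeping; the single point worth stating explicitly is that the sign change introduced by $T$ passes through $\Sigma^r$ precisely because $\Sigma^r$ is the identity on morphisms and hence additive.
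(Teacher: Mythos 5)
Your proof is correct and matches the paper's approach: the paper simply states that the lemma follows from the definitions, and your argument is exactly the explicit unwinding of those definitions (index shift and sign on objects, identity on morphisms) that justifies that claim. Nothing further is needed.
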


\begin{proof}
This follows from definitions.

\end{proof}

We define a complex $(C_{\bullet} \oplus K_{\oplus}, d_{C \oplus K})$ as a complex with
\begin{equation*}
(C_{\bullet} \oplus K_{\bullet})_n = C_n \oplus K_n, \;\;\; d_{C \oplus K} =   \begin{pmatrix}
d_C & 0 \\
0 & d_K
\end{pmatrix}.
\end{equation*}
By definition, $(\Sigma^r(C_{\bullet} \oplus K_{\bullet}))_n = \Sigma^rC_n \oplus \Sigma^rK_n$ and $\Sigma^r(d_{C \oplus K}) = d_{C \oplus K}$.

\vspace{0.08in}

Let $F: C_{\bullet} \rightarrow K_{\bullet}$ be a chain map. We define a cone of $F$
\begin{equation*}
Cone(F) = (T(Y_{\bullet}) \oplus Z_{\bullet}, d_{cone}). \;\;\; d_{cone} = \begin{pmatrix}
-d_C & 0 \\
F & d_K
\end{pmatrix}.
\end{equation*}

A cochain complex $C_{\bullet}$ is called $r-$acyclic if the  $id_{C_{\bullet}}^{0,0} = 0$ in $Hom^r(C_{\bullet}, C_{\bullet})$, i.e. $id_{C_{\bullet}}^{0,0}$ homotopic to zero via homotopy with components in $SC_b(C_{n}, C_{n-1}; \;r)$. A chain map $F$ is called $r-$isomorphism if $id_{cone(F)}^{0,0} = 0$ in $Hom^{r}(Cone(F), Cone(F))$.

\begin{lemma}
The cone of the map (see Example $\ref{identityfiltr}$)
\begin{equation*}
id_{C_{\bullet}}^{r,0}: \Sigma^rC_{\bullet} \rightarrow C_{\bullet}
\end{equation*}
is $r-$acyclic for any complex $C_{\bullet}$.
\end{lemma}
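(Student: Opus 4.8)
The plan is to exhibit an explicit contracting homotopy for the identity of the cone and then check that all of its components sit in filtration level $r$. Write $P_{\bullet} = Cone(id_{C_{\bullet}}^{r,0})$. By the definition of the cone applied to $F = id_{C_{\bullet}}^{r,0}\colon \Sigma^rC_{\bullet} \to C_{\bullet}$, we have $P_n = \Sigma^r C_{n+1} \oplus C_n$ with differential
\begin{equation*}
d_P = \begin{pmatrix} -d_{\Sigma^r C} & 0 \\ id_{C}^{r,0} & d_C \end{pmatrix},
\end{equation*}
where $id_C^{r,0}$ stands for the component $id_{C_{n+1}}^{r,0}$ of the chain map $F$. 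Since $\Sigma^r$ is the identity on morphisms, the correspondence underlying $d_{\Sigma^r C}$ is literally $d_C$; the shift only changes the filtration labels of source and target.

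First I would propose the homotopy. In the unfiltered case the cone of an identity map is contractible by the homotopy that sends the second summand back to the first via the identity, so here, where the first summand carries a $\Sigma^r$, I would take the components
\begin{equation*}
H_n = \begin{pmatrix} 0 & id_{C_n}^{0,r} \\ 0 & 0 \end{pmatrix} \colon \; \Sigma^r C_{n+1} \oplus C_n \longrightarrow \Sigma^r C_n \oplus C_{n-1}.
\end{equation*}
The single nonzero entry is the identity correspondence of $C_n$ read as a morphism $(C_n,0)\to(C_n,r)$, that is $id_{C_n}^{0,r}$ in the notation of Example \ref{examplefiltr}. It lives in $SC_b((C_n,0),(C_n,r);\,r) = SC_b(C_n, C_n;\,0)$, hence in filtration level $r$, so every component of $H$ lies in $SC_b(P_n, P_{n-1};\,r)$, exactly as the definition of $r$-acyclicity demands.

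Next I would verify $d_P H + H d_P = id_P^{0,0}$ by a direct $2\times 2$ matrix multiplication. Using the composition law $id^{r',s}\circ id^{r,r'} = id^{r,s}$ from Example \ref{examplefiltr}, the products $id_{C_n}^{r,0}\circ id_{C_n}^{0,r} = id_{C_n}^{0,0}$ and $id_{C_{n+1}}^{0,r}\circ id_{C_{n+1}}^{r,0} = id_{C_{n+1}}^{r,r}$ produce precisely the two diagonal entries of $id_P^{0,0}$. The only point left is that the resulting off-diagonal term
\begin{equation*}
id_{C_{n+1}}^{0,r}\circ d_C \; - \; d_{\Sigma^r C}\circ id_{C_n}^{0,r}
\end{equation*}
vanishes: as correspondences both summands equal $d_C$ (the identity correspondences only relabel filtration), and both run between the same filtered objects $(C_n,0)$ and $(C_{n+1},r)$, so they cancel. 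This uses only that $\Sigma^r$ is trivial on morphisms together with associativity of composition. The signs are immaterial because the definition of chain homotopy in the excerpt already allows $d H \pm H d$.

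The main obstacle is the filtration bookkeeping rather than any algebra: one must ensure the contracting homotopy costs exactly $r$, which is why a strict identity is replaced by $id_{C_n}^{0,r}$. The shift $\Sigma^r$ forces the homotopy into level $r$ and no lower, and this is exactly the reason $P_{\bullet}$ is $r$-acyclic rather than genuinely contractible ($0$-acyclic). Once the identity above is confirmed, $H$ witnesses $id_P^{0,0} \simeq_r 0$ with components in $SC_b(P_n,P_{n-1};\,r)$, so $id_P^{0,0} = 0$ in $Hom^r(P_{\bullet},P_{\bullet})$, which by definition means $Cone(id_{C_{\bullet}}^{r,0})$ is $r$-acyclic.
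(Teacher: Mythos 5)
Your proposal is correct and follows essentially the same route as the paper: the same contracting homotopy $H_n = \begin{pmatrix} 0 & id_{C_n}^{0,r} \\ 0 & 0 \end{pmatrix}$, the same observation that its entries lie in $SC_b(\cdot,\cdot;\,r)$, and the same $2\times 2$ matrix computation showing $d_{cone}H + Hd_{cone} = id_{Cone}^{0,0}$ with the off-diagonal $\pm d_C$ terms cancelling. Your added remark explaining why the off-diagonal entries agree as correspondences (since $\Sigma^r$ is trivial on morphisms) is a useful clarification the paper leaves implicit.
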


\begin{proof}
We need to prove that $Cone(id_{C_{\bullet}}^{r,0})$ is $r-$acyclic. This means that
\begin{equation*}
\begin{pmatrix}
id_{C_{n+1}}^{r,r} & 0 \\
0 &       id_{C_{n}}^{0,0}
\end{pmatrix}
 = 0\;\; \text{in $Hom^r(Cone(id_{C_{\bullet}}^{r,0}), Cone(id_{C_{\bullet}}^{r,0}))$}.
\end{equation*}
We define a homotopy
\begin{equation*}
H_n: \Sigma^rC_{n+1} \oplus C_n \rightarrow \Sigma^rC_n \oplus C_{n-1}, \;\;\; H_n = \begin{pmatrix}
0 & id_{C_{n}}^{0,r} \\
0 & 0
\end{pmatrix},
\end{equation*}
where $id_{C_{n}}^{0,r}: C_n \rightarrow \Sigma^rC_n$. Therefore $H_n \in SC_b(Cone(id_{C_{n}}^{r,0}), Cone(id_{C_{n}}^{r,0}); \; r)$ (because all entries of the matrix belong to $SC_b(\cdot, \cdot; \; r)$). We have
\begin{equation*}
\begin{gathered}
d_{Cone}^{n-1} \circ H_n = \begin{pmatrix}
0 & -d_{C}^{n} \\
0 & id_{C_{n}}^{0,0}
\end{pmatrix}
\;\;\;
H_{n+1} \circ d_{cone}^n = \begin{pmatrix}
id_{C_{n+1}}^{r, r} & d_{C}^n \\
0 & 0
\end{pmatrix}
\\
d_{cone}^{n-1}\circ H_n+ H_{n+1} \circ d_{cone}^n = \begin{pmatrix}
id_{C_{n+1}}^{r,r} & 0 \\
0 &       id_{C_{n}}^{0,0}
\end{pmatrix}
\end{gathered}
\end{equation*}
This shows that the cone is $r-$acyclic.
\end{proof}

\begin{theorem}
The category $\mathcal{C}$ is triangulated persistence category
\end{theorem}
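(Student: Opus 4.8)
The plan is to verify, one axiom at a time, that $\mathcal{C}$ satisfies the definition of a triangulated persistence category given in \cite{Octav1} and \cite{Octav2}. All the structural ingredients have already been assembled in the preceding lemmas; what remains is to check that they fit together into the required package. Recall that a triangulated persistence category consists of a persistence category $\mathcal{C}$ together with a triangulated structure on the zero-level subcategory $\mathcal{C}_0$ and a family of shift endofunctors $\{\Sigma^r\}_{r \in \mathbb{R}}$, subject to the conditions: (i) $\mathcal{C}$ is a persistence category; (ii) $\mathcal{C}_0$ is triangulated with translation $T$; (iii) each $\Sigma^r$ is an exact (triangulated) endofunctor commuting with $T$; and (iv) for every $r \geq 0$ and every object $C_{\bullet}$ the cone of the shift morphism $id_{C_{\bullet}}^{r,0} \colon \Sigma^r C_{\bullet} \to C_{\bullet}$ is $r$-acyclic.

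First I would dispatch conditions (i) and (ii), which are exactly the content of the lemmas asserting that $\mathcal{C}$ is a persistence category and that $\mathcal{C}_0$ is triangulated. For condition (iii) I would argue as follows. Since $\Sigma^r$ is the identity on morphisms and satisfies $Hom^0(\Sigma^r C_{\bullet}, \Sigma^r K_{\bullet}) = Hom^0(C_{\bullet}, K_{\bullet})$ by the displayed identities for $Hom^s(\Sigma^r C_{\bullet}, \Sigma^k K_{\bullet})$, it restricts to an additive automorphism of $\mathcal{C}_0$. That $\Sigma^r$ commutes with $T$ is the content of the corresponding lemma. It remains to check that $\Sigma^r$ sends distinguished triangles to distinguished triangles; since every distinguished triangle in the homotopy category $\mathcal{C}_0$ is isomorphic to a cone triangle and the cone is built from the matrix $d_{cone}$ with entries $-d_C$, $F$, $d_K$, the identities $(\Sigma^r C_{\bullet})_n = \Sigma^r C_n$ and $\Sigma^r(d_{C \oplus K}) = d_{C \oplus K}$, together with the fact that $\Sigma^r$ is the identity on morphisms, yield $\Sigma^r Cone(F) = Cone(\Sigma^r F)$. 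Hence $\Sigma^r$ is exact.

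The decisive axiom is (iv), and this is precisely the last lemma of the section, which produces for every $C_{\bullet}$ and every $r \geq 0$ an explicit nullhomotopy $H_n$ witnessing that $Cone(id_{C_{\bullet}}^{r,0})$ is $r$-acyclic. I would assemble the distinguished triangle $\Sigma^r C_{\bullet} \xrightarrow{id_{C_{\bullet}}^{r,0}} C_{\bullet} \to Cone(id_{C_{\bullet}}^{r,0}) \to T\Sigma^r C_{\bullet}$ in $\mathcal{C}_0$ and invoke that lemma to conclude that its cone term is $r$-acyclic. Combined with the identities $Hom^s(\Sigma^r C_{\bullet}, K_{\bullet}) = Hom^{s+r}(C_{\bullet}, K_{\bullet})$ established earlier, which certify that the shift functors implement the persistence structure on hom-sets, this completes the verification that $\mathcal{C}$ is a triangulated persistence category.

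I anticipate that the main obstacle is not any single computation but the faithful matching of the axioms of \cite{Octav1} to the objects at hand: one must be careful that the homotopy relation $\simeq_r$ defining $Hom^r$ is exactly the filtered chain homotopy that makes the cone construction well behaved, and that the well-definedness of composition across filtration levels (Lemma \ref{compwelldef}) guarantees that the shift morphisms $id_{C_{\bullet}}^{r,s}$ interact correctly with distinguished triangles. Verifying (iii)—that $\Sigma^r$ is genuinely triangulated rather than merely additive—is the most delicate point, since it requires the cone construction to be preserved strictly, which is why the matrix description of $d_{cone}$ and the fact that $\Sigma^r$ acts as the identity on morphisms are the essential inputs.
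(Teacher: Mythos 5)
Your proposal is correct and follows essentially the same route as the paper: the paper's proof is simply the statement that the theorem follows from the preceding lemmas (persistence structure on $\mathcal{C}$, triangulation of $\mathcal{C}_0$, commutation of $T$ with $\Sigma^r$, and $r$-acyclicity of $Cone(id_{C_{\bullet}}^{r,0})$), and you have made explicit exactly that matching of axioms to lemmas, adding only the routine check that $\Sigma^r$ preserves cones.
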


\begin{proof}
This follows from the previous proved lemmas.
\end{proof}

\subsection{Distances between symplectic manifolds}

In the previous section we constructed triangulated persistence category of symplectic manifolds $\mathcal{C}$. This allows us to use the machinery developed by Biran, Cornea, and Zhang (see papers $\cite{Octav1}$, $\cite{Octav2}$) to define distances between symplectic manifolds. Let $\mathcal{F} \subset Ob(\mathcal{C})$ be a subset of objects. We have a  pseudometric $d^{\mathcal{F}}$ between objects of $\mathcal{C}$. We defined chain complexes $Y_{\bullet}(M, p_0)$, $Y_{\bullet}(M, p_0,p_1)$ (see Examples $\ref{examplechain1}$, $\ref{examplechain2}$) and they can be used to define distances between symplectic manifolds.

We fix $(M, p_0, p_1)$ and define distance between $Y$ and $Z$ by the following formulas
\begin{equation*}
\begin{gathered}
dist(Y, Z) = d^{\mathcal{F}}(Y_{\bullet}(M, p_0, p_1), Z_{\bullet}(M, p_0, p_1)).
\end{gathered}
\end{equation*}
There is another option
\begin{equation*}
\begin{gathered}
dist(Y, Z) = d^{\mathcal{F}}(Y_{\bullet}(M, p_0, p_1), Z_{\bullet}(M, p_0, p_1)).
\end{gathered}
\end{equation*}

\section{Some computations}\label{computations}

In this section we prove theorems and lemmas mentioned in Sections $\ref{question1}$ and $\ref{question2}$.

\vspace{0.08in}
\noindent
\textbf{Proof of Theorem $\ref{cohomkahler}$}. First, let us give the following definition.

\begin{definition}
A complex manifold $X$ is called algebraically connected if for any general pair of points $p, q \in X$, there exists a proper, connected and not necessarily irreducible curve in $X$ containing $p$ and $q$.
\end{definition}

We need the following criteria proved by Campana.

\begin{theorem}(see $\cite[p. 212]{Campana})$.
A compact complex manifold is projective if and only if it is Kahler and algebraically connected.
\end{theorem}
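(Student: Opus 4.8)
The statement splits into two implications of quite different character, so the plan is to treat them separately. \textbf{The forward direction} (projective $\Rightarrow$ Kähler and algebraically connected) is elementary. If $X \subset \mathbb{C}P^N$, then the restriction of the Fubini--Study form is a Kähler form, so $X$ is Kähler. For algebraic connectivity I would fix general $p, q \in X$ and intersect $X$ with a generic linear subspace $\Lambda \subset \mathbb{C}P^N$ of codimension $\dim X - 1$ passing through $p$ and $q$; by the Bertini irreducibility and connectedness theorems, $X \cap \Lambda$ is a connected (indeed irreducible, for $X$ irreducible and $\Lambda$ generic) curve containing both points. Hence $X$ is algebraically connected.

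\textbf{The reverse direction} (compact Kähler and algebraically connected $\Rightarrow$ projective) is the substance of the theorem, and here I would reduce projectivity to a single cohomological condition. By Kodaira's embedding theorem together with the $\partial\bar\partial$-lemma, which identifies integral $(1,1)$-classes with first Chern classes of holomorphic line bundles, a compact Kähler manifold is projective if and only if its Kähler cone $\mathcal{K} \subset H^{1,1}(X,\mathbb{R})$ contains a rational class. Thus the goal becomes to show that algebraic connectivity forces $\mathcal{K} \cap H^2(X,\mathbb{Q}) \neq \emptyset$.

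To produce such a class I would exploit the abundance of curves supplied by the hypothesis. First, because $X$ is Kähler, the Barlet cycle space of $X$ has compact irreducible components (Fujiki--Lieberman), so the connecting curves fit into a compact family $T$ of connected $1$-cycles whose incidence correspondence dominates $X \times X$; this is precisely the statement that Campana's $\Gamma$-reduction $X \dashrightarrow \Gamma(X)$ is trivial, i.e.\ $\Gamma(X)$ is a point. I would then invoke the Demailly--Păun numerical characterization of the Kähler cone, namely that $\mathcal{K}$ is the connected component, containing a Kähler class, of the set of $(1,1)$-classes positive on every analytic subvariety, to argue that when a covering family of curves joins general points the extremal structure of $\partial\mathcal{K}$ is cut out by the (rational) curve classes, forcing $\mathcal{K}$ to contain a rational interior point. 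An alternative packaging of the same step is to prove that algebraic connectivity makes $X$ Moishezon, with algebraic dimension equal to $\dim X$, and then apply Moishezon's theorem that a Moishezon Kähler manifold is projective.

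\textbf{The main obstacle} is exactly this last step: converting ``general points lie on a common connecting curve'' into ``the Kähler cone contains a rational class.'' The forward direction and the reduction to the rational-class criterion are essentially formal, but the reverse implication genuinely requires the hard analytic inputs of compact Kähler geometry — compactness of the cycle-space components, the construction and almost-holomorphy of the $\Gamma$-reduction in the transcendental (non-algebraic) setting, and the Demailly--Păun description of $\mathcal{K}$. Controlling the $\Gamma$-reduction when $X$ is merely Kähler, rather than already projective, is where the real work of Campana's argument lies, which is why I would ultimately cite \cite{Campana} for this implication rather than reprove it from scratch.
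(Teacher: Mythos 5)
The paper does not prove this statement at all: it is imported verbatim from Campana's 1981 paper \cite{Campana} (the parenthetical citation \emph{is} the paper's entire treatment), and it is then used as a black box in the proof of Theorem \ref{cohomkahler}. So there is no internal proof to measure your attempt against, and the only fair comparison is with that citation. Your proposal is consistent with the paper's usage and adds real content on top of it. The forward direction you give is correct and standard: the Fubini--Study restriction gives the Kahler property, and successive hyperplane sections through $p$ and $q$ give a connected curve through both points, though you should invoke a connectedness theorem of Enriques--Zariski/Lefschetz type rather than plain Bertini, since the linear system of hyperplanes through two fixed points has a base locus. Your reduction of the converse to producing a rational class in the Kahler cone, via Kodaira embedding and the identification of integral $(1,1)$-classes with Chern classes of line bundles, is also correct. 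But the core implication --- that algebraic connectedness of a compact Kahler manifold forces such a rational class --- is precisely Campana's theorem, and your proposal, as you candidly acknowledge, terminates in the same citation rather than in a proof; this is not a defect relative to the paper, which does exactly the same with less detail. One historical point worth noting: of your two proposed packagings of the hard step, the Moishezon route (algebraic connectedness trivializes the algebraic reduction, so $X$ is Moishezon, and a Kahler Moishezon manifold is projective by Moishezon's theorem) is the one close to Campana's actual argument, whereas the Demailly--Paun characterization of the Kahler cone postdates Campana's paper by more than two decades and is not needed for this result.
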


Let $\mathbb{Z}[X]$ be a group of finite formal linear combinations of points of $X$. By definition,
\begin{equation*}
JC_0^{\mathbb{C}P^1}(pt, X) = \mathbb{Z}[X], \quad JC_1^{\mathbb{C}P^1}(pt, X) = JC(\mathbb{C}P^{1}, X).
\end{equation*}
If $JH_0^{\mathbb{C}P^1}(pt, X) = \mathbb{Z}$, then it is generated by one point and for any $p, q \in X$ there exists $F = r_1F_1 + \ldots + r_nF_n \in JC(\mathbb{C}P^1, X)$ such that $dF = p - q$.

Consider a correspondence $G = F_1 + \ldots + F_n$. Also, denote by  $G$ the associated subset of $\mathbb{C}P^1 \times X$ and by $\pi$ the projection $G \rightarrow \mathbb{C}P^1$. Since $dF = p-q$, we get that $p$ and $q$ belong to one connected component of $G$. We denote the connected component containing $p$ and $q$ also by $G$.

If $U \subset \mathbb{C}P^1$ is open and small, then there is $N$ such that
\begin{equation*}
\pi^{-1}(U) = \bigcup_{i=1}^N(U, f_i(U)), \;\;\; \text{where $f_i: U  \rightarrow X$ are holomorphic}.
\end{equation*}
Let $G_1, \ldots, G_k$ be irreducible components of $G$. There are numbers $n_1, \ldots, n_k$ such that
\begin{equation*}
G_j|_U = \bigcup_{i=1}^{n_j}(U, g_i(U)), \;\;\; \text{where $g_i: U  \rightarrow X$ are holomorphic}.
\end{equation*}
Since distinct holomorphic functions can be equal only at discrete points, we get that $\pi: G_j \rightarrow \mathbb{C}P^1$ is a branched covering. This means that $G_j$ is a curve of some genus.

Let $pr_X$ be the projection of $\mathbb{C}P^1 \times X$ onto $X$. Since $dG = p - q \pm \ldots$, we see that  $p, q \in pr_X(G)$.

We proved that $X$ is algebraically connected. Then the criteria of Campana says that $X$ is algebraic (we assume that $X$ is Kahler).

\vspace{0.1in}
\noindent
\textbf{Proof of Lemma $\ref{curvepassinglemma}$}. As before,
\begin{equation*}
JC_0^{\mathbb{C}P^1}(pt, X) = \mathbb{Z}[X], \quad JC_1^{\mathbb{C}P^1}(pt, X) = JC(\mathbb{C}P^{1}, X),
\end{equation*}
where $\mathbb{Z}[X]$ is a group of finite formal linear combinations of points of $X$. Recall that $\mathcal{J}\mathcal{S}ymp(\mathbb{C}P^1, X) \subset JC(\mathbb{C}P^{1}, X)$. We assumed that for any $p, q \in X$ there is $f \in \mathcal{J}\mathcal{S}ymp(\mathbb{C}P^1, X)$ and $a, b \in \mathbb{C}P^1$ such that $f(a) = p$ and $f(b) = q$. Let $\gamma: \mathbb{C}P^1 \rightarrow \mathbb{C}P^1$ be an automorphism such that
\begin{equation*}
\gamma([0:1]) = a, \quad \gamma([1:0]) = b.
\end{equation*}
Then, $d(f \circ \gamma) = f \circ \gamma([0:1]) - f \circ \gamma([1:0]) = p-q$. This means that $JH^{\mathbb{C}P^1}_0(pt, X)$ is generated by one point. As a result, $JH^{\mathbb{C}P^1}_0(pt, X) = \mathbb{Z}$.

\vspace{0.1in}
\noindent
\textbf{Proof of Lemma $\ref{sympapplemma}$}. Since there exists symplectic embedding of $M$ into $X^n$, we get that $SCor(M, X) \neq \emptyset$.  By definition,
\begin{equation*}
SC_0^M(Y, X) = SC(Y, X), \quad SC_1^M(Y, X) = SC(Y \times M, X).
\end{equation*}
Let $f, g: Y \rightarrow X$ be our given symplectic embeddings and $f, g \in SCor(Y, X)$ be the associated correspondences. Theorem $\ref{deformtheorem}$ says that if there is symplectic isotopy $\varphi_t$ connecting $f$ and $g$, then  $f$ and $g$ are $M$-homotopic. This means that there is $H \in SC(Y \times M, X)$ such that $H|_{p_0} = f$ and $H|_{p_1} = g$. We also know that $dH = H|_{p_0} - H|_{p_1} = f - g$.

So, if $f$ and $g$ are symplectically isotopic, then $f$ and $g$ represent the same class in cohomology groups. On the other hand, we assumed that $f$ and $g$ represent different classes. This implies that the symplectic isotopy does not exist.


\begin{thebibliography}{100}
\bibitem{Octav1} P. Biran, O. Cornea, J. Zhang, \emph{Triangulation, Persistence, and Fukaya categories}, arXiv:2304.01785.
\bibitem{Octav2} P. Biran, O. Cornea, J. Zhang, \emph{Persistence K-theory}, arXiv:2305.01370.
\bibitem{Campana} F. Campana, \emph{Coreduction algebrique d'un espace analytique faiblement Kahlerien compact}, vol. 63 (1981), 187--223.
\bibitem{DoldThom} A. Dold, R. Thom,  \emph{Quasifaserungen und unendliche symmtrische Produkte}, Annals of Mathematics, vol 67 (1958), 230--281.
\bibitem{Fukaya} K. Fukaya, \emph{Morse homotopy and its quantization}, AMS/IP Studies in Advanced Math.  vol. 2 (1997), 409 -- 440.
\bibitem{Gromov} M. Gromov, {Soft and hard symplectic geometry}, ICM Series, American Mathematical Society, Providence, RI, 1988.
\bibitem{Gromov2} M. Gromov, \emph{Pseudo holomorphic curves in symplectic manifolds}. Inventiones Mathematicae, vol. 82 (1985), 307--347.
\bibitem{categories} M Kashiwara, \emph{Categories and sheaves}, Grundlehren der Mathematischen Wissenschaften, vol. 332 (2006), Springer-Verlag.
\bibitem{sheaves1} S. Mac Lane, I. Moerdijk,   \emph{Sheaves in Geometry and Logic - A first introduction to topos theory}, Springer Verlag (1992).
\bibitem{motivic1} C. Mazza, V. Voevodsky, C. Weibel, \emph{Lectures on Motivic Cohomology}, Clay Mathematics Monographs, vol. 2 (2006).
\bibitem{motivic2} F. Morel, V. Voevodsky,  \emph{$A^1$-homotopy theory of schemes}, Publications Mathematiques de L'Institut des Hautes Scientifiques, vol. 90 (1999), 45--143.
\bibitem{Pardon} J. Pardon, \emph{An algebraic approach to virtual fundamental cycles on moduli spaces of pseudo-holomorphic curves}, Geometry and Topology, vol. 20 (2016), 779--1034.
\bibitem{Seidel1} P. Seidel, \emph{Floer homology and the symplectic isotopy problem}. Ph.D. thesis, Oxford University, 1997.
\bibitem{Seidel2}  P. Seidel, \emph{Lectures on four-dimensional Dehn twists. Symplectic four-manifolds and algebraic surfaces}, Lecture Notes in Mathematics vol. 1938 (2008), 231--267.
\bibitem{Tian} B. Siebert, G. Tian, \emph{On the holomorphicity of genus two Lefschetz fibrations}, Annals of Mathematics, vol. 161 (2005), 959--1020.
\bibitem{Sikorav} J. Sikorav, \emph{The gluing construction for normally generic J-holomorphic curves}, Symplectic and contact topology: interactions and perspectives, vol. 35 (2003)  175--199.
\bibitem{Starkston} L. Starkston, \emph{A new approach to the symplectic isotopy problem}, Journal of Symplectic Geometry, vol. 18 (2020), 939--960.
\bibitem{motivic3} A. Suslin, V. Voevodsky, \emph{Singular homology of abstract algebraic varieties}, Inventiones Mathematicae, vol. 123 (1996), 61--94.
\end{thebibliography}
\end{document}